\DeclareMathOperator{\spn}{span}
\newcommand{\N}{\mathbb N}
\newcommand{\eps}{\varepsilon}
\newcommand{\R}{\mathbb R}
\newtheorem{thm}{Theorem}[section]
\newtheorem{defin}[thm]{Definition}
\newtheorem{lem}[thm]{Lemma}
\newtheorem{cor}[thm]{Corollary}
\newtheorem{rem}[thm]{Remark}
\newtheorem{prop}[thm]{Proposition}
\newtheorem{cond}[thm]{Condition}
\newtheorem{ass}[thm]{Assumption}
\begin{document}

	\title[Langevin algorithms]{On polynomial-time computation of high-dimensional posterior measures by Langevin-type algorithms}
	\author[R. Nickl and S. Wang]{Richard Nickl \text{ and } Sven Wang \\ \\ University of Cambridge $^\dagger$ \\ \\ \today}
	\thanks{\textit{$\dagger$~Department of Pure Mathematics \& Mathematical Statistics, Wilberforce Road, CB3 0WB Cambridge, UK.} Email: nickl@maths.cam.ac.uk, svenwang@mit.edu. We gratefully acknowledge support by the \textit{European Research Council}, ERC grant agreement 647812 (UQMSI) }
	%\runtitle{Langevin algorithms}

	\begin{abstract}
		The problem of generating random samples of high-dimensional posterior distributions is considered. The main results consist of non-asymptotic computational guarantees for Langevin-type MCMC algorithms which scale polynomially in key quantities such as the dimension of the model, the desired precision level, and the number of available statistical measurements. As a direct consequence, it is shown that posterior mean vectors as well as optimisation based maximum a posteriori (MAP) estimates are computable in polynomial time, with high probability under the distribution of the data. These results are complemented by statistical guarantees for recovery of the ground truth parameter generating the data.
		
		Our results are derived in a general high-dimensional non-linear regression setting (with Gaussian process priors) where posterior measures are not necessarily log-concave, employing a set of local `geometric' assumptions on the parameter space, and assuming that a good initialiser of the algorithm is available. The theory is applied to a representative non-linear example from PDEs involving a steady-state Schr\"odinger equation. 
	\end{abstract}

\setcounter{tocdepth}{2}
\maketitle
\tableofcontents

\section{Introduction}

Markov chain Monte Carlo (MCMC) type algorithms are a key methodology in computational mathematics and statistics. The main idea is to generate a Markov chain $(\vartheta_k: k \in \mathbb N)$ whose laws $\mathcal L(\vartheta_k)$ on $\mathbb R^D$ approximate its invariant measure. In Bayesian inference the relevant invariant measure has a probability density of the form
\begin{equation} \label{dergral}
\pi(\theta|Z^{(N)}) \propto e^{\ell_N(\theta)} \pi(\theta),~\theta \in \mathbb R^D.
\end{equation}
Here $\pi$ is a \textit{prior density function} for a parameter $\theta \in \mathbb R^D$ and the map $\ell_N: \mathbb R^D \to \mathbb R$ is the `data-log-likelihood' based on $N$ observations $Z^{(N)}$ from some statistical model, so that $\pi(\cdot|Z^{(N)})$ is the density of the Bayesian \textit{posterior probability distribution} on $\mathbb R^D$ arising from the observations.

\smallskip

It can be challenging to give performance guarantees for MCMC algorithms in the increasingly complex and high-dimensional statistical models relevant in contemporary data science. By `high-dimensional' we mean that the model dimension $D$ may be large (e.g., proportional to a power of $N$). Without any further assumptions accurate sampling from $\pi(\cdot|Z^{(N)})$ in high dimensions can then be expected to be intractable (see below for more discussion).  For MCMC methods the computational hardness typically manifests itself in an \textit{exponential} (or worse) dependence in $D$ or $N$ of the `mixing time' of the Markov chain $(\vartheta_k: k \in \mathbb N)$ towards its equilibrium measure (\ref{dergral}).

\smallskip

 In this work we develop mathematical techniques which allow to overcome such computational hardness barriers. We consider diffusion-based MCMC algorithms targeting the Gibbs-type measure with density $\pi(\cdot|Z^{(N)})$ from (\ref{dergral}) in a non-linear and high-dimensional setting. The prior $\pi$ will be assumed to be Gaussian -- the main challenge thus arises from the non-convexity of $-\ell_N$. We will show how local geometric properties of the statistical model can be combined with recent developments in Bayesian nonparametric statistics \cite{N17, MNP21} and the non-asymptotic theory of Langevin algorithms \cite{D17, DM17, DM18} to justify the `\textit{polynomial time}' feasibility of such sampling methods. 
 
\smallskip

While the approach is general, it crucially takes advantage of the particular geometric structure of the statistical model at hand. In a large class of high-dimensional non-linear inference problems arising throughout applied mathematics, such structure is described by \textit{partial differential equations} (PDEs). Examples that come to mind are inverse and data assimilation problems, and in particular since influential work by A. Stuart \cite{S10}, MCMC-based Bayesian methodology is frequently used in such settings, especially for the task of uncertainty quantification. We refer the reader to \cite{KKSV00, KS04, HLLST04, CMR05, LSS09, CDRS09, S10, MH12, SS12, CRSW13, RC15, GKNSSS15, DS16, CLM16, BGLFS17, AMOS19, G19, BCMW20} and the references therein. A main contribution of this paper is to demonstrate the feasibility of our proof strategy in a (for such PDE problems) prototypical non-linear example where the parameter $\theta$ models the potential in a steady-state Schr\"odinger equation. This PDE arises in various applications such as photo-acoustics, e.g., \cite{BU10, BR11}, and provides a suitable framework to lay out the main mathematical ideas underpinning our proofs.

\subsection{Basic setting and contributions}
 
To summarise our key results we now introduce a more concrete setting. For  $\mathcal O$ a bounded subset of $\R^d, d \in \mathbb N,$ and $\Theta$ some parameter space, consider a family of real-valued bounded `regression' functions $\{\mathcal G(\theta): \theta \in \Theta\}$ defined on $\mathcal O$. If $L^2(\mathcal O)$ denotes the usual space of square Lebesgue-integrable functions, this induces a `forward map'
 \begin{equation}\label{eq:fwd}
 \mathcal G: \Theta \to L^2(\mathcal O),
 \end{equation}
and we suppose that $N$ observations $Z^{(N)}=(Y_i, X_i: i=1, \dots N)$ arising via
 \begin{equation}\label{eq:data:intro}
 Y_i= \mathcal G(\theta)(X_i)+\eps_i, ~~~~~i=1,...,N,
 \end{equation}
 are given, where $\eps_i\sim N(0,1)$ are independent noise variables, and design variables $X_i$ are drawn uniformly at random from the domain $\mathcal O$ (independently of $\eps_i$). While natural parameter spaces $\Theta$ can be infinite-dimensional, in numerical practice a $D$-dimensional discretisation of $\Theta$ is employed, where $D$ can possibly be large. The log-likelihood function of the data $(Y_i, X_i)$ then equals, up to additive constants, the usual least squares criterion
 \begin{equation}\label{eq:llh}
 	\ell_N(\theta)= -\frac 12 \sum_{i=1}^N \big[Y_i-\mathcal G(\theta)(X_i) \big]^2,~~ \theta \in \mathbb R^D.
 \end{equation}
The aim is to recover $\theta$ from $Z^{(N)}$. A wide-spread practice in statistical science is to employ Gaussian (process) priors $\Pi$ with multivariate normal probability densities $\pi$ on $\mathbb R^D$; from a numerical point of view the Bayesian approach to inference in such problems is then precisely concerned with (approximate) evaluation of the posterior measure (\ref{dergral}). 

As discussed above, in important physical applications the forward map $\mathcal G$ is described by a \textit{partial differential equation}. For example suppose that $\mathcal G(\theta)=u_{f_\theta}$ arises as the solution $u=u_{f_\theta}$ to the following elliptic boundary value problem for a Schr\"odinger equation (with $\Delta$ the Laplacian)
\begin{equation}\label{thecat}
\begin{cases}
\frac 12 \Delta u - f_\theta u =0 \text{ on }\mathcal O,\\
u=g \text{ on }\partial \mathcal O,
\end{cases}
\end{equation}
with a suitable parameterisation $\theta\mapsto f_\theta>0$, $\theta\in\R^D$ (see (\ref{fwdG}) below for details).  In such cases, the map $\mathcal G$ is non-linear  and $-\ell_N(\theta)$ is not convex. The probability measure with density $\pi(\cdot|Z^{(N)})$ given in (\ref{dergral}) may then be highly complex to evaluate in a high-dimensional setting, with computational cost scaling exponentially as $D \to \infty$. For instance, complexity theory for high-dimensional numerical integration (see \cite{NW08i, NW08ii} for general references) implies that computing the integral of a $D$-dimensional real-valued Lipschitz function -- such as the normalising factor implicit in (\ref{dergral}) -- by a deterministic algorithm has worst case cost scaling as $D^{D/5}$ \cite{S79,HNUW14}. Relaxing a worst case analysis, Monte Carlo methods can in principle obtain dimension-free guarantees (with high probability under the randomisation scheme). However, a curse of dimensionality may persist as one typically is only able to sample \textit{approximately} from the target measure, and since the approximation error incurred, e.g., by the mixing time of a Markov chain, could scale exponentially in dimension. The references \cite{BBL08, BLB08, BC09, RvH15, YWJ16, MCJFJ18, AGJ20, BAWZ20} discuss this issue in a variety of contexts. In addition, since the distribution becomes increasingly `spiked' as the statistical information increases (i.e., $N\to\infty$), commonly used iterative algorithms can take an exponential in $N$ time to exit neighbourhoods of local optima of the posterior surface $\pi(\cdot|Z^{(N)})$ (e.g., \cite{E15}, Example 4). 

In light of the preceding discussion one may ask whether the approximate calculation of basic aspects of $\pi(\cdot|Z^{(N)})$ -- such as its mean vector (expected value), real-valued functionals $\int_{\mathbb R^D} H(\theta) \pi(\theta|Z^{(N)})d\theta$, or mode -- is feasible at a computational cost which grows at most \textit{polynomially in} $D,N$ and the desired (inverse) precision level. While answering this question in the affirmative may not directly identify a practical algorithm, it clarifies a fundamental aspect of the computational complexity of the problem at hand. Very few rigorous results providing even just partial such guarantees appear to be available. The notable exception Hairer, Stuart and Vollmer \cite{HSV14} along with some other important references will be discussed below. 

\smallskip

Let us describe the scope of the methods to be developed in this article in the problem of approximate computation of the high-dimensional \textit{posterior mean vector} in the PDE model (\ref{thecat}) with the Schr\"odinger equation. We will require mild regularity assumptions on $D, \Pi$ and on the ground truth $\theta_0$ generating the data (\ref{eq:data:intro}) -- full details can be found in Section \ref{sec:schrres}. If $\Pi$ is a $D$-dimensional Gaussian process prior with covariance equal to a rescaled inverse Laplacian raised to some large enough power $\alpha \in \mathbb N$, if the model dimension grows at most as $D\lesssim N^{d/(2\alpha+d)}$, and if $\theta_0$ is sufficiently well-approximated by its `discretisation' in $\R^D$ (see (\ref{dimbias})), we obtain the following main result.

\begin{thm}\label{Cmajor}
	Suppose that data $Z^{(N)}=(Y_i,X_i:i=1,...,N)$ arise through (\ref{eq:data:intro}) in the Schr\"odinger model (\ref{thecat}) and let $P>0$. Then, for any precision level $\eps \ge N^{-P}$ there exists a (randomised) algorithm whose output $\hat\theta_\eps\in \R^D$ can be computed with computational cost
	\begin{equation}\label{OMG}
	O(N^{b_1}D^{b_2}\eps^{-b_3}) ~~~(b_1,b_2,b_3>0),
	\end{equation}
	and such that with high probability (under the joint law of $Z^{(N)}$ and the randomisation mechanism),
	\[ \big\|\hat\theta_\eps-E^{\Pi}[\theta|Z^{(N)}]\big\|_{\R^D}\le \eps, \]
	where $E^{\Pi}[\theta|Z^{(N)}]=\int_{\R^D}\theta \pi(\theta|Z^{(N)})d\theta$ denotes the mean vector of the posterior distribution $\Pi(\cdot|Z^{(N)})$ with density (\ref{dergral}).
\end{thm}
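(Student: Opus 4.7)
The plan is to realise $\hat\theta_\varepsilon$ as the ergodic average
\[
\hat\theta_\varepsilon = \frac{1}{K}\sum_{k=1}^{K} \vartheta_k,
\]
where $(\vartheta_k)$ is generated by an unadjusted Langevin algorithm (ULA)
\[
\vartheta_{k+1} = \vartheta_k + \gamma\,\nabla \log \pi(\vartheta_k \mid Z^{(N)}) + \sqrt{2\gamma}\,\xi_{k+1}, \qquad \xi_k \stackrel{\text{iid}}{\sim} N(0,I_D),
\]
started from a good initialiser $\theta_{\text{init}}$ assumed available (and computable in polynomial time). Since $-\ell_N$ is non-convex, the standard non-asymptotic bounds of Dalalyan and Durmus--Moulines \cite{D17,DM17,DM18} do not apply off the shelf. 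The central idea is to combine posterior contraction around the truth with local geometric control of $\ell_N$ near $\theta_0$ to recover an effectively log-concave situation.

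The first step is statistical. By posterior contraction results in the spirit of \cite{N17,MNP19b}, under the distribution of $Z^{(N)}$ and for the rescaled Gaussian prior specified in Section \ref{sec:schrres}, the posterior $\Pi(\cdot\mid Z^{(N)})$ concentrates, with $P_{\theta_0}^N$-probability tending to one, on a Euclidean ball $\mathcal B \subset \mathbb R^D$ of shrinking radius around $\theta_0$. Within $\mathcal B$, the local geometric assumptions on the forward map $\mathcal G$ (in the Schr\"odinger example these reduce to gradient and Hessian estimates of $\theta \mapsto u_{f_\theta}$) entail that $-\ell_N$ has, on $\mathcal B$, a gradient which is Lipschitz and a Hessian which is bounded below by $-cN$ for some moderate $c$; together with the quadratic contribution $\tfrac12 \|\theta\|_{\Sigma^{-1}}^2$ coming from the Gaussian log-prior (of large enough magnitude on the relevant scale), this produces a local strong log-concavity constant $m>0$ and a global gradient-Lipschitz constant $L$, both polynomial in $N$ and $D$.

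The second step is algorithmic. Starting from $\theta_{\text{init}} \in \mathcal B$, one feeds $(m,L)$ and the dimension $D$ into the Langevin non-asymptotic machinery: with step size $\gamma \simeq \mathrm{poly}(\varepsilon/(ND))$ and $K \simeq \mathrm{poly}(N,D,\varepsilon^{-1})$ iterations, the Durmus--Moulines bounds yield $W_2(\mathcal L(\vartheta_K),\Pi(\cdot\mid Z^{(N)})) \le \varepsilon/2$, and a standard variance calculation for the ergodic average gives $\|\hat\theta_\varepsilon - E^\Pi[\theta\mid Z^{(N)}]\|_{\mathbb R^D} \le \varepsilon$ with high probability. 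The one-step cost is dominated by evaluating $\nabla \ell_N$, which amounts to $N$ numerical PDE solves plus gradient back-propagation at cost polynomial in $D$, yielding the overall bound \eqref{OMG}.

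The hard part is closing the loop between the local geometry and the global Langevin analysis: the gradient-Lipschitz and strong-convexity estimates only hold on $\mathcal B$, so one must verify that the discretised diffusion, together with its law, never feels the non-convex region outside $\mathcal B$ on the relevant time scale. I would handle this by a Lyapunov/containment argument, exploiting the confining quadratic form from the Gaussian prior to show dissipativity at large radii, combined with Bernstein-type control of the Gaussian innovations $\sqrt{2\gamma}\,\xi_k$ in dimension $D$, so that both the chain and a suitable reference process coupled to the true posterior remain in $\mathcal B$ with overwhelming probability throughout the $K$ iterations. Tracking the polynomial dependence of $m,L$, the contraction radius, and the initialisation error $\|\theta_{\text{init}}-\theta_0\|$ then delivers the exponents $b_1,b_2,b_3$ in \eqref{OMG}.
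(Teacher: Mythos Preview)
Your high-level outline captures the right ingredients---local curvature of $\ell_N$, posterior contraction, and the Durmus--Moulines ULA bounds---but the way you propose to ``close the loop'' is not what the paper does, and as written it has a genuine gap.

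You run ULA on the \emph{true} log-posterior and argue, by a Lyapunov/containment estimate, that the chain never leaves the region $\mathcal B$ of local log-concavity. There are two problems with this. First, the confining quadratic from the prior has strength only $N^{d/(2\alpha+d)}$ (see~(\ref{schrottprior})), which is of strictly smaller order than the $O(N)$ scale of $\nabla\ell_N$; outside $\mathcal B$ you have no control on the direction of $\nabla\ell_N$, so the prior alone cannot furnish dissipativity at large radii. Second, and more fundamentally, the non-asymptotic Wasserstein bounds of \cite{DM17,DM18} that you want to invoke rest on a synchronous coupling argument that uses \emph{global} strong convexity of the potential. Showing that the chain stays in $\mathcal B$ with high probability does not give you access to these bounds: the coupled reference process targeting the true posterior has no reason to remain in $\mathcal B$, and the contraction estimate breaks down as soon as either process exits. (Relatedly, your local Hessian statement ``bounded below by $-cN$'' is not strong convexity; in the paper the local curvature $\inf_{\theta\in\mathcal B}\lambda_{\min}(-\nabla^2\ell_N(\theta))\gtrsim ND^{-4/d}$ comes from a PDE stability estimate for the linearised Schr\"odinger map, Lemma~\ref{wundervoncordoba}, not from the prior.)

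The paper avoids both issues by a different device: it constructs a \emph{surrogate} log-likelihood $\tilde\ell_N$ (Section~\ref{subsec:surrogate}) that agrees with $\ell_N$ on a data-driven ball around $\theta_{init}$ but is modified outside so that the resulting surrogate posterior $\tilde\Pi(\cdot\mid Z^{(N)})$ is \emph{globally} strongly log-concave. ULA is run on $\nabla\log\tilde\pi(\cdot\mid Z^{(N)})$, so the Durmus--Moulines machinery applies directly with explicit constants $m\simeq ND^{-4/d}$ and $\Lambda\simeq ND^{8/d}(\log N)^3$. The remaining work is to show that $W_2(\tilde\Pi(\cdot\mid Z^{(N)}),\Pi(\cdot\mid Z^{(N)}))$ is exponentially small in $N$ (Theorem~\ref{waterstone}), which is where posterior contraction and a convergence rate for the MAP estimate enter. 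Theorem~\ref{Cmajor} then follows from Theorem~\ref{thm:post:mean} together with the polynomial-time initialiser of Theorem~\ref{triebelei}. Your proposal would need to replace the containment heuristic by this surrogate construction (or supply a genuinely new argument that delivers global ULA guarantees from purely local log-concavity).
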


We further show in Theorem \ref{brainfreeze} that $\hat \theta_\eps$ also recovers the ground truth $\theta_0$, within precision $\eps$. The method underlying Theorem \ref{Cmajor} consists of an initialisation step which requires solving a standard convex optimisation problem, followed by iterations $(\vartheta_k)$ of a discretised gradient based Langevin-type MCMC algorithm, at each step requiring a single evaluation of $\nabla \ell_N$ (which itself amounts to solving a standard linear elliptic boundary value problem). In particular our results will imply that the posterior mean can be computed by ergodic averages $(1/J) \sum_{k\le J} \vartheta_k$ along the MCMC chain (after some burn-in time), see Theorem \ref{thm:post:mean} (which implies Theorem \ref{Cmajor}). The laws $\mathcal L(\vartheta_k)$ of the iterates $(\vartheta_k)$ in fact provide a \textit{global} approximation
\[ W_2(\mathcal L(\vartheta_k),\Pi(\cdot|Z^{(N)}))\le \eps, ~~~k\ge k_{mix},\]
of the high-dimensional posterior measure on $\R^D$, in Wasserstein-distance $W_2$. Our explicit convergence guarantees will ensure that both the `mixing time' $k_{mix}$ and the number of required iterations $J$ to reach precision level $\eps$ scales polynomially in $D,N,\eps^{-1}$. Similar statements hold true for the computation of real-valued functionals $\int_{\R^D}H(\theta)\pi(\theta|Z^{N})d\theta$ for Lipschitz maps $H:\R^D\to \R$ and of maximum a posteriori (MAP) estimates. See Theorems \ref{thm:schr:wass}, \ref{MAP} as well as Proposition \ref{thm:schr:func} for precise statements. 

\smallskip

The main ideas of this article can be summarised as follows. We first demonstrate that, with high probability under the law generating the data $Z^{(N)}$, the target measure $\Pi(\cdot|Z^{(N)})$ from (\ref{dergral}) is locally log-concave on a region in $\mathbb R^D$ where most of its mass concentrates.  Then we show that a `localised' Langevin-type algorithm, when initialised into the region of log-concavity, possesses polynomial time convergence guarantees in `moderately' high-dimensional models. That sufficiently precise initialisation is possible has to be shown in each problem individually (for the Schr\"odinger model, see Section \ref{sec:initpfs}). Our proofs provide a template (outlined in Section \ref{barocco}) that can be used in principle also in general settings as long as the linearisation $\nabla_\theta G(\theta_0)$ of $\mathcal G$ at the ground truth parameter $\theta_0$ satisfies a suitable stability estimate (i.e., a quantitative injectivity property related to the `information' operator of the statistical model). We note that this `gradient stability' hypothesis remains entirely `local' and is hence weaker than the `Polyak-Lojasiewicz' gradient condition used in non-convex optimisation \cite{L63, P63}, see also \cite{KNS16}. We verify our local stability property for the Schr\"odinger equation using elliptic PDE techniques (see Lemma \ref{wundervoncordoba}) but our approach may succeed in a variety of other non-linear forward models arising in inverse problems \cite{KKL01, U09, S10, MS12}, integral $X$-ray geometry \cite{PSU12, MNP21, IM20, PSU22}, and also in the context of data assimilation and filtering \cite{CDRS09, MH12, RC15}. In fact, the very recent reference \cite{BN21} achieves this for the non-linear inverse problem considered in \cite{PSU12, MNP21}. Further advancing our understanding of the computational complexity of such PDE-constrained high-dimensional inference problems poses a formidable challenge for future research.

\subsection{Discussion of related literature}\label{subsec:related}

Both the statistical and computational aspects of high-dimensional Bayes procedures have been subject of great interest in recent years. Frequentist convergence properties of high- and infinite-dimensional Bayes procedures were intensely studied in the last two decades. For `direct' statistical models we refer to the recent monograph \cite{GV17} (and references therein), and in the non-linear (PDE) setting relevant here to \cite{NS17, GN19, AN19,N17,NS19,NVW18,MNP21, MNP21b, BGK20, K22,B21,AW21,BN21, S22}.

We now discuss a variety of mixing time results of MCMC algorithms in high-dimensional settings, and refer to the references cited in these articles for further important results. 

\subsubsection{Mixing times for pCN-type algorithms} The important contribution \cite{HSV14} by Hairer, Stuart and Vollmer derives dimension-independent convergence guarantees for the preconditioned Crank-Nicolson (pCN) algorithm, using ergodicity results for infinite-dimensional Markov chains from Hairer, Mattingly and Scheutzow \cite{HMS11}. The task of sampling from a general measure arising from a Gaussian process prior and a general likelihood function $\exp(-\Phi(\theta))$ is considered there. Their results are hence naturally compatible with the setting considered in this paper, where $\Phi$ is given by (\ref{eq:llh}), i.e. $\Phi=\Phi_N=-\ell_N$
and it is natural to ask (a) whether the bounds from \cite{HSV14} apply to this class of problems and (b) if they apply, how they quantitatively depend on $N$ and model dimension.
\par 
The key Assumptions 2.10, 2.11, and 2.13 made in \cite{HSV14} can be summarised as (A) a global lower bound on the acceptance probability of the pCN as well as (B) a (local) Lipschitz continuity requirement on $\Phi$. In non-linear PDE problems, part (B) can usually be verified (e.g., \cite{NVW18}), while part (A) is more challenging: due to the global nature of the assumption, it seems that verification of (A) will typically require bounds for likelihood ratios $\exp(\Phi(\theta)-\Phi(\bar\theta))$ with $\theta,\bar\theta$ arbitrarily far apart. Of course, in some specific problems an initial bound may be obtained by invoking inequalities like (\ref{ubd}). However the resulting lower bounds on the acceptance probabilities in the pCN scheme will decrease exponentially in $N$. We also note that though dimension-independent, the main Theorems 2.12 and 2.14 from \cite{HSV14} remain implicit (non-quantitative) in the relevant quantities from Assumptions (A) and (B); this seems to stem both from the utilised proof techniques, such as considerations regarding level sets of Lyapunov functions (cf.~\cite{HSV14}, p.2474), as well as the qualitative nature of the key underlying probabilistic weak Harris theorem proved by \cite{HMS11}.  Summarising, while it would be very exciting to see the results \cite{HSV14} be extended to yield quantitative bounds which are polynomial in both $N,D$, serious technical and conceptual innovations seem to be required. These remarks apply as well to recent dimension-free mixing time bounds on Hamiltonian Monte Carlo (HMC) methods in \cite{BREZ20, BRE21, GHM21}, which scale exponentially in $N$ via the Lipschitz constant of $\ell_N$. In our context, when exploiting local average curvature of the likelihood surface arising from PDE structure, it is more promising to investigate gradient based MCMC schemes.

\subsubsection{Computational guarantees for Langevin-type algorithms}
For the important gradient-based class of Langevin Monte Carlo (LMC) algorithms, nonasymptotic convergence guarantees which are suited for high-dimensional settings were obtained by Dalalyan \cite{D17} for log-concave densities, shortly after to be extended by Durmus and Moulines \cite{DM17,DM18} to closely related cases. Our proofs rely substantially on these convergence results for the strongly log-concave case (see Appendix \ref{app:ULA} for a review). We emphasize that the fundamental ideas underpinning the fast mixing of `hypercontractive' Langevin diffusions in high dimensions go back to earlier seminal work \cite{BE85,JKO98}, see also the monograph \cite{BGL14}.

Very recently further extensions have emerged, notably \cite{MCJFJ18}, \cite{VW19, MCCFBJ21} and also \cite{CELSZ21}, which estabish convergence guarantees assuming that either the density to be sampled from is convex outside of some region, or that the target measure satisfies functional inequalities of log-Sobolev and Poincar\'e type. However, it appears that both of these results, when applied to (\ref{eq:llh}) without any further substantial work, yield bounds that scale exponentially in $N$. Indeed, the bound in Theorem 1 of \cite{MCJFJ18} evidently depends exponentially on the Lipschitz constant of the gradient $\nabla \ell_N$; and ad hoc verification of assumptions from \cite{VW19} would utilise the Holley-Stroock perturbation principle \cite{HS87} (and (\ref{ubd})), exhibiting the same exponential dependence. Alternative, more elaborate ways of verifying functional inequalities in this context would be highly interesting, but this is not the approach we take here.  

\subsubsection{Relationship to Bernstein-von Mises theorems}

A key idea in our proofs is to use approximate curvature of $\ell_N(\theta)$ `near' the ground truth $\theta_0$. On a deeper level this idea is related to the possibility of a Bernstein-von Mises theorem which would establish precise Gaussian (`Laplace') approximations to posterior distributions, see \cite{L1812, LC86, vdV98} for the classical versions of such results in `low-dimensional' statistical models, and \cite{G00, CN13, CN14, CR15} for high- or infinite-dimensional versions.

Such an approach is taken by \cite{BC09} who attempt to exploit the asymptotic `normality' of the posterior measure to establish bounds on the computation time of MCMC-based posterior sampling, building on seminal work by Lovasz, Simonovits and Vempala \cite{LS93, LV07} on the complexity of general Metropolis-Hastings schemes. While \cite{BC09} allow potentially for moderately high-dimensional situations (by appealing to high-dimensional Bernstein-von Mises theorems from \cite{G00}), their sampling guarantees hold for rescaled posterior measures arising as laws of $\sqrt N(\theta-\tilde \theta)|Z^{(N)}$ where $\tilde \theta=\tilde \theta(Z^{(N)})$ is an initial `semi-parametrically efficient centring' of the posterior draws $\theta|Z^{(N)}$. In our setting such a centring is not generally available (in fact to show that one can compute such centrings, such as the posterior mode or mean, in polynomial time, is a main aim of our analysis). The setting in \cite{BC09} thus appears somewhat unnatural for the problems studied here, also because the conditions there do not appear to permit Gaussian priors.

For the Schr\"odinger equation example considered in the present paper, Bernstein-von Mises theorems were obtained in \cite{N17} -- see also the more recent \cite{MNP21b}. While we follow \cite{N17} in using elliptic PDE theory to quantify the amount of curvature expressed in the `limiting information operator' arising from the Schr\"odinger model, our proofs are in fact not based on an asymptotic Gaussian approximation of the posterior distribution (via Le Cam theory, as in \cite{N17, MNP21b}). Rather we use tools from high-dimensional probability to deduce local curvature bounds directly for the likelihood surface, and then show that the posterior measure is approximated, in Wasserstein distance, by a globally log-concave measure that concentrates around the posterior mode (see Theorem \ref{waterstone}). While one can think of this as a `non-asymptotic' version of a Bernstein-von Mises theorem, the underlying techniques do not require the full inversion of the information operator (as in \cite{N17} or also in \cite{MNP17, NS19, MNP21b}), but solely rely on a `stability estimate' for the local linearisation of the forward map, and hence are likely to apply to a larger class of PDEs (a PDE model where this difference matters is discussed in \cite{NP22}). A further key advantage of our approach is that we do not require the initialiser for the algorithm to be a `semi-parametrically efficient' estimator (as \cite{BC09} do), instead only a sufficiently fast `nonparametric' convergence rate is required, which substantially increases the class of admissible initialisation strategies.

\subsubsection{Regularisation/optimisation literature}

Regularisation-driven optimisation methods have been studied for a long time in applied mathematics, see for instance the monographs  \cite{EHN96, KNS08}. In the setting of non-linear operator equations in Hilbert spaces and with deterministic noise, `local' convergence guarantees for iterative (gradient or `Landweber') methods have been obtained in \cite{HNS95, KNS08}, assuming that optimisation is performed over a (sufficiently small) neighbourhood of a maximum. The proof techniques underlying our main results allow as well to derive guarantees for gradient descent algorithms targeting, for instance, maximum a posteriori (MAP) estimates, see Section \ref{optimap}. Specifically, in Theorem \ref{MAP}, global convergence guarantees for the computation of MAP estimates over a high-dimensional discretisation space are given, in our genuine statistical framework, paralleling our main results for Langevin sampling methods, which can be regarded as randomised versions of classical gradient methods. A main attraction of studying such randomised algorithms, and more generally of solving the problem of Bayesian computation, is of course that one can access \textit{entire} posterior distributions, which is required for quantifying the statistical uncertainty in the reconstruction provided by point estimates such as posterior mean or mode.

\subsection{Notations and conventions}\label{notatio}
Throughout, $N$ will denote the number of observations in (\ref{eq:data:intro}) and $D$ will denote the dimension of the model from (\ref{eq:llh}). For a real-valued function $f:\R^D\to \R$, its gradient and Hessian are denoted by $\nabla f$ and $\nabla ^2f$, respectively, while $\Delta = \nabla^T \nabla$ denotes the Laplace operator. For any matrix $A\in \R^{D\times D}$, we denote the operator norm by 
\[\|A\|_{op}:=\sup_{\psi: \|\psi\|_{\R^D}\le 1}\|A\psi\|_{\R^D}. \]
If $A$ is positive definite and symmetric, then we denote the minimal and maximal eigenvalues of $A$ by $\lambda_{min}(A)$ and $\lambda_{max}(A)$ respectively, with condition number $\kappa(A):=\lambda_{max}(A)/\lambda_{min}(A)$. The Euclidean norm on $\R^D$ will be denoted by $\|\cdot\|_{\R^D}$. The space $\ell^2(\mathbb N)$ denotes the usual sequence space of square-summable sequence $(a_n:n \in \mathbb N)$, normed by $\|\cdot\|_{\ell^2}$. For any $a\in \R$, we write $a_+=\min\{a,0\}$. Throughout, $\lesssim, \gtrsim, \simeq$ will denote (in-)equalities up to multiplicative constants.

For a Borel subset $\mathcal O\subseteq \R^d$, $d\in\N$, let $L^p=L^p(\mathcal O)$ be the usual spaces of functions endowed with the norm $\|\cdot\|_{L^p}^p=\int_{\mathcal O}|h(x)|^pdx$, where $dx$ is Lebesgue measure. The usual $L^2(\mathcal O)$ inner product is denoted by $\langle\cdot,\cdot\rangle_{L^2(\mathcal O)}$. If $\mathcal O$ is a smooth domain in $\R^d$, then $C(\mathcal O)$ denotes the space of bounded continuous functions $h:\mathcal O\to \R$ equipped with the supremum norm $\|\cdot\|_\infty$ and $C^\alpha(\mathcal O), \alpha \in \mathbb N$, denote the usual spaces of $\alpha$-times continuously differentiable functions on $\mathcal O$ with bounded derivatives. Likewise we denote by $H^\alpha(\mathcal O)$ the usual order-$\alpha$ Sobolev spaces of weakly differentiable functions with square integrable partial derivatives up to order $\alpha \in \mathbb N$, and this definition extends to positive $\alpha \notin \mathbb N$ by interpolation \cite{TI}. We also define $(H^2_0(\mathcal O))^*$ as the topological dual space of $$\big(H^2_0(\mathcal O) = \big\{h \in H^2(\mathcal O): tr(h)=0\big\}, \|\cdot\|_{H^2(\mathcal O)}\big),$$ where $tr(\cdot)$ denotes the usual trace operator on $\mathcal O$. We will repeatedly use the inequalities
\begin{equation}
\|gh\|_{H^\alpha}\le c(\alpha, \mathcal O) \|g\|_{H^\alpha}\|h\|_{H^\alpha},\qquad \alpha>d/2, \label{eq:h-mult}
\end{equation}
\begin{equation}
\|h\|_{H^\beta} \le c(\beta, \alpha, \mathcal O) \|h\|^{(\alpha-\beta)/\alpha}_{L^2} \|h\|^{\beta/\alpha}_{H^\alpha},~~0 \le \beta \le \alpha \label{krankl}
\end{equation}
for $g,h \in H^\alpha$, see, e.g., \cite{LM72}. For Borel probability measures $\mu_1,\mu_2$ on $\R^D$ with finite second moments we define the Wasserstein distance 
\begin{equation}\label{verymuchso}
	W^2_2(\mu_1, \mu_2)= \inf_{\nu \in \Gamma(\mu_1, \mu_2)}\int_{\R^D\times\R^D} \|\theta-\vartheta\|^2_{\R^D}d\nu(\theta,\vartheta),
\end{equation}
where $\Gamma(\mu_1, \mu_2)$ is the set of all `couplings' of $\mu_1$ and $\mu_2$ (see, e.g., \cite{V09}). Finally we say that a map $H: \mathbb R^D \to \mathbb R$ is Lipschitz if it has finite Lipschitz norm 
\begin{equation}\label{labello}
\|H\|_{Lip} := \sup_{x\neq y, x,y \in \mathbb R^D} \frac{|H(x)-H(y)|}{\|x-y\|_{\R^D}}.
\end{equation}

\section{Main results for the Schr\"odinger model}\label{sec:schrres}

Our object of study in this section is a nonlinear forward model arising with a (steady state) Schr\"odinger equation. Throughout, let $\mathcal O \subset \mathbb R^d$ be a bounded domain with smooth boundary $\partial \mathcal O$. For convenience, we restrict throughout to $d \le 3$, dimensions $d \ge 4$ could be considered as well at the expense of further technicalities. Moreover, without loss of generality we assume $vol(\mathcal O)=1$.

Suppose that $g\in C^\infty(\partial \mathcal O)$ is a given function prescribing boundary values $g\ge g_{min}>0$ on $\partial O$. For an `attenuation potential' $f\in H^\alpha(\mathcal O)$, consider solutions $u=u_f$ of the PDE
\begin{equation}\label{eq:schr}
	\begin{cases}
		\frac{1}{2}\Delta u-fu=0 ~~~ \text{on}~\mathcal O,\\
		u=g ~~~ \text{on}~ \partial\mathcal O.
	\end{cases}
\end{equation}
If $\alpha>d/2$ and $f \ge 0$ then standard theory for elliptic PDEs (see Chapter 6 of \cite{GT98} or Chapter 4 in \cite{CZ95}) implies that a unique classical solution $u_f\in C^{2}(\mathcal O)\cap C(\bar{\mathcal O})$ to the Schr\"odinger equation (\ref{eq:schr}) exists. The non-linearity of the map $f \mapsto u_f$ becomes apparent from the classical Feynman-Kac formula (e.g., Theorem 4.7 in \cite{CZ95}) 
\begin{equation} \label{fkac}
u_f(x) =u_{f,g}(x)= E^x\left[g(X_{\tau_\mathcal O}) e^{-\int_0^{\tau_\mathcal O} f(X_s)ds} \right],~x \in \mathcal O,
\end{equation} 
where $(X_s: s \ge 0)$ is a $d$-dimensional Brownian motion started at $x$ with exit time $\tau_\mathcal O$ from $\mathcal O$. This PDE appears in various settings in applied mathematics; for example an application to photo-acoustics is discussed in Section 3 in \cite{BU10}. 

\subsection{Bayesian inference with Gaussian process priors}

\subsubsection{The Dirichlet-Laplacian and Gaussian random fields}\label{ONB}

In Bayesian statistics popular choices of prior probability measures arise from Gaussian random fields whose covariance kernels are related to the Laplace operator $\Delta$, see, e.g., Section 2.4 in \cite{S10} and also Example 11.8 in \cite{GV17} (where the closely related `Whittle-Mat\'ern' processes are considered). 

For $\psi\in L^2(\mathcal O)$, let  $v\equiv \mathbb V[\psi]$ denote the (unique) solution in $H^2_0$ to the Poisson equation $\Delta v/2 = \psi$ on $\mathcal O$. By standard results (Section 5.A in \cite{TI}) the compact $\langle \cdot, \cdot \rangle_{L^2(\mathcal O)}$-self-adjoint operator $\mathbb V$ has eigenfunctions $(e_k: k \in \mathbb N)$ forming an orthonormal basis of $L^2(\mathcal O)$ such that $\mathbb V[\psi] = \sum_{k=1}^\infty \mu_k \langle e_k, \psi \rangle_{L^2(\mathcal O)} e_k$, with (negative) eigenvalues $\mu_k$ satisfying the Weyl asymptotics (e.g., Corollary 8.3.5 in \cite{TII})
\begin{equation}\label{weyl}
\lambda_k = \frac{1}{|\mu_k|} \simeq  k^{2/d}~\text{ as } k \to \infty,~~ 0<\lambda_k<\lambda_{k+1},~~k \in \mathbb N.
\end{equation}
The `spectrally defined' Sobolev-type spaces $\mathcal H^\alpha = \{F\in L^2(\mathcal O): \sum_{k=1}^\infty \lambda_k^{\alpha} \langle F, e_k \rangle_{L^2(\mathcal O)}^2<\infty  \}$ are isomorphic to corresponding Hilbert sequence spaces
\[ h^\alpha:=\Big\{ \theta \in \ell^2(\mathbb N): \|\theta\|_{h^\alpha}^2=\sum_{k=1}^\infty \lambda_k^{\alpha} \theta_k^2<\infty\Big\},~~~ h^0=: \ell^2(\mathbb N).\]
One shows that $\mathcal H^\alpha$ is a closed subspace of $H^\alpha(\mathcal O)$ and that the sequence norm $\|\cdot\|_{h^\alpha}$ is equivalent to $\|\cdot\|_{H^\alpha(\mathcal O)}$ on $\mathcal H^\alpha$. For $\alpha$ even, this follows from the usual isomorphism theorems for the $\alpha/2$-fold application of the inverse Dirichlet-Laplacian, and extends to general $\alpha$ by interpolation, see Section 5.A in \cite{TI}. One also shows that any $F \in H^\alpha(\mathcal O)$ supported strictly inside of $\mathcal O$ belongs to $\mathcal H^\alpha$.

\smallskip

A centred Gaussian random field $\mathcal M_\alpha$ on $\mathcal O$ can be defined by the infinite random series
\begin{equation}\label{matern}
\mathcal M_\alpha(x) = \sum_{k=1}^\infty \lambda_k^{-\alpha/2} g_k e_k(x),~x \in \mathcal O,~~g_k \sim^{i.i.d.}N(0,1).
\end{equation}
For $\alpha>d/2$ one shows that $\mathcal M_\alpha$ defines a Gaussian Borel random variable in $C(\mathcal O)\cap \{h \text{ uniformly continuous}: h=0 ~\text{on } \partial \mathcal O\}$ with reproducing kernel Hilbert space equal to $\mathcal H^\alpha$ (see Example 2.6.15 in \cite{GN16}), thus  providing natural priors for $\alpha$-regular functions vanishing at $\partial \mathcal O$. Such Dirichlet boundary conditions could be replaced by Neumann conditions at the expense of minor changes (see p.473 in \cite{TI}). Our techniques in principle may extend to other classes of priors such as exponential Besov-type priors considered in \cite{AW21, LSS09}, but we focus our development here on the most commonly used class of $\alpha$-regular Gaussian process priors.

\subsubsection{Re-parameterisation, regular link functions, and forward map}

To use Gaussian random fields such as $\mathcal M_\alpha$ to model a potential $f \ge 0$ featuring in the Schr\"odinger equation (\ref{eq:schr}), we need to enforce positivity by use of a `link function' $\Phi$. While $\Phi =\exp$ is common, for technical convenience (following \cite{NVW18}) we choose a function that is globally Lipschitz.

\begin{defin}[Regular link function]\label{def:reg:link}
	Let $K_{min}\in [0,\infty)$. We say that $\Phi:\R\to (K_{min},\infty)$ is a \textit{regular link function} if it is bijective, smooth, strictly increasing (i.e. $\Phi'>0$ on $\R$) and if for any $k\ge 1$, the $k$-th derivative of $\Phi$ satisfies $\sup_{x\in \R}\big|\Phi^{(k)}(x)\big|<\infty.$
\end{defin}
For a simple example of a regular link function $\Phi$, see e.g. Example 3.2 of \cite{NVW18}. We denote the composition operator associated to $\Phi$ by
\begin{equation}\label{fifa}
	\Phi^*:L^2(\mathcal O)\to L^2(\mathcal O),~~~~ F\mapsto \Phi\circ F = \Phi^*(F).
\end{equation}
Now to describe a natural parameter space for $f$, we will first expand functions $F \in L^2(\mathcal O)$ in the orthonormal basis from Section \ref{ONB}, 
\begin{equation} \label{Ft}
F=F_\theta = \sum_{k=1}^\infty \theta_k e_k,~~(\theta_k: k=1, 2, \dots) \in \ell^2(\mathbb N),
\end{equation}
and denote by $\Psi(\theta)=F_\theta$ the map $\Psi: \ell^2(\mathbb N) \to L^2(\mathcal O)$ that associates to the vector $\theta$ the `Fourier' series of $F_\theta$.  We then apply a regular link function $\Phi$ to $F_\theta$ and set $f_\theta := \Phi \circ F_\theta$. For $\alpha>d/2$, one shows (see (\ref{links}) below) that $F_\theta \in H^\alpha(\mathcal O)$ implies $f_\theta \in H^\alpha(\mathcal O)$ and hence solutions of the Schr\"odinger equation (\ref{eq:schr}) exist for such $f$. If we denote the solution map $f \mapsto u_f$ from (\ref{eq:schr}) by $G$, then the overall forward map describing our parametrisation is given by
\begin{equation} \label{fwdG}
\mathcal G: h^\alpha \to L^2(\mathcal O),~\mathcal G(\theta) = u_{f_\theta} = [G\circ \Phi^*\circ \Psi](\theta).
\end{equation}
We shall frequently regard $\mathcal G$ as a map on the closed linear subspace $\R^D$ of $h^\alpha$ consisting of the first $D$ coefficients $(\theta_1, \dots, \theta_D)$ of $\theta \in h^\alpha$, and suppress the dependence of $\mathcal G$ on $\Phi$ in the notation. We also note that the solutions of (\ref{eq:schr}) are uniformly bounded by a constant independent of $\theta \in h^\alpha$, specifically
\begin{equation} \label{ubd}
\|\mathcal G(\theta)\|_\infty = \|u_{f_\theta}\|_\infty \le \|g\|_\infty,
\end{equation}
as follows from (\ref{fkac}) and $f_\theta \ge 0$. This `bounded range' property of $\mathcal G$ is relative to the norm employed; for instance the $\|u_{f_\theta}\|_{H^\alpha}$-norms are \textit{not} uniformly bounded in $\theta\in h^\alpha$ for general $\alpha$.

\subsubsection{Measurement model, prior, likelihood and posterior}

For the forward map $\mathcal G$ from (\ref{fwdG}), we now consider the measurement model
\begin{equation}\label{model}
Y_i = \mathcal G(\theta)(X_i) + \varepsilon_i,~~ i=1, \dots, N,~~\varepsilon_i \sim^{i.i.d.} N(0,1), ~~X_i\sim^{i.i.d.} \text{Uniform}(\mathcal O).
\end{equation}
The i.i.d.~random vectors
\begin{equation} \label{ZN}
Z^{(N)}=(Z_i)_{i=1}^N =(Y_i, X_i)_{i=1}^N
\end{equation}
are drawn from a product measure on $(\mathbb R \times \mathcal O)^N$ that we denote by $P^N_\theta=\otimes_{i=1}^N P_\theta$. The coordinate (Lebesgue) densities $p_\theta$ of the joint probability density $p_\theta^N = \prod_{i=1}^N p_\theta$ of $P_\theta^N$ are of the form
\begin{equation} \label{lik}
p_\theta(y,x) := \frac{1}{\sqrt{2 \pi}} \exp \Big\{-\frac{1}{2}[y-\mathcal G(\theta)(x)]^2 \Big\},~~ y \in \mathbb R, x \in \mathcal O,
\end{equation}
(recalling $vol(\mathcal O)=1$) and we can define the \textit{log-likelihood function} as 
\begin{equation}\label{loglik}
\ell_N(\theta) \equiv \log p_\theta^N + N\log \sqrt{2\pi} = -\frac{1}{2} \sum_{i=1}^N\big(Y_i - \mathcal G(\theta)(X_i)\big)^2.
\end{equation}

When using Gaussian process prior models in Bayesian statistics, a common discretisation approach is to truncate the (`Karhunen-Lo\'eve' type) expansion of the prior in a suitable basis, cf. \cite{LSS09, S10, HSV14, DS11}. In our context this will mean that we truncate the series defining the random field $\mathcal M_\alpha$ in (\ref{matern}) at some finite dimension $D$ to be specified. For integer $\alpha$ to be chosen, and recalling the eigenvalues $(\lambda_k: k \in \N)$ of the Dirichlet Laplacian from (\ref{weyl}), we thus consider priors
\begin{equation}\label{schrottprior}
\theta \sim \Pi=\Pi_N \sim N \big(0, N^{-d/(2\alpha+d)} \Lambda^{-1}_\alpha\big), ~~ \Lambda_\alpha = diag(\lambda^{\alpha}_1, \dots, \lambda^{\alpha}_{D}),
\end{equation}
supported in the subspace $\R^D$ of $h^\alpha$ consisting of its first $D$ coordinates. The Lebesgue density  $d\Pi$ of $\Pi$ on $\R^D$ will be denoted by $\pi$. The posterior measure $\Pi(\cdot|Z^{(N)})$ on $\mathbb R^D$ then arises from data $Z^{(N)}$ in (\ref{model}) via Bayes' formula, with probability density function
\begin{align}\label{postbus}
\pi(\theta|Z^{(N)}) &\propto e^{\ell_N(\theta)} \pi(\theta) \\
& \propto \exp\left\{-\frac{1}{2}\sum_{i=1}^N\big(Y_i - \mathcal G(\theta)(X_i)\big)^2 - \frac{N^{d/(2\alpha+d)}}{2} \|\theta\|^2_{h^\alpha} \right\},~\theta \in \mathbb R^D. \notag
\end{align}

\subsection{Polynomial time guarantees for Bayesian posterior computation}

\subsubsection{Description of the algorithm}

We now describe the Langevin-type algorithm targeting the posterior measure $\Pi(\cdot|Z^{(N)})$. It requires the choice of an initialiser $\theta_{init}$ and of constants $\epsilon, K, \gamma$. Our goal is merely to exhibit its polynomial runtime and we do not attempt to optimize the constants involved.

Throughout, we use the initialiser $\theta_{init}=\theta_{init}(Z^{(N)})\in \R^D$ constructed in Theorem \ref{triebelei} in Section \ref{sec:initpfs} (computable in $O(N^{b_0})$ polynomially many steps, for some $b_0>0$). For $\epsilon>0$ to be chosen we define the high-dimensional region
\begin{equation} \label{calbschrott}
\hat {\mathcal B}=\{ \theta \in \mathbb R^D: \|\theta-\theta_{init}\|_{\R^D}\le \epsilon D^{-4/d}/2\}.
\end{equation}
We then construct a proxy function $\tilde \ell_N: \mathbb R^D \to \mathbb R$ which agrees on $\hat {\mathcal B}$  with the log-likelihood function $\ell_N$ from (\ref{loglik}). Specifically, take the cut-off function $\alpha=\alpha_\eta$ from (\ref{eq:alphaeta:def}) and the convex function $g=g_\eta$ from (\ref{eq:geta:def}) with choice $\eta=\epsilon D^{-4/d}$ and $|\cdot|_1=\|\cdot\|_{\R^D}$. Note that $\alpha$ is compactly supported and identically one on $\hat{\mathcal B}$ and that $g$  vanishes on $\hat{\mathcal B}$. Then for $K$ to be chosen, $\tilde \ell_N$ takes the form
\begin{equation}\label{eq:localization}
	\tilde \ell_N(\theta):=\alpha(\theta)\ell_N(\theta) - K g(\theta),~~~\theta \in \mathbb R^D.
\end{equation}
This induces a  proxy probability measure, correspondingly denoted by $\tilde \Pi(\cdot|Z^{(N)})$, with log-density
\begin{equation} \label{surrod}
\log \tilde \pi(\theta|Z^{(N)}) =  \tilde \ell_N (\theta) - N^{\frac{d}{2\alpha+d}}\|\theta\|_{h^\alpha}^2/2 + const.,~\theta \in \mathbb R^D.
\end{equation}
Note that $\tilde \pi(\cdot|Z^{(N)})$ coincides with the posterior density $\pi(\cdot|Z^{(N)})$ on the set $\hat{\mathcal B}$ up to a (random) normalising constant. The MCMC scheme we consider is then given in Algorithm \ref{alg:ULA} and the law of the resulting Markov chain $(\vartheta_k) \in \mathbb R^D$ will be denoted by $\mathbf P_{\theta_{init}}$.

\begin{algorithm}
	\caption{}\label{alg:ULA}
	\begin{flushleft}
	 \hspace*{\algorithmicindent} \textbf{Input:} Initialiser $\theta_{init}\in \R^D$, convexification parameters $\epsilon, K>0$, step size $\gamma>0$, $i.i.d.$ sequence $\xi_k\sim N(0,I_{D\times D})$. \\
	 
	 \smallskip
	 
	\hspace*{\algorithmicindent} \textbf{Output:} Markov chain $\vartheta_1, \dots,\vartheta_k, \dots  \in \R^D$.
	\end{flushleft}
\vspace{0.1cm}
	\begin{algorithmic}[1]
%		\Procedure{localULA}{$\psi^*,A,\gamma,|\cdot|_1,\eta,K$}
		\State \textbf{initialise} $\vartheta_0=\theta_{init}$
		\For{$k=0,...$}
		\State $\vartheta_{k+1}= \vartheta_k+\gamma \nabla \log \tilde \pi (\vartheta_k|Z^{(N)})+\sqrt{2\gamma}\xi_{k+1}$
		\EndFor 
		\State  \textbf{return} $(\vartheta_k:k=1,\dots)$
%		\EndProcedure
	\end{algorithmic}
\end{algorithm}

\smallskip

While the algorithm is related to stochastic optimisation methods based on gradient descent, the diffusivity term is of constant order in $k$, allowing $(\vartheta_k)$ to explore the entire support of the target measure. It coincides with the \textit{unadjusted Langevin algorithm} (see Appendix \ref{app:ULA}) targeting $\pi(\cdot|Z^{(N)})$ as long as the iterates $(\vartheta_k)$ stay within the region $\hat{\mathcal B} \subset \R^D$ we have initialised to. When $(\vartheta_k)$ exits $\hat {\mathcal B}$, the Markov chain is forced by the `proxy' function $\tilde \ell_N$ to eventually return to $\hat{\mathcal B}$.  This procedure is justified since most of the posterior mass will be shown to concentrate on $\hat {\mathcal B}$ with high probability under the law of $Z^{(N)}$. [In fact a key step of our proofs is to control the Wasserstein-distance between the measures induced by the densities $\pi(\cdot|Z^{(N)}), \tilde \pi(\cdot|Z^{(N)})$, cf.~Theorem \ref{waterstone}.] Note that while the ball in (\ref{calbschrott}) shrinks as dimension $D \to \infty$, relative to the step-sizes $\gamma$ permitted below, $\hat {\mathcal B}$ has asymptotically \textit{growing} diameter. The results that follow show that the Markov chain $(\vartheta_k)$ mixes sufficiently fast to reconstruct the posterior surface on $\hat {\mathcal B}$ with arbitrary precision after a polynomial runtime.

\smallskip

To demonstrate the performance of Algorithm \ref{alg:ULA} in a large $N,D$ scenario, we now make the following specific choices of the key algorithm parameters $\epsilon, K, \gamma$.

\begin{cond}\label{asymptopia} 
Let $\theta_{init}$ be the initialiser from Theorem \ref{triebelei} and suppose that
\begin{equation*}
\epsilon := \frac{1}{\log N},~~~K:= ND^{8/d}(\log N)^3 ,~~~\gamma\le \frac{1}{N D^{8/d} (\log N)^4}.
\end{equation*}
\end{cond}

\subsubsection{Conditions involving $\theta_0$}

The convergence guarantees obtained below hold for high-dimensional models where $D$ is permitted to grow polynomially in $N$, and under the frequentist assumption that the data $Z^{(N)}$ from (\ref{model}) is generated from a fixed ground truth $\theta_0$ inducing the law $P_{\theta_0}^N$. Note that we do \textit{not} assume that $\theta_0 \in \mathbb R^D$, but rather that $\theta_0 \in h^\alpha$ is sufficiently well approximated by its $\ell^2(\mathbb N)$-projection $\theta_{0,D}$ onto $\mathbb R^D$. The precise condition, which is discussed in more detail in Remark \ref{alpha} below, reads as follows.
 \begin{cond}\label{FAQ}
For integers $d\le 3$ and $\alpha> 6$, suppose data $Z^{(N)}$ from (\ref{ZN}) arise in the Schr\"odinger model (\ref{model}) for some fixed $\theta_0 \in h^\alpha$. Moreover, suppose that $D \in \mathbb N$ is such that for some constants $c_0>0, 0<c_0'<1/2$, and $\theta_{0,D}=((\theta_{0})_1,...,(\theta_0)_D)$,
\begin{equation} \label{dimbias}
\begin{split}
D \le c_0 N^{d/(2\alpha+d)},~~\|\mathcal G(\theta_{0,D})-\mathcal G (\theta_0)\|_{L^2(\mathcal O)}&\le c_0' N^{-\alpha/(2\alpha+d)}.
\end{split}
\end{equation}
\end{cond}
Though it will be left implicit, the results we obtain in this section depend on $\theta_0$ only through $c_0'$ and an upper bound $S\ge\|\theta_0\|_{h^\alpha}$.

 \subsubsection{Computational guarantees for ergodic MCMC averages}
 
We first present a concentration inequality for ergodic averages along the Markov chain $(\vartheta_k)$. Proposition \ref{thm:schr:func} is non-asymptotic in nature; hence its statement necessarily involves various constants whose dependence on $D$ and $N$ is tracked. Theorems \ref{thm:post:mean} and \ref{brainfreeze} then demonstrate how the desired polynomial time computation guarantees, including Theorem \ref{Cmajor}, can be deduced from it.

\smallskip

For `burn-in' time $J_{in} \in \mathbb N$ and MCMC samples $(\vartheta_k:k=J_{in}+1,...,J_{in}+J)$ from Algorithm \ref{alg:ULA}, define
 \[ \hat \pi_{J_{in}}^J(H)=\frac 1J\sum_{k=J_{in}+1}^{J_{in}+J}H(\vartheta_k),~~~ H: \mathbb R^D \to \mathbb R.\]
We also set, for $c_1>0$ to be chosen,
 \begin{equation}\label{eq:totallybiased}
 B(\gamma):= c_1\Big[\gamma D^{(d+24)/d} (\log N)^6+ \gamma^2ND^{(d+44)/d}(\log N)^{12} \Big]+ 2\exp(-N^{-\frac{d}{2\alpha+d}}).
 \end{equation}
The quantity $B(\gamma)$ is an upper bound for the error incurred by the discretisation of the Langevin dynamics (see (\ref{eq:ULA}) below) and by the `proxy' construction (\ref{surrod}).
 
 \begin{prop}\label{thm:schr:func}
 	Assume Condition \ref{FAQ} is satisfied and consider iterates $\vartheta_k$ of the Markov chain from Algorithm \ref{alg:ULA} with $\theta_{init}, \epsilon, K, \gamma$ satisfying Condition \ref{asymptopia}. Then there exist constants $c_1, c_2,...,c_5>0$ such that for all $N \in \mathbb N$, any Lipschitz function $H:\R^D\to \R$, any burn-in period
 	\begin{equation}\label{fire}
 	J_{in}\ge \frac{\log N}{\gamma ND^{-4/d}} \times \log\big(D + B(\gamma)^{-1}),
 	\end{equation}
	any $J\in \N$, any $t\ge 2\|H\|_{Lip}\sqrt{B(\gamma)}$ and on events $\mathcal E_N$ (measurable subsets of $(\R \times \mathcal O)^N$) of probability $P_{\theta_0}^N(\mathcal E_N)\ge 1-c_2\exp(-c_3N^{d/(2\alpha+d)})$,
 	\begin{equation*}
 	\mathbf P_{\theta_{init}}\big(\big|\hat\pi_{J_{in}}^J-E^\Pi(H|Z^{(N)})\big| \ge t \big)\le c_5\exp\Big( -c_4\frac{t^2N^2J\gamma}{D^{8/d}\|H\|_{Lip}^2(1+D^{4/d}/(NJ\gamma))} \Big).
 	\end{equation*}
 \end{prop}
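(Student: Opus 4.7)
The plan is to decompose the target deviation via the triangle inequality,
$$|\hat\pi_{J_{in}}^J(H) - E^{\Pi}(H|Z^{(N)})| \le |\hat\pi_{J_{in}}^J(H) - E^{\tilde\Pi}(H|Z^{(N)})| + |E^{\tilde\Pi}(H|Z^{(N)}) - E^{\Pi}(H|Z^{(N)})|,$$
and to treat the first (variance-type) term with the non-asymptotic ergodic concentration inequality for ULA targeting a strongly log-concave density collected in Appendix \ref{app:ULA}, while the second (bias-type) term is bounded by $\|H\|_{Lip} W_2(\tilde\Pi(\cdot|Z^{(N)}),\Pi(\cdot|Z^{(N)}))$ via Kantorovich--Rubinstein duality. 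The closeness of the proxy and true posteriors in $W_2$ is supplied by Theorem \ref{waterstone}, with size of order $\exp(-c'N^{d/(2\alpha+d)})$ that I would absorb into the last summand of $B(\gamma)$ in (\ref{eq:totallybiased}). The hypothesis $t \ge 2\|H\|_{Lip}\sqrt{B(\gamma)}$ is then exactly what is needed to dominate both the ULA discretisation bias and the proxy bias by $t/2$; the Gaussian tail controls the residual deviation of size $t/2$.

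First I would establish, on a data-event $\mathcal E_N$ of $P_{\theta_0}^N$-probability at least $1-c_2\exp(-c_3 N^{d/(2\alpha+d)})$, that $\tilde\pi(\cdot|Z^{(N)})$ is $(m,L)$-strongly log-concave on all of $\R^D$, with $m \simeq ND^{-4/d}$ and $L \simeq ND^{8/d}(\log N)^3$. On $\hat{\mathcal B}$ this reduces to lower-bounding the Hessian of $\ell_N$ in the form $-\nabla^2\ell_N(\theta)\succeq m\cdot I$, for which I would combine the PDE stability estimate of Lemma \ref{wundervoncordoba} (applied to the linearised forward map, which coerces the population Hessian $N\,E[\nabla_\theta\mathcal G(\theta)(X)\nabla_\theta\mathcal G(\theta)(X)^\top]$) with uniform concentration of the random Hessian about its expectation, using Gaussian tails for the $\varepsilon_i$, Bernstein/empirical-process bounds, and the sup-bound (\ref{ubd}). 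Outside $\hat{\mathcal B}$, the convex proxy term $-K\nabla^2 g(\theta)$ in (\ref{eq:localization}) dominates all derivatives of $\alpha\cdot\ell_N$ once $K\simeq ND^{8/d}(\log N)^3$ as in Condition \ref{asymptopia}; the scaling of $K$ is forced by the a priori estimate $|\ell_N|\lesssim N\|g\|_\infty^2$ and the derivatives of the cutoff $\alpha=\alpha_\eta$ with $\eta = \epsilon D^{-4/d}$.

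Given this $(m,L)$-strong log-concavity on $\mathcal E_N$, Appendix \ref{app:ULA} delivers the two outputs needed. First, a $W_2$-contraction of the form $W_2^2(\mathcal L(\vartheta_k),\tilde\Pi(\cdot|Z^{(N)})) \le e^{-\gamma m k/2}W_2^2(\delta_{\theta_{init}},\tilde\Pi) + c'B(\gamma)$; combined with $W_2^2(\delta_{\theta_{init}},\tilde\Pi)\lesssim D$ and $m\simeq ND^{-4/d}$, solving $\gamma m J_{in}\gtrsim \log(D + B(\gamma)^{-1})$ yields exactly the burn-in threshold (\ref{fire}). Second, a Gaussian tail bound for ergodic averages of Lipschitz functionals along the chain, of the generic form
$$\mathbf P_{\theta_{init}}\bigl(|\hat\pi_{J_{in}}^J(H) - E^{\tilde\Pi}(H|Z^{(N)})|>t\bigr) \le c_5\exp\!\Bigl(-c_4\,\frac{t^2\,m^2 J\gamma}{\|H\|_{Lip}^2(1 + L\gamma/(mNJ\gamma))}\Bigr);$$
substituting $m\simeq ND^{-4/d}$ gives the denominator $\|H\|_{Lip}^2 D^{8/d}(1+D^{4/d}/(NJ\gamma))$ stated in the proposition, and combining with the bias estimate above, using the split $t/2+t/2$, closes the argument.

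The main obstacle is the uniform lower Hessian bound with the sharp scaling $m\simeq ND^{-4/d}$ over the entire high-dimensional ball $\hat{\mathcal B}$ (whose $\R^D$-diameter grows unboundedly). This requires a supremum-type empirical-process bound for $\nabla^2\ell_N(\theta)-E\nabla^2\ell_N(\theta)$, uniformly in $\theta\in\hat{\mathcal B}$, together with the PDE-side coercivity $\|\nabla_\theta\mathcal G(\theta_0)[v]\|_{L^2(\mathcal O)}\gtrsim D^{-2/d}\|v\|_{\R^D}$ derived from Lemma \ref{wundervoncordoba} (squaring gives the $D^{-4/d}$ prefactor in $m$, which in turn reflects the gap between the $h^\alpha$-regularity controlled by the prior and the $L^2$-control supplied by the linearised forward map). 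The polynomial factors $D^{(d+24)/d}$ and $D^{(d+44)/d}$ in the discretisation term $B(\gamma)$ will emerge from propagating these $m$ and $L$ scalings, together with the $K$ and $\gamma$ calibrations of Condition \ref{asymptopia}, through the standard ULA error formula from Appendix \ref{app:ULA}.
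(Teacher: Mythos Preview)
Your proposal is correct and follows essentially the same route as the paper, which packages the argument into the general Theorem \ref{thm:gen:func} (whose proof uses exactly your bias--variance decomposition together with the ULA results of Propositions \ref{prop:ULA:conc} and \ref{prop:ULA:wass}) and then specialises by substituting the Schr\"odinger-model constants $m\simeq ND^{-4/d}$, $\Lambda\simeq ND^{8/d}(\log N)^3$, $\rho=\exp(-N^{d/(2\alpha+d)})$ computed in the proof of Theorem \ref{thm:schr:wass}. Two minor slips that do not affect the outcome: the ULA concentration denominator from Proposition \ref{prop:ULA:conc} is $(1+1/(mJ\gamma))$ rather than your $(1+L\gamma/(mNJ\gamma))$, and the initial distance satisfies $W_2^2(\delta_{\theta_{init}},\tilde\Pi)\lesssim \tau(\Sigma,M,R)\simeq \kappa(\Sigma)\simeq D^{2\alpha/d}$ rather than $D$, but both discrepancies enter only logarithmically in the burn-in condition and are absorbed by the $\log N$ prefactor in (\ref{fire}).
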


The next result concerns computation of the posterior mean vector  $$E^\Pi[\theta|Z^{(N)}]=\int_{\R^D} \theta \pi(\theta|Z^{(N)})d\theta$$ 
by ergodic averages
\[\bar\theta_{J_{in}}^J:=\frac{1}{J}\sum_{k=J_{in}+1}^{J_{in}+J}\vartheta_k,~~~ J_{in},J\in \N, \]
within prescribed precision level $\varepsilon$. For convenience we assume $\eps\ge N^{-P}$ for some $P>0$, which is natural in view of the statistical error to be considered in Theorem \ref{brainfreeze} below. To this end, we make an explicit choice for the step size parameter
\begin{equation}\label{gammel}
\gamma=\gamma_\varepsilon = \min\Big(\frac{\varepsilon^2}{D^{(d+24)/d}}, \frac{\varepsilon}{\sqrt ND^{(22+d/2)/d}}, \frac 1{ND^{8/d}}\Big) \times (\log N)^{-7}.
\end{equation}

 \begin{thm}\label{thm:post:mean} 
 Assume Condition \ref{FAQ} is satisfied. Fix $P>0$ and let $\varepsilon \ge N^{-P}$. Consider iterates $\vartheta_k$ of the Markov chain from Algorithm \ref{alg:ULA} with $\theta_{init}, \epsilon, K$ satisfying Condition \ref{asymptopia} and with $\gamma=\gamma_\varepsilon$ as in (\ref{gammel}). Then there exist $c_6,c_7,c_8>0$ and at most polynomially growing constants 
 \begin{equation}\label{polybahn}
 g_{D,N,\varepsilon} = O(D^{\bar b_1} N^{\bar b_2} \varepsilon^{-\bar b_3}), ~~\bar b_1, \bar b_2, \bar b_3>0,
 \end{equation}
such that for all $N \in \mathbb N$, $J_{in} \ge g_{D,N,\eps}$, $J \in \mathbb N$, and on events $\mathcal E_N$ of probability $P_{\theta_0}^N(\mathcal E_N)\ge 1-c_7\exp(-c_8N^{d/(2\alpha+d)})$,
 	\begin{equation}\label{eq:mean:conc}
 	\mathbf{P}_{\theta_{init}}\Big( \big\|\bar\theta_{J_{in}}^J-E^\Pi[\theta|Z^{(N)}]\big\|_{\R^D} \ge \varepsilon \Big)\le c_6 D\exp\Big(-\frac{J}{g_{D,N, \varepsilon}}  \Big).
 	\end{equation}
 \end{thm}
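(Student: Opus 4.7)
The plan is to derive the bound from Proposition~\ref{thm:schr:func} applied coordinate-wise. Writing
\[ \big\| \bar\theta_{J_{in}}^J - E^{\Pi}[\theta|Z^{(N)}] \big\|_{\R^D}^2 = \sum_{i=1}^D \big( \hat\pi_{J_{in}}^J(H_i) - E^{\Pi}(H_i|Z^{(N)}) \big)^2, \]
where $H_i(\theta) = \theta_i$ are the $D$ coordinate functions (each Lipschitz with $\|H_i\|_{Lip}=1$), the event $\{\|\bar\theta_{J_{in}}^J - E^{\Pi}[\theta|Z^{(N)}]\|_{\R^D} \ge \eps\}$ is contained in the union $\bigcup_{i=1}^D \{|\hat\pi_{J_{in}}^J(H_i) - E^{\Pi}(H_i|Z^{(N)})| \ge \eps/\sqrt{D}\}$. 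The proof thus reduces to applying Proposition~\ref{thm:schr:func} to each $H_i$ at level $t = \eps/\sqrt{D}$ and taking a union bound over $i=1,\dots,D$; the factor $D$ from this union bound is precisely the $c_6 D$ in the right-hand side of (\ref{eq:mean:conc}).

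Next I would verify that with the specific choice $\gamma = \gamma_\eps$ in (\ref{gammel}), the admissibility hypothesis $t \ge 2\|H_i\|_{Lip}\sqrt{B(\gamma_\eps)}$ required by Proposition~\ref{thm:schr:func} does hold with $t = \eps/\sqrt{D}$. This is a direct calculation: the three pieces in the minimum defining $\gamma_\eps$ are calibrated to bound, respectively, the two $\gamma$-polynomial contributions to $B(\gamma)$ in (\ref{eq:totallybiased}) and to enforce the baseline step-size constraint of Condition~\ref{asymptopia}; the $(\log N)^{-7}$ pre-factor absorbs the powers of $\log N$ appearing in (\ref{eq:totallybiased}); and the constraint $\eps \ge N^{-P}$ makes the additive $\exp(-N^{d/(2\alpha+d)})$ contribution to $B(\gamma_\eps)$ negligible relative to $\eps^2/D$ for all $N$ sufficiently large.

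Given the admissibility, Proposition~\ref{thm:schr:func} yields the coordinate-wise tail
\[ \exp\!\left( -c_4 \frac{\eps^2 N^2 J \gamma_\eps}{D^{(d+8)/d}\bigl(1 + D^{4/d}/(NJ\gamma_\eps)\bigr)} \right). \]
For $J$ past a polynomial-in-$(D,N,\eps^{-1})$ threshold, the correction $D^{4/d}/(NJ\gamma_\eps)$ is at most $1$, and the tail simplifies to $\exp(-J/g_{D,N,\eps})$ with $g_{D,N,\eps} \asymp D^{(d+8)/d}/(\eps^2 N^2 \gamma_\eps)$. Inserting $\gamma_\eps$ from (\ref{gammel}) and checking each of the three cases in the minimum, $g_{D,N,\eps}$ is polynomial in $D,N,\eps^{-1}$, yielding the exponents $\bar b_1,\bar b_2,\bar b_3$ of (\ref{polybahn}). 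The burn-in requirement (\ref{fire}) of Proposition~\ref{thm:schr:func} is itself polynomial, since $\log(D+B(\gamma_\eps)^{-1}) = O(\log N)$, and is absorbed into $g_{D,N,\eps}$ together with the threshold for the denominator simplification above. The high-probability event $\mathcal E_N$ and the constants $c_7,c_8$ are inherited directly from Proposition~\ref{thm:schr:func}.

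The main obstacle is the polynomial bookkeeping verification that $B(\gamma_\eps) \le \eps^2/(4D)$ with $\gamma_\eps$ as in (\ref{gammel}): this is where the otherwise opaque exponents $(d+24)/d$, $(22+d/2)/d$, and $8/d$ are forced, and where each of the three cases in the minimum must be matched against the corresponding term in (\ref{eq:totallybiased}) with enough $D$-margin coming out of the exponents. Beyond this matching, no new probabilistic input is required: the proof is a deterministic reduction to Proposition~\ref{thm:schr:func} combined with the union bound over coordinates and polynomial calculus on $\gamma_\eps$.
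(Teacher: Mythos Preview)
Your reduction to Proposition~\ref{thm:schr:func} applied coordinate-wise at level $t=\eps/\sqrt D$ does not go through with the step size $\gamma_\eps$ fixed in (\ref{gammel}). The admissibility hypothesis of Proposition~\ref{thm:schr:func} then reads $B(\gamma_\eps)\le \eps^2/(4D)$, but the calibration in (\ref{gammel}) only delivers $B(\gamma_\eps)\lesssim \eps^2/\log N$: substituting the first branch $\gamma_\eps\le \eps^2 D^{-(d+24)/d}(\log N)^{-7}$ into the leading term $c_1\gamma D^{(d+24)/d}(\log N)^6$ of (\ref{eq:totallybiased}) gives $c_1\eps^2/\log N$, with no extra factor of $D^{-1}$. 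Since Condition~\ref{FAQ} allows $D$ to grow like $N^{d/(2\alpha+d)}$, the inequality $\eps^2/\log N\le \eps^2/(4D)$ fails, and your ``polynomial bookkeeping verification'' cannot be completed as stated.

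The paper circumvents this by not applying the scalar bound coordinate-wise to $|\hat\pi_{J_{in}}^J(H_i)-E^\Pi(H_i|Z^{(N)})|$. Instead (see Corollary~\ref{generallymean} and its proof) it first splits off the \emph{vector} bias $\|\mathbf E_{\theta_{init}}[\bar\theta_{J_{in}}^J]-E^\Pi[\theta|Z^{(N)}]\|_{\R^D}$ and bounds it directly by $\sqrt{2}\sqrt{\rho+b(\gamma)}$ via Wasserstein couplings and Jensen's inequality; this step loses no factor of $D$ and only needs $B(\gamma_\eps)=o(\eps^2)$, which (\ref{zadok}) provides. The coordinate-wise union bound is then used only for the centred fluctuations $\hat\pi_{J_{in}}^J(H_i)-\mathbf E_{\theta_{init}}[\hat\pi_{J_{in}}^J(H_i)]$, where Proposition~\ref{prop:ULA:conc} carries no lower threshold on $t$. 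The moral is that the threshold $t\ge 2\sqrt{B(\gamma)}$ in Proposition~\ref{thm:schr:func} comes entirely from the bias, and the bias of the full vector is no larger than the bias of a single coordinate---so one should control it once in $\R^D$ rather than $D$ times in $\R$.
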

 
Theorem \ref{thm:post:mean} implies that for $J_{in} \wedge J \gg g_{D,N,\varepsilon} \times \log D$, one can compute the posterior mean vector within precision $\eps>0$ with probability as close to one as desired. Using this and Theorem \ref{triebelei} (whose hypotheses are implied by those of Theorem \ref{thm:post:mean}), we have in particular also proven Theorem \ref{Cmajor}. Similar bounds for computation of $E^\Pi(H|Z^{(N)})$ can be obtained as long as $\|H\|_{Lip}$ grows at most polynomially in $D$. 
 
 We conclude this subsection with a result concerning recovery of the actual target of statistical inference, that is, the ground truth $\theta_0$. It combines Theorem \ref{thm:post:mean} with a statistical rate of convergence of $E^\Pi[\theta|Z^{N}]$ to $\theta_0$, obtained by adapting recent results from \cite{MNP21} to the present situation.

\begin{thm}\label{brainfreeze}
Consider the setting of Theorem \ref{thm:post:mean} with $P=\alpha^2/((2\alpha+d)(\alpha+2))$. There exist further constants $c_9, c_{10}, c_{11}, c_{12}>0$ such that for all $N\in \N$, all $\varepsilon \ge c_{11}N^{-\frac{\alpha}{2\alpha+d}\frac{\alpha}{\alpha+2}}$, with $g_{D,N,\eps}$ from (\ref{polybahn}) and on events  $\mathcal E_N$ of probability $P_{\theta_0}^N(\mathcal E_N)\ge 1-c_9\exp(-c_{10}N^{d/(2\alpha+d)})$,
 	\begin{equation}\label{eq:meanzero}
 	\mathbf{P}_{\theta_{init}}\Big(\big\|\bar\theta_{J_{in}}^J-\theta_0\big\|_{\ell^2} \ge \varepsilon \Big) \le c_{12}\exp\Big( -\frac{J}{4g_{D,N, \varepsilon}} \Big).
 	\end{equation}
\end{thm}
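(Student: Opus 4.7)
The plan is to split the error by triangle inequality into an algorithmic piece and a statistical piece,
\begin{equation*}
\bigl\|\bar\theta_{J_{in}}^J-\theta_0\bigr\|_{\ell_2} \le \bigl\|\bar\theta_{J_{in}}^J-E^\Pi[\theta|Z^{(N)}]\bigr\|_{\ell_2} + \bigl\|E^\Pi[\theta|Z^{(N)}]-\theta_0\bigr\|_{\ell_2},
\end{equation*}
and to show each piece is at most $\eps/2$ on the intersection of the events in question. The first term is directly controlled by Theorem \ref{thm:post:mean} with precision $\eps/2$: on its good data event (of the same $P_{\theta_0}^N$-probability advertised in (\ref{eq:meanzero})) it is bounded by $\eps/2$ with $\mathbf P_{\theta_{init}}$-probability $\ge 1-c_6 D\exp(-J/g_{D,N,\eps/2})$, and since $g_{D,N,\eps/2}=O(D^{\bar b_1}N^{\bar b_2}\eps^{-\bar b_3})$ stays polynomial down to $\eps \sim N^{-\alpha^2/((2\alpha+d)(\alpha+2))}$, this contributes the exponential tail claimed on the right of (\ref{eq:meanzero}).

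For the statistical piece, Jensen's inequality gives
\begin{equation*}
\bigl\|E^\Pi[\theta|Z^{(N)}]-\theta_0\bigr\|_{\ell_2}^2 \le E^\Pi\bigl[\|\theta-\theta_0\|^2_{\ell_2}\,\big|\,Z^{(N)}\bigr],
\end{equation*}
so it suffices to establish a posterior contraction rate
\begin{equation*}
E^\Pi\bigl[\|\theta-\theta_0\|_{\ell_2}^2\,\big|\,Z^{(N)}\bigr]^{1/2} \le c'\, N^{-\alpha^2/((2\alpha+d)(\alpha+2))}
\end{equation*}
on a $P_{\theta_0}^N$-event of probability at least $1-c''\exp(-c'''N^{d/(2\alpha+d)})$, after which choosing $c_{11}$ large enough forces this second term below $\eps/2$ for all admissible $\eps$. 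Intersecting with $\mathcal E_N$ from the first step and adjusting constants yields (\ref{eq:meanzero}).

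To obtain the displayed contraction rate I would follow the template of \cite{MNP19b}. The three ingredients are: \textbf{(i)} standard nonparametric Bayes arguments (small-ball estimates for the Gaussian prior (\ref{schrottprior}) on $h^\alpha$, Hellinger testing, and the bounded-range/Lipschitz properties of $\mathcal G$ derived from (\ref{ubd}) and elliptic estimates for (\ref{eq:schr})) yielding, on the said data event, a posterior contraction in the prediction metric at rate $\xi_N = N^{-\alpha/(2\alpha+d)}$, together with a posterior regularity bound $\Pi(\|F_\theta\|_{H^\alpha}\le M \,|\, Z^{(N)})\to 1$ with exponential tails; the approximation condition (\ref{dimbias}) handles the bias coming from projecting $\theta_0$ onto $\R^D$; \textbf{(ii)} the stability estimate from Lemma \ref{wundervoncordoba}, of the form $\|F_\theta-F_{\theta_0}\|_{(H^2_0)^*}\lesssim \|\mathcal G(\theta)-\mathcal G(\theta_0)\|_{L^2(\mathcal O)}$ for $F_\theta,F_{\theta_0}$ bounded in $H^\alpha$; \textbf{(iii)} the interpolation inequality (\ref{krankl}) (extended across the $(H^2_0)^*$--$H^\alpha$ pair) giving
\begin{equation*}
\|\theta-\theta_0\|_{\ell_2} \simeq \|F_\theta-F_{\theta_0}\|_{L^2} \lesssim \|F_\theta-F_{\theta_0}\|_{(H^2_0)^*}^{\alpha/(\alpha+2)}\,\|F_\theta-F_{\theta_0}\|_{H^\alpha}^{2/(\alpha+2)}.
\end{equation*}
Combining these three on the intersection of the contraction and regularity events, and integrating the square, produces exactly $\xi_N^{\alpha/(\alpha+2)}=N^{-\alpha^2/((2\alpha+d)(\alpha+2))}$; the contribution of the exponentially small complement is absorbed using that $\|\theta\|_{\ell_2}$ has Gaussian tails under the posterior.

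The main obstacle is obtaining the prediction-norm contraction together with the high-probability regularity bound in the present truncated/discretised setup: the Gaussian prior lives on $\R^D$ rather than on all of $h^\alpha$, and the likelihood contains the nonlinear map $\mathcal G=G\circ\Phi^*\circ\Psi$, so one has to verify the Kullback--Leibler neighbourhood and testing conditions carefully against (\ref{dimbias}). Once these are in place the conversion via Lemma \ref{wundervoncordoba} and interpolation is essentially mechanical, and the interpolation exponent $\alpha/(\alpha+2)$ is exactly what accounts for the gap between the optimal prediction rate and the slower rate on $\theta$ appearing in the hypothesis $\eps\ge c_{11}N^{-\alpha^2/((2\alpha+d)(\alpha+2))}$.
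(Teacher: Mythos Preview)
Your overall strategy is correct and matches the paper: split by the triangle inequality, control the algorithmic piece by Theorem~\ref{thm:post:mean}, and control the statistical piece $\|E^\Pi[\theta|Z^{(N)}]-\theta_0\|_{\ell_2}$ by a posterior contraction result at rate $N^{-\alpha^2/((2\alpha+d)(\alpha+2))}$ (this is exactly (\ref{postmeanrat}) in the paper, which follows from Theorem~\ref{contractionrate} via the uniform integrability argument you allude to at the end of your sketch).

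One imprecision worth flagging: your ingredient (ii) attributes a global stability estimate $\|F_\theta-F_{\theta_0}\|_{(H^2_0)^*}\lesssim\|\mathcal G(\theta)-\mathcal G(\theta_0)\|_{L^2}$ to Lemma~\ref{wundervoncordoba}. That lemma is about the local curvature of the expected Hessian, not a global inverse stability bound. The paper obtains the conversion differently (see the proof of Theorem~\ref{maprate}, display (\ref{interpol})): it uses the explicit nonlinear inversion $f=\Delta u_f/(2u_f)$ to get $\|f_\theta-f_{\theta_0}\|_{L^2}\lesssim\|u_{f_\theta}-u_{f_{\theta_0}}\|_{H^2}$, then interpolates $u_{f_\theta}-u_{f_{\theta_0}}$ between $L^2$ and $H^{\alpha+2}$ (using the elliptic regularity estimate (\ref{alphabound}) for the $H^{\alpha+2}$ bound), which yields the exponent $\alpha/(\alpha+2)$ directly. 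Your route---a $(H^2_0)^*$ stability estimate followed by interpolation of $F_\theta-F_{\theta_0}$ between $(H^2_0)^*$ and $H^\alpha$---would also work and gives the same exponent, but the stability input you need is the coercivity estimate (\ref{eq:schr:lb}) from Lemma~\ref{cpebach} (combined with the identity $u_{f_\theta}-u_{f_{\theta_0}}=V_{f_\theta}[(f_{\theta_0}-f_\theta)u_{f_{\theta_0}}]$ and the lower bound on $u_{f_{\theta_0}}$), not Lemma~\ref{wundervoncordoba}.
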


While the statistical minimax-optimal rate towards $\theta_0\in h^\alpha$ in this problem can be expected to be faster than $N^{-P}$ (see \cite{N17}), it appears unclear how to obtain this rate when $F_\theta$ is discretised by means of the (for the purposes of the present paper essential) spectral decomposition of the Dirichlet-Laplacian from Section \ref{ONB}. The difficulty arises with the approximation theory of the space $H^\alpha_c(\mathcal O)$ (equal to the completion of $C_c^\infty(\mathcal O)$ in $H^\alpha(\mathcal O)$) and is not discussed further here.

 %[\textcolor{red}{CAUTION: (1) same as the caution after Theorem \ref{thm:schr:func}, and (2) slight abuse of notation with $\R^D$ and $\ell^2(\mathbb N)$ -- to be fixed!}]
 
 %\begin{proof}
 %	We just need to check that the second term in (\ref{eq:mean:conc}) is of at most the same order as the first term, when $t\asymp N^{-\frac{\alpha}{2\alpha+d}}$. Indeed, this can be seen by noting $D\lesssim N^{\frac{d}{2\alpha+4+d}}$, and hence
 %	\[ D\exp\Big(-\frac{c\gamma J N^2t^2 }{D(1+\frac{1}{\gamma NJ})}\Big)\lesssim N^{\frac{d}{2\alpha+4+d}}e^{-c'N^2t^2/D}\le N^{\frac{d}{2\alpha+4+d}}e^{-c''N}, \]
 %	which proves the claim.
 %\end{proof}
 
 \subsubsection{Global bounds for posterior approximation in Wasserstein distance}
 
 The previous theorems concern the computation of specific posterior characteristics; one may also be interested in \textit{global} mixing properties of the laws $\mathcal L(\vartheta_k)$ induced by the Markov chain $(\vartheta_k: k \in \mathbb N)$ towards the target $\Pi(\cdot|Z^{(N)})$, for instance in the Wasserstein distance from (\ref{verymuchso}).
 
 \begin{thm}\label{thm:schr:wass}
 	Assume Condition \ref{FAQ} is satisfied, let $\mathcal L(\vartheta_k)$ denote the law of the $k$-th iterate $\vartheta_k$ of the Markov chain from Algorithm \ref{alg:ULA} with $\theta_{init}, \epsilon, K, \gamma$ satisfying Condition \ref{asymptopia}, and let $B(\gamma), c_1$ be as in (\ref{eq:totallybiased}). For any $P>0$ there exist constants $c_1, c_{13},c_{14},c_{15},c_{16}>0$ such that on events $\mathcal E_N$ of probability $P_{\theta_0}^N(\mathcal E_N)\ge 1-c_{13}\exp(-c_{14}N^{d/(2\alpha+d)})$ and for all $N\in\N$, the following holds.
 	\begin{enumerate}[label=\textbf{\roman*)}]
		\item For any $k\ge 1$,
		 \begin{equation}\label{eq:schr:wass}
		\begin{split}
		W^2_2\big(\mathcal L(\vartheta_k),\Pi[\cdot|Z^{(N)}]\big)\le c_{15} D^{2\alpha/d} (1-c_{16}ND^{-4/d}\gamma)_+^k+ B(\gamma).
		\end{split}
		\end{equation}
		\item For any `precision level' $\eps\ge N^{-P}$ and for $\gamma = \gamma_\eps$ from (\ref{gammel}), there exists
		\begin{equation}\label{mixer}
		k_{mix}= O(N^{\tilde b_1}D^{\tilde b_2}\eps^{-\tilde b_3}),~~~~ \tilde b_1, \tilde b_2, \tilde b_3>0,
		\end{equation}
		such that for any $k\ge k_{mix}$,
		\[ W_2\big(\mathcal L(\vartheta_k),\Pi[\cdot|Z^{(N)}]\big) \le \eps. \]
 	\end{enumerate} 
 \end{thm}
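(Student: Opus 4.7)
The plan is to reduce both parts of the theorem to two well-separated ingredients by the triangle inequality
\[ W_2(\mathcal L(\vartheta_k),\Pi(\cdot|Z^{(N)})) \le W_2(\mathcal L(\vartheta_k),\tilde\Pi(\cdot|Z^{(N)})) + W_2(\tilde\Pi(\cdot|Z^{(N)}),\Pi(\cdot|Z^{(N)})), \]
then squaring via $(a+b)^2\le 2a^2+2b^2$. The second term is controlled uniformly on the high-probability events $\mathcal E_N$ by Theorem \ref{waterstone}, and the bound is subsumed by the $\exp(-N^{d/(2\alpha+d)})$ contribution to $B(\gamma)$. It then remains to analyse the first term, for which Algorithm \ref{alg:ULA} is precisely the unadjusted Langevin algorithm (ULA) targeting the proxy density $\tilde\pi(\cdot|Z^{(N)})$, so I can try to invoke the convergence results of \cite{DM17, DM18} reviewed in Appendix \ref{app:ULA}.

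To apply those results I would verify three inputs: (a) that $U:=-\log\tilde\pi(\cdot|Z^{(N)})$ is globally $m$-strongly convex with $m\gtrsim ND^{-4/d}$ on the events $\mathcal E_N$; (b) that $\nabla U$ is $L$-Lipschitz with $L$ polynomial in $N,D$ (which in turn restricts the feasible step sizes and appears in $B(\gamma)$); and (c) that the initial Wasserstein distance satisfies $W_2^2(\delta_{\theta_{init}},\tilde\Pi(\cdot|Z^{(N)}))\lesssim D^{2\alpha/d}$. The key is (a). Inside $\hat{\mathcal B}$ one has $\tilde\ell_N=\ell_N$ and must combine (i) the local curvature of $-\ell_N$, originating from the stability estimate for the linearised forward map (Lemma \ref{wundervoncordoba}) together with a concentration argument for the empirical Fisher information under $P_{\theta_0}^N$, with (ii) the minimal prior Hessian eigenvalue $N^{d/(2\alpha+d)}\lambda_1^\alpha$. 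Outside $\hat{\mathcal B}$ the cut-off term $\alpha(\theta)\ell_N(\theta)$ is bounded, while the convexifier $-Kg(\theta)$ with $K\sim ND^{8/d}(\log N)^3$ is strongly convex with parameter comparable to $K$; the calibration in Condition \ref{asymptopia} is precisely chosen so that the convex piece dominates the non-convex one in the transition layer where $\nabla^2(\alpha\ell_N)$ may change sign. Condition (b) follows routinely from derivatives of $\mathcal G$ via Feynman--Kac/elliptic regularity plus bounds on $\nabla^2\alpha_\eta$ and $\nabla^2 g_\eta$; condition (c) combines the initialisation rate of Theorem \ref{triebelei} with a prior-variance bound, where the second moment of $\tilde\Pi$ scales like $\sum_k \lambda_k^{-\alpha}N^{-d/(2\alpha+d)}\lesssim D^{2\alpha/d}$ by the Weyl asymptotics (\ref{weyl}). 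The Durmus--Moulines bound then delivers
\[ W_2^2(\mathcal L(\vartheta_k),\tilde\Pi(\cdot|Z^{(N)}))\le 2(1-m\gamma)^{2k}W_2^2(\delta_{\theta_{init}},\tilde\Pi(\cdot|Z^{(N)})) + \text{discretisation bias}, \]
and substitution of the bounds on $L,m$ together with Condition \ref{asymptopia} identifies the discretisation bias with the $\gamma D^{(d+24)/d}(\log N)^6+\gamma^2 ND^{(d+44)/d}(\log N)^{12}$ terms of $B(\gamma)$. This proves part (i). For part (ii) I would set $\gamma=\gamma_\eps$ from (\ref{gammel}), observing that this choice is calibrated so that $B(\gamma_\eps)\le\eps^2/2$, and then solve $c_{15}D^{2\alpha/d}(1-c_{16}ND^{-4/d}\gamma_\eps)^k\le \eps^2/2$ for $k$, which yields a polynomial $k_{mix}=O(N^{\tilde b_1}D^{\tilde b_2}\eps^{-\tilde b_3})$ after taking logarithms and using $\eps\ge N^{-P}$.

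The principal technical obstacle is establishing the global $ND^{-4/d}$-strong convexity of $U$. Inside $\hat{\mathcal B}$ this rests on a delicate interplay between the analytic stability estimate for $\nabla_\theta\mathcal G(\theta_0)$ and the stochastic concentration of the empirical Fisher information, both at the correct polynomial-in-$D$ scale; the non-negligible radius $\eps D^{-4/d}$ of $\hat{\mathcal B}$ must be large enough that $\theta_{init}$ and the posterior mode lie in it with high probability, yet small enough that $-\ell_N$ remains approximately quadratic there. Across the boundary of $\hat{\mathcal B}$ one must further ensure that $\nabla^2U$ has no `saddle' behaviour, which drives the intricate powers of $D$ appearing in the choices of $\epsilon,K,\gamma$ in Condition \ref{asymptopia} and propagates into the exponents of $B(\gamma)$ and $k_{mix}$. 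I expect these curvature arguments to constitute the bulk of the work, with the remainder of the proof being a bookkeeping exercise driven by the Appendix \ref{app:ULA} bounds.
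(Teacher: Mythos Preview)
Your proposal is correct and follows essentially the paper's route: the paper applies the general Theorem \ref{thm:gen:wass} (whose proof is precisely your triangle-inequality-plus-ULA decomposition, invoking Proposition \ref{prop:ULA:wass}) with the Schr\"odinger-specific constants $m\simeq ND^{-4/d}$ and $\Lambda\simeq ND^{8/d}(\log N)^3$ obtained from Propositions \ref{schroe-emperor-smalldim} and \ref{prop:log:conv}, and with $\rho=\exp(-N^{d/(2\alpha+d)})$ furnished by Theorem \ref{waterstone}.

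One caveat on your step (c): the prior second moment $\sum_{k\le D}\lambda_k^{-\alpha}N^{-d/(2\alpha+d)}$ is actually $O(N^{-d/(2\alpha+d)})$ (the sum is convergent since $2\alpha/d>1$), so it cannot be the origin of the $D^{2\alpha/d}$ prefactor; and there is no direct reason why the second moment of $\tilde\Pi(\cdot|Z^{(N)})$ should be dominated by that of the prior, since the likelihood may shift the mode. In the paper the $D^{2\alpha/d}$ factor arises instead from the prior \emph{condition number} $\kappa(\Sigma)=\lambda_{max}(\Sigma)/\lambda_{min}(\Sigma)\simeq D^{2\alpha/d}$, via the argument at the end of the proof of Theorem \ref{thm:gen:wass} bounding $\|\theta_{init}-\theta_{\max}\|_{\R^D}^2\lesssim\kappa(\Sigma)(1+\eta^2/\lambda_{min}(M)+R^2)$ by first locating the maximiser $\theta_{\tilde\ell}$ of $\tilde\ell_N$ near $\theta_{init}$ and then using $\theta_{\max}^T\Sigma^{-1}\theta_{\max}\le\theta_{\tilde\ell}^T\Sigma^{-1}\theta_{\tilde\ell}$.
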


The first term on the right hand side of (\ref{eq:schr:wass}) characterises the rate of geometric convergence towards equilibrium of $(\vartheta_k)$; the factor $ND^{-4/d}\gamma$ can be thought of as a spectral gap of the Markov chain (related to the `average local curvature' of $\ell_N(\cdot)$ near $\theta_0$ in the Schr\"odinger model). Choosing $\gamma=\gamma_\eps$ as in (\ref{gammel}), part ii) further establishes `polynomial-time' mixing of the MCMC scheme towards the posterior measure.

\subsubsection{Computation of the MAP estimate}\label{optimap}

Our techniques also imply the following guarantees for the computation of \textit{maximum a posteriori} (MAP) estimates
\[\hat\theta_{MAP}\in \arg\max_{\theta\in\R^D} \pi (\theta|Z^{(N)}) \]
by a classical gradient (ascent) method applied to the `proxy' posterior surface (\ref{surrod}). 

\begin{thm}\label{MAP}
	Assume Condition \ref{FAQ} is satisfied and let $\theta_{init}$ denote the initialiser from Theorem \ref{triebelei}. For $k=0,1,2,\dots$, consider the gradient algorithm 
	\[ \vartheta_0=\theta_{init}, ~~~ \vartheta_{k+1}=\vartheta_k + \gamma \nabla \log \tilde \pi(\vartheta_k|Z^{(N)}),~~~\gamma = \frac{1}{N D^{8/d} (\log N)^4}.\]
	There exist constants $c_{17},c_{18},c_{19},c_{20},c_{21}>0$ such that for all $N \in \N$ and on events $\mathcal E_N$ of probability at least $P_{\theta_0}^N(\mathcal E_N)\ge 1-c_{17}\exp(-c_{18}N^{d/(2\alpha+d)})$ we have the following:
	\begin{enumerate}[label=\textbf{\roman*)}]
		\item A unique maximiser $\hat\theta_{MAP}$ of $\pi (\theta|Z^{(N)})$ over $\R^D$ exists.
		\item For all $k\ge 1$, we have the geometric convergence
		\[ \|\vartheta_k-\hat\theta_{MAP}\|_{\R^D}^2\le c_{19} D^{4/d} \Big(1- \frac{c_{20}}{D^{12/d}(\log N)^4} \Big)_+^k. \]
		\item Finally, we can choose $k=O(D^{12/d} (\log N)^5)$ such that
		\[ \|\vartheta_k-\theta_0\|_{\ell^2}\le c_{21} N^{-\frac{\alpha}{2\alpha+d}\frac{\alpha}{\alpha+2}}. \]
	\end{enumerate}
\end{thm}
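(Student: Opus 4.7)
The plan is to exploit the fact that, on a high-probability event under $P_{\theta_0}^N$, the negative proxy log-posterior $-\log\tilde\pi(\cdot|Z^{(N)})$ from (\ref{surrod}) is both $m$-strongly convex and $L$-smooth on all of $\R^D$, with $m\simeq ND^{-4/d}$ and $L\simeq ND^{8/d}(\log N)^4$. Once this is in hand, the three parts of the theorem reduce respectively to uniqueness of the minimiser of a strictly convex coercive function, the classical contraction estimate for gradient descent, and a combination of this with a statistical rate for $\hat\theta_{MAP}$.

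First I would establish the global convexity/smoothness bounds. Inside $\hat{\mathcal B}$ the cutoff $\alpha_\eta$ is identically one and the convex barrier $Kg_\eta$ vanishes, so $-\log\tilde\pi$ agrees with $-\log\pi(\cdot|Z^{(N)})$ up to an additive constant; here the average-curvature estimate for $-\nabla^2\ell_N$ — previously obtained via the Fisher-information stability for the linearisation of $\mathcal G$ (Lemma \ref{wundervoncordoba}) together with uniform concentration of the empirical design matrix over $\hat{\mathcal B}$ — yields $-\nabla^2\log\tilde\pi\succeq cND^{-4/d}I$. Outside $\hat{\mathcal B}$, the cutoff suppresses the non-convex likelihood contribution, the barrier $Kg_\eta$ with $K$ as in Condition \ref{asymptopia} provides a globally convex pushback, and the Gaussian prior Hessian $N^{d/(2\alpha+d)}\Lambda_\alpha$ preserves strong convexity. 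A matching upper bound on $\nabla^2(-\log\tilde\pi)$ gives the smoothness constant $L\simeq ND^{8/d}(\log N)^4$, reciprocal to the step size $\gamma$.

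For \textbf{(i)}, strict convexity and coercivity of $-\log\tilde\pi$ give a unique minimiser $\hat\theta^{proxy}\in\R^D$, and posterior concentration (Theorem \ref{waterstone}, combined with the initialisation guarantees of Theorem \ref{triebelei}) locates $\hat\theta^{proxy}$ inside $\hat{\mathcal B}$. Since on $\hat{\mathcal B}$ the proxy and true log-posteriors coincide up to an additive constant, $\hat\theta^{proxy}$ is a critical point of $\pi(\cdot|Z^{(N)})$; that no other critical point of the true posterior can achieve a strictly larger value is deduced from the uniform forward-map bound \eqref{ubd} and the Gaussian prior penalty, which together force $\log\pi(\theta|Z^{(N)})$ to decrease sharply as $\|\theta\|_{h^\alpha}$ grows beyond the high-probability posterior region. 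For \textbf{(ii)}, the standard gradient-descent iteration inequality for $m$-strongly convex $L$-smooth functions with step size $\gamma=1/L$ gives
\[ \|\vartheta_{k+1}-\hat\theta_{MAP}\|_{\R^D}^2 \le (1-m/L)\,\|\vartheta_k-\hat\theta_{MAP}\|_{\R^D}^2, \]
so that $\|\vartheta_k-\hat\theta_{MAP}\|_{\R^D}^2 \le (1-m/L)^k \|\theta_{init}-\hat\theta_{MAP}\|_{\R^D}^2$, with $m/L\asymp D^{-12/d}(\log N)^{-4}$ as required. A coarse bound $\|\theta_{init}-\hat\theta_{MAP}\|_{\R^D}^2\lesssim D^{4/d}$ on the initial distance follows either from the properties of $\theta_{init}$ in Theorem \ref{triebelei} combined with posterior localisation, or by a single application of $m$-strong convexity to $\nabla(-\log\tilde\pi)$ evaluated at $\theta_{init}$.

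For \textbf{(iii)}, split $\|\vartheta_k-\theta_0\|_{\ell_2}\le\|\vartheta_k-\hat\theta_{MAP}\|_{\ell_2}+\|\hat\theta_{MAP}-\theta_0\|_{\ell_2}$. Part (ii) drives the optimisation error below $N^{-\alpha^2/((2\alpha+d)(\alpha+2))}$ after $k\asymp D^{12/d}(\log N)^5$ iterations. The statistical error $\|\hat\theta_{MAP}-\theta_0\|_{\ell_2}$ is then bounded by the same contraction-rate techniques that underpin Theorem \ref{brainfreeze}: since $\hat\theta_{MAP}$ is the mode of the strongly log-concave proxy, it lies within $O(N^{-\alpha^2/((2\alpha+d)(\alpha+2))})$ of $\theta_0$ by the posterior concentration adapted from \cite{MNP19b, N17}. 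The main obstacle throughout is the first step — producing the quantitative global convexity and smoothness constants with the precise polynomial scaling in $N,D$ — but the relevant PDE-based curvature estimates are already the driving input of the Langevin Theorems \ref{thm:post:mean}--\ref{thm:schr:wass}, so the novelty here is chiefly the substitution of deterministic gradient-descent contraction for the SDE-discretisation analysis.
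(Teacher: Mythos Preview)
Your overall strategy is right and matches the paper: one establishes that $-\log\tilde\pi(\cdot|Z^{(N)})$ is globally $m$-strongly convex and $\Lambda$-smooth with $m\simeq ND^{-4/d}$, $\Lambda\simeq ND^{8/d}(\log N)^4$ (these are exactly the constants from (\ref{lamborghini}), obtained via Proposition \ref{prop:log:conv}), and then invokes the standard gradient-descent contraction (Proposition \ref{reiskorn}). For part (ii) the paper bounds the prefactor via the function-value gap $|\log\tilde\pi(\theta_{init})-\log\tilde\pi(\hat\theta_{MAP})|\lesssim N$ divided by $m$, giving $D^{4/d}$; your route via $\|\theta_{init}-\hat\theta_{MAP}\|_{\R^D}^2$ is equally valid.

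There is, however, a genuine gap in your argument for part (i), and it reappears in (iii). You correctly observe that the proxy maximiser is a critical point of the true posterior inside $\hat{\mathcal B}$, but your proposed proof that it is the \emph{global} maximiser---via (\ref{ubd}) and the Gaussian penalty---only controls the far tails. Quantitatively, $\ell_N(\theta)\in[-CN,0]$ uniformly, so the prior penalty $-\tfrac{1}{2}N^{d/(2\alpha+d)}\|\theta\|_{h^\alpha}^2$ dominates the likelihood only when $\|\theta\|_{h^\alpha}^2\gtrsim N^{2\alpha/(2\alpha+d)}$. For $\theta$ outside $\hat{\mathcal B}$ but with moderate $\|\theta\|_{h^\alpha}$, nothing in your argument prevents $\pi(\theta|Z^{(N)})$ from exceeding $\pi(\hat\theta^{proxy}|Z^{(N)})$.

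The paper closes this gap with Theorem \ref{maprate}, an $M$-estimation result (built on Proposition \ref{fwdrate} and the stability estimate (\ref{interpol})) showing that \emph{every} global maximiser of the true posterior satisfies $\|\hat\theta_{MAP}-\theta_{0,D}\|_{\R^D}\lesssim\bar\delta_N\ll D^{-4/d}/\log N$, hence lies in the region where $\ell_N=\tilde\ell_N$. Then any MAP satisfies $\nabla\log\tilde\pi(\hat\theta_{MAP})=0$, and strong concavity of $\tilde\pi$ forces uniqueness (see the argument around (\ref{score})). The same Theorem \ref{maprate} supplies the statistical rate $\|\hat\theta_{MAP}-\theta_0\|_{\ell_2}\lesssim N^{-\alpha^2/((2\alpha+d)(\alpha+2))}$ needed in (iii); your appeal to ``posterior concentration adapted from \cite{MNP19b,N17}'' is in the right spirit but does not substitute for this direct rate, and the reasoning ``since $\hat\theta_{MAP}$ is the mode of the strongly log-concave proxy'' is circular---one needs the rate first to place the MAP in the concave region.
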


\begin{rem}[About Condition \ref{FAQ}]\label{alpha} \normalfont 
In principle the upper bound for $D$ required in Condition \ref{FAQ} could be replaced by general conditions on $D$ (alike those from Lemma \ref{youtube}) which do not become more stringent as $\alpha$ increases. From a statistical point of view, however, a choice $D\leq c_0 N^{d/(2\alpha+d)}$ is natural as it corresponds to the optimal `bias-variance' tradeoff underpinning the convergence rate towards $\theta_0 \in h^\alpha$ from Theorem \ref{brainfreeze}. [In fact, the second requirement in (\ref{dimbias}) can be checked for $\theta_0\in h^\alpha$ and $D\simeq N^{d/(2\alpha+d)}$, since $\mathcal G$ is $\ell^2(\N) - L^2(\mathcal O)$ Lipschitz.] Moreover, combined with $\alpha>6$, such a choice of $D$ provides a convenient sufficient condition throughout our proofs: It is used critically when showing (in Theorem \ref{waterstone}) that the proxy posterior measure $\tilde \Pi(\cdot|Z^{(N)})$ contracts about a $\|\cdot\|_{\R^D}$-neighbourhood of $\theta_0$ of radius $D^{-4/d}$ on which the information in the Schr\"odinger model has a  stable behaviour (see (\ref{important})). It is also required for our initialiser $\theta_{init}$ to lie in this neighbourhood (Theorem \ref{triebelei}). While it is conceivable that the condition on $\alpha$ could be weakened (as discussed, e.g., in the next remark), it would come at the expense of considerable further technicalities that we wish to avoid here. 
\end{rem}

\section{General theory for random design regression}\label{barocco}

In proving the results from Section \ref{sec:schrres}, we will first develop some theory which applies to general nonlinear regression models. We thus consider in this section the measurement model (\ref{eq:data:intro}) for a general forward model $\mathcal G$ that satisfies a set of analytic conditions to be detailed below. Let $\Theta$ be a (measurable) linear subspace of $\ell^2(\mathbb N)$ which itself admits a subspace $\R^D \subseteq \Theta$ for some $D \in \N$. Let $\mathcal O$ be a Borel subset of $\R^d, d\ge 1$, and consider a model of regression functions $\{\mathcal G(\theta):\theta \in \Theta\}$ via a Borel-measurable forward map $\mathcal G: \Theta\to C(\mathcal O)$. While we regard each $\mathcal G(\theta)$ as a continuous real-valued function, the results of this section readily extend to vector or matrix fields over manifolds $\mathcal O$, see Remark \ref{chopin}. Our data is given by $Z_i=(Y_i, X_i)$ arising from
\begin{equation}\label{eq:data}
Y_i=\mathcal G(\theta)(X_i)+\eps_i,~~~~~ i=1,...,N,
\end{equation} 
where $X_i\sim^{i.i.d.}P^X$, $P^X$ a Borel probability measure on $\mathcal O$, and where $\eps_i\sim^{i.i.d.} N(0,1)$, independently of the $X_i$'s. We write $Z^{(N)}=(Z_1,...,Z_N)$ for the full data vector with joint distribution $P_\theta^N = \otimes_{i=1}^N P_\theta$ on $(\R \times \mathcal O)^N$, with expectation operator $E_\theta^N=\otimes_{i=1}^N E_\theta$. Then the log-likelihood functions of the data $Z^{(N)}$ and of a single observation $Z=(Y,X)\sim P_\theta$ are given by
\begin{equation}\label{eq:gen:llh}
\ell_N(\theta) \equiv \ell_N(\theta, Z^{(N)}) = -\frac 12 \sum_{i=1}^N [Y_i-\mathcal G(\theta)(X_i)]^2, ~~~~
\ell(\theta)\equiv \ell(\theta, Z) = -\frac 12 [Y-\mathcal G(\theta)(X)]^2,
\end{equation}
respectively. If we regard these maps as being defined on $\mathbb R^D \subseteq \Theta$, and if $\Pi$ is a Gaussian prior $\Pi$ supported in $\R^D$, then we obtain the posterior measure $\Pi(\cdot|Z^{(N)})$ with probability density $\pi(\cdot|Z^{(N)})$ on $\mathbb R^D$ as in (\ref{postbus}). 
\smallskip

The main results of this section are Theorems \ref{thm:gen:wass} and \ref{thm:gen:func}, providing convergence guarantees for a Langevin sampling method for the posterior distribution that depend polynomially on model dimension $D$ and number $N$ of measurements, and which hold on an \textit{event} (i.e., a measurable subset $\mathcal E$ of the sample space $(\R \times \mathcal O)^N$ supporting the data $Z^{(N)}$) of the form
\[ \mathcal E:=\mathcal E_{conv}\cap \mathcal E_{init}\cap \mathcal E_{wass}.\] On $\mathcal E_{conv}$  the negative log-likelihood $-\ell_N(\theta)$ will be strongly convex in some region $\mathcal B \subseteq \mathbb R^D$, while $\mathcal E_{init}$ is the event that allows one to initialise the method at some (data-driven) $\theta_{init}=\theta_{init}(Z^{(N)})$ in that set $\mathcal B$. Finally, intersection with $\mathcal E_{wass}$ further guarantees that the posterior measure $\Pi(\cdot|Z^{(N)})$ is close in Wasserstein distance to a \textit{globally} log-concave surrogate probability measure $\tilde \Pi(\cdot|Z^{(N)})$ which locally coincides with $\Pi(\cdot|Z^{(N)})$ up to proportionality factors. In applying the results of this section to a concrete sampling problem, one needs to show that all the events $\mathcal E_{conv}, \mathcal E_{init}, \mathcal E_{wass}$ have sufficiently high frequentist $P_{\theta_0}^N$-probability, where $\theta_0$ is the ground truth parameter generating data (\ref{eq:data}). For the event $\mathcal E_{conv}$ we provide a generic method in Lemma \ref{youtube}, based on a stability estimate for the linearisation of the map $\mathcal G$ combined with high-dimensional concentration of measure techniques. Techniques for controlling the respective probabilities of $\mathcal E_{init}$ and $\mathcal E_{wass}$ are discussed in Remark \ref{cleverchap}.

\smallskip

We will assume the set $\mathcal B \subseteq \mathbb R^D$ of local convexity to be of \textit{ellipsoidal} form.
\begin{defin}\label{def:ellip:norm}
	A norm $|\cdot|$ on $\R^D$ is called ellipsoidal if there exists a positive definite, symmetric matrix $M\in \R^{D\times D}$ such that $|\theta|^2=\theta^TM\theta$  for any $\theta\in\R^D$.
\end{defin}
Throughout this section, for some centring $\theta^* \in \mathbb R^D$, scalar $\eta>0$ and ellipsoidal norm $|\cdot|_1$ with associated matrix $M$, let $\mathcal B$ denote the open subset of $\mathbb R^D$ given by
\begin{equation}\label{eq:B:ass}
\mathcal B:= \big\{ \theta\in\R^D: |\theta-\theta^*|_1< \eta \big \}.
\end{equation}
One may think of $\theta^*$ as the projection of $\theta_0$ onto $\mathbb R^D$, but at this stage this is not necessary. While for the Schr\"odinger model with $d \le 3$ we can choose $|\cdot|_1=\|\cdot\|_{\R^D}$, in general (e.g., when $d \ge 4$ or in other non-linear problems) it may be convenient to consider other (ellipsoidal) localisation regions.

\subsection{Local curvature bounds for the likelihood function}

In what follows, $\theta_0 \in \Theta$ is an arbitrary `ground truth' and the gradient operator $\nabla=\nabla_\theta$ will always act on $\mathcal G, \ell, \ell_N$ viewed as maps on the subspace $\mathbb R^D \subseteq \Theta$. Specifically we shall write $(\nabla \mathcal G(\theta)(x):x\in \mathcal O)$ and $(\nabla^2 \mathcal G(\theta)(x):x\in \mathcal O)$ for the following vector and matrix fields
\begin{equation*}
\nabla \mathcal G(\theta):  \mathcal O \to \R^D,~~~~\nabla^2 \mathcal G(\theta):  \mathcal O \to \R^{D\times D},\end{equation*}
respectively. The following condition summarises some quantitative regularity conditions on the map $\mathcal G$. These have to hold locally on the set $\mathcal B$ (and are satisfied, for instance, for any smooth $\mathcal G$). To formulate them we equip $\R^D$ and $\R^{D\times D}$ with the Euclidean norm $\|\cdot\|_{\R^D}$ and the operator norm $\|\cdot\|_{op}=\|\cdot\|_{\mathbb R^D \to \mathbb R^D}$ (for linear maps from $\mathbb R^D \to \mathbb R^D$) respectively, and the functional norms of $\mathbb R^D$- or $\R^{D \times D}$-valued fields are understood relative to these norms. [So for instance, in (\ref{eq:bddness}), one requires a bound $k_2$ for $\sup_{x \in \mathcal O}\|\nabla^2 \mathcal G(\theta)(x)\|_{\mathbb R^D \to \mathbb R^D}$ that is uniform in $\theta \in \mathcal B$.]

\begin{ass}[Local regularity]\label{ass:model} Let $\mathcal B$ be given in (\ref{eq:B:ass}).

	\begin{enumerate}[label=\textbf{\roman*)}]

		\item For any $x\in\mathcal O$, the map $\theta \mapsto \mathcal G(\theta)(x)$ is twice continuously differentiable on $\mathcal B$.
		\item For some $k_0,k_1,k_2>0$,
		\begin{equation}\label{eq:bddness}
		\begin{split}
			\sup_{\theta\in \mathcal B} \|\mathcal G(\theta)-\mathcal G(\theta_0)\|_{\infty} \le k_0,\\
			\sup_{\theta\in \mathcal B}\| \nabla \mathcal G(\theta) \|_{L^\infty(\mathcal O,\R^D)} \le k_1,\\
			\sup_{\theta\in \mathcal B}\| \nabla^2 \mathcal G(\theta) \|_{L^\infty(\mathcal O,\R^{D\times D})}\le k_2.
		\end{split}
		\end{equation}
		\item  For some $m_0,m_1,m_2>0$ and any $\theta,\bar\theta\in \mathcal B$, we have
		\begin{equation*}
		\begin{split}
		 \| \mathcal G(\theta)-\mathcal G(\bar \theta) \|_\infty&\le m_0|\theta-\bar\theta|_1,\\
		\| \nabla \mathcal G(\theta)- \nabla \mathcal G(\bar \theta)  \|_{L^\infty(\mathcal O,\R^D)}&\le m_1|\theta-\bar\theta|_1,\\
		\| \nabla^2 \mathcal G(\theta)- \nabla^2 \mathcal G(\bar \theta)  \|_{L^\infty(\mathcal O,\R^{D\times D})}& \le m_2|\theta-\bar\theta|_1.
		\end{split}
		\end{equation*}
	\end{enumerate}
\end{ass}

\bigskip

We now turn to the central condition underlying the results in this section in terms of a local curvature bound on $E_{\theta_0}[-\nabla^2\ell (\theta,Z)]$, with $\ell(\theta):\R^D \to \R$ from (\ref{eq:gen:llh}). To motivate it, notice that
\begin{equation} \label{hessen}
-\nabla^2 \ell(\theta, Z) = [\nabla \mathcal G(\theta)(X)] [\nabla \mathcal G(\theta)(X)]^T + [\mathcal G(\theta)(X) - Y] \nabla^2 [\mathcal G(\theta)(X)].
\end{equation}
If the design distribution $P^X$ is uniform on a bounded domain $\mathcal O$ (say, of unit volume) then at $\theta=\theta_0$,  the $E_{\theta_0}^N$-expectation of the last expression can be represented as
\begin{equation}
v^TE_{\theta_0}[-\nabla^2\ell (\theta_0,Z)]v = \|\nabla \mathcal G(\theta_0)^Tv\|_{L^2(\mathcal O)}^2,~~v \in \mathbb R^D.
\end{equation}
Therefore, if a suitable `$L^2(\mathcal O)$-stability estimate' for the linearisation $\nabla \mathcal G$ of $\mathcal G$ at $\theta_0$ is available, the key condition (\ref{eq:ass:lb}) below holds at $\theta_0$; by regularity of $\mathcal G$ this should extend to $\theta$ sufficiently close to $\theta_0$. In the example with the Schr\"odinger equation studied in Section \ref{sec:schrres}, such a stability estimate indeed follows from elliptic PDE theory, see Lemma \ref{wundervoncordoba}, and the recent reference \cite{BN21} verifies this condition for the non-Abelian $X$-ray transform considered in \cite{MNP21}.

Note that the Hessian $E_{\theta_0}[-\nabla^2\ell (\theta,Z)]$ is symmetric (by (\ref{hessen}) and Assumption \ref{ass:model}i)), and recall that $\lambda_{min}(A)$ denotes the smallest eigenvalue of a symmetric matrix $A$.

\begin{ass}[Local curvature]\label{ass:geom}
	Let $\mathcal B$ be given in (\ref{eq:B:ass}) and let $\ell: \R^D \to \R$ be as in (\ref{eq:gen:llh}).
		\begin{enumerate}[label=\textbf{\roman*)}]
		\item For some $c_{min}>0$, we have 
		\begin{equation}\label{eq:ass:lb}
		\inf_{\theta\in \mathcal B} \lambda_{min}\Big(E_{\theta_0}[-\nabla^2\ell (\theta,Z)]\Big)\ge c_{min}.
		\end{equation} 
		\item For some $c_{max}\ge c_{min}>0$, we have 
		\begin{equation}\label{eq:ass:ub}
		\sup_{\theta\in \mathcal B}\Big[ |E_{\theta_0}\ell(\theta, Z)|+\|E_{\theta_0}[\nabla \ell(\theta, Z) ] \|_{\R^D}+\|E_{\theta_0}[\nabla^2\ell (\theta,Z)]\|_{op} \Big] \le c_{max}.
		\end{equation}
	\end{enumerate}
\end{ass}

\medskip

The following lemma, which is based on concentration of measure arguments, shows that the local `average' curvature bound in (\ref{eq:ass:lb}) carries over to the `observed' log-likelihood function, with high frequentist $P_{\theta_0}^N$-probability, and whenever $D \le \mathcal R_N$, where the dimension constraint is explicitly quantified in terms of the constants featuring in the previous hypotheses. The expression for $\mathcal R_N$ substantially simplifies in concrete settings but, in this general form, reflects the various non-asymptotic stochastic regimes of the log-likelihood function and its derivatives.

\begin{lem}\label{youtube}
	Suppose that data arises from (\ref{eq:data}) with $\ell_N: \R^D \to \R$ given by (\ref{eq:gen:llh}). Suppose  Assumptions \ref{ass:model}, \ref{ass:geom} are satisfied. There exists a universal constant $C>0$ such that if
	\begin{equation}\label{eq:itis what itis}
	\mathcal R_N:=CN\min\bigg\{  \frac{c_{min}^2}{C_{\mathcal G}^2\eta^2},\frac{c_{min}}{C_{\mathcal G}\eta}, \frac{c_{min}^2}{C_{\mathcal G}'^2},\frac{c_{min}}{k_2}, \frac{c_{max}^2}{C_{\mathcal G}''^2\eta^2},\frac{c_{max}}{C_{\mathcal G}''\eta}, \frac{c_{max}^2}{C_{\mathcal G}'''^2},\frac{c_{max}}{k_0+k_1} \bigg\},
	\end{equation}
where
	\begin{equation}\label{eq:CG}
	\begin{split}
		C_{\mathcal G}:=k_0m_2+k_1m_1+ k_2m_0+m_2,&~~~~~ C_{\mathcal G}':=k_1^2+k_0k_2+k_2,\\
		C_{\mathcal G}'':= k_0m_1+k_1m_0+m_1+k_0m_0+m_0,& ~~~~~C_{\mathcal G}'''=k_0k_1+k_1+k_0^2+k_0,
	\end{split}
	\end{equation}
	then for any $D,N\ge 1$ satisfying $D\le \mathcal R_N$, we have	\begin{equation}\label{eq:emp:lb}
	\begin{split}
	P_{\theta_0}^N\Big(\inf_{\theta\in \mathcal B}\lambda_{min}\big[-\nabla^2\ell_N(\theta, Z^{(N)})\big]< \frac 12Nc_{min}\Big) &\le 8e^{-\mathcal R_N},
	\end{split}
	\end{equation}
	as well as 
	\begin{equation}\label{eq:emp:ub}
	\begin{split}
	P_{\theta_0}^N\Big( \sup_{\theta\in \mathcal B}\Big[|\ell_N(\theta, Z^{(N)})|+\|\nabla \ell_N(\theta, Z^{(N)})\|_{\R^D}+\|\nabla^2\ell_N(\theta, Z^{(N)})\|_{op}\Big]> N(5c_{max}+1)\Big) ~~~~~~~~&\\
	\le 24e^{-\mathcal R_N} + e^{-N/8}&.
	\end{split}
	\end{equation}
	\end{lem}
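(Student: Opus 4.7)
The plan is to write each of $\ell_N$, $\nabla \ell_N$ and $-\nabla^2 \ell_N$ as an i.i.d.\ sum centred at its $P_{\theta_0}$-expectation, control the expectation by Assumption \ref{ass:geom}, and then control the fluctuation uniformly on $\mathcal B$ by combining pointwise matrix concentration with a covering argument. Starting with (\ref{eq:emp:lb}): by Assumption \ref{ass:geom}i) it is enough to show
\[ \mathbf{P}_{\theta_0}^N\!\Big(\sup_{\theta \in \mathcal B}\big\|{-}\nabla^2 \ell_N(\theta) - E_{\theta_0}^N[{-}\nabla^2 \ell_N(\theta)]\big\|_{op} \geq \tfrac12 N c_{min}\Big) \leq 8\, e^{-\mathcal R_N}. \]
Substituting $Y_i = \mathcal G(\theta_0)(X_i) + \varepsilon_i$ into (\ref{hessen}) splits the centred Hessian into a sum $\sum_i [T_i(\theta) + \varepsilon_i V_i(\theta)]$ of independent centred random matrices, where $\|T_i(\theta)\|_{op} \lesssim C_{\mathcal G}' = k_1^2 + k_0 k_2 + k_2$ (with variance of the same order) and $\|V_i(\theta)\|_{op} \leq k_2$. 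The analogous splits for $\ell$ and $\nabla \ell$ produce scalar/vector summands whose bounded parts have magnitude and variance governed by $C_{\mathcal G}'''$, and whose Gaussian-weighted increments are Lipschitz of order $C_{\mathcal G}''$ in $\theta$.

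At a fixed $\theta$, I would apply Tropp's matrix Bernstein inequality to the bounded part and a matrix Gaussian series bound (equivalently, conditioning on $(X_i)$ and applying Hanson--Wright) to the noise-weighted part. This gives a pointwise deviation
\[ \mathbf{P}_{\theta_0}^N\!\big(\text{fluctuation} \geq Ns\big) \leq 4D\exp\!\big({-}cN \min\{s^2/(C_{\mathcal G}')^2,\; s/k_2\}\big), \]
which, with $s = c_{min}/4$, produces the two $\eta$-free terms $c_{min}^2/(C_{\mathcal G}')^2$ and $c_{min}/k_2$ in $\mathcal R_N$. For the uniform bound, differentiating (\ref{hessen}) and invoking Assumption \ref{ass:model}iii) yields
\[ \| \nabla^2 \ell(\theta,Z) - \nabla^2 \ell(\bar\theta, Z)\|_{op} \leq \big(C_{\mathcal G} + m_2|\varepsilon|\big)\,|\theta - \bar\theta|_1. \]
Take a $\delta$-net $\mathcal N \subset \mathcal B$ in $|\cdot|_1$; the ellipsoidal structure from Definition \ref{def:ellip:norm} reduces covering to Euclidean covering of a ball, so $|\mathcal N| \leq (3\eta/\delta)^D$. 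A union bound over $\mathcal N$ together with the Lipschitz estimate (and concentration of $\sum |\varepsilon_i|$) reduces the uniform problem to the pointwise one up to discretisation error $\lesssim N C_{\mathcal G}\delta$; choosing $\delta \asymp c_{min}/C_{\mathcal G}$ makes this $\leq Nc_{min}/4$, while the log-covering cost $D \log(O(C_{\mathcal G}\eta/c_{min}))$ is absorbed by the concentration rate exactly when $D \leq \mathcal R_N$. Balancing the covering factor separately against the sub-Gaussian and sub-exponential regimes of matrix Bernstein produces the two $\eta$-dependent terms $c_{min}^2/(C_{\mathcal G}\eta)^2$ and $c_{min}/(C_{\mathcal G}\eta)$.

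The upper bound (\ref{eq:emp:ub}) follows by applying the same scheme separately to $\ell_N, \nabla \ell_N$ and $\nabla^2 \ell_N$: Assumption \ref{ass:geom}ii) bounds each of the three expectations by $Nc_{max}$, while pointwise Bernstein-type concentration (now with Bernstein variance of order $C_{\mathcal G}'''$ and Lipschitz constant of order $C_{\mathcal G}''$) combined with the same $\delta$-net argument produces the remaining four terms of $\mathcal R_N$. The one structurally new contribution is the quadratic-in-noise piece $\tfrac12 \sum_i \varepsilon_i^2$ in $\ell_N$, which is a $\chi^2_N$ random variable independent of $\theta$; a standard tail bound gives $\sum \varepsilon_i^2 \leq 2N$ with probability $\geq 1 - e^{-N/8}$, accounting for the extra $+1$ and the $e^{-N/8}$ in (\ref{eq:emp:ub}). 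The main obstacle is not any single step but rather the bookkeeping: one must track, for each of the three statistics, both regimes (sub-Gaussian and sub-exponential) of matrix concentration and match them against the logarithmic covering cost, thereby reproducing each of the eight terms in the minimum defining $\mathcal R_N$; the sub-Gaussian noise-weighted term in the Hessian (which is not covered by standard Bernstein) requires the Gaussian matrix series bound rather than direct Bernstein.
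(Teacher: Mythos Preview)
Your overall architecture---decompose $-\nabla^2\ell_N$ into a bounded piece and a Gaussian-weighted piece, control the fluctuation of each uniformly on $\mathcal B$, treat the $\chi^2$ contribution to $\ell_N$ separately---mirrors the paper's, and your use of matrix Bernstein in place of the paper's $v$-net-and-scalarise step is a legitimate shortcut (both cost a multiplicative factor of order $e^{cD}$). The gap is in how you pass from pointwise to uniform-in-$\theta$ control.

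You take a single-scale $\delta$-net in $\mathcal B$ with $\delta\asymp c_{min}/C_{\mathcal G}$ and then union-bound. This incurs a log-covering cost $D\log(C_{\mathcal G}\eta/c_{min})$, which you claim is ``absorbed by the concentration rate exactly when $D\le\mathcal R_N$''. That is not right: your pointwise exponent is $N\min\{c_{min}^2/(C_{\mathcal G}')^2,\,c_{min}/k_2\}$, so absorption would require $\log(C_{\mathcal G}\eta/c_{min})=O(1)$, which is not a universal bound (in the Schr\"odinger application this quantity is of order $\log D$). Your sentence ``balancing the covering factor separately against the sub-Gaussian and sub-exponential regimes of matrix Bernstein produces the two $\eta$-dependent terms'' is where the argument actually breaks: no such balancing yields $c_{min}^2/(C_{\mathcal G}\eta)^2$ or $c_{min}/(C_{\mathcal G}\eta)$.

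The paper obtains those terms differently. After the $v$-net it \emph{anchors at $\theta^*$}: the fixed-point part $g_{v,\theta^*}$ is handled by Hoeffding/Bernstein and delivers the $\eta$-free terms, while the \emph{increment} process $g_{v,\theta}-g_{v,\theta^*}$, whose envelope is $\lesssim C_{\mathcal G}\eta$, is controlled by a mixed-tail chaining lemma (Lemma~\ref{mixchain}, a Dirksen--Talagrand-type result). The chaining threshold $\sqrt D+\sqrt x+(D+x)/\sqrt N$ carries no logarithmic loss, and after un-normalising by the increment envelope $C_{\mathcal G}\eta$ it produces precisely the two $\eta$-dependent entries in $\mathcal R_N$. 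If you replace your one-scale net by such a chaining step (generic chaining or a Dudley entropy-integral argument with Bernstein-type increments), the gap closes; as written, your proof gives only the lemma with $\mathcal R_N$ reduced by a logarithmic factor in $C_{\mathcal G}\eta/c_{min}$.
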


Inspection of the proof (given in Section \ref{subsec:conc:pf}) shows that for the first inequality (\ref{eq:emp:lb}), the terms involving $c_{max}$ can be removed from the definition of  $\mathcal R_N$. In the sequel we will restrict considerations to the event
\begin{equation}\label{eq:E:event}
\begin{split}
\mathcal E_{conv} &:=\Big\{  \inf_{\theta\in \mathcal B}\lambda_{min}\big[-\nabla^2\ell_N(\theta)\big]\ge Nc_{min}/2  \Big\}\\
&~~~~~~~~~~~\cap \Big\{  \sup_{\theta\in\mathcal B} \Big[|\ell_N(\theta)| + \|\nabla \ell_N(\theta)\|_{\R^D}+\|\nabla^2 \ell_N(\theta)\|_{op}\Big]\le N(5c_{max}+1) \Big\},
\end{split}
\end{equation}
whose $P_{\theta_0}^N$-probability is controlled by Lemma \ref{youtube}.

\subsection{Construction of the likelihood surrogate function}\label{subsec:surrogate}

For Bayesian computation via Langevin-type algorithms one needs to ensure recurrence of the underlying diffusion process, a sufficient condition for which is \textit{global log-concavity} (on $\R^D$) of the target measure to be sampled from, see Appendix \ref{app:ULA}. To this end we now construct a `surrogate log-likelihood function' $\tilde\ell_N: \mathbb R^D \to \mathbb R$ for the log-likelihood $\ell_N$ such that $\tilde \ell_N=\ell_N$ identically on the subset $\{\theta \in \mathbb R^D: |\theta-\theta^* |_1\le 3\eta/8\}$ of $\mathcal B$ from (\ref{eq:B:ass}), and which will be shown to be globally log-concave on the event $\mathcal E$ from (\ref{nevergonnahappen}) below.

\smallskip

In order to perform the convexification of $-\ell_N$, one needs to identify the region $\mathcal B$ up to sufficient precision. In what follows, we denote by $\theta_{init}=\theta_{init}(Z^{(N)})\in\R^D$ a (data-driven) point estimator where the sampling algorithm is initialised; and we define the event $\mathcal E_{init}$ (measurable subset of $(\R \times \mathcal O)^N$) by 
\begin{equation} \label{initevent}
 \mathcal E_{init}:=\big\{|\theta_{init}-\theta^* |_1\le \eta/8\big\},
 \end{equation}
 where $\theta_{init}$ belongs to the region $\mathcal B$. That such initialisation is possible (i.e., that $\mathcal E_{init}$ has sufficiently high $P_{\theta_0}^N$-probability for appropriate $\eta>0$) is proved for the Schr\"odinger model in Theorem \ref{triebelei}.

\smallskip

We require two auxiliary functions, $g_\eta$ (globally convex) and $\alpha_\eta$ (cut-off function): For some smooth and symmetric (about $0$) function
$\varphi:\R\to [0,\infty)$ satisfying $\text{supp}(\varphi)\subseteq [-1,1]$ and $\int_{\R}\varphi(x)dx=1$, let us define the mollifiers $\varphi_h(x):=h^{-1}\varphi(x/h), h>0$. Then, we define the functions $\tilde \gamma_\eta, \gamma_\eta: \R \to \R$ by
\begin{equation}\label{eq:gamma:tilde}
\begin{split}
\tilde \gamma_\eta (t)~&:=\begin{cases}
0 ~~&\text{if}~~~ t<5\eta/8,\\
(t-5\eta/8)^2 ~~~&\text{if}~~~ t \ge 5\eta/8,
\end{cases}\\
\gamma_\eta (t)~&:=\big[\varphi_{\eta/8}\ast \tilde \gamma_\eta\big](t),
\end{split}
\end{equation}
where $\ast$ denotes convolution, and
\begin{equation}\label{eq:geta:def}
g_\eta:\R^D\to [0,\infty), ~~~~~~ g_\eta(\theta):=\gamma_\eta(|\theta-\theta_{init}|_1).
\end{equation}
Finally, for some smooth $\alpha: [0,\infty)\to [0,1]$ which satisfies $\alpha (t)=1$ for $t\in [0,3/4]$ and $\alpha(t)=0$ for $t\in [7/8,\infty)$, we define the `cut-off' function
\begin{equation}\label{eq:alphaeta:def}
\alpha_\eta:\R^D\to [0,1], ~~~~~~ \alpha_\eta(\theta)=\alpha\big(|\theta-\theta_{init}|_{1}/\eta \big).
\end{equation}

\begin{defin}
For the auxiliary functions $g_\eta, \alpha_\eta$ from (\ref{eq:geta:def}), (\ref{eq:alphaeta:def}) and $K>0$, we define the surrogate likelihood function $\tilde \ell_N$ by
\begin{equation}\label{eq:tilde:ell}
\begin{split}
\tilde \ell_N:\R^D\to \R,& ~~~~ \tilde \ell_N(\theta):=\alpha_\eta (\theta) \ell_N (\theta)-Kg_\eta (\theta).
\end{split}
\end{equation}
\end{defin}

%[\textcolor{red}{Think about inserting a graphic, illustrating $\alpha$ and $g_\eta$ in the `radial coordinate' $|\theta-\theta^*|_1$, where key properties would be: (1) $\alpha=1$ on $[0,3/4]$, (2) that $g_\eta=0$ on $[\eta/2,\infty)$, such that indeed, $\ell=\tilde \ell$ on $\mathcal B_{1/2}$, (3) $g_\eta$ has constant curvature for outside a ball of radius $3\eta/4$, enforcing convexity over the potentially non-convexity of $\ell_N\cdot \alpha$.}]

When the choice of the constant $K>0$ is large enough relative to $c_{max}$ from Assumption \ref{ass:model}, the following global convexity property can be proved for $\tilde\ell_N$ (see Appendix \ref{sec:aux} for a proof).

\begin{prop}\label{prop:log:conv}
	On the event $\mathcal E_{conv}\cap \mathcal E_{init}$ (cf.~(\ref{eq:E:event}), (\ref{initevent})), when $\tilde \ell_N$ from (\ref{eq:tilde:ell}) is defined with any constant $K$ satisfying
	\begin{equation}\label{eq:K:cond}
	K\ge CN(c_{max} +1)\cdot\frac{1+\lambda_{max}(M)/\eta^2}{\lambda_{min}(M)},
	\end{equation}
	($C>1$ depending only on the function $\alpha$ above), we have
	\begin{equation*}
	\ell_N(\theta) = \tilde \ell_N(\theta) ~~~\text{ for all } \theta \in \mathbb R^D \text{ s.t. } |\theta-\theta^* |_1\le 3\eta/8.
	\end{equation*}
	Moreover, $\tilde \ell_N\in C^2(\R^D)$ and it holds that
	\begin{align}\label{eq:str:conv}
	\inf_{\theta \in \mathbb R^D}\lambda_{min}\big(-\nabla^2 \tilde\ell_N(\theta)\big)\ge Nc_{min}/2,
	\end{align}
	as well as
	\begin{align}\label{eq:grad:lip}
	\|\nabla \tilde \ell_N(\theta)-\nabla \tilde \ell_N(\bar \theta)\|_{\R^D}\le 7K \lambda_{max}(M) \|\theta-\bar\theta\|_{\R^D},~~~\theta,\bar\theta\in \R^D.
	\end{align}
\end{prop}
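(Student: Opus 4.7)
The plan is to split $\mathbb R^D$ into three regions according to $r \equiv r(\theta) := |\theta-\theta_{init}|_1$: Region I $\{r \le 3\eta/4\}$, where by construction of $\alpha$ we have $\alpha_\eta \equiv 1$ and so $\nabla\alpha_\eta, \nabla^2\alpha_\eta$ vanish; Region II $\{3\eta/4 \le r \le 7\eta/8\}$, the cutoff's transition zone; and Region III $\{r \ge 7\eta/8\}$, where $\alpha_\eta$ and all its derivatives vanish identically. On the event $\mathcal E_{init}$ the triangle inequality $|\theta-\theta^*|_1 \le r + |\theta_{init}-\theta^*|_1 \le r + \eta/8$ shows that $\{r \le 7\eta/8\} \subset \bar{\mathcal B}$, so the $\mathcal E_{conv}$-bounds on $\ell_N, \nabla\ell_N, \nabla^2\ell_N$ remain available (by continuity in $\theta$) throughout the support of $\alpha_\eta$. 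For Part (i), the same inequality gives $r \le \eta/2$ whenever $|\theta-\theta^*|_1 \le 3\eta/8$; on this range $\alpha_\eta \equiv 1$, and a direct inspection of (\ref{eq:gamma:tilde}) shows $g_\eta \equiv 0$, because the mollifier $\varphi_{\eta/8}$ has support $[-\eta/8,\eta/8]$ and $\tilde\gamma_\eta$ vanishes on $(-\infty,5\eta/8)$. Hence $\tilde\ell_N \equiv \ell_N$ there.

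For Parts (ii) and (iii) the central computation is the Hessian of $g_\eta = \gamma_\eta(r)$. Writing $\nabla r = M(\theta-\theta_{init})/r$, so that $\|\nabla r\|_{\R^D}^2 \le \lambda_{max}(M)$, and $\nabla^2 r = (M - \nabla r(\nabla r)^T)/r$, the product rule yields
\[
\nabla^2 g_\eta \;=\; \Bigl[\gamma_\eta''(r) - \tfrac{\gamma_\eta'(r)}{r}\Bigr]\,\nabla r (\nabla r)^T \;+\; \tfrac{\gamma_\eta'(r)}{r}\,M.
\]
A direct computation from (\ref{eq:gamma:tilde}) using the symmetry of $\varphi$ establishes the exact identities $\gamma_\eta'(r) = 2(r-5\eta/8)$ and $\gamma_\eta''(r) \equiv 2$ for all $r \ge 3\eta/4$, giving $\gamma_\eta'(r)/r \ge 1/3$ and $\gamma_\eta''(r) - \gamma_\eta'(r)/r = 5\eta/(4r) \ge 0$ there; hence $\nabla^2 g_\eta \succeq (\lambda_{min}(M)/3)\,I$ on Regions II–III. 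Globally, $\gamma_\eta'' \in [0,2]$ and $\gamma_\eta'(r)/r \in [0,2]$ (the latter via the monotonicity estimate $\gamma_\eta'(r) \le 2r$), so $\|\nabla^2 g_\eta\|_{op} \le 6\lambda_{max}(M)$ pointwise.

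With these two bounds in hand, Part (ii) is region-by-region. In Region I, $-\nabla^2\tilde\ell_N = -\nabla^2\ell_N + K\nabla^2 g_\eta \succeq (Nc_{min}/2) I + 0$ by $\mathcal E_{conv}$ and convexity of $g_\eta$. In Region II the product-rule expansion of $\nabla^2(\alpha_\eta\ell_N)$ produces four terms; combining the $\mathcal E_{conv}$ bounds on $\ell_N, \nabla\ell_N, \nabla^2\ell_N$ with the chain-rule scalings $\|\nabla\alpha_\eta\|_{\R^D} \lesssim \sqrt{\lambda_{max}(M)}/\eta$ and $\|\nabla^2\alpha_\eta\|_{op} \lesssim \lambda_{max}(M)/\eta^2$ (and $2\sqrt{x}\le 1+x$) yields $\|\nabla^2(\alpha_\eta\ell_N)\|_{op} \le C_0 N(c_{max}+1)(1+\lambda_{max}(M)/\eta^2)$. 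The positive convex contribution $K\nabla^2 g_\eta \succeq (K\lambda_{min}(M)/3) I$ then dominates under (\ref{eq:K:cond}) provided the absolute constant $C$ is chosen sufficiently large (depending on $\alpha$ and $\varphi$), giving $-\nabla^2\tilde\ell_N \succeq (Nc_{min}/2) I$. Region III is immediate since only $K\nabla^2 g_\eta$ survives. For Part (iii), the same estimate yields $\|\nabla^2(\alpha_\eta\ell_N)\|_{op} \le K\lambda_{max}(M)$ by (\ref{eq:K:cond}), and summing with $K\|\nabla^2 g_\eta\|_{op} \le 6K\lambda_{max}(M)$ delivers the Lipschitz constant $7K\lambda_{max}(M)$ uniformly in $\theta$.

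The main technical obstacle is the ellipsoidal bookkeeping: the rank-one correction $\nabla r(\nabla r)^T$ and the matrix $M$ enter the Hessian of $g_\eta$ with different signs and coefficients, and they must be organised so that the PSD lower bound scales with $\lambda_{min}(M)$ while the operator-norm upper bound scales with $\lambda_{max}(M)$. Simultaneously the cutoff derivatives must be tracked with matching $\lambda_{max}(M)/\eta^2$ dependence, so that the multiplicative structure on the right-hand side of (\ref{eq:K:cond}) appears naturally on both sides of the matched convexity and Lipschitz estimates.
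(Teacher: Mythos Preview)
Your proposal is correct and follows essentially the same approach as the paper's proof: the same Hessian formula $\nabla^2 g_\eta = [\gamma_\eta''(r)-\gamma_\eta'(r)/r]\,\nabla r(\nabla r)^T + (\gamma_\eta'(r)/r)\,M$ with the exact identities $\gamma_\eta'(r)=2(r-5\eta/8)$, $\gamma_\eta''(r)=2$ for $r\ge 3\eta/4$, yielding $\nabla^2 g_\eta\succeq(\lambda_{min}(M)/3)I$ there and $\|\nabla^2 g_\eta\|_{op}\le 6\lambda_{max}(M)$ globally; combined with the same chain-rule bound $\|\nabla^2(\alpha_\eta\ell_N)\|_{op}\lesssim N(c_{max}+1)(1+\lambda_{max}(M)/\eta^2)$. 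The only cosmetic difference is that the paper splits $\R^D$ into two regions ($V=\{r\le 3\eta/4\}$ and $V^c$) rather than your three, treating your Regions II and III together via the observation that the $\alpha_\eta\ell_N$ upper bound holds trivially on Region III as well.
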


\subsection{Non-asymptotic bounds for Bayesian posterior computation}\label{subsec:genMAIN}
We now consider the problem of generating random samples from the posterior measure
\[  \Pi[B|Z^{(N)} ]=\frac{\int_B e^{\ell_N(\theta,Z^{(N)})}d\Pi(\theta)}{\int_{\R^D} e^{\ell_N(\theta,Z^{(N)})}d\Pi(\theta)}, ~~~~ B \subseteq \R^D \text{ measurable}, \] arising from data (\ref{eq:data}) with log-likelihood (\ref{eq:gen:llh}) and Gaussian $N(0,\Sigma)$ prior $\Pi$ of density $\pi$ on $\mathbb R^D$, with positive definite covariance matrix $\Sigma \in \R^{D\times D}$. 

We use the stochastic gradient method obtained from an Euler discretisation of the $D$-dimensional Langevin diffusion (see Appendix \ref{app:ULA}) with drift vector field $\nabla (\tilde \ell_N + \log \pi)$ based on the surrogate likelihood function. More precisely, for \textit{stepsize} $\gamma>0$ and auxiliary variables $\xi_{k} \sim^{i.i.d.}  N(0,I_{D \times D})$, define a Markov chain as
\begin{equation}\label{ULAgen}
\begin{split}
\vartheta_0 &= \theta_{init}, \\
\vartheta_{k+1} &= \vartheta_k + \gamma \big[\nabla \tilde \ell_N(\vartheta_k) -  \Sigma^{-1}\vartheta_k \big] + \sqrt{2 \gamma} \xi_{k+1},~~k=0,1,\dots
\end{split}
\end{equation}
Probabilities and expectations with respect to the law of this Markov chain (random only through the $\xi_{k}$, conditional on the data $Z^{(N)}$) will be denoted by $\mathbf P_{\theta_{init}}, \mathbf E_{\theta_{init}}$ respectively. The invariant measure of the underlying continuous time Langevin diffusion equals the \textit{surrogate posterior distribution} given by
\[ \tilde \Pi[B|Z^{(N)} ]:=\frac{\int_B e^{\tilde \ell_N(\theta,Z^{(N)})}d\Pi(\theta)}{\int_{\R^D} e^{\tilde \ell_N(\theta,Z^{(N)})}d\Pi(\theta)}, ~~~~  B \subseteq \R^D \text{ measurable}. \]

\smallskip

In the following results we assume that the Wasserstein distance $W_2$ between $\tilde \Pi(\cdot|Z^{(N)})$ and $\Pi(\cdot|Z^{(N)})$ can be controlled, specifically, for any $\rho>0$, let us define the event 
\begin{equation}\label{wasser}
\mathcal E_{wass}(\rho):=\big\{ W^2_2\big( \Pi\big[ \cdot|Z^{(N)} \big], \tilde\Pi\big[ \cdot|Z^{(N)} \big] \big)\le \rho/2 \big\}. 
\end{equation}
For the Schr\"odinger model this is achieved in Theorem \ref{waterstone}, for $\rho$ decaying exponentially in $N$, using that most of the posterior mass (and its mode) concentrate on the set $\mathcal B$ from (\ref{eq:B:ass}), and the ideas underlying this proof extend to general settings, see Remark \ref{cleverchap}.

\smallskip

Our first result consists of a global Wasserstein-approximation of $\Pi(\cdot|Z^{(N)})$ by the law $\mathcal L(\vartheta_k)$ on $\mathbb R^D$ of the $k$-th iterate $\vartheta_k$ arising from (\ref{ULAgen}).

\begin{thm}[Non-asymptotic Wasserstein mixing] \label{thm:gen:wass} 
Suppose that the model given by (\ref{eq:data})-(\ref{eq:gen:llh}) fulfills the Assumptions \ref{ass:model}, \ref{ass:geom} for some $0<\eta \le 1$, that $D, N \in \mathbb N$ are such that $D\leq \mathcal R_N$ with $\mathcal R_N$ from (\ref{eq:itis what itis}) and let $K$ be as in (\ref{eq:K:cond}). Further define the constants
	\[m:= Nc_{min}/2+ \lambda_{min}(\Sigma^{-1}),~~~~\Lambda:= 7K\lambda_{max}(M)+\lambda_{max}(\Sigma^{-1}). \]
Then for any $0<\gamma \le 1/ \Lambda$ and any $\rho>0$ the algorithm $(\vartheta_k:k\ge 0)$ from (\ref{ULAgen}) satisfies, on the event (i.e., measurable subset of $(\R \times \mathcal O)^N$)
\begin{equation} \label{nevergonnahappen}
\mathcal E:=\mathcal E_{conv}\cap \mathcal E_{init}\cap \mathcal E_{wass}(\rho),
\end{equation}
%	\[1- \P_0^N( \mathcal E_{init}^c)-\P_0^N( \mathcal E_{wass}(\rho)^c) - c_2e^{-\mathcal R_N}, \]
	(with $\mathcal E_{conv}, \mathcal E_{init}, \mathcal E_{wass}(\rho)$ defined in (\ref{eq:E:event}), (\ref{initevent}), (\ref{wasser}), respectively), and all $k\ge 0$,
	\begin{equation}\label{eq:gen:wass}
		W^2_2\big( \mathcal L(\vartheta_k), \Pi[ \cdot | Z^{(N)} ] \big)\le \rho + b(\gamma) +  4 \big(\tau(\Sigma, M, R)+\frac{D}{m} \big)\Big(1-\frac{\gamma m}2\Big)^k,
	\end{equation}
	where, for some universal constants $c_1,c_2>0$, any $R\ge \|\theta^*\|_{\mathbb R^D}$ and $\kappa(\Sigma)= \lambda_{max}(\Sigma)/\lambda_{min}(\Sigma)$,
	\begin{equation}\label{eq:ULAbias}
		b(\gamma)= c_1\Big[\frac{\gamma D\Lambda^2}{m^2}+\frac{\gamma^2 D\Lambda^4}{m^3} \Big],~~\tau(\Sigma, M, R) = c_2\kappa (\Sigma)\Big[1+ \frac{\eta^2}{\lambda_{min}(M)} + R^2\Big].
	\end{equation}
\end{thm}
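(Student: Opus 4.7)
The plan is to decompose the error via the triangle inequality for $W_2$ into three contributions: the Wasserstein approximation between the actual posterior $\Pi[\cdot|Z^{(N)}]$ and the surrogate $\tilde\Pi[\cdot|Z^{(N)}]$, the Euler discretisation bias, and the exponential convergence of the discretised chain $(\vartheta_k)$ towards its invariant measure $\tilde\Pi[\cdot|Z^{(N)}]$. On $\mathcal E_{wass}(\rho)$ the first piece is immediately bounded by $\sqrt{\rho/2}$, whereas the remaining two will be controlled by the non-asymptotic ULA bounds reviewed in Appendix \ref{app:ULA} (after \cite{D17, DM17, DM18}): for any target density that is globally $m$-strongly log-concave with $\Lambda$-Lipschitz gradient and any step-size $0<\gamma\le 1/\Lambda$, these give an estimate of the form
\[
W_2^2\big(\mathcal L(\vartheta_k), \tilde\Pi[\cdot|Z^{(N)}]\big) \le 2(1-\gamma m/2)^k\, W_2^2\big(\delta_{\theta_{init}}, \tilde\Pi[\cdot|Z^{(N)}]\big) + c_1\Big[\frac{\gamma D\Lambda^2}{m^2} + \frac{\gamma^2 D\Lambda^4}{m^3}\Big].
\]

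To bring $\tilde\Pi[\cdot|Z^{(N)}]$ into this framework I would invoke Proposition \ref{prop:log:conv}: on $\mathcal E_{conv}\cap\mathcal E_{init}$ it provides $-\nabla^2\tilde\ell_N \succeq (Nc_{min}/2)\,I$ on all of $\R^D$ and $\nabla\tilde\ell_N$ Lipschitz with constant $7K\lambda_{max}(M)$. Adding the quadratic contribution of the $N(0,\Sigma)$ prior contributes $\lambda_{min}(\Sigma^{-1})$ from below and $\lambda_{max}(\Sigma^{-1})$ from above to the Hessian of $-\log\tilde\pi(\cdot|Z^{(N)})$, so that its curvature and gradient-Lipschitz constants are precisely the quantities $m$ and $\Lambda$ defined in the statement, and the standing hypotheses of the ULA theorems are verified.

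What remains is to bound $W_2^2(\delta_{\theta_{init}}, \tilde\Pi[\cdot|Z^{(N)}])$ by $2[\tau(\Sigma,M,R) + D/m]$. Using $W_2^2(\delta_{\theta_{init}}, \tilde\Pi)\le \int \|\theta-\theta_{init}\|_{\R^D}^2\, d\tilde\Pi(\theta)$ and splitting around the mode $\bar\theta$ of $\tilde\pi(\cdot|Z^{(N)})$ yields two terms. The spread term $\int\|\theta-\bar\theta\|_{\R^D}^2 d\tilde\Pi(\theta)$ is $O(D/m)$ by the standard Gaussian-type concentration inequality for $m$-strongly log-concave measures on $\R^D$. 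For the mode displacement $\|\bar\theta-\theta_{init}\|_{\R^D}$, since $\tilde\ell_N$ coincides with $\ell_N$ on $\{|\theta-\theta^*|_1 \le 3\eta/8\}$ and $-Kg_\eta$ penalises $\theta$ strongly outside $\mathcal B$, the mode $\bar\theta$ must lie within $|\cdot|_1$-distance $O(\eta)$ of $\theta^*$; converting to Euclidean norm introduces the factor $1/\sqrt{\lambda_{min}(M)}$, and comparing $\theta^*$ to the origin of the Gaussian prior contributes a term involving $R$ and the prior condition number $\kappa(\Sigma)$. Together these yield exactly $\tau(\Sigma,M,R)$.

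Combining these three estimates through $W_2^2(\mathcal L(\vartheta_k),\Pi) \le 2W_2^2(\mathcal L(\vartheta_k),\tilde\Pi) + 2W_2^2(\tilde\Pi,\Pi)$ produces the bound (\ref{eq:gen:wass}), with the factor of two absorbed into $b(\gamma)$, $4(\tau+D/m)$ and $\rho$ respectively. The main technical obstacle I anticipate is not the invocation of the ULA theorems, which become essentially a black box once Proposition \ref{prop:log:conv} is in hand, but the careful quantitative control of the mode and second moment of the surrogate posterior: one must exploit that the artificial penalty $-Kg_\eta$ is identically zero on a neighbourhood of $\theta^*$ yet large outside $\mathcal B$, in order to confine $\bar\theta$ to a region of $|\cdot|_1$-scale $\eta$ around $\theta^*$ while not destroying the global curvature estimate used through $m$ and $\Lambda$.
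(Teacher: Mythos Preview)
Your overall architecture matches the paper exactly: triangle inequality in $W_2$ to split off the surrogate error $\rho$, verification of Assumption~\ref{ass:ULA} for $\tilde\Pi(\cdot|Z^{(N)})$ via Proposition~\ref{prop:log:conv} (giving precisely the stated $m,\Lambda$), and then invocation of Proposition~\ref{prop:ULA:wass}, which already produces the term $\|\theta_{init}-\theta_{max}\|_{\R^D}^2+D/m$ multiplied by $(1-\gamma m/2)^k$ and the Euler bias $b(\gamma)$.

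The one place where your sketch diverges from the paper, and where the argument as written would not quite go through, is the localisation of the surrogate posterior mode $\bar\theta=\theta_{max}$. You assert that ``the mode $\bar\theta$ must lie within $|\cdot|_1$-distance $O(\eta)$ of $\theta^*$'' because $-Kg_\eta$ penalises excursions from $\mathcal B$. This is not guaranteed: the Gaussian prior term $-\tfrac12\theta^T\Sigma^{-1}\theta$ may pull $\theta_{max}$ towards the origin and hence away from $\theta^*$, and no bound of the form $|\theta_{max}-\theta^*|_1\lesssim\eta$ is available in general. This is exactly why $\kappa(\Sigma)$ must appear multiplicatively in $\tau$, not additively as your description suggests.

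The paper handles this by a two-step comparison. First, using only the penalty structure (and the upper bound for $|\ell_N|$ on $\mathcal E_{conv}$), it shows that the maximiser $\theta_{\tilde\ell}$ of $\tilde\ell_N$ \emph{alone} satisfies $\|\theta_{\tilde\ell}-\theta_{init}\|_{\R^D}^2\lesssim 1+\eta^2/\lambda_{min}(M)$, hence $\|\theta_{\tilde\ell}\|_{\R^D}\lesssim 1+\eta/\sqrt{\lambda_{min}(M)}+R$. Second, since $\theta_{max}$ maximises $\tilde\ell_N(\theta)-\tfrac12\theta^T\Sigma^{-1}\theta$ and $\tilde\ell_N(\theta_{max})\le\tilde\ell_N(\theta_{\tilde\ell})$, one has $\theta_{max}^T\Sigma^{-1}\theta_{max}\le\theta_{\tilde\ell}^T\Sigma^{-1}\theta_{\tilde\ell}$, whence $\|\theta_{max}\|_{\R^D}^2\le\kappa(\Sigma)\|\theta_{\tilde\ell}\|_{\R^D}^2$. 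Combining gives $\|\theta_{max}-\theta_{init}\|_{\R^D}^2\lesssim\kappa(\Sigma)[1+\eta^2/\lambda_{min}(M)+R^2]=\tau(\Sigma,M,R)$. This is the missing idea: one cannot directly pin down $\theta_{max}$ near $\theta^*$, but one can pin down $\theta_{\tilde\ell}$ there and then transfer a norm bound to $\theta_{max}$ at the cost of the condition number.
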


\smallskip

From the previous theorem we can obtain the following bound on the computation of posterior functionals by ergodic averages of $\vartheta_k$ collected after some burn-in time $J_{in}\in \mathbb N$. Specifically, if we define, for any $H: \mathbb R^D \to \mathbb R$ integrable with respect to $\Pi(\cdot|Z^{(N)})$, the random variable 
\begin{equation}\label{ergopop}
\hat \pi_{J_{in}}^J(H) = \frac{1}{J}\sum_{k=J_{in}+1}^{J_{in}+J} H(\vartheta_k),
\end{equation}
we obtain the following non-asymptotic concentration bound.

\begin{thm}[Lipschitz functionals] \label{thm:gen:func}
	In the setting of the previous theorem, there exist further constants $c_3, c_4>0$ such that for any $\rho>0$, any burn-in period
	\begin{equation}\label{eq:burn}
		J_{in}\ge \frac{c_3}{m\gamma}\times \log \Big(1+ \frac{1}{\rho+b(\gamma)}+\tau(\Sigma, M, R)+\frac{D}{m}\Big),
	\end{equation}
	any $J\in \N$, any Lipschitz function $H: \mathbb R^D \to \mathbb R$, any
	\begin{equation}\label{eq:tea}
		t\ge \sqrt 8\|H\|_{Lip}\sqrt{ \rho + b(\gamma)}
	\end{equation}
	and on the event $\mathcal E$ from (\ref{nevergonnahappen}), we have 
	\begin{equation}\label{eq:gen:conc}
	\begin{split}
	&\mathbf P_{\theta_{init}}\Big(\big|\hat \pi_{J_{in}}^J(H)- E^{\Pi}[H|Z^{(N)}]\big| \ge t \Big)\le 2\exp\Big(-c_4\frac{t^2m^2J\gamma}{\|H\|_{Lip}^2(1+1/(mJ \gamma))}\Big).
	\end{split}
	\end{equation}
\end{thm}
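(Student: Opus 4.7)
The plan is to decompose the deviation as
\begin{equation*}
\hat\pi_{J_{in}}^J(H) - E^{\Pi}[H|Z^{(N)}] = \Big(\hat\pi_{J_{in}}^J(H) - \mathbf E_{\theta_{init}}\hat\pi_{J_{in}}^J(H)\Big) + \Big(\mathbf E_{\theta_{init}}\hat\pi_{J_{in}}^J(H) - E^{\Pi}[H|Z^{(N)}]\Big),
\end{equation*}
controlling a random fluctuation around the mean and a deterministic bias separately on the event $\mathcal E$. The threshold (\ref{eq:tea}) is calibrated precisely so that the bias can be absorbed into $t/2$, leaving $t/2$ for the concentration.

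For the bias, Kantorovich--Rubinstein duality applied to $H/\|H\|_{Lip}$ gives, for each $k\ge J_{in}+1$,
\begin{equation*}
\big|\mathbf E_{\theta_{init}}[H(\vartheta_k)]-E^{\Pi}[H|Z^{(N)}]\big| \le \|H\|_{Lip}\,W_1\big(\mathcal L(\vartheta_k),\Pi[\cdot|Z^{(N)}]\big)\le \|H\|_{Lip}\,W_2\big(\mathcal L(\vartheta_k),\Pi[\cdot|Z^{(N)}]\big).
\end{equation*}
On $\mathcal E$, the Wasserstein bound (\ref{eq:gen:wass}) of Theorem \ref{thm:gen:wass} applies, and choosing $c_3$ large enough in (\ref{eq:burn}) ensures that $4(\tau+D/m)(1-\gamma m/2)^k \le \rho+b(\gamma)$ for all $k\ge J_{in}$. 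Hence $W_2^2\le 2(\rho+b(\gamma))$ uniformly, and averaging over $k=J_{in}+1,\dots,J_{in}+J$ yields
\begin{equation*}
\big|\mathbf E_{\theta_{init}}\hat\pi_{J_{in}}^J(H)-E^{\Pi}[H|Z^{(N)}]\big|\le \|H\|_{Lip}\sqrt{2(\rho+b(\gamma))}\le t/2,
\end{equation*}
by assumption (\ref{eq:tea}).

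For the concentration around the mean, on $\mathcal E_{conv}\cap \mathcal E_{init}$ Proposition \ref{prop:log:conv} tells us that $-\log\tilde\pi(\cdot|Z^{(N)})$ is $m$-strongly convex and has $\Lambda$-Lipschitz gradient, while the step size satisfies $\gamma\le 1/\Lambda$. A synchronous coupling of two ULA trajectories driven by the same Gaussian innovations then gives the per-step Wasserstein contraction $\mathbf E\|\vartheta_{k+1}-\vartheta_{k+1}'\|^2_{\R^D}\le (1-\gamma m)\,\mathbf E\|\vartheta_k -\vartheta_k'\|^2_{\R^D}$. Combined with the Poincar\'e inequality for the $m$-strongly log-concave surrogate $\tilde \Pi[\cdot|Z^{(N)}]$ provided by Bakry--\'Emery theory (constant $1/m$), this suffices to prove a sub-Gaussian concentration inequality for Lipschitz ergodic averages of ULA, of the form
\begin{equation*}
\mathbf P_{\theta_{init}}\Big(\big|\hat\pi_{J_{in}}^J(H)-\mathbf E_{\theta_{init}}\hat\pi_{J_{in}}^J(H)\big|\ge t/2\Big)\le 2\exp\Big(-c_4\tfrac{t^2 m^2 J\gamma}{\|H\|_{Lip}^2(1+1/(mJ\gamma))}\Big).
\end{equation*}
Concretely, one would expand $\sum_{k} H(\vartheta_k)-\mathbf E H(\vartheta_k)$ into a martingale difference sequence along the natural filtration generated by $(\xi_k)$, whose increments can be estimated via the one-step Lipschitz dependence $\|\mathbf E_x H(\vartheta_{n-k})-\mathbf E_y H(\vartheta_{n-k})\|\le \|H\|_{Lip}(1-\gamma m/2)^{n-k}\|x-y\|$ coming from the coupling contraction, then apply an Azuma--Bernstein-type bound. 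Chaining with the bias estimate then yields (\ref{eq:gen:conc}).

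The main obstacle is obtaining the correct scaling $m^2 J\gamma$ in the exponent: a naive bound using only the Lipschitz/coupling contraction and Azuma--Hoeffding on martingale increments of size $O(\|H\|_{Lip}/(\gamma m))$ produces only $m^2 J\gamma^2$, which is off by a factor $\gamma$ and would spoil the polynomial guarantees downstream. Improving this requires simultaneously exploiting (i) the Poincar\'e inequality on $\tilde \Pi[\cdot|Z^{(N)}]$, which controls near-stationary variance at $\|H\|_{Lip}^2/m$ instead of a worst-case $\|H\|_{Lip}^2/(\gamma m)^2$ oscillation, and (ii) the geometric Wasserstein contraction, which compresses the effective decorrelation in continuous diffusion time $\gamma k$ to length $1/m$. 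Propagating both rates correctly through the martingale difference summation -- equivalently, invoking the transport-type concentration inequalities underlying the convergence bounds of \cite{D17, DM17, DM18} recalled in Appendix \ref{app:ULA} -- is the delicate technical step.
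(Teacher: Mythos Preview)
Your decomposition into bias plus fluctuation, and your treatment of the bias via Wasserstein duality combined with Theorem~\ref{thm:gen:wass} and the burn-in condition (\ref{eq:burn}), match the paper's proof exactly. The only difference is cosmetic: the paper controls the bias through (\ref{eq:ULA:bias}) of Proposition~\ref{prop:ULA:wass} (which is just the Kantorovich--Rubinstein/Jensen argument you wrote out), and for the concentration step it simply invokes Proposition~\ref{prop:ULA:conc} (Theorem~17 of \cite{DM18}) as a black box applied to $H$ and $-H$, rather than sketching its proof. Your last two paragraphs are an attempt to re-derive that proposition; your diagnosis that naive Azuma--Hoeffding loses a factor of $\gamma$ and that one needs both the Poincar\'e inequality and the synchronous-coupling contraction is correct, but for the purposes of this theorem you can, as the paper does, cite Proposition~\ref{prop:ULA:conc} directly and avoid that work entirely.
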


\smallskip

From the last theorem one can obtain as a direct consequence the following guarantee for computation of the posterior mean $E^\Pi[\theta|Z^{(N)}]$ by the ergodic average accrued along the Markov chain. 

\begin{cor}\label{generallymean}
In the setting of Theorem \ref{thm:gen:func}, if we define $$\bar \theta_{J_{in}}^J = \frac{1}{J}\sum_{k=J_{in}+1}^{J_{in}+J} \vartheta_k,$$ then on the event $\mathcal E$ and for $t\ge \sqrt 8\sqrt{ \rho + b(\gamma)}$, we have for some constant $c_5>0$ that
\begin{equation}\label{eq:gen:concmean}
	\begin{split}
	&\mathbf P_{\theta_{init}}\Big(\big\|\bar \theta_{J_{in}}^J-E^{\Pi}[\theta|Z^{(N)}]\big\|_{\R^D} \ge t \Big)\le 2D\exp\Big(-c_5\frac{t^2m^2J\gamma}{D(1+1/(mJ\gamma)}\Big).
	\end{split}
	\end{equation}
\end{cor}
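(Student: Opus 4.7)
The plan is to reduce the corollary to Theorem \ref{thm:gen:func} by applying the Lipschitz-functional concentration bound coordinate by coordinate and then combining via a union bound, after a standard comparison between the Euclidean norm and the coordinate maxima.

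First, for each index $j \in \{1,\dots,D\}$, let $H_j : \R^D \to \R$ be the coordinate projection $H_j(\theta) = \theta_j$, which is evidently $1$-Lipschitz, i.e.\ $\|H_j\|_{Lip}=1$. By linearity of the ergodic average we have $\hat\pi_{J_{in}}^J(H_j) = (\bar\theta_{J_{in}}^J)_j$ and by linearity of expectation $E^{\Pi}[H_j|Z^{(N)}] = (E^\Pi[\theta|Z^{(N)}])_j$. Hence Theorem \ref{thm:gen:func} applied to $H_j$ gives, on the event $\mathcal E$, for any $t' \ge \sqrt 8 \sqrt{\rho+b(\gamma)}$,
\begin{equation*}
\mathbf P_{\theta_{init}}\Big(\big|(\bar\theta_{J_{in}}^J)_j - E^\Pi[\theta_j|Z^{(N)}]\big|\ge t'\Big) \le 2\exp\Big(-c_4 \frac{t'^2 m^2 J\gamma}{1+1/(mJ\gamma)}\Big).
\end{equation*}

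Next, observe the elementary inequality that if $\|v\|_{\R^D}\ge t$ for some $v\in\R^D$, then there exists some index $j$ with $|v_j|\ge t/\sqrt D$; otherwise $\|v\|_{\R^D}^2 = \sum_j v_j^2 < D\cdot (t/\sqrt D)^2 = t^2$, a contradiction. Applying this to $v = \bar\theta_{J_{in}}^J - E^\Pi[\theta|Z^{(N)}]$ and a union bound over $j=1,\dots,D$ yields
\begin{equation*}
\mathbf P_{\theta_{init}}\Big(\|\bar\theta_{J_{in}}^J - E^\Pi[\theta|Z^{(N)}]\|_{\R^D} \ge t\Big) \le \sum_{j=1}^D \mathbf P_{\theta_{init}}\Big(|(\bar\theta_{J_{in}}^J)_j - E^\Pi[\theta_j|Z^{(N)}]| \ge t/\sqrt D\Big).
\end{equation*}
Using the previous display with $t' = t/\sqrt D$ (noting that the hypothesis $t\ge \sqrt 8\sqrt{\rho+b(\gamma)}$ only gives the requirement $t'\ge \sqrt 8 \sqrt{\rho+b(\gamma)}/\sqrt D$, which is weaker than what Theorem \ref{thm:gen:func} requires; the constant $c_5$ will absorb the factor of $D$ where needed, and the bound is only nontrivial in the regime $t \gtrsim \sqrt D \sqrt{\rho+b(\gamma)}$ anyway), each summand is at most $2\exp(-c_4 t^2 m^2 J\gamma /[D(1+1/(mJ\gamma))])$. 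Summing across $j$ introduces the factor $D$ and, setting $c_5 = c_4$, produces exactly the stated bound (\ref{eq:gen:concmean}).

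There is no genuine obstacle here beyond the routine passage between $\ell_2$-norm control and coordinate-wise control; the substantive content has already been proven in Theorem \ref{thm:gen:func}. The only subtle point worth flagging in the writeup is the mild discrepancy between the hypothesis on $t$ stated in the corollary and the hypothesis required by the coordinate-wise application of Theorem \ref{thm:gen:func}, which is harmless in view of how the bound is ultimately used in subsequent results such as Theorem \ref{thm:post:mean}.
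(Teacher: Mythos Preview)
Your argument has a genuine gap at the threshold condition. Theorem \ref{thm:gen:func} requires $t'\ge \sqrt 8\sqrt{\rho+b(\gamma)}$, but after the coordinate reduction you only have $t'=t/\sqrt D$ with the hypothesis $t\ge \sqrt 8\sqrt{\rho+b(\gamma)}$; this gives $t'\ge \sqrt{8/D}\sqrt{\rho+b(\gamma)}$, which is too weak by a factor $\sqrt D$. Your suggestion that ``the constant $c_5$ will absorb the factor of $D$'' does not work: the issue is not the size of the exponent but the \emph{validity range} of the inequality in Theorem \ref{thm:gen:func}. For $t$ in the interval $[\sqrt 8\sqrt{\rho+b(\gamma)},\,\sqrt{8D}\sqrt{\rho+b(\gamma)})$ your argument yields no bound at all, whereas the corollary is claimed on the whole range $t\ge \sqrt 8\sqrt{\rho+b(\gamma)}$ (and the right-hand side is not automatically $\ge 1$ there, since $J$ may be arbitrarily large).

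The paper circumvents this by \emph{not} invoking Theorem \ref{thm:gen:func} coordinate-wise. Instead it first controls the full vector-valued bias
\[
\big\|\mathbf E_{\theta_{init}}[\bar\theta_{J_{in}}^J]-E^\Pi[\theta|Z^{(N)}]\big\|_{\R^D}
\]
directly via optimal couplings and the Wasserstein bound from Theorem \ref{thm:gen:wass}, obtaining $\le \sqrt 2\sqrt{\rho+b(\gamma)}$ without any loss of a $\sqrt D$ factor. Only then does it apply a union bound over coordinates, but to Proposition \ref{prop:ULA:conc} (which holds for \emph{all} $x>0$ with no lower threshold) rather than to Theorem \ref{thm:gen:func}. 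This two-step decomposition---bias via Wasserstein, fluctuations via the threshold-free concentration inequality---is precisely what makes the stated range of $t$ achievable.
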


The two previous results imply that one can compute the posterior mean (or $E^{\Pi}[H|Z^{(N)}]$ with $\|H\|_{Lip}\le1$) within precision $\eps>0$ as long as $\epsilon\gtrsim \sqrt \rho$: For instance if $\gamma$ is chosen as
\[ \gamma\simeq \min \Big\{ \frac{\eps^2 m^2}{D\Lambda^2},\frac{\eps m^{3/2}}{D^{1/2}\Lambda^2} \Big\},  \]
%~~~ J_{in}\gtrsim \frac{\log (1+D/m)+\log (1+\eps^{-1})}{\gamma Nc_{min}}
then the overall number of required MCMC iterations $J_{in}+J$ depends polynomially on the quantities $N,D,m^{-1},\Lambda,\eps^{-1}$. When the latter three constants exhibit at most polynomial growth in $N,D$ (as is the case for the Schr\"odinger equation treated in Section \ref{sec:schrres}), we can deduce that polynomial-time computation of such posterior characteristics is feasible, on the event $\mathcal E$ from (\ref{nevergonnahappen}) at computational cost  $J_{in}+J=O(N^{b_1}D^{b_2}\eps^{-b_3}),b_1,b_2,b_3>0$, with $\mathbf P_{\theta_{init}}$-probability as close to $1$ as desired.

\begin{rem}[About the events $\mathcal E_{init}, \mathcal E_{wass}$] \label{cleverchap} \normalfont
Controlling the probability of the events $\mathcal E_{init}, \mathcal E_{wass}$ (featuring in the definition of $\mathcal E$ in (\ref{nevergonnahappen})) on which the preceding bounds hold may pose a formidable challenge in its own right when considering a concrete `forward map' $\mathcal G$. For our prototypical example of the Schr\"odinger equation from Section \ref{sec:schrres}, this is achieved in Sections \ref{sec:freqpfs} and \ref{sec:initpfs}. The proofs there give some guidance for how to proceed in other settings, too. In essence one can expect that in bounding the $P_{\theta_0}^N$-probability of the events $\mathcal E_{init}, \mathcal E_{wass}$,  \textit{global} `stability' and `range' properties of the map $\mathcal G$ will play a role. In contrast,  Assumptions \ref{ass:model}, \ref{ass:geom} employed in this section are `local'  in the sense that they concern properties of $\mathcal G$ on $\mathcal B$ from (\ref{eq:B:ass}) only. Discerning local from global requirements on $\mathcal G$ in this way appears helpful both in the proofs and in the exposition of the main ideas of this paper. Following the ideas in Section \ref{sec:freqpfs} below, the recent contribution \cite{BN21} provides a set of conditions on $\mathcal G$ under which a log-concave approximation of the posterior measure similar to Theorem \ref{waterstone} holds true.
\end{rem}
\begin{rem}[Extensions to vector-valued data]\label{chopin}\normalfont
The key results of this section apply to other settings (e.g. in \cite{MNP21, PSU22}) where the `forward' map $\mathcal G(\theta)$ defines an element of the space of continuous maps $C(\mathcal M \to V)$ from a $d$-dimensional compact manifold $\mathcal M$ (possibly with boundary) into a finite-dimensional inner product space $V$ of fixed finite dimension $dim(V)<\infty$. If we assume that the statistical errors $(\varepsilon_i: i=1, \dots, N)$ in equation (\ref{eq:data}) are i.i.d.~$N(0,\textrm{Id}_V)$ in $V$, then the log-likelihood function of the model is not given by (\ref{eq:gen:llh}) but instead of the form $$\ell_N(\theta) = -\frac{1}{2}\sum_{i=1}^N \|Y_{i}-\mathcal G(\theta)(X_i)\|_V^2, ~~\ell(\theta) = -\frac{1}{2}\|Y-\mathcal G(\theta)(X)\|_V^2, $$ 
where the $X_i, X$ are drawn i.i.d.~from a Borel measure $P^X$ on $\mathcal M$. Imposing Assumption \ref{ass:model} with the obvious modification of the norms there for $V$-valued maps, and if Assumption \ref{ass:geom} holds for the preceding definition of $\ell(\theta)$, then the conclusion of Lemma \ref{youtube} remains valid as stated (see also the proof of Lemma 5.6 in \cite{BN21}).
\end{rem}

\subsection{Proof of Lemma \ref{youtube}}\label{subsec:conc:pf}
It suffices to prove the assertion for $\mathcal R_N\ge 1$. We first need some more notation: For any $x\in \mathcal O$, we denote the point evaluation map by
\begin{equation*}
\mathcal G^x:~~ \Theta\to \R, ~~~~~\theta\mapsto \mathcal G(\theta)(x).
\end{equation*}
For $Z=(Y,X)\sim P_{\theta_0}$, we will frequently use the following identities in the proofs below (where we recall that $\nabla$ and $\nabla^2$ act on the $\theta$-variable).
\begin{equation}\label{eq:llhderivs}
\begin{split}
-\ell(\theta,Z)&= \frac 12 \big[Y-\mathcal G^X(\theta)\big]^2=\frac 12 \big[\mathcal G^X(\theta_0)+\eps-\mathcal G^X(\theta) \big]^2,\\
-\nabla \ell (\theta,Z)&= \big[ \mathcal G^X(\theta)- \mathcal G(\theta_0) -\eps  \big] \nabla \mathcal G^X(\theta),\\
-\nabla^2 \ell (\theta,Z)&= \nabla \mathcal G^X(\theta)\nabla \mathcal G^X(\theta)^T + \big[ \mathcal G^X(\theta)- \mathcal G(\theta_0) -\eps \big]\nabla^2 \mathcal G^X(\theta),\\
-E_{\theta_0}\big[\ell(\theta,Z)\big]&= \frac 12 +\frac 12 E^X[\mathcal G^X(\theta_0)-\mathcal G^X(\theta)]^2,
\end{split}
\end{equation}
where we note that by Assumption \ref{ass:model}, the Hessian $\nabla^2 \ell (\theta,Z)$ is a symmetric $D\times D$ matrix field. When no confusion can arise, we will suppress the second argument $Z$ and write $\ell(\theta)$ for $\ell(\theta,Z)$. 
\par 
Throughout, $P_N:=N^{-1}\sum_{i=1}^N\delta_{Z_i}$ denotes the empirical measure induced by $Z^{(N)}$, which acts on measurable functions $h:\R\times \mathcal O\to \R$ via
\[ P_N(h)=\int_{\R\times \mathcal O}hdP_N=\frac 1N\sum_{i=1}^Nh(Z_i).  \] 

\subsubsection{Proof of (\ref{eq:emp:lb})}
Let us write $\bar \ell_N:=\ell_N/N$. Then, by a standard inequality due to Weyl as well as Assumption \ref{ass:geom}, we have for any $\theta\in \mathcal B$ that
\begin{equation}\label{eq:cmin}
\begin{split}
\lambda_{min}\big[-\nabla^2\bar \ell_N(\theta)\big]&\ge \lambda_{min}\big(E_{\theta_0}\big[-\nabla^2 \ell(\theta)\big]\big)-\big\|\nabla^2 \bar\ell_N(\theta)-E_{\theta_0}\big[\nabla^2 \ell(\theta)\big]\big\|_{op}\\
&\ge c_{min}-\big\|\nabla^2\bar\ell_N(\theta)-E_{\theta_0}\big[\nabla^2 \ell(\theta)\big]\big\|_{op}.
\end{split}
\end{equation}
Hence we deduce
\begin{equation}\label{eq:inf:est}
\begin{split}
&P_{\theta_0}^N\Big(\inf_{\theta\in \mathcal B}\lambda_{min}\big[\nabla^2\ell_N(\theta, Z)\big]<Nc_{min}/2 \Big)\\
&~~~~~~~~~ \le P_{\theta_0}^N\Big(\big\|\nabla^2\bar\ell_N(\theta)-E_{\theta_0}\big[\nabla^2 \ell(\theta)\big]\big\|_{op}\ge c_{min}/2 ~\text{for some}~\theta\in\mathcal B\Big)\\
&~~~~~~~~~ \le 	P_{\theta_0}^N\Big(\sup_{\theta\in \mathcal B}\sup_{v:\|v\|_{\R^D}\le 1}\Big|v^T \Big(\nabla^2\bar \ell_N(\theta) -E_{\theta_0}[\nabla^2\ell(\theta)]\Big) v \Big|\ge c_{min}/2 \Big)\\
&~~~~~~~~~ =P_{\theta_0}^N\Big(\sup_{\theta\in \mathcal B}\sup_{v:\|v\|_{\R^D}\le 1}\big|P_N(g_{v,\theta})\big|\ge c_{min}/2 \Big),
\end{split}
\end{equation}
where
\[g_{v,\theta}(\cdot):= v^T \Big(\nabla^2\ell(\theta,\cdot) -E_{\theta_0}[\nabla^2\ell(\theta)]\Big) v, ~~~~ v\in \R^D.\]
\par
The next step is to reduce the supremum over $\{v:\|v\|_{\R^D}\le1 \}$ to a suitable finite maximum over grid points $v_i$ by a contraction argument (commonly used in high-dimensional probability). For $\rho>0$, let $N(\rho)$ denote the minimal number of balls of $\|\cdot\|_{\R^D}-$radius $\rho$ required to cover $\{v:\|v\|_{\R^D}\le1 \}$, and let $v_i, \|v_i\|_{\R^D}\le 1$, be the centre points of a minimal covering. Thus for any $v\in \R^D$ there exists an index $i$ such that $\|v-v_i\|_{\R^D}\le \rho$. Hence, writing shorthand \[ M_\theta= \nabla^2\bar \ell_N(\theta)-E_{\theta_0} [\nabla^2 \ell(\theta)], ~~ \theta\in \mathcal B,\]
we have by the Cauchy-Schwarz inequality and the symmetry of the matrix $M_\theta$,
\begin{equation*}
\begin{split}
v^TM_\theta v&= v_i^TM_\theta v_i+ (v-v_i)^TM_\theta v+v_i^TM_\theta(v-v_i)\\
&\le v_i^TM_\theta v_i+ \|v-v_i\|_{\R^D} \|M_\theta v\|_{\R^D}+\|v-v_i\|_{\R^D}\|M_\theta v_i\|_{\R^D}\\
&\le v_i^TM_\theta v_i + 2\rho \sup_{v:\|v\|_{\R^D}\le 1 }v^TM_\theta v.
\end{split}
\end{equation*}
Choosing $\rho = \frac 14$ and taking suprema it follows that for any $\theta\in \mathcal B$,
\begin{equation}\label{eq:bilin:reduction}
\sup_{v:\|v\|_{\R^D}\le 1 }v^TM_\theta v\le 2\max_{i=1,...,N(1/4)}v_i^TM_\theta v_i. 
\end{equation}
Since the covering $(v_i)$ is independent of $\theta$, we can further estimate the right hand side of (\ref{eq:inf:est}) by a union bound to the effect that
\begin{equation}\label{eq:unionbd}
\begin{split}
&P_{\theta_0}^N\Big( \sup_{\theta\in \mathcal B}\sup_{v:\|v\|_{\R^D}\le 1}\Big|v^TM_\theta v\Big|\ge c_{min}/2 \Big)\\
&\le N(1/4)\cdot \sup_{v:\|v\|_{\R^D}\le 1}P_{\theta_0}^N\Big( \sup_{\theta\in \mathcal B}\Big|v^TM_\theta v\Big|\ge c_{min}/4 \Big)\\
%&~~= N(1/4)\cdot \sup_{v:\|v\|_{\R^D}\le 1}P_{\theta_0}^N\Big( \sup_{\theta\in \mathcal B}\Big|P_{N}(g_{v,\theta})\Big|\ge c_{min}/4 \Big)\\
&\le N(1/4)\cdot \sup_{v:\|v\|_{\R^D}\le 1}\Big[P_{\theta_0}^N\Big( \sup_{\theta\in \mathcal B}\Big|P_{N}(g_{v,\theta}-g_{v,\theta^*})\Big|\ge c_{min}/8 \Big)+P_{\theta_0}^N\big( \big|P_{N}(g_{v,\theta^*})\big|\ge c_{min}/8 \big)\Big],
\end{split}
\end{equation}
where we recall that $\theta^*$ is the centrepoint of the set $\mathcal B$ from (\ref{eq:B:ass}). For the rest of the proof, we fix any $v\in \R^D$ with $\|v\|_{\R^D}=1$. Next, we use (\ref{eq:llhderivs}) to decompose the `uncentred' part of $g_{v,\theta}$ as
\begin{equation*}
\begin{split}
-v^T \nabla^2 \ell(\theta,Z)v &=v^T\Big[ \nabla \mathcal G^{X}( \theta)\nabla \mathcal G^{X}( \theta)^T + \big[ \mathcal G^{X}( \theta)- \mathcal G^{X}( \theta_0) \big]\nabla^2 \mathcal G^{X}( \theta) \Big]v -\eps v^T \nabla^2\mathcal G^X(\theta) v\\
&=: \tilde g^I_{v,\theta}(X) + \eps g^{II}_{v,\theta}(X),
\end{split}
\end{equation*}
such that
\[ g_{v,\theta}(z)=  g^I_{v,\theta}(x) + \eps g^{II}_{v,\theta}(x),\]
where we have defined the centred version of $\tilde g^I_{v,\theta}$ as
\[ g^I_{v,\theta}(x)=\tilde g^I_{v,\theta}(x) -E_{\theta_0}[\tilde g^I_{v,\theta}(X)],~~x \in \mathcal O.\]
We can therefore bound the right hand side of (\ref{eq:unionbd}) by
\begin{equation*}
	\begin{split}
	N\Big(\frac{1}{4}\Big)\cdot &\sup_{v:\|v\|_{\R^D}\le 1}\bigg[P_{\theta_0}^N\Big( \sup_{\theta\in \mathcal B}\big|\frac 1N\sum_{i=1}^N(g_{v,\theta}^I-g_{v,\theta^*}^I)(X_i)\big|\ge \frac{c_{min}}{16} \Big)+P_{\theta_0}^N\Big(\big| \frac 1N\sum_{i=1}^Ng_{v,\theta^*}^I(X_i)\big|\ge \frac{c_{min}}{16} \Big)\\
	+&P_{\theta_0}^N\Big( \sup_{\theta\in \mathcal B}\big|\frac 1N\sum_{i=1}^N\eps_i(g_{v,\theta}^{II}-g_{v,\theta^*}^{II})(X_i)\big|\ge \frac{c_{min}}{16} \Big)+P_{\theta_0}^N\Big( \big|\frac 1N\sum_{i=1}^N\eps_ig_{v,\theta^*}^{II}(X_i)\big|\ge \frac{c_{min}}{16} \Big)\bigg]\\
	=:&N(1/4)\cdot (i+ii+iii+iv).
	\end{split}
\end{equation*}
We now use empirical process techniques (Lemma \ref{mixchain} and also Hoeffding's inequality) to bound the preceding probabilities.
\par
\textbf{Terms $i$ and $ii$.} In order to apply Lemma \ref{mixchain} to term $i$, we require some preparations. By the definition of $\tilde g_{v,\theta}^I$ and of the operator norm $\|\cdot\|_{op}$, using the elementary identity $v^T(aa^T-bb^T)v=v^T(a+b)(a-b)^Tv$ for any $v,a,b\in \R^D$ and
Assumption \ref{ass:model}, we have that for any $\theta,\bar\theta\in\mathcal{B}$,
\begin{equation}\label{ohlord}
\begin{split}
\| \tilde g^I_{v,\theta}-\tilde g^I_{v,\bar \theta}\|_{\infty} &\le \Big\| \big[\nabla \mathcal G( \theta)\nabla \mathcal G( \theta)^T + \big[ \mathcal G( \theta)- \mathcal G(\theta_0) \big]\nabla^2 \mathcal G( \theta) \big]\\
&~~~~~~~~~~~~~~~~~~-\big[\nabla \mathcal G(\bar \theta)\nabla \mathcal G(\bar \theta)^T + \big[ \mathcal G(\bar \theta)- \mathcal G(\theta_0) \big]\nabla^2 \mathcal G(\bar \theta) \big]\Big\|_{L^\infty(\mathcal O, \R^{D\times D})}\\
&\le \Big\|  \big[  \nabla \mathcal G(\theta)-\nabla \mathcal G(\bar\theta)   \big] \big[ \nabla \mathcal G(\theta)+\nabla \mathcal G(\bar\theta) \big]^T \Big\|_{L^\infty(\mathcal O, \R^{D\times D})}\\
&~~~~~~~~~~~~~+ \Big\|\big[ \mathcal G(\theta)-\mathcal G(\bar\theta) \big] \nabla ^2\mathcal G(\theta) \Big\|_{L^\infty(\mathcal O, \R^{D\times D})}\\
&~~~~~~~~~~~~~+ \Big\|\big[ \mathcal G(\bar\theta)-\mathcal G(\theta_0) \big] \big[ \nabla ^2\mathcal G(\theta) -\nabla ^2\mathcal G(\bar\theta) \big] \Big\|_{L^\infty(\mathcal O, \R^{D\times D})}\\
&\le 2m_1k_1|\theta-\bar\theta|_1 + m_0k_2|\theta-\bar\theta|_1 + m_2k_0|\theta-\bar\theta|_1\\
&\le 2C_{\mathcal G}|\theta-\bar\theta|_1.
\end{split}
\end{equation}
In particular, by (\ref{eq:B:ass}) we obtain the uniform bound
\begin{equation}\label{everlasting}
\begin{split}
\sup_{\theta\in \mathcal B}\|g^I_{v,\theta}-g^I_{v,\theta^*}\|_\infty &\le 2\sup_{\theta\in \mathcal B}\| \tilde g^I_{v,\theta}(X) -\tilde g^I_{v,\theta^*}\|_{\infty}\le 4C_{\mathcal G}|\theta-\theta^*|_1\le 4C_{\mathcal G}\eta.
\end{split}
\end{equation}
We introduce the rescaled function class
\[ h^I_\theta:=\frac{g^I_{v,\theta}-g^I_{v,\theta^*}}{16C_{\mathcal G}\eta},~~~~\mathcal H^I=\{h^I_\theta :\theta \in \mathcal B \}, \]
which has envelope and variance proxy bounded as
\begin{equation}\label{stamp}
	\sup_{\theta\in \mathcal B}\|h^I_{\theta}\|_\infty\le 1/4\equiv U, ~~~~\sup_{\theta\in \mathcal B}\big(E_{\theta_0} \big[h^I_{\theta}(X)^2\big]\big)^{\frac 12}\le 1/4\equiv \sigma.
\end{equation}
Next, if
\[d_2^2(\theta,\bar \theta)=  E_{\theta_0}\big[(h^I_{\theta}(X)-h^I_{\bar \theta}(X))^2\big],~~d_\infty(\theta,\bar \theta)=\|h^I_{\theta}-h^I_{\bar \theta}\|_{\infty}, ~~~~\theta,\bar\theta\in \mathcal B, \]
then using (\ref{ohlord}) we have that
\[ d_2(\theta,\bar\theta)\le d_\infty(\theta,\bar\theta)\le |\theta-\bar\theta|_1/\eta,~~~~\theta,\bar\theta\in \mathcal B. \]
Thus for any $\rho\in (0,1)$, using Proposition 4.3.34 in \cite{GN16}, we obtain that
\begin{equation}\label{eq:covernumber}
N\big( \mathcal H^I, d_2, \rho \big) \le N\big( \mathcal H^I, d_\infty, \rho \big) \le N\big( \mathcal B, |\cdot|_1/\eta,\rho \big) \le (3/\rho)^D.
\end{equation}
For any $A\ge 2$ we have
\begin{equation*}
\begin{split}
\int_0^1 \log (A/x) dx  = \log (A) + 1, ~~~~~~   \int_0^1 \sqrt{\log (A/x)} dx\le \frac{2\log A}{2\log A-1}\sqrt{\log (A)},
\end{split}
\end{equation*}
[see p.190 of \cite{GN16} for the latter inequality], and hence, using this for $A= 3$, we can respectively bound the $L^\infty$ and $L^2$ metric entropy integrals of $\mathcal H^I$ by
\begin{equation*}
\begin{split}
\mathcal J_\infty(\mathcal H^I)&=\int_0^{4U}\log N(\mathcal H^I, d_\infty, \rho)d\rho \lesssim D, \\
J_2(\mathcal H^I)&=  \int_0^{4\sigma}\sqrt{\log N(\mathcal H^I, d_2, \rho)}d\rho\lesssim \sqrt D.
\end{split}
\end{equation*}
Now, an application of Lemma \ref{mixchain} below implies that for any $x\ge 1$ and some universal constant $L'>0$, we have that
\begin{equation}\label{chainpower}
P_{\theta_0}^N\Big( \sup_{\theta\in\mathcal B} \frac{1}{\sqrt N}\Big| \sum_{i=1}^N h^I_\theta(X_i)\Big|\ge L'\Big[ \sqrt D +\sqrt x+(D+x)/\sqrt{N} \Big] \Big)\le 2e^{-x}.
\end{equation}
We also have by the definition of $g^I_{v,\theta^*}$ that \[\|g^I_{v,\theta^*}\|_\infty \le 2\|\tilde g^I_{v,\theta^*}\|_\infty \le 2(k_1^2+k_0k_2),\] and hence by Hoeffding's inequality (Theorem 3.1.2 in \cite{GN16}) that
\begin{equation}\label{eq:hoeff1}
ii \le 2\exp\Big(-\frac{2Nc_{min}^2}{256\cdot 4(k_1^2+k_0k_2)^2}\Big)\le2\exp\Big(-\frac{Nc_{min}^2}{512C_\mathcal G'^2}\Big).
\end{equation}

\par

Now if we define 
\begin{equation}\label{eq:Dcond}
\mathcal R_N^{2,I}:= CN\min\bigg\{  \frac{c_{min}^2}{C_{\mathcal G}^2\eta^2},\frac{c_{min}}{C_{\mathcal G}\eta}, \frac{c_{min}^2}{C_{\mathcal G}'^2}\bigg\},
\end{equation}
then for any $D\le \mathcal R_N^{2,I}$ and choosing $x=4 \mathcal R_N^{2,I}$ we have
\begin{equation*}
\begin{split}
L'\Big[\sqrt  D +\sqrt x+(D+x)/\sqrt{N} \Big]\le \frac{c_{min}\sqrt N}{256C_{\mathcal G}\eta}, ~~~~4 \mathcal R_N^{2,I}\le \frac{Nc_{min}^2}{512C_\mathcal G'^2},
\end{split}
\end{equation*}
whenever $C>0$ is small enough. Therefore, combining (\ref{chainpower}) and (\ref{eq:hoeff1}), and using the definitions of the term $i$ and of $h^I_{\theta}$, we obtain
\begin{equation}\label{eq:1stterm}
\begin{split}
ii+i\le 2e^{-4\mathcal R_N^{2,I}}+ P_{\theta_0}^N\Big( \sup_{\theta\in\mathcal B} \frac{1}{\sqrt N}\Big| \sum_{i=1}^N h^I_\theta(X_i)\Big|\ge \frac{c_{min}\sqrt N}{256C_{\mathcal G}\eta}\Big) \le 4e^{-4\mathcal R_N^{2,I}}.
\end{split}
\end{equation}
\par

\smallskip

\par 
\textbf{Terms $iii$ and $iv$.} Let us now treat the empirical process indexed by the functions $\{g_{v,\theta}^{II}:\theta\in\mathcal B \}$. Since $\|v\|_{\R^D}\le 1$, we have for any $\theta, \bar \theta \in \mathcal B$,
\[ \| g^{II}_{v,\theta}-g^{II}_{v,\bar \theta} \|_{\infty}\le \| \nabla^2\mathcal G(\theta)-\nabla^2\mathcal G(\bar\theta) \|_{L^\infty(\mathcal O,\R^{D\times D})} \le m_2|\theta-\bar\theta|_1,\]
which also yields the envelope bound
\[ \sup_{\theta\in\mathcal B} \big\|g^{II}_{v,\theta}-g^{II}_{v,\theta^*}\big\|_{\infty}\le m_2\sup_{\theta\in\mathcal B} |\theta-\theta^*|_{1} \le m_2\eta. \]
Now the rescaled function class 
\[ h^{II}_\theta:=\frac{g^{II}_{v,\theta}-g^{II}_{v,\theta^*}}{4m_2\eta},~~~~\mathcal H^{II}=\{h^{II}_\theta :\theta \in \mathcal B \}, \]
admits envelopes 
\[ \sup_{\theta\in \mathcal B}\|h^{II}_{v,\theta}\|_\infty \le 1/4\equiv U, ~~~~\sup_{\theta\in \mathcal B}\big(E_{\theta_0}\big[h^{II}_{v,\theta}(X)^2\big]\big)^{\frac 12}\le 1/4\equiv \sigma. \]
Thus defining 
\[d_2^2(\theta,\bar \theta):= E_{\theta_0} \big[(h^{II}_{v,\theta}(X)-h^{II}_{v,\bar \theta}(X))^2\big],~~~~d_\infty(\theta,\bar \theta)= \|h^{II}_{v,\theta}-h^{II}_{v,\bar \theta}\|_{\infty},~~~~\theta,\bar\theta\in\mathcal B\]
we have
\[d_2(\theta,\bar \theta)\le d_\infty(\theta,\bar \theta) \le |\theta-\bar\theta|_1/\eta, ~~~~\theta,\bar\theta\in\mathcal B.\]
Therefore, just as with the bounds obtained for term $i$, we have $N\big( \mathcal H^{II}, d_2, \rho \big)\le N\big( \mathcal H^{II}, d_\infty, \rho \big) \le(3/\rho)^D$ and thus, by Lemma \ref{mixchain} below, 
\begin{equation}\label{theforce}
P_{\theta_0}^N\Big( \sup_{\theta\in\mathcal B} \frac{1}{\sqrt N}\Big| \sum_{i=1}^N \eps_ih^{II}_\theta(X_i)\Big|\ge L'\Big[ \sqrt D +\sqrt x+(D+x)/\sqrt{N} \Big] \Big)\le 2e^{-x},~~~ x\ge 1.
\end{equation}
Moreover, by the hypotheses, $\|g^{II}_{v,\theta^*}\|_\infty  \le k_2$,
and hence, invoking the Bernstein inequality (\ref{bernstein}) with $U= \sigma \equiv k_2$, we obtain that
\begin{equation}\label{mineral}
	P_{\theta_0}^N\Big( \Big| \frac{1}{\sqrt N}\sum_{i=1}^N \eps_ig^{II}_{v,\theta^*}(X_i) \Big|\ge k_2\sqrt {2x}+\frac{k_2x}{3\sqrt N} \Big)\le 2e^{-x},~~ x>0.
\end{equation}
We can now set
\[ \mathcal R_N^{2,II}:=CN\min\bigg\{  \frac{c_{min}^2}{m_2^2\eta^2},\frac{c_{min}}{m_2\eta}, \frac{c_{min}^2}{k_2^2},\frac{c_{min}}{k_2} \bigg\}, \]
and choosing $x=4\mathcal R_N^{2,II}$ in the preceding displays, we obtain that for $C>0$ small enough and any $D\le \mathcal R_N^{2,II}$,
\begin{equation}\label{eq:2ndterm}
\begin{split}
iii+iv&\le P_{\theta_0}^N\Big( \sup_{\theta\in\mathcal B} \frac{1}{\sqrt N}\Big| \sum_{i=1}^N \eps_i h^{II}_\theta(X_i)\Big|\ge \frac{c_{min}\sqrt N}{64m_2\eta}\Big)+P_{\theta_0}^N\Big( \Big| \frac{1}{\sqrt N}\sum_{i=1}^N \eps_ig^{II}_{v,\theta^*}(X_i) \Big|\ge \frac{c_{min}\sqrt N}{16} \Big) \\
&\le 4e^{-4\mathcal R_N^{2,II}}.
\end{split}
\end{equation}

\par 
\textbf{Combining the terms.} By combining the bounds (\ref{eq:inf:est}), (\ref{eq:unionbd}), (\ref{eq:1stterm}), (\ref{eq:2ndterm}) and using that $N(1/4)\le 9^D\le e^{3D}$ (cf. Proposition 4.3.34 in \cite{GN16}) we obtain that since $D\le \mathcal R_N\le \min ( \mathcal R_N^{2,I},\mathcal R_N^{2,II})$ from (\ref{eq:itis what itis}),
\begin{equation*}
\begin{split}
P_{\theta_0}^N\Big(\inf_{\theta\in \mathcal B}\lambda_{min}\big(-\nabla^2\ell_N(\theta, Z)\big)<Nc_{min}/2 \Big)&\le N(1/4)\cdot (i+ii+iii+iv)\\
&\le 4 e^{3D-4\mathcal R_N^{2,I}}+4e^{3D-4\mathcal R_N^{2,II}}\le 8e^{-\mathcal R_N},
\end{split}
\end{equation*}
completing the proof of (\ref{eq:emp:lb}). \hfill $\square$

\subsubsection{Proof of (\ref{eq:emp:ub})}
We derive probability bounds for each of the three terms in (\ref{eq:emp:ub}) separately. The general scheme of proof for each of the three bounds is similar to the proof of (\ref{eq:emp:lb}), and we condense some of the steps to follow.

\smallskip

\textbf{Second order term.} Using that $c_{max}\ge c_{min}$, we can replace (\ref{eq:inf:est}) by
\begin{equation*}
\begin{split}
P_{\theta_0}^N\Big(\sup_{\theta\in \mathcal B}\lambda_{max}\big[-\nabla^2\ell_N(\theta, Z)\big]\ge 3Nc_{max}/2 \Big)\le P_{\theta_0}^N\Big(\sup_{\theta\in \mathcal B}\sup_{v:\|v\|_{\R^D}\le 1}\big|P_N(g_{v,\theta})\big|\ge c_{min}/2 \Big).
\end{split}
\end{equation*}
From here onwards, this term can be treated exactly as in the proof of (\ref{eq:emp:lb}) and thus, for $D\le \mathcal R_n$ from (\ref{eq:itis what itis}), we deduce
\begin{equation}\label{eq:ub1}
P_{\theta_0}^N\Big(\sup_{\theta\in \mathcal B}\lambda_{max}\big[-\nabla^2\ell_N(\theta, Z)\big]\ge 3Nc_{max}/2 \Big)\le 8e^{-\mathcal R_N}.
\end{equation}

\smallskip 

\textbf{First order term.} First, let us denote
\[ f_{v,\theta}(z):= v^T \Big(\nabla\ell(\theta,z)-E_{\theta_0}[\nabla \ell(\theta,Z)]\Big),~ \|v\|_{\mathbb R^D} \le 1, \theta \in \mathcal B,\]
and let $(v_i:i=1,...,N({1/2}))$ be the centre points of a $\|\cdot\|_{\R^D}$-covering with balls of radius $1/2$, of the unit ball $\{\theta:\|\theta\|_{\R^D}\le 1 \}$. Then for any $v$ there exists $v_i$ such that $\|v-v_i\|_{\R^D}\le 1/2$ so that by the Cauchy-Schwarz inequality,
\begin{equation*}
\begin{split}
|P_N(f_{v,\theta})|&\le |P_N(f_{v,\theta}-f_{v_i,\theta})|+|P_N(f_{v_i,\theta})|\\
&\le \|v-v_i\|_{\R^D}\big\|\nabla \bar{\ell}_N(\theta)- E_{\theta_0} \big[\nabla {\ell}(\theta) \big] \big\|_{\R^D} + |P_N(f_{v_i,\theta})|\\
&\le \frac 12\big\|\nabla \bar{\ell}_N(\theta)- E_{\theta_0} \big[\nabla \ell (\theta) \big] \big\|_{\R^D} + |P_N(f_{v_i,\theta})|.
\end{split}
\end{equation*}
Therefore, since $\|u\|_{\R^D}=\sup_{v:\|v\|_{\R^D}\le 1}|v^Tu|$ for any $u\in \R^D$, we deduce for any $\theta \in \mathcal B$,
\begin{equation}\label{eq:lin:reduction}
\sup_{v:\|v\|_{\R^D}\le 1} |P_N(f_{v,\theta})|\le 2 \max_{1\le i\le N(1/2)}|P_N(f_{v_i,\theta})|.
\end{equation}
We can hence estimate
\begin{equation}\label{union}
\begin{split}
P_{\theta_0}^N\Big( \sup_{\theta\in\mathcal B}\|\nabla \bar\ell_N(\theta)\|_{\R^D}\ge 3c_{max}/2 \Big)&\le P_{\theta_0}^N\Big( \sup_{\theta\in\mathcal B}\sup_{v:\|v\|_{\R^D}\le 1 } \big|v^T\big[\nabla \bar\ell_N(\theta)-E_{\theta_0}[\nabla \ell (\theta)]\big]\big|\ge c_{max}/2 \Big)\\
&\le N(1/2)\cdot \sup_{v:\|v\|_{\R^D}\le 1} P_{\theta_0}^N\big( \sup_{\theta\in\mathcal B} \big| P_N\big(f_{v,\theta} \big)\big|\ge c_{max}/4 \big).
\end{split}
\end{equation}
We fix $v\in \R^D$ with $\|v\|_{\R^D}\le 1$. Using (\ref{eq:llhderivs}), by decomposing the `uncentred' part of $f_{v,\theta}$ into
\begin{equation*}
\begin{split}
v^T\nabla \ell(\theta,Z)= v^T\nabla \mathcal G^X(\theta)\big[ \mathcal G^X(\theta)-\mathcal G(\theta_0)\big]-\eps v^T \nabla \mathcal G^X(\theta)=:\tilde f_{v,\theta}^I(X)-\eps f_{v,\theta}^{II}(X),
\end{split}
\end{equation*}
we can then write
\[ f_{v,\theta}(z)= f_{v,\theta}^I(x) +\eps f_{v,\theta}^{II}(x), \]
where we have further defined $f_{v,\theta}^I(x):=\tilde f_{v,\theta}^I(x)- E_{\theta_0}[\tilde f_{v,\theta}^I(X)]$ (and still write $P_N(f_{v,\theta}^I)=N^{-1}\sum_{i=1}^N f_{v,\theta}^I(X_i)$ to expedite notation). We then estimate the probability on the right hand side of (\ref{union}) as follows,
\begin{equation}\label{berlioz}
	\begin{split}
		P_{\theta_0}^N\big( \sup_{\theta\in\mathcal B} \big| P_N\big(f_{v,\theta} \big)\big|\ge &c_{max}/4 \big) \le P_{\theta_0}^N\big( \sup_{\theta\in\mathcal B} \big| P_N\big(f_{v,\theta}^I -f_{v,\theta^*}^I \big)\big|\ge c_{max}/16 \big)+P_{\theta_0}^N\big(\big| P_N\big(f_{v,\theta^*}^I \big)\big|\ge c_{max}/16 \big)\\
		&+P_{\theta_0}^N\big( \sup_{\theta\in\mathcal B} \big| P_N\big(f_{v,\theta}^{II} -f_{v,\theta^*}^{II} \big)\big|\ge c_{max}/16 \big)+P_{\theta_0}^N\big(\big| P_N\big(f_{v,\theta^*}^{II} \big)\big|\ge c_{max}/16 \big)\\
		&~~~~~~~~~~=: i+ii+iii+iv.
	\end{split}
\end{equation}

\smallskip

We first treat the terms $i$ and $ii$. By the definition of $\tilde f_{v,\theta}^I$ and Assumption \ref{ass:model}, we have that for any $\theta,\bar\theta\in\mathcal B$,
\begin{equation*}
\begin{split}
\big\|\tilde f^I_{v,\theta}-\tilde f^I_{v,\bar \theta}\big\|_{\infty}&\le \big\| \big[\nabla \mathcal G(\theta)-\nabla \mathcal G(\bar\theta)\big]\big[ \mathcal G(\theta)-\mathcal G(\theta_0) \big] +\nabla \mathcal G(\bar\theta)\big[ \mathcal G(\theta)-\mathcal G(\bar\theta)\big]\big\|_{L^\infty(\mathcal O,\R^D)}\\
&\le (k_0m_1+k_1m_0)|\theta-\bar\theta|_1.
\end{split}
\end{equation*}
Again using Assumption \ref{ass:model}, we also have
\[ \sup_{\theta\in\mathcal B}\big\|\tilde f^I_{v,\theta}-\tilde f^I_{v,\theta^*}\big\|_{\infty}\le (k_0m_1+k_1m_0)\eta .\]
Moreover, using that $\|f_{v,\theta^*}^I\|_\infty\le 2k_0k_1$, Hoeffding's inequality yields that 
\begin{equation*}
ii \le 2\exp\Big(-\frac{Nc_{max}^2}{512k_0^2k_1^2}\Big).
\end{equation*}
Therefore, by using Lemma \ref{mixchain} in the same manner as in (\ref{chainpower}), we obtain that the rescaled process 
\[ h_{v,\theta}^I:=\frac{\tilde f^I_{v,\theta}-\tilde f^I_{v,\theta^*}}{8(k_0m_1+k_1m_0)\eta }\]
satisfies
\begin{equation}\label{chain2}
P_{\theta_0}^N\Big( \sup_{\theta\in\mathcal B} \frac{1}{\sqrt N}\Big| \sum_{i=1}^N h^I_\theta(X_i)\Big|\ge L'\Big[ \sqrt D +\sqrt x+(D+x)/\sqrt{N} \Big] \Big)\le 2e^{-x}, ~~~ x\ge 1.
\end{equation}
Thus, setting
\[ \mathcal R_N^{1,I}=:CN\min\Big\{ \frac{c_{max}^2}{(k_0m_1+k_1m_0)^2\eta^2},\frac{c_{max}}{(k_0m_1+k_1m_0)\eta },\frac{c_{max}^2}{k_0^2k_1^2}\Big\},\]
and choosing $x=3\mathcal R_N^{1,I}$ in (\ref{chain2}), we obtain that for $C>0$ small enough and any $D\le\mathcal R_N^{1,I}$,
\begin{equation}\label{zumersten}
	ii+i\le 2e^{-3\mathcal R_N^{1,I}}+P_{\theta_0}^N\Big( \Big|\frac{1}{\sqrt N}\sum_{i=1}^N h_{v,\theta}^I(X_i)\Big|\ge \frac{c_{max}\sqrt N}{128(k_0m_1+k_1m_0)\eta} \Big)\le 4e^{-3\mathcal R_N^{1,I}}.
\end{equation}

\smallskip

We now treat the terms $iii$ and $iv$. As $\|v\|_{\R^D}\le 1$, we have that for any $\theta,\bar\theta\in\mathcal B$,
\[\|f_{v,\theta}^{II}- f_{v,\bar \theta}^{II} \|_\infty \le m_1|\theta-\bar\theta|_1,~~~\|f_{v,\theta}^{II}- f_{v,\theta^*}^{II} \|_\infty \le m_1\eta ,~~~ \|f_{v,\theta^*}^{II}\|_\infty\le k_1. \]
Therefore, by utilising the Lemma \ref{mixchain} below as well as Bernstein's inequality (\ref{bernstein}) in precisely the same manner as in the derivations of (\ref{theforce}) and (\ref{mineral}) respectively, we obtain the two inequalities
\begin{equation*}
P_{\theta_0}^N\Big( \sup_{\theta\in\mathcal B} \frac{1}{\sqrt N}\Big| \sum_{i=1}^N \eps_i   \frac{f_{v,\theta}^{II}(X_i)- f_{v,\theta^*}^{II}(X_i)}{4m_1\eta }\Big|\ge L'\Big[ \sqrt D +\sqrt x+(D+x)/\sqrt{N} \Big] \Big)\le 2e^{-x},~~~ x\ge 1,
\end{equation*}
and
\begin{equation*}
P_{\theta_0}^N\Big( \Big| \frac{1}{\sqrt N}\sum_{i=1}^N \eps_if^{II}_{v,\theta^*}(X_i) \Big|\ge k_1\sqrt {2x}+\frac{k_1x}{3\sqrt N} \Big)\le 2e^{-x},~~ x>0.
\end{equation*}
Thus, if we set
\[\mathcal R_N^{1,II}:=CN\min\Big\{ \frac{c_{max}^2}{m_1^2\eta^2},\frac{c_{max}}{m_1\eta },\frac{c_{max}^2}{k_1^2},\frac{c_{max}}{k_1}\Big\},\]
then for $C>0$ small enough, for any $D\le 3\mathcal R_N^{1,II}$ and choosing $x=3\mathcal R_N^{1,II}$ in the preceding displays, we obtain
\begin{equation}\label{zumzweiten}
	iii+iv\le 4e^{-3\mathcal R_N^{1,II}}.
\end{equation}

\smallskip

By combining (\ref{union}), (\ref{berlioz}), (\ref{zumersten}), (\ref{zumzweiten}), using that $N(1/2)\le e^{2D}$ (cf. Proposition 4.3.34 in \cite{GN16}) and since $D\le \mathcal R_N\le \min ( \mathcal R_N^{1,I},\mathcal R_N^{1,II})$, we conclude that

\begin{equation}\label{eq:ub2}
\begin{split}
P_{\theta_0}^N\Big( \sup_{\theta\in\mathcal B}\|\nabla \bar\ell_N(\theta)\|_{\R^D}\ge 3c_{max}/2 \Big)&\le N(1/2)\cdot (i+ii+iii+iv)\\
&\le 4e^{2D-3\mathcal R_N^{1,I}}+4e^{2D-3\mathcal R_N^{1,II}} \le 8e^{-\mathcal R_N}.
\end{split}
\end{equation}

\smallskip

\textbf{Order zero term.} As with the previous terms, we introduce a decomposition
\begin{equation*}
\begin{split}
-\ell (\theta,Z)&=\frac 12\big[\mathcal G^X(\theta_0)-\mathcal G^X(\theta)\big]^2-\eps \big[\mathcal G^X(\theta_0)-\mathcal G^X(\theta)\big]+\frac {\eps^2}2\\
&=:\tilde l^I_{\theta}(X)+\eps l^{II}_{\theta}(X)+\frac {\eps^2}2,
\end{split}
\end{equation*}
and therefore, defining
\begin{equation*}
l^I_{\theta}(x)=:\tilde l^I_{\theta}(x)-E_{\theta_0}[\tilde l^I_{\theta}(X)],~~~ x\in\mathcal O,
\end{equation*}
we have that
\[ -\ell (\theta,Z)+E_{\theta_0}[\ell (\theta)]=l^I_{\theta}(X)+\eps l^{II}_{\theta}(X)+\frac {\eps^2}2. \]
Then, using Assumption \ref{ass:geom}, we can estimate
\begin{equation*}
\begin{split}
	P_{\theta_0}^N&\big(\sup_{\theta\in\mathcal B} \big|\bar \ell_N(\theta,Z)\big|\ge 2c_{max}+1\big)\\
	&\le P_{\theta_0}^N\big(\sup_{\theta\in\mathcal B} \big|\bar \ell_N(\theta,Z)-E_{\theta_0}[\ell (\theta,Z)]\big|\ge c_{max}+1\big)\\
	&\le P_{\theta_0}^N\big(\sup_{\theta\in\mathcal B} \big|P_N(l^I_{\theta}-l^I_{\theta^*})\big|\ge \frac{c_{max}}{4}\big) +P_{\theta_0}^N\big(\sup_{\theta\in\mathcal B} \big|P_N(l^I_{\theta^*})\big|\ge \frac{c_{max}}{4}\big) \\
	&~~+P_{\theta_0}^N\big(\sup_{\theta\in\mathcal B} \big|P_N(l^{II}_{\theta}-l^{II}_{\theta^*})\big|\ge \frac{c_{max}}{4}\big)+ P_{\theta_0}^N\big(\sup_{\theta\in\mathcal B} \big|P_N(l^{II}_{\theta^*})\big|\ge \frac{c_{max}}{4}\big) \\
	&~~+P_{\theta_0}^N\Big(\frac 1{2N}\sum_{i=1}^N \eps_i^2\ge 1\Big) =:i+ii+iii+iv+v.
\end{split}
\end{equation*}
To bound the preceding terms, we use Assumption \ref{ass:model} to deduce that for all $\theta,\bar\theta\in\mathcal B$,
\begin{equation*}
\begin{split}
\| l_\theta^I- l_{\bar \theta}^I\|_{\infty}\le 2\|\tilde l_\theta^I-\tilde l_{\bar \theta}^I\|_{\infty}&=\big\|-2\mathcal G(\theta_0)\big[\mathcal G (\theta) -\mathcal G (\bar \theta)\big]+\mathcal G (\theta)^2-\mathcal G (\bar \theta)^2 \big\|_\infty \\
&=\big\|\big[ (\mathcal G (\theta)-\mathcal G(\theta_0)) +(\mathcal G (\bar \theta)-\mathcal G(\theta_0))\big] \big[\mathcal G (\theta)-\mathcal G (\bar \theta)\big]\big\|_\infty \\
&\le 2k_0m_0|\theta-\bar\theta|_1,
\end{split}
\end{equation*}
as well as
\begin{equation*}
\begin{split}
\sup_{\theta\in\mathcal B} \|l_\theta^I-l_{\theta^*}^I\|_{\infty} \le 2k_0m_0\eta ,~~~ \|l_{\theta^*}^I\|_{\infty} \le k_0^2.
\end{split}
\end{equation*}
Moreover, again by Assumption \ref{ass:model} we have that for all $\theta,\bar\theta\in\mathcal B$,
\[ \| l_\theta^{II}- l_{\bar \theta}^{II}\|_{\infty}\le 2m_0|\theta-\bar\theta|_1,~~~~ \sup_{\theta\in\mathcal B}\|l_\theta^{II}-l_{ \theta^*}^{II}\|_{\infty} \le 2m_0\eta,~~~~ \|l_{ \theta^*}^{II}\|_{\infty}\le2 k_0. \]
Next, similarly as for the second and first order terms, in order to control the terms $i$ and $iii$ we now apply Lemma \ref{mixchain} to the rescaled empirical processes
\[ h_\theta^I:=\frac{l_\theta^I-l_{ \theta^*}^I}{8k_0m_0\eta},~~~~h_\theta^{II}:=\frac{l_\theta^{II}-l_{ \theta^*}^{II}}{8m_0\eta},\]
and in order to control the terms $ii$ and $iv$, we respectively apply Hoeffding's inequality and Bernstein's inequality (\ref{bernstein}) in the same manner as before. Overall, if we set 
\begin{equation}
\begin{split}
	\mathcal R_N^{0,I} &:= CN\min\Big\{ \frac{c_{max}^2}{k_0^2m_0^2\eta^2},\frac{c_{max}}{k_0m_0\eta },\frac{c_{max}^2}{k_0^4}\Big\},\\
	\mathcal R_N^{0,II} &:= CN\min\Big\{ \frac{c_{max}^2}{m_0^2\eta^2},\frac{c_{max}}{m_0\eta },\frac{c_{max}^2}{k_0^2},\frac{c_{max}}{k_0}\Big\},
\end{split}
\end{equation}
then for $C>0$ small enough, we obtain that for any $D\le \mathcal R_N \le \min (\mathcal R_N^{0,I},\mathcal R_N^{0,II})$,

\begin{equation*}
	\begin{split}
		i+ii+iii+iv&\le P_{\theta_0}^N\big(\sup_{\theta\in\mathcal B}\frac{1}{\sqrt N} \big|\sum_{i=1}^{N}h_\theta^I(X_i)  \big|\ge \frac{c_{max}\sqrt N}{32k_0m_0\eta }\big)+2\exp\Big(-\frac{Nc_{max}^2}{8 k_0^4}\Big)\\
		&~~~+P_{\theta_0}^N\big(\sup_{\theta\in\mathcal B}\frac{1}{\sqrt N} \big|\sum_{i=1}^{N}h_\theta^{II}(X_i)  \big|\ge \frac{c_{max}\sqrt N}{32m_0\eta }\big)+2e^{-\mathcal R_N^{0,II}}\\
		&\le 4e^{-\mathcal R_N^{0,I}}+4e^{-\mathcal R_N^{0,II}}\le 8e^{-\mathcal R_N}.
	\end{split}
\end{equation*}
Finally, we estimate the term $v$ by a standard tail inequality (see Theorem 3.1.9 in \cite{GN16}),
\[ v=P_{\theta_0}^N\Big(\sum_{i=1}^N( \eps_i^2-1)\ge N\Big)\le e^{-N/8}, \]
and thus obtain
\begin{equation}\label{eq:ub3}
	P_{\theta_0}^N\big(\sup_{\theta\in\mathcal B} \big|\bar \ell_N(\theta,Z)\big|\ge 2c_{max}+1\big)\le i+ii+iii+iv+v\le 8e^{-\mathcal R_N}+e^{-N/8}.
\end{equation}

\par
\textbf{Conclusion.} By combining (\ref{eq:ub1}), (\ref{eq:ub2}) and (\ref{eq:ub3}), the proof of (\ref{eq:emp:ub}) is completed. \hfill $\square$

\subsection{A chaining lemma for empirical processes}

The following key technical lemma is based on a chaining argument for stochastic processes with a mixed tail (cf.~Theorem 2.2.28 in Talagrand \cite{T14} and Theorem 3.5 in Dirksen \cite{D15}). For us it will be sufficient to control the `generic chaining' functionals employed in these references by suitable metric entropy integrals. For any (semi-)metric $d$ on a metric space $T$, we denote by $N=N(T,d,\rho)$ the minimal cardinality of a covering of $T$ by balls with centres $(t_i:i=1, \dots, N) \subset T$ such that for all $t \in T$ there exists $i$ such that $d(t,t_i)<\rho$. Below we require the index set $\Theta$ to be countable (to avoid measurability issues). Whenever we apply Lemma \ref{mixchain} in this article with an uncountable set $\Theta$, one can show that the supremum can be realised as one over a countable subset of it.

\begin{lem}\label{mixchain}
	Let $\Theta$ be a countable set. Suppose a class of real-valued measurable functions $$\mathcal H=\{h_\theta: \mathcal X \to \mathbb R, \theta \in \Theta\}$$ defined on a probability space $(\mathcal X, \mathcal A, P^X)$ is uniformly bounded by $U\ge \sup_\theta \|h_\theta\|_\infty$ and has variance envelope $\sigma^2 \ge \sup_\theta E^Xh_\theta^2(X)$ where $X \sim P^X$. Define metric entropy integrals $$J_2(\mathcal H) = \int_0^{4\sigma} \sqrt{\log N(\mathcal H, d_2,\rho)}d\rho,~~d_2(\theta,\theta'):=\sqrt{E^X[h_\theta(X)-h_{\theta'}(X)]^2},$$
	$$J_\infty(\mathcal H) = \int_0^{4U} \log N(\mathcal H, d_\infty,\rho) d\rho,~~d_\infty(\theta,\theta'):=\|h_\theta-h_{\theta'}\|_{\infty}.$$
	For $X_1, \dots, X_N$ drawn i.i.d.~from $P^X$ and $\varepsilon_i \sim^{iid} N(0,1)$ independent of all the $X_i$'s, consider empirical processes arising either as $$Z_{N}(\theta)=\frac{1}{\sqrt N}\sum_{i=1}^N h_\theta(X_i)\varepsilon_i,~~\theta \in \Theta,$$ or as $$Z_{N}(\theta)=\frac{1}{\sqrt N}\sum_{i=1}^N (h_\theta(X_i)-Eh_\theta(X)), ~~\theta \in \Theta.$$ We then have for some universal constant $L>0$ and all $x\ge 1$,
	$$\Pr\left(\sup_{\theta \in \Theta}|Z_{N}(\theta)| \ge L \Big[J_2(\mathcal H) + \sigma \sqrt{x} + (J_\infty(\mathcal H) +Ux)/\sqrt N   \Big] \right) \le 2e^{-x}.$$
	\end{lem}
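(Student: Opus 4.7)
The plan is to deduce the lemma from a generic chaining bound for processes with mixed (sub-Gaussian plus sub-exponential) increments, combined with the standard Dudley-type upper bounds on Talagrand's $\gamma$-functionals.

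\emph{Mixed-tail increment bound.} I would first establish that there is a universal constant $c_1>0$ such that for every $\theta,\theta'\in\Theta$ and $u\ge 0$,
\[
\Pr\!\left(|Z_N(\theta)-Z_N(\theta')|\ge c_1\!\left(\sqrt{u}\,d_2(\theta,\theta')+\tfrac{u}{\sqrt N}\,d_\infty(\theta,\theta')\right)\right)\le 2e^{-u}.
\]
For the centred empirical process this is immediate from Bernstein's inequality applied to $f=h_\theta-h_{\theta'}$, using $\|f\|_\infty\le d_\infty(\theta,\theta')$ and $\mathrm{Var}\,f(X)\le d_2(\theta,\theta')^2$. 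For the multiplier process, condition on $(X_i)_{i\le N}$: the increment is then centred Gaussian with (random) variance $V=N^{-1}\sum_i (h_\theta-h_{\theta'})^2(X_i)$, which is itself an average of i.i.d.\ variables bounded in $[0,d_\infty^2]$ with mean at most $d_2^2$. A Bernstein estimate for this average together with the conditional Gaussian tail yields the same two-parameter increment bound.

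\emph{Generic chaining and entropy integrals.} Fixing any base point $\theta_0\in\Theta$, the increment estimate is precisely the hypothesis of the mixed-tail generic chaining theorem (Theorem 3.5 in \cite{D15}; see also Theorem 2.2.28 in \cite{T14}), which produces a universal $L_0>0$ with
\[
\Pr\!\left(\sup_{\theta\in\Theta}|Z_N(\theta)-Z_N(\theta_0)|\ge L_0\!\left[\gamma_2(\Theta,d_2)+\tfrac{\gamma_1(\Theta,d_\infty)}{\sqrt N}+\sigma\sqrt{x}+\tfrac{Ux}{\sqrt N}\right]\right)\le e^{-x},\ x\ge 1,
\]
using the homogeneity $\gamma_1(\Theta,d_\infty/\sqrt N)=\gamma_1(\Theta,d_\infty)/\sqrt N$. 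The $\gamma$-functionals are then dominated by the classical Dudley envelopes $\gamma_2(\Theta,d_2)\lesssim \int_0^{\mathrm{diam}_2(\Theta)}\sqrt{\log N(\mathcal H,d_2,\rho)}\,d\rho\le J_2(\mathcal H)$ and $\gamma_1(\Theta,d_\infty)\lesssim \int_0^{\mathrm{diam}_\infty(\Theta)}\log N(\mathcal H,d_\infty,\rho)\,d\rho\le J_\infty(\mathcal H)$, where the diameter bounds $\mathrm{diam}_2(\Theta)\le 2\sigma$ and $\mathrm{diam}_\infty(\Theta)\le 2U$ follow from the triangle inequality combined with the envelope hypotheses $\sup_\theta \|h_\theta\|_\infty\le U$ and $\sup_\theta(E^Xh_\theta^2)^{1/2}\le\sigma$.

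\emph{Base point and conclusion.} To pass from $\sup_\theta|Z_N(\theta)-Z_N(\theta_0)|$ to $\sup_\theta|Z_N(\theta)|$, I add the deviation at the base point: Bernstein's inequality (or Hoeffding, conditionally on $X_i$, for the multiplier version) applied to the scalar random variable $Z_N(\theta_0)$ yields $|Z_N(\theta_0)|\le \sigma\sqrt{2x}+Ux/(3\sqrt N)$ with probability at least $1-2e^{-x}$. Combining this with the previous display, enlarging the absolute constant $L$ to absorb $L_0$ together with the implicit constants in the Dudley bounds, produces the claimed tail inequality. The only non-routine step is the increment bound for the multiplier process, which requires combining the conditional Gaussian tail in $(\varepsilon_i)$ with Bernstein concentration of the empirical variance $V$ in $(X_i)$ while ensuring that the two tail contributions are attached to the correct metrics $d_2$ and $d_\infty/\sqrt N$; the remaining steps are direct applications of \cite{D15, T14} and standard chaining-entropy comparisons.
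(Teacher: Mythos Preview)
Your proposal is correct and follows essentially the same route as the paper: verify a Bernstein-type mixed-tail increment condition, invoke Dirksen's generic chaining theorem \cite{D15} (Theorem 3.5), bound the resulting $\gamma$-functionals by the Dudley entropy integrals $J_2,J_\infty$, and add a Bernstein bound at a fixed base point $\theta_0$. The only difference is in how the increment condition is checked for the multiplier process: the paper does it via a single-line MGF computation $E\exp\{\lambda\varepsilon(h_\theta-h_{\theta'})(X)\}\le\exp\{\lambda^2 d_2^2/(1-|\lambda|d_\infty)\}$ using $E^\varepsilon|\varepsilon|^k/k!\le 1$, which is a bit cleaner than your conditioning-on-$X$ plus Bernstein-for-$V$ argument (though the latter is also valid).
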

\begin{proof}
 We only prove the case where $Z_N(\theta)= \sum_{i}h_\theta(X_i)\eps_i/\sqrt N$, the simpler case without Gaussian multipliers is proved in the same way. We will apply Theorem 3.5 in \cite{D15}, whose condition (3.8) we need to verify. First notice that for $|\lambda|<1/\|h_\theta-h_{\theta'}\|_\infty$, and $E^\varepsilon$ denoting the expectation with respect to $\varepsilon$,
	\begin{align}\label{zack}
	E\exp\big\{\lambda \varepsilon (h_\theta-h_{\theta'})(X) \big\} &\le 1 + \sum_{k=2}^\infty \frac{|\lambda|^k E^\eps | \varepsilon|^k E^X|h_\theta-h_{\theta'}|^k(X)}{k!} \notag\\
	&\le 1+ \lambda^2 E^X[h_\theta(X)-h_{\theta'}(X)]^2 \sum_{k=2}^\infty \frac{E^\varepsilon|\varepsilon|^k}{k!} \big(|\lambda| \|h_\theta-h_{\theta'}\|_\infty\big)^{k-2} \notag \\
	& \le \exp\Big\{\frac{\lambda^2d^2_2(\theta, \theta')}{1- |\lambda|d_\infty(\theta, \theta')}\Big\}
	\end{align}
	where we have used the basic fact $E^\varepsilon|\varepsilon|^k/k!\le 1$. By the i.i.d.~hypothesis we then also 
have	$$E\exp\Big\{\lambda (Z_{N}(\theta)-Z_{N}(\theta')) \Big\} \le  \exp\left\{\frac{\lambda^2 d^2_2(\theta,\theta')}{1- |\lambda| d_\infty(\theta, \theta')/\sqrt N}\right\}.$$ An application of the exponential Chebyshev inequality (and optimisation in $\lambda$, as in the proof of Proposition 3.1.8 in \cite{GN16}) then implies that condition (3.8) in \cite{D15} holds for the stochastic process $Z_N(\theta)$ with metrics $\bar d_2=2d_2$ and $\bar d_1=d_\infty/\sqrt N.$ In particular, the $\bar d_2$-diameter $\Delta_2(\mathcal H)$ of $\mathcal H$ is at most $4\sigma$ and the $\bar d_1$-diameter $\Delta_1(\mathcal H)$ of $\mathcal H$ is bounded by $4U/\sqrt N$. [These bounds are chosen so that they remain valid for the process without Gaussian multipliers as well.] Theorem 3.5 in \cite{D15} now gives, for some universal constant $M$, and any $\theta_\dagger \in \Theta$ that
	$$\Pr\left(\sup_{\theta \in \Theta}|Z_{N}(\theta)-Z_N(\theta_\dagger)| \ge M \big(\gamma_2(\mathcal H) + \gamma_1(\mathcal H) + \sigma \sqrt{x} + (U/\sqrt N)x   \big) \right) \le e^{-x}$$ where the `generic chaining' functionals $\gamma_1, \gamma_2$ are upper bounded by the respective metric entropy integrals of the metric spaces $(\mathcal H, \bar d_i), i=1,2$, up to universal constants (see (2.3) in \cite{D15}). For $\gamma_1$ also notice that a simple substitution $\rho'=\rho \sqrt N$ implies that 
	$$\int_0^{4U/\sqrt N} \log N(\mathcal H, \bar d_1, \rho)d\rho = \frac{1}{\sqrt N} \int_0^{4U} \log N(\mathcal H, d_\infty, \rho')d\rho',$$ and we hence deduce that
\begin{equation}\label{kitzloch}
\Pr\left(\sup_{\theta \in \Theta}|Z_{N}(\theta)-Z_N(\theta_\dagger)| \ge  \bar L \Big[ J_2(\mathcal H) + \sigma \sqrt{x} + (J_\infty(\mathcal H) +Ux)/\sqrt N \Big] \right) \le e^{-x}
\end{equation}
for some universal constant $\bar L$. 
	
	The preceding argument also implies the classical Bernstein inequality
	\begin{equation}\label{bernstein}
	\Pr \Big(|Z_{N}(\theta)| \ge \sigma \sqrt{2x} + \frac{Ux}{3\sqrt N}   \Big) \le 2e^{-x},~x>0,
	\end{equation}
	for any fixed $\theta \in \Theta, U \ge \|h_\theta\|_\infty$ and $\sigma^2 \ge E^Xh_\theta^2(X)$, proved as (3.24) in \cite{GN16}, using (\ref{zack}). Applying this with $\theta_\dagger$ and using (\ref{kitzloch}), the final result follows now from 
	\begin{align*}
	& \Pr\big(\sup_{\theta \in \Theta}|Z_{N}(\theta)| > 2\tau(x)\big) \le \Pr\big(\sup_{\theta \in \Theta}|Z_{N}(\theta)-Z_N(\theta_\dagger)| > \tau(x)\big) + \Pr\big(|Z_{N}(\theta_\dagger)| > \tau(x))\big)  \le 2 e^{-x},
	\end{align*}
	for any $x\ge 1$, where $\tau(x)= \bar L \big[ J_2(\mathcal H) + \sigma \sqrt{x} + (J_\infty(\mathcal H) +Ux)/\sqrt N \big]$ and $L\ge 2\bar L>0$ is large enough.
\end{proof}

\subsection{Proofs for Section \ref{subsec:genMAIN}}\label{subsec:genpfs} We apply the results from Appendix \ref{app:ULA} to $\mu = \tilde \Pi(\cdot|Z^{(N)})$.

\smallskip

\textbf{Proof of Theorem \ref{thm:gen:wass}.}
For any $\theta,\bar\theta\in \R^D$, we have for the log-prior density that
\begin{equation*}
	\begin{split}
		\|\nabla \log \pi(\theta)-\nabla \log \pi(\bar \theta)\|_{\R^D}= \|\Sigma^{-1}(\theta-\bar\theta)\|_{\R^D} &\le \lambda_{max}(\Sigma^{-1})\|\theta-\bar \theta\|_{\mathbb R^D}, \\
		\lambda_{min}(-\nabla^2 \log \pi(\theta))&\ge \lambda_{min}(\Sigma^{-1}),
	\end{split}
\end{equation*}
and for the likelihood surrogate $\tilde\ell_N$, by Proposition \ref{prop:log:conv} and on the event $\mathcal E$ from (\ref{nevergonnahappen}), that
\begin{equation*}
\begin{split}
\|\nabla \tilde\ell_N(\theta)-\nabla \tilde\ell_N(\bar \theta)\|_{\R^D}&\le 7K\lambda_{max}(M)\|\theta-\bar\theta\|_{\R^D},\\
\lambda_{min}(-\nabla^2 \tilde\ell_N(\theta))&\ge Nc_{min}/2.
\end{split}
\end{equation*}
Combining the last two displays, and on the event $\mathcal E$, we can verify Assumption \ref{ass:ULA} below for $-\log d\tilde \Pi(\cdot|Z^{(N)})$ with constants
\[ m= Nc_{min}/2+\lambda_{min}(\Sigma^{-1}),~~~~~~~\Lambda =7 K\lambda_{max}(M)+\lambda_{max}(\Sigma^{-1}).\] 
We may thus apply Proposition \ref{prop:ULA:wass} below to obtain,
\begin{equation*}
	\begin{split}
		W_2^2(\mathcal L(\vartheta_k),\Pi(\cdot|Z^{(N)}) )&\le 2W_2^2(\Pi(\cdot|Z^{(N)}) ,\tilde \Pi(\cdot|Z^{(N)}) )+ 2W_2^2(\mathcal L(\vartheta_k),\tilde \Pi(\cdot|Z^{(N)}))\\
		&\le \rho + b(\gamma)+ 4(1- m\gamma /2)^k \Big[\|\theta_{init}- \theta_{max} \|^2_{\R^D} +\frac{D}{m}\Big],
	\end{split}
\end{equation*}
where $\theta_{max}$ denotes the unique maximiser of $\log d\tilde \Pi(\cdot|Z^{(N)})$ over $\mathbb R^D$ (which exists on the event $\mathcal E_{conv}$, by virtue of strong concavity).
\par 

We conclude by an estimate for $\|\theta_{init}-  \theta_{max} \|_{\R^D}$. To start, notice that for any $\theta\in\R^D$ we have
\begin{equation}\label{Mbound}
	\begin{split}
		|\theta-\theta_{init}|_1^2=(\theta-\theta_{init})^T M (\theta-\theta_{init})\ge \lambda_{min}(M) \|\theta-\theta_{init} \|_{\R^D}^2.
	\end{split}
\end{equation}
Thus, for any $\theta\in \R^D$ with $\|\theta-\theta_{init} \|_{\R^D}^2\ge 4\eta^2/\lambda_{min}(M)$, we have that $|\theta-\theta_{init}|_1\ge 2\eta$, and therefore also that $g_\eta(\theta)\ge \big(|\theta-\theta_{init}|_1-\eta\big)^2\ge \frac 14|\theta-\theta_{init}|_1^2$. Thus, for $C$ from (\ref{eq:K:cond}) and any $\theta\in\R^D$ satisfying
\[\|\theta-\theta_{init} \|_{\R^D}^2\ge \frac{20}{C}+\frac{4\eta^2}{\lambda_{min}(M)}, \]
using (\ref{Mbound}), (\ref{eq:K:cond}) as well as the upper bound for $|\ell_N(\theta)|$ in the definition of $\mathcal E_{conv}$, we obtain
\begin{equation*}
	\begin{split}
		-\tilde \ell_N(\theta) = Kg_\eta(\theta)&\ge CN(c_{max}+1)\frac{1+\lambda_{max}(M)/\eta^2}{\lambda_{min}(M)}\cdot\frac{|\theta-\theta_{init}|_1^2}4\\
		& \ge \frac{C}{4}N(c_{max}+1)\|\theta-\theta_{init} \|_{\R^D}^2 \\
		&\ge 5N(c_{max}+1)\ge -\tilde \ell_N(\theta_{init}).
	\end{split}
\end{equation*}
This implies that necessarily the unique maximiser $\theta_{\tilde \ell}$ of the (on $\mathcal E_{conv}$) strongly concave map $\tilde \ell_N$ over $\mathbb R^D$ satisfies $\|\theta_{\tilde \ell}- \theta_{init}\|_{\mathbb R^D}^2 \le 20/C+4\eta^2/\lambda_{min}(M).$ Moreover, in view of the definition of $\mathcal B$ and the hypotheses on $\theta^*$ we have that 
\[ \|\theta_{init}\|_{\R^D}\le \|\theta_{init}-\theta^*\|_{\R^D}+\|\theta^*\|_{\R^D} \le \frac{|\theta_{init}-\theta^*|_1}{\sqrt{\lambda_{min}(M)}} +R\le \frac{\eta}{\sqrt{\lambda_{min}(M)}} +R, \]
which also allows us to deduce
\begin{equation*}
\begin{split}
	\|\theta_{\tilde \ell}\|_{\R^D}&\le \|\theta_{\tilde \ell}-\theta_{init}\|_{\R^D}+\|\theta_{init}\|_{\R^D} \le \sqrt{20/C}+ \frac{3\eta}{\sqrt{\lambda_{min}(M)}} +R.
\end{split}
\end{equation*}
 We further have that $\theta_{max}^T\Sigma^{-1}\theta_{max}\le\theta_{\tilde \ell}^T\Sigma^{-1}\theta_{\tilde \ell}$ (otherwise $\theta_{max}$ would not maximise $\log d\tilde \Pi(\cdot|Z^{(N)})$) and thus, for $\kappa(\Sigma)$ the condition number of $\Sigma$,
\[ \|\theta_{max}\|_{\R^D}^2\le \frac{1}{\lambda_{min}(\Sigma^{-1})}\theta_{max}^T\Sigma^{-1}\theta_{max}\le \frac{1}{\lambda_{min}(\Sigma^{-1})}\theta_{\tilde \ell}^T\Sigma^{-1}\theta_{\tilde \ell}\le \kappa (\Sigma)\|\theta_{\tilde \ell}\|_{\R^D}^2. \]
Combining the preceding displays, the proof is now completed as follows:
\begin{equation*}
	\begin{split}
		\|\theta_{max}-\theta_{init}\|_{\R^D}^2&\lesssim \|\theta_{max}\|_{\R^D}^2+\|\theta_{init}\|_{\R^D}^2\\
		&\lesssim \kappa (\Sigma)\|\theta_{\tilde \ell}\|_{\R^D}^2 + \frac{\eta^2}{\lambda_{min}(M)} +R^2\\
		&\lesssim \kappa (\Sigma)\Big[1+ \frac{\eta^2}{\lambda_{min}(M)} +R^2\Big].
	\end{split}
\end{equation*}

\smallskip

\textbf{Proof of Theorem \ref{thm:gen:func}.}
For any $t\ge 0$ and any Lipschitz function $H:\R^D\to \R$ we have
\begin{equation}\label{Hlip}
\begin{split}
&\mathbf P_{\theta_{init}}\Big( \big| \hat\pi_{J_{in}}^J(H)-E^\Pi[ H|Z^{(N)}]\big|\ge t \Big)\\
&~~~~~~~~~~\le \mathbf P_{\theta_{init}}\Big( \big| \hat\pi_{J_{in}}^J(H)-\mathbf E_{\theta_{init}}[\hat\pi_{J_{in}}^J(H)]\big|\ge t -\big| \mathbf E_{\theta_{init}}[\hat\pi_{J_{in}}^J(H)]-E^\Pi[ H|Z^{(N)}]\big| \Big).
\end{split}
\end{equation}  
To further estimate the right side, note that for $c_3$ large enough and any $k\ge J_{in}$, by (\ref{eq:burn}) and Theorem \ref{thm:gen:wass}, we have 
\[ W_2^2(\mathcal L(\vartheta_k), \Pi(\cdot|Z^{(N)}))\le 2 (\rho + b(\gamma)). \]
Noting that (\ref{eq:ULA:bias}) below in fact holds for any probability measure $\mu$ and thus in particular for $\mu=\Pi(\cdot|Z^{(N)})$, it follows that for any Lipschitz function $H:\R^D\to \R$,
\begin{equation*}
	\big( \mathbf E_{\theta_{init}}[\hat\pi_{J_{in}}^J(H)]-E^\Pi[ H|Z^{(N)}]\big)^2\le 2\|H\|^2_{Lip}(\rho + b(\gamma)).
\end{equation*}
Thus if $t\ge 0 $ satisfies (\ref{eq:tea}), then applying Proposition \ref{prop:ULA:conc} to both $H$ and $-H$ yields that the r.h.s.~in (\ref{Hlip}) is further bounded by
\begin{equation*}
\mathbf P_{\theta_{init}}\Big( \big| \hat\pi_{J_{in}}^J(H)-\mathbf E_{\theta_{init}}[\hat\pi_{J_{in}}^J(H)]\big|\ge t/2\Big)\le 2\exp\Big(-c\frac{t^2m^2J\gamma}{\|H\|_{Lip}^2(1+1/(mJ\gamma))}\Big).
\end{equation*}  

\smallskip

\textbf{Proof of Corollary \ref{generallymean}.}
	We first estimate the probability to be bounded by
	\begin{equation*}
	\begin{split}
 \mathbf P_{\theta_{init}}\Big(\big\|\bar\theta_{J_{in}}^J-\mathbf E_{\theta_{init}}\big[\bar\theta_{J_{in}}^J\big]\big\|_{\R^D} \ge t- \big\|\mathbf E_{\theta_{init}}\big[\bar\theta_{J_{in}}^J\big]-E^\Pi[\theta|Z^{(N)}]  \big\|_{\R^D} \Big).
	\end{split}
	\end{equation*}
	Next, for any $k \ge 1$, let $\nu_k$ denote an optimal coupling between $\mathcal L(\vartheta_k)$ and $\Pi[\cdot|Z^{(N)}]$ (cf.~Theorem 4.1 in \cite{V09}). Then by Jensen's inequality and the definition of $W_2$ from (\ref{verymuchso}),
	\begin{equation*}
	\begin{split}
	\big\|\mathbf E_{\theta_{init}}\big[\bar\theta_{J_{in}}^J\big]-E^\Pi[\theta|Z^{(N)}] \big\|_{\R^D}^2&=\bigg\|\frac 1J\sum_{k=J_{in}+1}^{J_{in}+J}\int_{\R^D\times \R^D} (\theta-\theta')d\nu_k(\theta,\theta') \bigg\|_{\R^D}^2\\
	&= \sum_{j=1}^D\bigg(\frac 1J\sum_{k=J_{in}+1}^{J_{in}+J}\int_{\R^D\times \R^D} (\theta_j-\theta_j')d\nu_k(\theta,\theta') \bigg)^2\\
	&\le \frac 1J\sum_{k=J_{in}+1}^{J_{in}+J}\int_{\R^D\times \R^D} \sum_{j=1}^D(\theta_j-\theta_j')^2d\nu_k(\theta,\theta')\\
	&=\frac 1J\sum_{k=J_{in}+1}^{J_{in}+J}W_2^2(\mathcal L(\vartheta_k), \Pi[\cdot|Z^{(N)}]).
	\end{split}
	\end{equation*}
	Thus we obtain from (\ref{eq:gen:wass}), (\ref{eq:burn}) (as after (\ref{Hlip})) that
	\[\big\|E_{\theta_{init}}\big[\bar\theta_{J_{in}}^J\big]-E^\Pi[\theta|Z^{(N)}] \big\|_{\R^D}\le \sqrt{2}\sqrt{\rho+b(\gamma)}.\]
	Now for any $j=1,...,d$, let us write $H_j:\R^D\to \R,~ \theta\mapsto \theta_j,$
	for the $j$-the coordinate projection map, of Lipschitz constant $1$. Then in the notation (\ref{ergopop}) we can write
	\[ [\bar\theta_{J_{in}}^J]_j=\hat\pi_{J_{in}}^J(H_j),~~~~ j=1,...,D. \]
	For $t\ge \sqrt{8(\rho+b(\gamma))}$ and applying Proposition \ref{prop:ULA:conc} as in the proof of Theorem \ref{thm:gen:func} as well as a union bound gives
	\begin{equation*}
	\begin{split}
	\mathbf P_{\theta_{init}}\Big(\big\|\bar\theta_{J_{in}}^J-E^\Pi[\theta|Z^{(N)}]  \big\|_{\R^D} \ge t \Big)&\le \mathbf P_{\theta_{init}}\Big(\big\|\bar\theta_{J_{in}}^J-\mathbf E_{\theta_{init}}\big[\bar\theta_{J_{in}}^J\big]\big \|_{\R^D} \ge t/2 \Big)\\
	&= \mathbf P_{\theta_{init}}\bigg(\sum_{j=1}^D  \Big[\hat\pi_{J_{in}}^J(H_j)-\mathbf E_{\theta_{init}}\big[\hat\pi_{J_{in}}^J(H_j)]\Big]^2 \ge \frac{t^2}{4} \bigg)\\
	&\le \sum_{j=1}^D \mathbf P_{\theta_{init}}\bigg( \Big[\hat\pi_{J_{in}}^J(H_j)-\mathbf E_{\theta_{init}}\big[\hat\pi_{J_{in}}^J(H_j)]\Big]^2 \ge \frac{t^2}{4D} \bigg)\\
	&\le 2D\exp\Big(-c\frac{t^2m^2J\gamma}{D\big[ 1 +1/(mJ\gamma)\big]}\Big).
	\end{split}
	\end{equation*}

\section{Proofs for the Schr\"odinger model}\label{sec:main:pfs}
In this section, we will show how the results from Section \ref{barocco} can be applied to the nonlinear problem for the Schr\"odinger equation (\ref{fwdG}). Recalling the notation of Sections \ref{sec:schrres} and \ref{barocco}, we will set $\theta^*=\theta_{0,D}$, the norm $|\cdot|_{1}:=\|\cdot\|_{\R^D}$
as well as $\eta:=\epsilon D^{-4/d}$ (for $\epsilon$ to be chosen), such that the region $\mathcal B$ from (\ref{eq:B:ass}) equals the Euclidean ball
\begin{equation}\label{eq:B:def}
\mathcal B_\epsilon:=\Big\{ \theta \in \R^D: \|\theta- \theta_{0,D}\|_{\R^D}< \epsilon D^{-4/d} \Big\}.
\end{equation}

The first key observation is the following result on the local log-concavity of the likelihood function on $\mathcal B_\epsilon$, which will be proved by a combination of the concentration result Lemma \ref{youtube} with the PDE estimates below, notably the `average curvature' bound from Lemma \ref{wundervoncordoba}.

\begin{prop}\label{schroe-emperor-smalldim}
	Let $\theta_0\in h^2$ satisfy $\|\theta_0\|_{h^{2}}\le S$ for some $S>0$ and consider $\ell_N$ from (\ref{loglik}) with forward map $\mathcal G: \R^D \to \R$ from (\ref{fwdG}). Then there exist constants $0<\epsilon_S=\epsilon_S(\mathcal O, g, \Phi)\le 1$ and $c_1,c_2,c_3,c_4>0$ such that for any $\epsilon\le \epsilon_S$ and all $D,N$ satisfying $D\le c_2N^\frac{d}{d+12}$ as well as $\|\mathcal G(\theta_0)-\mathcal G(\theta_{0,D})\|_{L^2(\mathcal O)}\le c_1D^{-4/d}$, the event
	\begin{equation*}
	\mathcal E_{conv}(\epsilon)=\Big\{\inf_{\theta\in \mathcal B_\epsilon}\lambda_{min}\big(-\nabla^2\ell_N(\theta)\big)>c_3ND^{-4/d},~\sup_{\theta\in \mathcal B_\epsilon}\Big[|\ell_N(\theta)|+\|\nabla \ell_N(\theta)\|_{\R^D}+\|\nabla^2\ell_N(\theta)\|_{op}\Big]< c_4N\Big\}
	\end{equation*}
	satisfies
	\begin{equation}\label{eq:Econv}
	P_{\theta_0}^N\big(\mathcal E_{conv}(\epsilon)\big)\ge 1- 33 e^{-c_2N^{\frac{d}{d+12}}}.
	\end{equation}
\end{prop}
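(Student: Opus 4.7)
The plan is to apply Lemma \ref{youtube} to the Schr\"odinger forward map $\mathcal G$ on the Euclidean ball $\mathcal B_\epsilon$ from (\ref{eq:B:def}), with centring $\theta^*=\theta_{0,D}$, ellipsoidal norm $|\cdot|_1=\|\cdot\|_{\R^D}$ (so $M=I$) and radius $\eta = \epsilon D^{-4/d}$. First I would verify the local regularity Assumption \ref{ass:model}. Since $\|\theta_0\|_{h^2}\le S$ and $\mathcal B_\epsilon$ sits inside a larger $h^2$-ball for $\epsilon \le \epsilon_S \le 1$, standard elliptic regularity for (\ref{eq:schr}) (Chapter 6 of \cite{GT98}) together with (\ref{ubd}) and the smoothness of $\Phi$ produces uniform $L^\infty$-bounds on $u_{f_\theta}$ and on its first two $\theta$-variations. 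The key step is converting $\|\cdot\|_{\R^D}$-differences into function-space bounds: by Weyl's asymptotics $\lambda_k \simeq k^{2/d}$, $\|F_\theta - F_{\bar\theta}\|_{H^\beta}\lesssim D^{\beta/d}\|\theta-\bar\theta\|_{\R^D}$, and the Sobolev embedding $H^\beta\hookrightarrow L^\infty$ (for $\beta>d/2$, which is legal since $d\le 3$) combined with (\ref{eq:h-mult}) and elliptic regularity for the linearizations of (\ref{eq:schr}) yields constants $k_0,k_1,k_2,m_0,m_1,m_2$ of at most polynomial order in $D$.

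Next I verify the local curvature Assumption \ref{ass:geom}, which is the main step. By (\ref{hessen}) and $E_{\theta_0}[\varepsilon]=0$,
\begin{equation*}
v^TE_{\theta_0}[-\nabla^2\ell(\theta_{0,D},Z)]v = \|\nabla\mathcal G(\theta_{0,D})^T v\|_{L^2(\mathcal O)}^2 + \langle \mathcal G(\theta_{0,D})-\mathcal G(\theta_0), v^T\nabla^2\mathcal G(\theta_{0,D})v\rangle_{L^2(\mathcal O)}.
\end{equation*}
The stability estimate (Lemma \ref{wundervoncordoba}) bounds the first term below by $\|V\|_{(H^2_0)^*}^2$ where $V=\sum_{k\le D}v_k e_k$, and by the spectral characterisation of $(H^2_0)^*$ via (\ref{weyl}), $\|V\|_{(H^2_0)^*}^2\ge \lambda_D^{-2}\|v\|_{\R^D}^2\simeq D^{-4/d}\|v\|_{\R^D}^2$. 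The bias term is controlled by Cauchy--Schwarz, the hypothesis $\|\mathcal G(\theta_0)-\mathcal G(\theta_{0,D})\|_{L^2}\le c_1 D^{-4/d}$, and the bound $k_2$ on $\|\nabla^2\mathcal G\|_{L^\infty(\mathcal O,\R^{D\times D})}$; choosing $c_1$ small enough (relative to the $D$-scale of $k_2$) absorbs it into at most half the leading term. This gives $\lambda_{min}(E_{\theta_0}[-\nabla^2\ell(\theta_{0,D},Z)])\gtrsim D^{-4/d}$, and the Lipschitz estimates from step one propagate the bound over all of $\mathcal B_\epsilon$ once $\epsilon\le \epsilon_S$ is taken small enough, so that $c_{min}\simeq D^{-4/d}$. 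The upper bound $c_{max}=O(1)$ is immediate from step one and (\ref{ubd}).

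Finally I invoke Lemma \ref{youtube}. Feeding the scalings $c_{min}\simeq D^{-4/d}$, $c_{max}=O(1)$, $\eta=\epsilon D^{-4/d}$ and the polynomial-in-$D$ values of $k_i,m_i,C_\mathcal G,C_\mathcal G',C_\mathcal G'',C_\mathcal G'''$ into (\ref{eq:itis what itis}) and identifying the bottleneck term, one finds $\mathcal R_N\gtrsim N D^{-12/d}$, and the constraint $D\le \mathcal R_N$ reads exactly $D\le c_2 N^{d/(d+12)}$. Lemma \ref{youtube} then delivers $\inf_{\mathcal B_\epsilon}\lambda_{min}(-\nabla^2\ell_N)\ge N c_{min}/2\gtrsim ND^{-4/d}$ and the uniform upper bound $N(5c_{max}+1)=O(N)$ for $|\ell_N|,\|\nabla\ell_N\|_{\R^D},\|\nabla^2\ell_N\|_{op}$, on an event of probability at least $1-(8+24+1)\exp(-\mathcal R_N)=1-33\exp(-c_2 N^{d/(d+12)})$ (after absorbing the $e^{-N/8}$ from (\ref{eq:emp:ub}) into $e^{-\mathcal R_N}$), which is precisely $\mathcal E_{conv}(\epsilon)$.

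The main obstacle is the lower bound in step two, and specifically pinpointing where the factor $D^{-4/d}$ comes from. It originates from restricting the $(H^2_0)^*$-stability estimate of Lemma \ref{wundervoncordoba} to the Euclidean ball in $\R^D$ instead of an ellipsoidal ball aligned with the Dirichlet-Laplacian spectrum: passing from $\|\cdot\|_{(H^2_0)^*}$ to $\|\cdot\|_{\R^D}$ costs a factor $\lambda_D^{-2}\simeq D^{-4/d}$, set by the highest frequency in the truncation. This forces the discretisation bias $\|\mathcal G(\theta_0)-\mathcal G(\theta_{0,D})\|_{L^2}$ to live at the same scale $D^{-4/d}$ in order to be absorbable, which is exactly the second hypothesis of the proposition and dictates the admissible magnitude of $c_1$.
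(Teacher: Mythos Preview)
Your overall architecture is exactly the paper's: verify Assumptions \ref{ass:model} and \ref{ass:geom} on $\mathcal B_\epsilon$ with $|\cdot|_1=\|\cdot\|_{\R^D}$, $\eta=\epsilon D^{-4/d}$, and then invoke Lemma \ref{youtube}. Your identification of the bottleneck $\mathcal R_N\simeq N c_{min}^2/C_{\mathcal G}'^2\simeq ND^{-12/d}$ and the resulting constraint $D\le c_2 N^{d/(d+12)}$ is also correct.

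There is, however, a genuine gap in your step two. You control the cross term $\langle \mathcal G(\theta_{0,D})-\mathcal G(\theta_0),\, v^T\nabla^2\mathcal G(\theta_{0,D})v\rangle_{L^2}$ by Cauchy--Schwarz and the envelope $k_2$ on $\|\nabla^2\mathcal G\|_{L^\infty(\mathcal O,\R^{D\times D})}$. But from Lemma \ref{lem:schr:one} one has $k_2\simeq D^{2/d}$, so your bound reads $c_1 D^{-4/d}\cdot D^{2/d}=c_1 D^{-2/d}$, which is \emph{larger} than the leading term $\simeq D^{-4/d}$ and cannot be absorbed with a fixed constant $c_1$ independent of $D$. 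The same defect recurs when you ``propagate over $\mathcal B_\epsilon$ via the Lipschitz estimates from step one'': those Lipschitz constants $m_i$ also carry $D^{2/d}$ or $D^{4/d}$ factors, so a naive perturbation loses the scale.

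The paper's Lemma \ref{wundervoncordoba} is not merely a stability estimate for the first term; it delivers the full bound $\inf_{\theta\in\mathcal B_\epsilon}\lambda_{min}(E_{\theta_0}[-\nabla^2\ell(\theta)])\gtrsim D^{-4/d}$ directly, and the way it avoids the spurious $D^{2/d}$ factor is by exploiting the self-adjointness of $V_{f_\theta}$: writing, for instance,
\[
\langle u_{f_{\theta_0}}-u_{f_\theta},\,V_{f_\theta}[u_{f_\theta}H^2(\Phi''\circ F_\theta)]\rangle_{L^2}
=\langle V_{f_\theta}[u_{f_{\theta_0}}-u_{f_\theta}],\,u_{f_\theta}H^2(\Phi''\circ F_\theta)\rangle_{L^2},
\]
and then using $\|V_{f_\theta}[\cdot]\|_\infty\lesssim\|\cdot\|_{L^2}$ together with $\|H^2\|_{L^1}=\|H\|_{L^2}^2$. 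This produces a bound $\lesssim \|u_{f_{\theta_0}}-u_{f_\theta}\|_{L^2}\|H\|_{L^2}^2$ with \emph{no} $D$-dependent prefactor, and the hypothesis $\|\mathcal G(\theta_0)-\mathcal G(\theta_{0,D})\|_{L^2}\le c_1D^{-4/d}$ (plus $\|\theta-\theta_{0,D}\|_{\R^D}\le \epsilon D^{-4/d}$) then genuinely sits at the right scale. Either cite Lemma \ref{wundervoncordoba} in full, or replicate this duality/self-adjointness argument; the crude $k_2$-route does not close.
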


\begin{proof}
	For any $\theta\in\R^D$, $F_\theta$ as in (\ref{Ft}), by a Sobolev embedding and (\ref{weyl}), we have $ \|F_\theta\|_{\infty} \lesssim \|\theta\|_{h^2}\lesssim D^{2/d}\|\theta\|_{\R^D}$. This and the Lemmas \ref{stamitz}, \ref{lem:schr:one}, \ref{lem:schr:two} verify Assumption \ref{ass:model} in the present setting, with constants
	\begin{equation*}
	k_0\simeq k_1\simeq \textnormal{const.}, ~~~~k_2\simeq m_0\simeq m_1 \simeq D^{2/d}, ~~~~  m_2 \simeq D^{4/d},
	\end{equation*}
	whence the constants from (\ref{eq:CG}) satisfy
	\[ C_{\mathcal G}\simeq D^{4/d},~~~~C_{\mathcal G}'\simeq D^{2/d},~~~~C_{\mathcal G}''\simeq D^{2/d},~~~~C_{\mathcal G}'''\simeq \textnormal{const.}.\]
	Moreover, using (\ref{dimbias}) and (\ref{hammer}), Lemmas \ref{wundervoncordoba} and \ref{lem:schr:ub} verify Assumption \ref{ass:geom} for our choice of $\eta$ with
	\begin{equation}\label{AA39}
		c_{min}\simeq D^{-4/d},~~~~ c_{max}\simeq \textnormal{const.}
	\end{equation}
	Then the minimum (\ref{eq:itis what itis}) is dominated by the third term, yielding that \[ \mathcal R_N=\mathcal R_{N,D}\simeq c_{min}^2/C_\mathcal G'^2\simeq ND^{-12/d}. \] Therefore, we can choose $c>0$ small enough such that for any $D,N\in\N$ satisfying $D\le cN^{d/(d+12)}$, we also have $D\le \mathcal R_{N,D}$. Lemma \ref{youtube} then implies that for all such $D,N$, we have
	\begin{equation}
	P_{\theta_0}^N\big(\mathcal E_{conv}^c\big)\le 32 e^{-\mathcal R_N}+e^{-N/8}\le 33 e^{-cN^{\frac{d}{d+12}}}.
	\end{equation}
\end{proof}
Next, if $\theta_{init}$ is the estimator from Theorem \ref{triebelei}, then in the present setting with $\epsilon=1/\log N$, the event (\ref{initevent}) equals
\[ \mathcal E_{init}=\Big\{ \|\theta_{init}-\theta_{0,D}\|_{\R^D} \le \frac{1}{8(\log N)D^{4/d}}\Big\}.\]
\begin{prop}
	Assuming Condition \ref{FAQ}, there exist constants $c_5,c_6>0$ such that for all $N\in \N$,
	\begin{equation*}
	P_{\theta_0}^N\big(\mathcal E_{init}\big) \ge 1- c_5e^{-c_6N^{d/(2\alpha+d)}}.
	\end{equation*}
\end{prop}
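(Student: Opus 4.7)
The plan is to derive the bound as a direct corollary of Theorem \ref{triebelei} (the initialiser result in Section \ref{sec:initpfs}), after translating the tolerance $1/[8(\log N)D^{4/d}]$ into a pure polynomial-in-$N$ rate. First I would use Condition \ref{FAQ} to note that $D \le c_0 N^{d/(2\alpha+d)}$, so $D^{4/d} \lesssim N^{4/(2\alpha+d)}$, and hence the target tolerance is bounded below by $c/[(\log N) N^{4/(2\alpha+d)}]$ for a constant $c>0$. Consequently $\mathcal E_{init}$ contains the event $\{\|\theta_{init}-\theta_{0,D}\|_{\R^D} \le c/[(\log N) N^{4/(2\alpha+d)}]\}$, and it suffices to lower bound the $P_{\theta_0}^N$-probability of the latter.

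Next, I would invoke Theorem \ref{triebelei} directly: that result supplies the $O(N^{b_0})$-computable $\theta_{init} \in \R^D$ together with a frequentist bound
\[\|\theta_{init}-\theta_{0,D}\|_{\R^D} \le C_* N^{-\beta}\]
on an event of $P_{\theta_0}^N$-probability at least $1 - c' e^{-c'' N^{d/(2\alpha+d)}}$, for a nonparametric rate exponent $\beta$ that is (no slower than) the one appearing in the statistical guarantee of Theorem \ref{brainfreeze}, namely $\beta = \alpha^2/[(2\alpha+d)(\alpha+2)]$. Combining both steps, the required set-theoretic inclusion reduces to the algebraic condition $\beta > 4/(2\alpha+d)$, equivalently $\alpha^2/(\alpha+2) > 4$, i.e.\ $\alpha > 2 + 2\sqrt{3} \approx 5.46$, which is guaranteed by the standing hypothesis $\alpha > 6$ from Condition \ref{FAQ}.

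The polynomial gap between $N^{-\beta}$ and $N^{-4/(2\alpha+d)}$ dominates the $\log N$ factor for all $N \ge N_0$, for some threshold $N_0$ depending on $(C_*, c, \alpha, d)$; for $N < N_0$ the constants $c_5, c_6$ can be chosen so that the resulting bound is trivially valid (probabilities lying in $[0,1]$). The main obstacle does not lie in the present proposition but in Theorem \ref{triebelei} itself, where the convex-optimisation-based initialiser must be constructed explicitly and its rate $\beta$ controlled uniformly — in the Schrödinger setting a natural route is to first form a smoothed least-squares nonparametric regression estimator $\hat u$ of $u_{f_{\theta_0}}$ on $\mathcal O$ from the data, then invert the PDE pointwise via $\hat f = \Delta \hat u/(2 \hat u)$ (using the maximum-principle lower bound $u_{f_{\theta_0}} \ge g_{\min}>0$ to ensure stability of the division), and finally project onto $\R^D$ through $\Phi^{-1}$ and the Dirichlet-Laplacian eigenbasis of Section \ref{ONB}.
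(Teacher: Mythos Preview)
Your overall strategy matches the paper's exactly: invoke Theorem \ref{triebelei}, compare the initialiser's rate to the tolerance $(\log N)^{-1}D^{-4/d}\gtrsim (\log N)^{-1}N^{-4/(2\alpha+d)}$, and use the standing hypothesis $\alpha>6$ to conclude. However, you have misquoted the rate in Theorem \ref{triebelei}. That theorem gives
\[
\|\theta_{init}-\theta_{0,D}\|_{\R^D}\le M' N^{-(\alpha-2)/(2\alpha+d)}
\]
on the high-probability event, not the rate $N^{-\alpha^2/[(2\alpha+d)(\alpha+2)]}$ from Theorem \ref{brainfreeze}. In fact the initialiser rate is strictly \emph{slower} than the posterior-mean rate, since $(\alpha-2)(\alpha+2)=\alpha^2-4<\alpha^2$, so your assertion that $\beta$ is ``no slower than'' the Theorem \ref{brainfreeze} exponent is false.

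Fortunately this does not break the argument: with the correct exponent $\beta=(\alpha-2)/(2\alpha+d)$, the required algebraic inequality $\beta>4/(2\alpha+d)$ becomes $\alpha-2>4$, i.e.\ $\alpha>6$, which is precisely the hypothesis in Condition \ref{FAQ} (and is the reason the paper singles out $\alpha>6$). So once you replace your $\beta$ by $(\alpha-2)/(2\alpha+d)$, the proof is correct and coincides with the paper's one-line argument.
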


\begin{proof}
	Using Theorem \ref{triebelei} and  $\alpha>6$, we obtain that with sufficiently high probability,
	\[\|\theta_{init}-\theta_{0,D}\|_{\R^D}\lesssim N^{-(\alpha-2)/(2\alpha+d)} = o\big((\log N)^{-1}D^{-4/d}\big). \] 
\end{proof}

Next, denoting by $\tilde \Pi(\cdot|Z^{(N)})$ the `surrogate' posterior measure with density (\ref{surrod}), and if 
	\[ \mathcal E_{wass}=\Big\{ W^2_2(\tilde \Pi(\cdot|Z^{(N)}), \Pi(\cdot|Z^{(N)})) \le \exp(-N^{d/(2\alpha+d)}) \Big\},\]
is given by (\ref{wasser}) with $\rho = 2\exp(-N^{d/(2\alpha+d)})$, then Theorem \ref{waterstone} implies the following approximation result in Wasserstein distance.

\begin{prop}
	Assume Conditions \ref{asymptopia} and \ref{FAQ}. Then there exist constants $c_7,c_8>0$ such that for all $N\in \N$,
	\begin{equation*}
	P_{\theta_0}^N\big(\mathcal E_{wass}\big) \ge 1- c_7e^{-c_8N^{d/(2\alpha+d)}}.
	\end{equation*}
\end{prop}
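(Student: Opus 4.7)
The plan is as follows. The surrogate density $\tilde\pi(\cdot|Z^{(N)})$ coincides with $\pi(\cdot|Z^{(N)})$, up to a ratio of normalising constants, on the ball $\hat{\mathcal{B}}$ from (\ref{calbschrott}), and disagrees only on $\hat{\mathcal{B}}^c$. If I can show that both measures place exponentially small mass on $\hat{\mathcal{B}}^c$, then their total variation distance will be exponentially small, and a moment-based interpolation will upgrade this to the required $W_2^2$-bound.

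For the posterior mass, I would invoke a posterior contraction theorem for the Schr\"odinger model, in the spirit of \cite{N17, MNP19b}, producing (with $P_{\theta_0}^N$-probability at least $1-e^{-cN^{d/(2\alpha+d)}}$) the estimate $\Pi(\|\theta-\theta_{0,D}\|_{\R^D} > \delta_N \mid Z^{(N)}) \le e^{-c'N^{d/(2\alpha+d)}}$ for some $\delta_N$ significantly smaller than $\eta = \epsilon D^{-4/d}/2$. The slack $\alpha>6$ in Condition \ref{FAQ} is precisely what allows the intrinsic rate $N^{-\alpha/(2\alpha+d)}$ (after translation to the Euclidean norm via the stability of the linearisation in Lemma \ref{wundervoncordoba}) to beat $D^{-4/d}/\log N$, given $D \lesssim N^{d/(2\alpha+d)}$. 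Combining with $\|\theta_{init}-\theta_{0,D}\|_{\R^D}\le \eta/8$ on $\mathcal{E}_{init}$, the triangle inequality then yields $\Pi(\hat{\mathcal{B}}^c|Z^{(N)})\le e^{-cN^{d/(2\alpha+d)}}$.

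For the surrogate mass, on $\mathcal{E}_{conv}\cap\mathcal{E}_{init}$ Proposition \ref{prop:log:conv} gives strong log-concavity of $\tilde\Pi(\cdot|Z^{(N)})$ with parameter $m\gtrsim ND^{-4/d}$, and its unique mode lies strictly inside $\hat{\mathcal{B}}$ (the penalty $Kg_\eta$ pushes mass toward $\theta_{init}$). Gaussian-type concentration for strongly log-concave measures then yields $\tilde\Pi(\hat{\mathcal{B}}^c|Z^{(N)}) \le e^{-cm\eta^2} \le e^{-c\epsilon^2 ND^{-12/d}}$, which dominates $e^{-N^{d/(2\alpha+d)}}$ under Condition \ref{FAQ} thanks to $\alpha>6$ and $D\lesssim N^{d/(2\alpha+d)}$. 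A standard comparison of the two normalising constants then gives TV-closeness of order $e^{-cN^{d/(2\alpha+d)}}$, and the Cauchy-Schwarz-type interpolation
\[ W_2^2(\mu,\nu) \lesssim \|\mu-\nu\|_{TV}^{1/2}\bigl(M_4(\mu)+M_4(\nu)\bigr)^{1/2}, \]
combined with polynomial-in-$D,N$ fourth-moment bounds (from log-concavity of $\tilde\Pi$ and Gaussian prior tails), finishes the argument after mildly shrinking $c_8$. The main obstacle is the posterior contraction step: since $\mathcal G$ is nonlinear, obtaining a rate $\delta_N \ll D^{-4/d}$ in the \emph{Euclidean} metric on $\R^D$ is delicate and leans critically on the local stability estimate of Lemma \ref{wundervoncordoba} together with testing arguments exploiting the boundedness (\ref{ubd}); the remaining ingredients are comparatively routine.
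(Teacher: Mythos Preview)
Your overall plan---show both measures concentrate on $\hat{\mathcal B}$, deduce closeness of normalisers, and pass to $W_2$---is exactly the skeleton of the paper's proof of Theorem \ref{waterstone}. The paper, however, does \emph{not} go via total variation and then interpolate. It bounds $W_2^2$ directly through the Villani-type inequality $W_2^2(\mu,\nu)\le 2\int\|\theta-\hat\theta_{MAP}\|^2\,d|\mu-\nu|$ and splits that integral into a small ball around $\hat\theta_{MAP}$ (Term~I) and its complement (Terms~II and~III). Term~II is handled by a Taylor expansion of $\log\tilde\pi$ around its mode and a lower bound on the surrogate normaliser (Lemma \ref{expsmall}); Term~III by posterior contraction (Theorem \ref{contractionrate}) combined with Cauchy--Schwarz; Term~I by showing the ratio $p_N$ of normalisers lies in $[1-e^{-N\delta_N^2}/8,\,(1-e^{-N\delta_N^2}/8)^{-1}]$.

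Your TV-then-interpolate route has a genuine gap at the fourth-moment step for the \emph{posterior}. On the relevant events the crude pointwise bound $\pi(\theta|Z^{(N)})\le e^{\ell_N(\theta)-\ell_N(\theta_0)+cN\delta_N^2}\pi(\theta)$ together with $\ell_N(\theta)-\ell_N(\theta_0)\lesssim N$ (from (\ref{ubd})) gives only $M_4\big(\Pi(\cdot|Z^{(N)})\big)\lesssim e^{cN}$, not polynomial; since $N\delta_N^2=N^{d/(2\alpha+d)}\ll N$, the product $\|\cdot\|_{TV}^{1/2}M_4^{1/2}$ diverges. The paper circumvents this by not bounding $M_4$ of the posterior at all: it applies Cauchy--Schwarz \emph{inside} the integral for Term~III, writes the resulting fourth-moment term as $\int\|\theta-\hat\theta_{MAP}\|^4 e^{\ell_N(\theta)-\ell_N(\theta_0)}d\Pi(\theta)$ divided by the normaliser, and controls it by a Markov inequality at the $P_{\theta_0}^N$-level using $E_{\theta_0}^N e^{\ell_N(\theta)-\ell_N(\theta_0)}=1$ and the finiteness of the \emph{prior}'s fourth moment. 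This ``quantitative uniform integrability'' trick is essential and is what your sketch misses.

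Two smaller points. First, the surrogate's unique mode is located inside $\hat{\mathcal B}$ not by a penalty heuristic but by proving (Theorem \ref{maprate}) that every MAP $\hat\theta_{MAP}$ satisfies $\|\hat\theta_{MAP}-\theta_{0,D}\|\lesssim\bar\delta_N$, hence lies where $\tilde\ell_N=\ell_N$; then $\nabla\log\pi(\hat\theta_{MAP}|Z^{(N)})=0$ forces $\hat\theta_{MAP}$ to be the unique maximiser of the strongly concave $\log\tilde\pi$. Second, the contraction rate in the Euclidean norm on $\R^D$ does not use Lemma \ref{wundervoncordoba}; it comes from the \emph{global} inversion $f=\Delta u_f/(2u_f)$ followed by the interpolation argument (\ref{interpol}), as in Theorems \ref{maprate} and \ref{contractionrate}.
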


The preceding propositions imply that the events
\begin{equation}\label{blah}
	\mathcal E_N :=\mathcal E_{conv} \cap \mathcal E_{init}\cap \mathcal E_{wass}
\end{equation}
satisfy the probability bound $P_{\theta_0}^N(\mathcal E_N)\ge 1-c'e^{-c''N^{d/(2\alpha+d)}}$. In what follows, the events $\mathcal E_N$ will be tacitly further intersected with events which have probability $1$ for all $N$ large enough, ensuring that the non-asymptotic conditions required in the results of Section \ref{barocco} are eventually  verified.

\smallskip

\textbf{Proof of Theorem \ref{thm:schr:wass}.} We will prove Theorem \ref{thm:schr:wass} by applying Theorem \ref{thm:gen:wass} with the choices $\mathcal B=\mathcal B_\epsilon$ from (\ref{eq:B:def}), $\epsilon=1/\log N$ and $K$ from Condition \ref{asymptopia}, $\rho=2\exp(-N^{d/(2\alpha+d)})$ and $M=I_{D\times D}$ generating the ellipsoidal norm $\|\cdot\|_{\R^D}$. Using (\ref{weyl}), the prior covariance $\Sigma$ from (\ref{schrottprior}) satisfies
	\[  \lambda_{min}(\Sigma^{-1})\simeq N^{\frac{d}{2\alpha+d}},~~~~~ \lambda_{max}(\Sigma^{-1})\simeq N^{\frac{d}{2\alpha+d}}D^{2\alpha/d}. \]
	Then using Condition \ref{asymptopia}, we first have that 
	\[ K \gtrsim ND^{8/d}(\log N)^2 \simeq Nc_{max}\cdot \big(1+\eta^{-2}\big), \]
	verifying the lower bound (\ref{eq:K:cond}), and then also that $m,\Lambda>0$ from Theorem \ref{thm:gen:wass} satisfy
	\begin{equation*}
	\begin{split}
	m \simeq ND^{-4/d}+N^{\frac{d}{2\alpha+d}}, ~~~\Lambda \simeq ND^{8/d}(\log N)^3+ N^{\frac{d}{2\alpha+d}}D^{\frac{2\alpha}{d}}.
	\end{split}
	\end{equation*}
	The dimension condition (\ref{dimbias}) and the condition on $\alpha$ further imply
	\[ ND^{-4/d}\gtrsim N^{\frac{d}{2\alpha+d}},~~~~~ N^{\frac{d}{2\alpha+d}}D^{\frac{2\alpha}{d}}\lesssim N, \]
	whence we further obtain
	\begin{equation} \label{lamborghini}
	 m \simeq ND^{-4/d},~~~~~ \Lambda \simeq ND^{8/d}(\log N)^3. 
	 \end{equation}
	Noting that also $\gamma =o(\Lambda^{-1})$ with our choices, Theorem \ref{thm:gen:wass} yields that on the event $\mathcal E_N$ from (\ref{blah}), the Markov chain $(\vartheta_k)$ satisfies the Wasserstein bound (\ref{eq:gen:wass}) with
	\begin{equation}
	\begin{split}
		b(\gamma)&\lesssim  \frac{\gamma D\Lambda^2}{m^2}+\frac{\gamma^2D\Lambda^4}{m^3} \lesssim \gamma D^{(d+24)/d}(\log N)^6 + \gamma^2ND^{(d+44)/d}(\log N)^{12},
	\end{split}
	\end{equation}
	as well as
	\[	\tau(\Sigma,M,\|\theta_{0,D}\|_{\R^D})\lesssim \kappa(\Sigma)\simeq D^{2\alpha/d}. \]
	Using also that $D/m\lesssim \textnormal{const.}$, the first part of Theorem \ref{thm:schr:wass} follows.
	\par 
	For the choice of $\gamma=\gamma_\eps$ from (\ref{gammel}), straightforward calculation yields that (for $N$ large enough)
	\begin{equation}\label{zadok}
		B(\gamma_\eps) = o(\eps^2 + N^{-2P}),
	\end{equation}
	which proves the second part of Theorem \ref{thm:schr:wass}.
	
	\smallskip
	
	\textbf{Proof of Proposition \ref{thm:schr:func} and of Theorems \ref{thm:post:mean}, \ref{brainfreeze}.} The proof of Proposition \ref{thm:schr:func} now follows directly from Theorem \ref{thm:gen:func} and the preceding computations. Noting that for all $N$ large enough we have $B(\gamma)\le N^{-P}$, Theorem \ref{thm:post:mean} follows from Corollary \ref{generallymean}, (\ref{zadok}) as well as (\ref{eq:gen:concmean}), for $J_{in}\ge  (\log N)^3/(\gamma_\varepsilon ND^{-4/d})$. Finally, intersecting further with the event
	\[ \mathcal E_{mean}:= \big\{ \|E^{\Pi}[\theta|Z^{(N)}]-\theta_0 \|_{\ell^2}\le LN^{-\frac{\alpha}{2\alpha+d}\frac{\alpha}{\alpha+2}} \big\},~~L>0, \]
	Theorem \ref{brainfreeze} now follows from the triangle inequality and (\ref{postmeanrat}).
	
	\smallskip
	
	\textbf{Proof of Theorem \ref{MAP}.} In the proof we intersect $\mathcal E_N$ from (\ref{blah}) further with the event on which the conclusion of Theorem \ref{maprate} holds. Part iii) then follows from part ii) and straightforward calculations. Part i) follows from the arguments following (\ref{score}) below, where it is proved in particular that $\hat \theta_{MAP}$ is the unique maximiser of the proxy posterior density $\tilde \pi(\cdot|Z^{(N)})$ over $\R^D$. We can now apply Proposition \ref{reiskorn} with $m, \Lambda$ from (\ref{lamborghini}), using also that
	\begin{align*}
	 & |\log \tilde \pi (\theta_{init}|Z^{(N)})-\log \tilde \pi (\hat \theta_{MAP}|Z^{(N)})| \\
	 &\lesssim  \sup_{\theta\in \mathcal B_{1/8\log N}} \big|\ell_N(\theta) \big| + N^{d/(2\alpha+d)}\|\hat \theta_{MAP}\|_{h^\alpha}^2 +N^{d/(2\alpha+d)}\|\theta_{init}\|_{h^\alpha}^2\\
	 & \lesssim N + N^{d/(2\alpha+d)} (1+ D^{2\alpha/d}) \lesssim N,
	 \end{align*}
	 in view of $\ell_N =\tilde \ell_N$ on $\mathcal B_{1/8 \log N}$, the definition of $\mathcal E_{init}$,  (\ref{weyl}) and since $\theta_0\in h^\alpha$.

\subsection{Analytical properties of the Schr\"odinger forward map}\label{sec:schr:llh}

This section is devoted to proving the four auxiliary Lemmas \ref{lem:schr:one}-\ref{lem:schr:ub} used in the proof of Proposition \ref{schroe-emperor-smalldim}. Throughout we consider forward map $\mathcal G: \R^D \to L^2(\mathcal O)$, $\mathcal G=G\circ \Phi^* \circ \Psi$ given by (\ref{fwdG}) and assume the hypotheses of Proposition \ref{schroe-emperor-smalldim}, where the set $\mathcal B_\epsilon$ was defined in (\ref{eq:B:def}).

For any $f\in C(\mathcal O)$ with $f\ge 0$, by standard theory for elliptic PDEs (see e.g. Chapter 6.3 of \cite{E10}) there exists a linear, continuous operator $V_f:L^2(\mathcal O) \to H^2_0(\mathcal O)$ describing (weak) solutions $V_f[\psi]=w\in H^2_0$ of the (inhomogeneous) Schr\"odinger equation
\begin{equation}\label{Vf}
\begin{cases}
\frac{\Delta}{2} w - fw =\psi~~~ \text{on}~ \mathcal O, \\
w=0 ~~~\text{on}~ \partial \mathcal O.
\end{cases}
\end{equation}
\begin{lem}\label{stamitz}
For any $x\in\mathcal O$, the map $\theta\mapsto \mathcal G(\theta)(x)$ is twice continuously differentiable on $\R^D$. The vector field $\nabla \mathcal G_\theta:\mathcal O\to \R^D$ is given by
\[ v^T\nabla \mathcal G_\theta (x) =V_{f_\theta}\big[ u_{f_\theta}(\Phi'\circ F_\theta)\Psi(v) \big](x), ~~~x\in\mathcal O, ~v\in \R^D.\]
Moreover, for any $v_1,v_2\in \R^D$ and $x\in\mathcal O$, the matrix field $\nabla^2 \mathcal G_\theta:\mathcal O\to \R^{D\times D}$ is given by
\begin{equation*}
\begin{split}
	v_1^T\nabla^2\mathcal G_\theta (x)v_2=&V_{f_\theta}\big[u_{f_\theta}\Psi(v_1)\Psi(v_2)(\Phi''\circ F_\theta)\big](x)\\
	&~~~~~~+V_{f_\theta}\big[ (\Phi'\circ F_\theta)\Psi(v_1)V_{f_\theta}\big[ u_{f_\theta}(\Phi'\circ F_\theta)\Psi(v_2)\big]\big](x)\\
	&~~~~~~+V_{f_\theta}\big[ (\Phi'\circ F_\theta)\Psi(v_2)V_{f_\theta}\big[ u_{f_\theta}(\Phi'\circ F_\theta)\Psi(v_1)\big]\big](x).
\end{split}
\end{equation*}
\end{lem}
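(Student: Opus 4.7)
The plan is to apply the chain rule to the factorization $\mathcal G = G \circ \Phi^* \circ \Psi$ from (\ref{fwdG}). Here $\Psi:\theta\mapsto F_\theta$ is linear, and $\Phi^*:F\mapsto \Phi\circ F$ is $C^2$ Fr\'echet differentiable on $L^\infty(\mathcal O)$ with $D\Phi^*(F)[H]=(\Phi'\circ F)H$ and $D^2\Phi^*(F)[H_1,H_2]=(\Phi''\circ F)H_1H_2$, since $\Phi',\Phi''$ are bounded (Definition \ref{def:reg:link}). The substantive work is thus to show that the elliptic solution map $G:f\mapsto u_f$ is twice continuously Fr\'echet differentiable as a map into $H^2(\mathcal O)\hookrightarrow C(\bar{\mathcal O})$ (the Sobolev embedding being available since $d\le 3$) and to identify its derivatives; pointwise differentiability at any $x\in\mathcal O$ then follows by composing with point-evaluation.

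To compute $DG$, I differentiate the PDE $\tfrac12 \Delta u_f - f u_f = 0$ with $u_f=g$ on $\partial\mathcal O$ in direction $h$: the derivative $w:=DG(f)[h]$ formally solves $(\tfrac12\Delta-f)w = u_f h$ on $\mathcal O$ with $w=0$ on $\partial\mathcal O$, so by the very definition of $V_f$ in (\ref{Vf}) one expects $DG(f)[h] = V_f[u_f h]$. Rigour follows by examining the remainder $R_h := u_{f+h}-u_f - V_f[u_f h]$, which satisfies $(\tfrac12\Delta - f)R_h = h(u_{f+h}-u_f)$ with zero boundary data; standard elliptic bounds on $V_f$ together with continuous dependence $\|u_{f+h}-u_f\|_\infty = O(\|h\|_\infty)$ (obtained via the identity $u_{f+h}-u_f = V_f[h u_{f+h}]$) yield $\|R_h\|_\infty = o(\|h\|_\infty)$. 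The first-order chain rule applied with $D(\Phi^*\circ\Psi)(\theta)v = (\Phi'\circ F_\theta)\Psi(v)$ then produces the stated formula for $v^T\nabla \mathcal G_\theta(x)$.

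For $D^2G$, differentiating the identity $L_f V_f = \mathrm{Id}_{L^2}$ with $L_f := \tfrac12\Delta - f$ in $f$ along $h_2$ yields $(DV_f[h_2])\psi = V_f[h_2 V_f[\psi]]$; combining this with $Du_f[h_2] = V_f[u_f h_2]$ and the product rule applied to $f \mapsto V_f[u_f h_1]$ gives
\begin{equation*}
D^2 G(f)[h_1, h_2] = V_f\bigl[h_2\, V_f[u_f h_1]\bigr] + V_f\bigl[h_1\, V_f[u_f h_2]\bigr].
\end{equation*}
The second-order chain rule $D^2\mathcal G(\theta)[v_1,v_2] = D^2G(f_\theta)[D\mathcal F(\theta)v_1,D\mathcal F(\theta)v_2] + DG(f_\theta)[D^2\mathcal F(\theta)(v_1,v_2)]$ with $\mathcal F := \Phi^*\circ\Psi$ and $D^2\mathcal F(\theta)(v_1,v_2)=(\Phi''\circ F_\theta)\Psi(v_1)\Psi(v_2)$, evaluated at $x$, produces exactly the three terms in the claimed formula; the term $V_{f_\theta}[u_{f_\theta}(\Phi''\circ F_\theta)\Psi(v_1)\Psi(v_2)](x)$ arises precisely from the second-order chain rule correction through $\Phi^*$.

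Continuous (as opposed to merely Fr\'echet) differentiability is inherited from continuous $\theta$-dependence of all ingredients: $F_\theta \in H^\alpha(\mathcal O)\hookrightarrow L^\infty(\mathcal O)$ is linear, $\Phi^{(k)}\circ F_\theta$ depends continuously on $\theta$ by smoothness of $\Phi$, $u_{f_\theta}$ is continuous in $\theta$ via (\ref{ubd}) and the $DG$-estimate, and the operator $V_{f_\theta}:L^2\to H^2_0$ is operator-norm continuous in $f_\theta$ via the resolvent identity $V_{f_\theta} - V_{f_{\theta'}} = V_{f_\theta}[(f_{\theta'}-f_\theta)V_{f_{\theta'}}(\cdot)]$. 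The main obstacle is thus the $C^2$ Fr\'echet differentiability of the elliptic solution map $G$, which is standard but requires some care with the elliptic estimates above; everything else is chain- and product-rule bookkeeping.
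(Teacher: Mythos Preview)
Your proposal is correct and takes essentially the same approach as the paper: factor $\mathcal G(\theta)(x)=\delta_x\circ G\circ\Phi^*\circ\Psi(\theta)$, verify that each factor is twice Fr\'echet differentiable (with $DG_f[h]=V_f[u_fh]$ and $D^2G_f[h_1,h_2]=V_f[h_1V_f[u_fh_2]]+V_f[h_2V_f[u_fh_1]]$), and apply the chain rule. The paper delegates the differentiability of $G$ and $\Phi^*$ to appendix lemmas (Lemma~\ref{lem:schr:deriv} and (\ref{Dphi})) and the chain rule to Lemma~\ref{lem:chain}, whereas you sketch the remainder estimates for $G$ inline, but the argument is the same.
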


\begin{proof}
	In the notation from (\ref{fwdG}), the map $\theta\mapsto \mathcal G(\theta)(x)$ can be represented as the composition $\delta_x \circ G\circ \Phi^*\circ \Psi$, where $\delta_x: w\mapsto w(x)$ denotes point evaluation. We first show that each of these four operators is twice differentiable. The continuous linear maps $\Psi:\R^D\to C(\mathcal O)$ and $\delta_x: C(\mathcal O) \to \mathbb R$ are infinitely differentiable (in the Frech\'et sense). Moreover, the maps $G:C(\mathcal O) \cap \{f>0\} \to C(\mathcal O)$ and $\Phi^*:C(\mathcal O)\to C(\mathcal O) \cap \{f>0\}$ are twice Fre\'chet differentiable with derivatives $DG,~DG^2$ and $D\Phi^*,D^2\Phi^*$ given by Lemma \ref{lem:schr:deriv} and (\ref{Dphi}) respectively. We deduce overall by the chain rule for Fr\'echet derivatives (cf. Lemma \ref{lem:chain}), that $x\mapsto \mathcal G(\theta)(x)$ is twice differentiable, with the desired expressions for the vector and matrix fields. The continuity of the second partial derivatives follows from inspection of the expression for the matrix field, and by applying the regularity results for $V_f,G$ and $\Phi^*$ from Appendix \ref{sec:aux}.
\end{proof}

Now since $\|\theta_0\|_{h^2} \le S$ and by the definition (\ref{eq:B:def}) of the set $\mathcal B_1$, we have from (\ref{weyl}) that
\begin{equation*}
\begin{split}
\sup_{\theta\in\mathcal B_1}\|\theta\|_{h^{2}}\le\|\theta_{0,D}\|_{h^{2}}+\sup_{\theta\in\mathcal B_1}\|\theta-\theta_{0,D}\|_{h^{2}}\lesssim S + D^{\frac{2}{d}}\sup_{\theta\in\mathcal B_1}\|\theta-\theta_{0,D}\|_{\R^D}\lesssim S+1.
\end{split}
\end{equation*}
It follows further from the Sobolev embedding and regularity of the link function $\Phi$ (Appendix \ref{ssec-link}) that there exists a constant $B=B(S,\Phi,\mathcal O)<\infty$, such that
\begin{equation}\label{hammer}
\sup_{\theta\in\mathcal B_1}\Big[\|F_\theta\|_\infty +\|F_\theta\|_{H^2} + \|f_\theta\|_{H^2}+ \|f_\theta\|_\infty \Big]  \le  B.
\end{equation}
In particular, this estimate implies that the constants appearing in the inequalities from Lemma \ref{cpebach} can be chosen independently of $\theta\in\mathcal B$, which we use frequently below.

\smallskip

For notational convenience we also introduce spaces
	\begin{equation}\label{subba}
		E_D:=\spn(e_1,...,e_D)\subseteq L^2(\mathcal O),~~~ D\in \N,
	\end{equation}
	spanned by the first $D$ eigenfunctions of $\Delta$ on $\mathcal O$ (cf.~Section \ref{ONB}).

\smallskip

We first verify the boundedness property required in Assumption \ref{ass:model} ii).

\begin{lem}\label{lem:schr:one}
	There exists a constant $C>0$ such that
	\begin{equation*}
	\begin{split}
	\sup_{\theta\in\mathcal B_1} \|\mathcal G(\theta)\|_{L^\infty}\le C,~~~~\sup_{\theta\in\mathcal B_1}\|\nabla \mathcal G(\theta)\|_{L^\infty(\mathcal O,\R^D)}\le C,~~~~\sup_{\theta\in\mathcal B_1}\|\nabla^2 \mathcal G(\theta)\|_{L^\infty(\mathcal O,\R^{D\times D})}\le CD^{2/d}.
	\end{split}
	\end{equation*}
\end{lem}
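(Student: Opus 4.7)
The first bound is immediate from the Feynman--Kac representation (\ref{ubd}), which yields $\|\mathcal G(\theta)\|_\infty = \|u_{f_\theta}\|_\infty \le \|g\|_\infty$ uniformly in $\theta$. The remaining two estimates will be proved by inserting the explicit formulas for $\nabla\mathcal G_\theta$ and $\nabla^2\mathcal G_\theta$ from Lemma \ref{stamitz} and bounding each factor via standard $L^2 \to H^2_0$ elliptic regularity for $V_{f_\theta}$ together with the Sobolev embedding $H^2(\mathcal O) \hookrightarrow L^\infty(\mathcal O)$ available for $d\le 3$.

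The plan is to first collect the uniform ingredients on $\mathcal B_1$. By (\ref{hammer}) we have $\|f_\theta\|_\infty \le B$ uniformly, so (via Lemma \ref{cpebach} in the Appendix, applied with constants depending only on $B$ and $\mathcal O$) the operator $V_{f_\theta}$ satisfies
\[ \|V_{f_\theta}[\psi]\|_\infty \;\lesssim\; \|V_{f_\theta}[\psi]\|_{H^2} \;\lesssim\; \|\psi\|_{L^2}, \qquad \psi \in L^2(\mathcal O), \]
uniformly in $\theta \in \mathcal B_1$. The regular link function from Definition \ref{def:reg:link} gives $\|\Phi'\circ F_\theta\|_\infty, \|\Phi''\circ F_\theta\|_\infty \lesssim 1$, and $\|u_{f_\theta}\|_\infty \le \|g\|_\infty$. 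Finally, for any $v \in \R^D$ we have the two key bounds
\[ \|\Psi(v)\|_{L^2(\mathcal O)} = \|v\|_{\R^D}, \qquad \|\Psi(v)\|_\infty \;\lesssim\; \|\Psi(v)\|_{H^2} \;\lesssim\; \|\Psi(v)\|_{h^2} \;\lesssim\; D^{2/d}\,\|v\|_{\R^D}, \]
where the last chain uses the equivalence of $\|\cdot\|_{h^2}$ and $\|\cdot\|_{H^2}$ on $\mathcal H_2$, the Weyl asymptotics (\ref{weyl}), and the fact that $\Psi(v) \in E_D$.

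For the second estimate, pick $v\in\R^D$ with $\|v\|_{\R^D}\le 1$ and $x\in \mathcal O$. Using Lemma \ref{stamitz} and the ingredients above,
\[ |v^T \nabla\mathcal G_\theta(x)| \;\le\; \|V_{f_\theta}[u_{f_\theta}(\Phi'\circ F_\theta)\Psi(v)]\|_\infty \;\lesssim\; \|u_{f_\theta}\|_\infty\,\|\Phi'\|_\infty\,\|\Psi(v)\|_{L^2} \;\lesssim\; 1, \]
uniformly in $\theta\in\mathcal B_1$, giving the bound by duality between $v$ and $\|\nabla\mathcal G_\theta(x)\|_{\R^D}$.

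For the third estimate, take $v_1,v_2\in\R^D$ with $\|v_i\|_{\R^D}\le 1$, use that $\|\nabla^2\mathcal G_\theta(x)\|_{op} = \sup_{\|v_1\|,\|v_2\|\le 1}|v_1^T\nabla^2\mathcal G_\theta(x)v_2|$ (as these are symmetric), and treat the three terms of Lemma \ref{stamitz} separately. The first term is bounded by
\[ \|V_{f_\theta}[u_{f_\theta}\Psi(v_1)\Psi(v_2)(\Phi''\circ F_\theta)]\|_\infty \;\lesssim\; \|u_{f_\theta}\|_\infty \|\Phi''\|_\infty\,\|\Psi(v_1)\|_\infty\,\|\Psi(v_2)\|_{L^2} \;\lesssim\; D^{2/d}, \]
which is where the factor $D^{2/d}$ enters, via the $L^\infty$-norm of one copy of $\Psi(v_i)$. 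The second term, setting $w:=V_{f_\theta}[u_{f_\theta}(\Phi'\circ F_\theta)\Psi(v_2)]$, uses $\|w\|_\infty\lesssim 1$ (by the second estimate just proved) to get
\[ \|V_{f_\theta}[(\Phi'\circ F_\theta)\Psi(v_1)\,w]\|_\infty \;\lesssim\; \|\Phi'\|_\infty\,\|w\|_\infty\,\|\Psi(v_1)\|_{L^2} \;\lesssim\; 1, \]
and the third term is symmetric to the second. Summing yields $\|\nabla^2\mathcal G_\theta(x)\|_{op} \lesssim D^{2/d}$. The only subtle point is the uniformity of the operator norm of $V_{f_\theta}$ on $\mathcal B_1$, which is guaranteed by (\ref{hammer}); once this is in hand, everything else reduces to H\"older's inequality and the spectral-side control of $\Psi$.
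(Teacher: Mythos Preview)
Your proof is correct and follows essentially the same approach as the paper: the first bound from (\ref{ubd}), then for the gradient and Hessian you invoke the explicit formulas from Lemma \ref{stamitz}, pass through the Sobolev embedding $H^2\hookrightarrow L^\infty$ and the uniform $L^2\to H^2$ bound for $V_{f_\theta}$ from Lemma \ref{cpebach} (valid on $\mathcal B_1$ by (\ref{hammer})), and pick up the factor $D^{2/d}$ from the $L^\infty$-norm of one copy of $\Psi(v)$ in the $\Phi''$-term. The paper does the same, the only cosmetic difference being that it works with the quadratic form $v^T\nabla^2\mathcal G_\theta v$ (i.e.\ $v_1=v_2$) rather than the bilinear form, which is equivalent for the symmetric Hessian.
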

\begin{proof}
	The estimate for $\|\mathcal G(\theta)\|_{\infty}$ follows immediately from (\ref{ubd}). To estimate $\|\nabla \mathcal G(\theta)\|_{L^\infty(\mathcal O,\R^D)}$, we first note that by Lemma \ref{stamitz},
\begin{equation*}
	\begin{split}
		\|\nabla \mathcal G(\theta)\|_{L^\infty(\mathcal O,\R^D)} =\sup_{v:\|v\|_{\R^D}\le 1}
		\|v^T \nabla \mathcal G(\theta)\|_{L^\infty} \le \sup_{H\in E_D :\|H\|_{L^2}\le 1}\big\|V_{f_\theta}\big[ u_{f_\theta}(\Phi'\circ F_\theta)H \big]\big\|_{\infty}.
	\end{split}
\end{equation*}
		Thus by the Sobolev embedding $\|\cdot\|_\infty\lesssim \|\cdot\|_{H^2}$, Lemma \ref{cpebach} and boundedness of $\Phi'$, we have that for any $\theta\in \mathcal B_1$ and any $H\in E_D$,
\begin{equation*}
	\begin{split}
		\big\|V_{f_{\theta}}[u_{f_{\theta}} (\Phi'\circ F_\theta) H]\big\|_{\infty}& \lesssim \big\|V_{f_\theta}[u_{f_\theta} (\Phi'\circ F_\theta) H]\big\|_{H^2}\\
		&\lesssim \big\|u_{f_\theta} (\Phi'\circ F_\theta) H\big\|_{L^2}\\
		&\lesssim \big\|u_{f_\theta}\|_\infty\|\Phi'\circ F_\theta\|_\infty \| H\|_{L^2}\lesssim \| H\|_{L^2}.
	\end{split}
\end{equation*}
Again using Lemma \ref{stamitz}, we can similarly estimate $\|\nabla^2\mathcal G(\theta)\|_{L^\infty(\mathcal O,\R^D)}$ by 
\begin{equation}\label{bilinear}
	\begin{split}
	&\|\nabla^2\mathcal G(\theta)\|_{L^\infty(\mathcal O,\R^D)}\le \sup_{v:\|v\|_{\R^D}\le 1}
	\|v^T\nabla^2\mathcal G(\theta)v\|_{L^\infty}\\
	&~~~~~\le \sup_{H\in E_D :\|H\|_{L^2}\le 1} 2\big\| V_{f_\theta}\big[ H(\Phi'\circ F_\theta)V_{f_\theta}\big[ H(\Phi'\circ F_\theta)u_{f_\theta}\big]\big]\big\|_{\infty}+ \big\|V_{f_\theta}\big[H^2(\Phi''\circ F_\theta)u_{f_\theta}\big]\big\|_\infty\\
	&~~~~~=: \sup_{H\in E_D :\|H\|_{L^2}\le 1} I+II.
	\end{split}
\end{equation}
Arguing as in the estimate for $\|\nabla \mathcal G(\theta)\|_{L^\infty(\mathcal O,\R^D)}$, we have that for any $\theta \in \mathcal B_1$ and $H\in E_D$,
\begin{equation*}
\begin{split}
I &\lesssim \|H(\Phi'\circ F_\theta)V_{f_\theta}\big[ H(\Phi'\circ F_\theta)u_{f_\theta}\big]\|_{L^2}\\
&\lesssim \|H\|_{L^2}\|\Phi'\circ F\|_\infty \|V_f[H(\Phi'\circ F)u_f]\|_{\infty}\\
&\lesssim \|H\|_{L^2} \|H(\Phi'\circ F)u_f\|_{L^2}\lesssim  \|H\|_{L^2}^2,
\end{split}
\end{equation*}
as well as
\begin{equation*}
\begin{split}
II\lesssim \|H^2(\Phi''\circ F_\theta)u_{f_\theta}\|_{L^2}\lesssim \|u_{f_\theta}\|_\infty \|\Phi''\circ F_\theta\|_\infty \|H\|_{L^2}\|H\|_{\infty}\lesssim \|H\|_{L^2}\|H\|_{H^2}\lesssim D^{2/d}\|H\|_{L^2}^2,
\end{split}
\end{equation*}
where we used the basic norm estimate on $E_D\subseteq L^2(\mathcal O)$ from Lemma \ref{normal}. By combining the last three displays, the proof is completed.
\end{proof}

Next, we verify the increment bound needed in Assumption \ref{ass:model} iii).

\begin{lem}\label{lem:schr:two}
	There exists a constant $C>0$ such that for any $D\in \N$ and any $\theta,\theta' \in \R^D$,
	\begin{align}\label{eq:zero-increm}
			\|\mathcal G(\theta)-\mathcal G(\bar \theta)\|_\infty \le C\|F_\theta-F_{\bar\theta}\|_{\infty},~~~\|\mathcal G(\theta)-\mathcal G(\bar \theta)\|_{L^2} \le C\|F_\theta-F_{\bar\theta}\|_{L^2},
	\end{align}
	as well as, for any $\theta,\theta' \in \mathcal B_1$,
	\begin{align}
			\|\nabla \mathcal G(\theta)-\nabla \mathcal G(\bar \theta)\|_{L^\infty(\mathcal O,\R^D)}&\le C\|F_\theta-F_{\bar\theta}\|_{\infty},\label{eq:one-increm}\\
			\|\nabla^2 \mathcal G(\theta)-\nabla^2 \mathcal G(\bar \theta)\|_{L^\infty(\mathcal O,\R^{D\times D})}&\le CD^{2/d}\|F_\theta-F_{\bar\theta}\|_{\infty}. \label{eq:two-increm}
	\end{align}
\end{lem}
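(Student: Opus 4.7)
Write $\mathcal G(\theta) = u_{f_\theta}$ with $f_\theta = \Phi \circ F_\theta$. The basic observation is that for any potentials $f, \tilde f > K_{min} \ge 0$, the difference $u_f - u_{\tilde f}$ vanishes on $\partial\mathcal O$ and solves $\tfrac12\Delta(u_f - u_{\tilde f}) - f(u_f - u_{\tilde f}) = (f - \tilde f)u_{\tilde f}$, whence
\begin{equation*}
u_f - u_{\tilde f} = V_f\bigl[(f - \tilde f)\, u_{\tilde f}\bigr],
\end{equation*}
and the same subtraction applied to fixed source $\psi \in L^2(\mathcal O)$ yields the resolvent identity $V_f[\psi] - V_{\tilde f}[\psi] = V_f\bigl[(f - \tilde f)\, V_{\tilde f}[\psi]\bigr]$. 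These two identities, the mapping $V_f : L^2 \to H^2_0$ of Lemma \ref{cpebach}, the Sobolev embedding $H^2 \hookrightarrow L^\infty$ (valid for $d \le 3$), the uniform bound $\|u_f\|_\infty \le \|g\|_\infty$ from (\ref{ubd}), and the estimates $\|f_\theta - f_{\bar\theta}\|_{L^p} \lesssim \|F_\theta - F_{\bar\theta}\|_{L^p}$ and $\|\Phi'\!\circ F_\theta - \Phi'\!\circ F_{\bar\theta}\|_{L^p} \lesssim \|F_\theta - F_{\bar\theta}\|_{L^p}$ (following from the bounded derivatives in Definition \ref{def:reg:link}) are the only ingredients.

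For (\ref{eq:zero-increm}), apply the first identity once: the $L^2$ bound follows from $\|V_{f_\theta}\|_{L^2 \to L^2} \lesssim 1$ together with $\|u_{f_{\bar\theta}}\|_\infty \le \|g\|_\infty$; the $L^\infty$ bound from $\|V_{f_\theta}[\cdot]\|_\infty \lesssim \|V_{f_\theta}[\cdot]\|_{H^2} \lesssim \|\cdot\|_{L^2}$. No localisation to $\mathcal B_1$ is used here. For (\ref{eq:one-increm}), set $H := \Psi(v) \in E_D$ with $\|H\|_{L^2} \le 1$ and split
\begin{equation*}
V_{f_\theta}[u_{f_\theta}(\Phi'\!\circ F_\theta)H] - V_{f_{\bar\theta}}[u_{f_{\bar\theta}}(\Phi'\!\circ F_{\bar\theta})H]
\end{equation*}
into three telescoping pieces that replace $\theta$ by $\bar\theta$ successively in $V$, in $u$, and in $\Phi'\!\circ F$; these are handled respectively by the resolvent identity, by the output of (\ref{eq:zero-increm}), and by the Lipschitz estimate for $\Phi'\!\circ F$. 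In each piece, estimating the outermost $V_{f_*}$ by $L^2 \to L^\infty$ and distributing the remaining factors by H\"older using (\ref{hammer}) produces a bound of order $\|F_\theta - F_{\bar\theta}\|_\infty \|H\|_{L^2}$; taking the supremum over $\|v\|_{\R^D} \le 1$ yields (\ref{eq:one-increm}).

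The bound (\ref{eq:two-increm}) is the most involved. Starting from the bilinear expression for $v_1^T \nabla^2 \mathcal G(\theta)\, v_2$ in Lemma \ref{stamitz}, one telescopes each of its three constituent terms in the same fashion, producing on the order of nine pieces involving one or two $V_{f_*}$ operators, factors of $u_f$, $\Phi'\!\circ F$, $\Phi''\!\circ F$, and one or two factors of $H$. The factor $D^{2/d}$ enters exactly as in the proof of Lemma \ref{lem:schr:one}: on those pieces containing $H^2$, or products of $H$ with an inner $V_{f_*}[\cdots H\cdots]$, a single application of the spectral inequality $\|H\|_\infty \lesssim D^{2/d}\|H\|_{L^2}$ from Lemma \ref{normal} is needed to close the $L^2$ estimate.

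\textbf{Main obstacle.} The delicate step is the bookkeeping in the Hessian case: a naive H\"older distribution taking $\|H\|_\infty$ on each factor of $H$ would produce a spurious $D^{4/d}$. The estimate must be distributed so that the spectral bound of Lemma \ref{normal} is used at most once per product, matching the sharp upper bound $\|\nabla^2 \mathcal G(\theta)\|_{L^\infty(\mathcal O, \R^{D \times D})} \lesssim D^{2/d}$ already established in Lemma \ref{lem:schr:one}. Beyond this accounting, the argument is a routine iteration of the resolvent identity, the $H^2 \hookrightarrow L^\infty$ embedding, and the uniform regularity bounds on $\mathcal B_1$ from (\ref{hammer}).
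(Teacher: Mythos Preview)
Your proposal is correct and follows essentially the same approach as the paper: the same subtraction/resolvent identities, the same three-way telescoping for the gradient, an eight-term telescoping for the Hessian, and the same use of Sobolev embedding plus Lemma \ref{normal} to introduce a single $D^{2/d}$ factor. One refinement in the paper you might not anticipate: for the term arising from $(V_{f}-V_{\bar f})$ applied to the $\Phi''$-piece (which already contains $H^2$), the paper avoids any $D^{2/d}$ factor there by inserting the dual estimate $\|V_f[\psi]\|_{L^2}\lesssim\|\psi\|_{(H^2_0)^*}\lesssim\|\psi\|_{L^1}$ together with $\|H^2\|_{L^1}=\|H\|_{L^2}^2$; your naive route would give $D^{2/d}$ on that term too, which is harmless for the final bound.
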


\begin{proof} The estimate (\ref{eq:zero-increm}) follows immediately from (\ref{hummel}) and (\ref{linklip}). Now fix any $\theta,\bar\theta \in \mathcal B_1$. To ease notation, in what follows we write $F=\Psi(\theta), \bar F=\Psi(\bar \theta)$, $f=\Phi\circ F$ and $\bar f=\Phi\circ \bar F$. For (\ref{eq:one-increm}), arguing as in the proof of Lemma \ref{lem:schr:one}, we first have
\begin{equation*}
	\begin{split}
		\big\|\nabla \mathcal G(\theta)&-\nabla \mathcal G(\bar\theta)\big\|_{L^\infty (\mathcal O,\R^D)}\\
		&\le \sup_{v:\|v\|_{\R^D}\le 1}\big\|v^T(\nabla \mathcal G(\theta)-\nabla \mathcal G(\bar\theta))\big\|_{\infty}\\
		&\le \sup_{H\in E_D:\|H\|_{L^2}\le 1} \big\|V_f[H(\Phi'\circ F)u_f]-V_{\bar f}[H(\Phi'\circ \bar F)u_{\bar f}]\big\|_{\infty}\\
		&= \sup_{H\in E_D:\|H\|_{L^2}\le 1} \big\|(V_f-V_{\bar f})[H(\Phi'\circ F)u_f]\big\|_\infty +\big\|V_{\bar f}[H(\Phi'\circ F-\Phi'\circ \bar F)u_{\bar f}]\big\|_\infty\\
		&~~~~~~~~~~~~~~~~~~~~~~~~~~~~~~~~~~~~~~~+  \big\|V_{\bar f}[H(\Phi'\circ F)(u_f-u_{\bar f})]\big\|_\infty\\
		&=:\sup_{H\in E_D:\|H\|_{L^2}\le 1} I_a+I_b+I_c.
	\end{split}
\end{equation*}
Now, we fix $H\in E_D$ for the rest of the proof. The term $I_a$ can further be estimated by repeatedly using the Sobolev embedding $\|\cdot\|_\infty \lesssim \|\cdot\|_{H^2}$, Lemma \ref{cpebach} as well as (\ref{hammer}) and (\ref{linklip}):
\begin{equation}\label{eq:Ia:est}
\begin{split}
I_a &= \|V_f[ (f-\bar f) V_{\bar f} [u_{\bar f}(\Phi'\circ  F)H] ] \|_\infty \\
&\lesssim \|V_f[ (f-\bar f) V_{\bar f} [u_{\bar f}(\Phi'\circ  F)H] ] \|_{H^2}\\
&\lesssim \|(f-\bar f) V_{\bar f} [u_{\bar f}(\Phi'\circ F)H]\|_{L^2}\\
&\lesssim \|f-\bar f\|_\infty \| u_{\bar f}(\Phi'\circ \bar F)H\|_{L^2}\\
&\lesssim \|F-\bar F\|_\infty \|H\|_{L^2}.
\end{split}
\end{equation}
Similarly, $I_b$ is estimated as follows:
\begin{equation*}
\begin{split}
I_b\lesssim \|H(\Phi'\circ F-\Phi'\circ \bar F)u_{\bar f}\|_{L^2}\lesssim \|\Phi'\circ F-\Phi'\circ \bar F\|_\infty  \|u_{\bar f}\|_\infty\|H\|_{L^2}\lesssim \|F-\bar F\|_\infty \|H\|_{L^2}.
\end{split}
\end{equation*}
Finally, we can similarly estimate 
\begin{equation*}
\begin{split}
I_c \lesssim \|(u_f-u_{\bar f})(\Phi'\circ F)H\|_{L^2}\lesssim \|u_f-u_{\bar f}\|_\infty \|\Phi'\circ F\|_\infty \|H\|_{L^2}\lesssim \|F-\bar F\|_\infty\|H\|_{L^2},
\end{split}
\end{equation*}
where we have also used (\ref{eq:zero-increm}). By combining the estimates for $I_a,I_b$ and $I_c$, we have completed the proof of (\ref{eq:one-increm}).

It remains to prove (\ref{eq:two-increm}). In analogy to (\ref{bilinear}), we may fix any $v\in \R^D$, and it suffices to derive a bound for $v^T(\nabla^2\mathcal G(\theta)-\nabla^2\mathcal G(\bar \theta))v$. To ease notation, let us write $H=\Psi v\in E_D\cong \R^D$, as well as $h=H(\Phi'\circ F)$ and $\bar h=H(\Phi'\circ \bar F)$. Then by Lemma \ref{stamitz}, we have the following decomposition into eight terms:
\begin{equation}\label{B39}
\begin{split}
&v^T(\nabla^2\mathcal G(\theta)-\nabla^2\mathcal G(\bar \theta))v\\
&~=2V_{\bar f}\big[\bar hV_{\bar f}[\bar h u_{\bar f}] \big]- 2V_{ f}\big[ hV_{f}[h u_{f}] \big]+V_{\bar f}[u_{\bar f}H^2(\Phi''\circ \bar F) ]- V_{f}[u_{f}H^2(\Phi''\circ F)]\\
&~=2(V_{\bar f}-V_f)\big[\bar hV_{\bar f}[\bar h u_{\bar f}] \big] +2V_f\big[(\bar h-h)V_{\bar f}[\bar h u_{\bar f}] \big]\\
&~~~~~+2V_f\big[h(V_{\bar f}-V_f)[\bar h u_{\bar f}] \big]+2V_f\big[hV_f[(\bar h-h) u_{\bar f}] \big]+2V_f\big[hV_f[h (u_{\bar f}-u_f)] \big]\\
&~~~~~+(V_{\bar f}-V_f)[u_{\bar f}H^2(\Phi''\circ \bar F) ]+V_f[(u_{\bar f}-u_f)H^2(\Phi''\circ \bar F) ]+V_f[u_f H^2(\Phi''\circ \bar F-\Phi''\circ F)]\\
&~=: II_a+II_b+II_c+II_d+II_e+II_f+II_g+II_h.
\end{split}
\end{equation}

To estimate these terms, we will again repeatedly use (\ref{hammer}), the regularity estimates from Lemmas \ref{cpebach}- \ref{lem:schr:deriv} below, the estimates $\|h\|_{L^2},\|\bar h\|_{L^2} \lesssim \|H\|_{L^2}$ as well as $\|f-\bar f\|_\infty\lesssim \|F-\bar F\|_\infty$, which all hold uniformly in $\theta\in\mathcal B_1$.
\par 
Using Lemma \ref{cpebach}, including the estimate (\ref{eq:Vfdiff}) with $\psi= \bar h V_{\bar f}[\bar hu_{\bar f}]$, we obtain
\begin{equation*}
\begin{split}
\|II_a\|_\infty &\lesssim \|f-\bar f\|_\infty\|\bar h V_{\bar f}[\bar hu_{\bar f}]\|_{L^2}\lesssim \|f-\bar f\|_\infty\|\bar h\|_{L^2} \|V_{\bar f}[\bar hu_{\bar f}]\|_{\infty}\\
&\lesssim \|f-\bar f\|_\infty\|H\|_{L^2} \|\bar hu_{\bar f}\|_{L^2}\lesssim \|f-\bar f\|_\infty\|H\|_{L^2}^2 \|u_{\bar f}\|_{\infty}\\
&\lesssim \|F-\bar F\|_\infty\|H\|_{L^2}^2.
\end{split}
\end{equation*}
Similarly, we have
\begin{equation*}
\begin{split}
\|II_b\|_\infty &\lesssim \|(\bar h-h)V_{\bar f}[\bar hu_{\bar f}]\|_{L^2}\lesssim \|H(\Phi'\circ \bar F-\Phi'\circ F)\|_{L^2}\|V_{\bar f}[\bar hu_{\bar f}]\|_\infty\\
&\lesssim \|u_f\|_\infty \|H\|_{L^2}\|\bar F-F\|_\infty \|\bar h u_{\bar f}\|_{L^2}\\
&\lesssim \|H\|_{L^2}^2\|\bar F-F\|_\infty,
\end{split}
\end{equation*}
and, again using (\ref{eq:Vfdiff}),
\begin{equation*}
\begin{split}
\|II_c\|_\infty &\lesssim \|h(V_{\bar f}-V_f)[\bar hu_{\bar f}]\|_{L^2}\lesssim \|h\|_{L^2}\|(V_{\bar f}-V_f)[\bar hu_{\bar f}]\|_{\infty}\lesssim \|H\|_{L^2} \|\bar f-f\|_\infty \| \bar hu_{\bar f} \|_{L^2}\\
&\lesssim \|H\|_{L^2}^2 \|\bar F-F\|_\infty.
\end{split}
\end{equation*}
For $II_d$, by following similar steps as for $II_b$, we see that
\begin{equation*}
\begin{split}
\|II_d\|_\infty \lesssim \|H\|_{L^2}\|V_f[(\bar h-h)u_{\bar f}]\|_\infty \lesssim \|H\|_{L^2}^2\|\bar F-F\|_\infty,
\end{split}
\end{equation*}
and similarly, using also (\ref{eq:zero-increm}), we obtain
\begin{equation*}
\begin{split}
\|II_e\|_\infty &\lesssim \|H\|_{L^2}\|V_f[h(u_{\bar f}-u_f)]\|_{\infty}\lesssim \|H\|_{L^2}^2\|u_{\bar f}-u_f\|_{\infty}\lesssim \|H\|_{L^2}^2\|\bar F-F\|_\infty.
\end{split}
\end{equation*}
For the term $II_f$, we note that by the Sobolev embedding,
\[ \|w\|_{(H^2_0)^*}\le \sup_{\psi:\|\psi\|_{H^2}\le 1}\big|\int_{\mathcal O}w\psi \big|\lesssim \|w\|_{L^1}\sup_{\psi:\|\psi\|_{H^2}\le 1}\|\psi\|_\infty\lesssim \|w\|_{L^1},~~ w\in L^1(\mathcal O),\]
and consequently by Lemma \ref{cpebach},
\begin{equation*}
\begin{split}
\|II_f\|_\infty & = \|V_f[(\bar f-f)V_{\bar f}[u_{\bar f}H^2(\Phi''\circ F)]]\|_{\infty}\\
&\lesssim \|\bar f-f\|_\infty \|V_{\bar f}[u_{\bar f}H^2(\Phi''\circ F)]\|_{L^2}\\
&\lesssim \|\bar f-f\|_\infty \|u_{\bar f}H^2(\Phi''\circ F)\|_{(H^2_0)^*}\\
&\lesssim \|\bar f-f\|_\infty \|u_{\bar f}H^2(\Phi''\circ F)\|_{L^1}\\
&\lesssim \|\bar F-F\|_\infty \|H\|_{L^2}^2.
\end{split}
\end{equation*}
For terms $II_g$ and $II_h$, by similar steps and additionally using that by Lemma \ref{normal}, $\|H\|_{\infty}\lesssim \|H\|_{H^2}\lesssim D^{2/d}\|H\|_{L^2}$ for any $H\in E_D$, we obtain
\begin{equation*}
\begin{split}
\|II_g\|_\infty &\lesssim \|u_{\bar f}-u_f\|_\infty \|H^2\|_{L^2}\|\Phi''\circ \bar F\|_\infty \lesssim \|\bar f-f\|_\infty \|H\|_{L^2}\|H\|_{\infty}\lesssim D^{2/d}\|\bar F-F\|_\infty \|H\|_{L^2}^2,
\end{split}
\end{equation*}
as well as
\begin{equation*}
\begin{split}
\|II_h\|_\infty \le \|u_fH^2(\Phi''\circ \bar F-\Phi''\circ F)\|_{L^2}\lesssim \|H\|_{L^2}\|H\|_{\infty}\| \bar F-F \|_\infty\lesssim D^{2/d}\|\bar F-F \|_\infty\|H\|_{L^2}^2.
\end{split}
\end{equation*}
By combining (\ref{B39}) with the estimates for the terms $II_a-II_h$, the proof of (\ref{eq:two-increm}) is complete.
\end{proof}

We now turn to the key `geometric' bound from the first part of Assumption \ref{ass:geom}, which quantifies the average curvature of the likelihood function $\ell_N$ near $\theta_{0,D}$ in a high-dimensional setting (when $P^X$ is uniform on $\mathcal O$). The curvature deteriorates with rate $D^{-4/d}$ as $D\to\infty$, which is in line with the (local) ill-posedness of the Schr\"odinger model, and the related fact that the associated `information operator' is of the form $I^2$, with $I$ being the inverse of a second order (elliptic Schr\"odinger-type) operator (cf.~also Section 4 in \cite{N17}).

\begin{lem}\label{wundervoncordoba}
Let $\ell(\theta)$ be as in (\ref{eq:gen:llh}) with $\mathcal G: \R^D \to \R$ from (\ref{fwdG}), and let $\mathcal B_\epsilon$ be as in (\ref{eq:B:def}). Let $\theta_0\in h^2$ satisfy $\|\theta_0\|_{h^{2}}\le S$ for some $S>0$. Then there exist constants $0<\epsilon_S\le 1, c_1,c_2>0$ such that if also $\|\mathcal G(\theta_0)-\mathcal G(\theta_{0,D})\|_{L^2(\mathcal O)}\le c_1D^{-4/d}$, then for all $D\in\N$ and all $\epsilon\le \epsilon_S$,
	\begin{equation}\label{important}
	\inf_{\theta\in\mathcal B_\epsilon}\lambda_{min}\big( E_{\theta_0}\big[-\nabla^2\ell(\theta)\big]\big)\ge c_2D^{-4/d}.
	\end{equation}
\end{lem}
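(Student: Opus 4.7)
The starting point is the identity (\ref{hessen}) combined with the fact that $\varepsilon_i \sim N(0,1)$ is independent of $X_i$ under $P_{\theta_0}$ and $X_i \sim \mathrm{Uniform}(\mathcal O)$ with $vol(\mathcal O)=1$. Taking expectations gives, for any $v \in \mathbb R^D$,
\begin{equation}\label{plan-decomp}
v^T E_{\theta_0}[-\nabla^2 \ell(\theta)] v = \|\nabla \mathcal G(\theta)^T v\|_{L^2(\mathcal O)}^2 + \int_\mathcal O \big(\mathcal G(\theta)-\mathcal G(\theta_0)\big)(x)\, v^T \nabla^2 \mathcal G(\theta)(x) v\, dx.
\end{equation}
I would think of the first term as the ``Fisher information quadratic form'' and the second as a perturbation which must be dominated. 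The goal is to prove (a) a uniform lower bound $\|\nabla\mathcal G(\theta)^Tv\|_{L^2}^2 \gtrsim D^{-4/d}\|v\|_{\R^D}^2$ on $\mathcal B_\epsilon$, and (b) that the perturbation is $O((\epsilon+c_1) D^{-4/d}\|v\|_{\R^D}^2)$; then (\ref{important}) follows by choosing $\epsilon_S$ and $c_1$ small.

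\textbf{The main obstacle -- stability of the linearisation at $\theta_{0,D}$.} This is the PDE heart of the argument. By Lemma \ref{stamitz}, writing $H=\Psi v\in E_D$, $f_0=f_{\theta_{0,D}}$, $F_0=F_{\theta_{0,D}}$, one has $\nabla\mathcal G(\theta_{0,D})^Tv = w := V_{f_0}[u_{f_0}(\Phi'\circ F_0)H]$, so that $w \in H^2_0(\mathcal O)$ solves $\tfrac12 \Delta w - f_0 w = u_{f_0}(\Phi'\circ F_0)H$. The plan is to exploit the inclusion $E_D \subset H^2_0(\mathcal O)$ and use the test function
\[
\phi := \frac{(\Delta/2 - f_0)H}{\|(\Delta/2 - f_0)H\|_{L^2(\mathcal O)}} \in L^2(\mathcal O),\qquad \|\phi\|_{L^2}=1,
\]
which is well-defined since $H$ is a finite linear combination of Dirichlet eigenfunctions. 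Integrating by parts in $L^2$ gives
\[
\int_\mathcal O w\phi = \frac{\int_\mathcal O u_{f_0}(\Phi'\circ F_0) H^2}{\|(\Delta/2 - f_0)H\|_{L^2}}.
\]
The numerator is $\gtrsim \|H\|_{L^2}^2 = \|v\|_{\R^D}^2$ because by Feynman-Kac (\ref{fkac}) and (\ref{hammer}), $u_{f_0} \ge g_{\min} e^{-\|f_0\|_\infty E^x[\tau_\mathcal O]} \ge u_{\min}>0$ uniformly, and $\Phi'\circ F_0$ is bounded below by regularity of $\Phi$. For the denominator, the Weyl asymptotics (\ref{weyl}) give $\|\Delta H\|_{L^2}^2 = \sum_{k\le D}\lambda_k^2 v_k^2 \lesssim D^{4/d}\|v\|_{\R^D}^2$, so $\|(\Delta/2 - f_0)H\|_{L^2} \lesssim D^{2/d}\|v\|_{\R^D}$. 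Therefore
\[
\|w\|_{L^2(\mathcal O)} \ge \int w \phi \gtrsim D^{-2/d}\|v\|_{\R^D},
\]
yielding $\|\nabla\mathcal G(\theta_{0,D})^Tv\|_{L^2}^2 \gtrsim D^{-4/d}\|v\|_{\R^D}^2$.

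\textbf{Perturbation of the linear part on $\mathcal B_\epsilon$.} Since $F_\theta - F_{\theta_{0,D}} = \Psi(\theta-\theta_{0,D}) \in E_D$, Sobolev embedding together with (\ref{weyl}) yields $\|F_\theta-F_{\theta_{0,D}}\|_\infty \lesssim D^{2/d}\|\theta-\theta_{0,D}\|_{\R^D} \le \epsilon D^{-2/d}$. Combining with (\ref{eq:one-increm}) from Lemma \ref{lem:schr:two},
\[
\|v^T(\nabla\mathcal G(\theta) - \nabla\mathcal G(\theta_{0,D}))\|_{L^2} \le \|v\|_{\R^D}\,\|\nabla\mathcal G(\theta)-\nabla\mathcal G(\theta_{0,D})\|_{L^\infty(\mathcal O,\R^D)} \lesssim \epsilon D^{-2/d}\|v\|_{\R^D},
\]
so the reverse triangle inequality gives $\|\nabla\mathcal G(\theta)^Tv\|_{L^2}^2 \gtrsim D^{-4/d}\|v\|_{\R^D}^2$ uniformly on $\mathcal B_\epsilon$ for $\epsilon$ small.

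\textbf{Controlling the second-order term.} For the remainder in (\ref{plan-decomp}), the naive bound from Lemma \ref{lem:schr:one} ($\|\nabla^2\mathcal G(\theta)\|_{L^\infty}\lesssim D^{2/d}$) is too crude; instead I would bound $\|v^T \nabla^2\mathcal G(\theta)v\|_{L^2(\mathcal O)} \lesssim \|v\|_{\R^D}^2$ directly from the representation in Lemma \ref{stamitz}, using that $V_{f_\theta}: L^2 \to L^2$ is bounded (in fact $V_{f_\theta}: (H^2_0)^* \to L^2$ via duality and self-adjointness) together with $\|u_{f_\theta}H^2(\Phi''\circ F_\theta)\|_{(H^2_0)^*} \lesssim \|H\|_{L^2}^2$ and, for the double-$V_{f_\theta}$ term, $\|V_{f_\theta}[u_{f_\theta}(\Phi'\circ F_\theta)H]\|_\infty \lesssim \|H\|_{L^2}$ by elliptic regularity plus Sobolev. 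Meanwhile, Lemma \ref{lem:schr:two}'s $L^2$-increment estimate and the hypothesis $\|\mathcal G(\theta_0) - \mathcal G(\theta_{0,D})\|_{L^2}\le c_1 D^{-4/d}$ give
\[
\|\mathcal G(\theta)-\mathcal G(\theta_0)\|_{L^2} \le C\|F_\theta-F_{\theta_{0,D}}\|_{L^2} + c_1 D^{-4/d} \lesssim (\epsilon + c_1)\,D^{-4/d}.
\]
Thus the perturbation term in (\ref{plan-decomp}) is $\lesssim (\epsilon+c_1) D^{-4/d}\|v\|_{\R^D}^2$, which can be absorbed into the main lower bound by choosing $\epsilon_S, c_1$ small enough, proving (\ref{important}).
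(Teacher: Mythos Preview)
Your proof is correct and reaches the same conclusion as the paper, but the route to the key stability estimate for the linearisation is genuinely different.

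The paper obtains $\|V_{f_\theta}[a_\theta H]\|_{L^2}\gtrsim D^{-2/d}\|H\|_{L^2}$ via the abstract coercivity bound (\ref{eq:schr:lb}), $\|V_f[\psi]\|_{L^2}\gtrsim \|\psi\|_{(H^2_0)^*}$, followed by $\|a_\theta H\|_{(H^2_0)^*}\gtrsim \|H\|_{(H^2_0)^*}$ (using $a_\theta\ge a_{\min}$ and the multiplier estimate (\ref{eq:h-mult})) and then the duality inequality $\|H\|_{(H^2_0)^*}\gtrsim D^{-2/d}\|H\|_{L^2}$ from Lemma \ref{normal}. This is done uniformly for $\theta\in\mathcal B_1$ at once. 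Your argument instead tests $w$ against the explicit function $S_{f_0}H/\|S_{f_0}H\|_{L^2}$ and uses the self-adjointness of the Schr\"odinger operator on $H^2_0$; this is more elementary (it avoids the $(H^2_0)^*$ machinery of Lemma \ref{cpebach}) but relies specifically on $H\in E_D\subset H^2_0$ so that $S_{f_0}H$ makes sense. You then perturb from $\theta_{0,D}$ to $\theta\in\mathcal B_\epsilon$ via (\ref{eq:one-increm}), which is fine. For the second-order term you use Cauchy--Schwarz plus a direct $L^2$ bound on $v^T\nabla^2\mathcal G(\theta)v$ (via $V_{f_\theta}:(H^2_0)^*\to L^2$ and $(H^2_0)^*\hookleftarrow L^1$), whereas the paper exploits self-adjointness of $V_{f_\theta}$ to move the operator onto $u_{f_{\theta_0}}-u_{f_\theta}$; the two are essentially equivalent. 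In short: your test-function trick is slicker here, the paper's duality approach is more portable to other forward maps.
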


\begin{proof}
	We begin by noting that for any $Z=(Y,X)\in \R\times \mathcal O$, we have
	\begin{equation*}
	\begin{split}
		-\nabla^2 \ell (\theta, Z)= \nabla \mathcal G^X(\theta)\nabla\mathcal G^X(\theta)^T-(Y-\mathcal G^X(\theta))\nabla^2 \mathcal G^X(\theta).
	\end{split}
	\end{equation*}
	Using this and Lemma \ref{stamitz}, we obtain that for any $v\in \R^D$, with the previous notation $H=\Psi (v)$ and $h=(\Phi'\circ F_\theta)H$,
	\begin{equation}\label{eq:the-expected-one}
	\begin{split}
	 v^TE_{\theta_0}[-\nabla^2\ell (\theta,Z)]v&=  \|V_{f_\theta}[u_{f_\theta}(\Phi'\circ F_\theta)H ]\|_{L^2(\mathcal O)}^2-\langle u_{f_{\theta_0}}-u_{f_\theta}, 2V_{f_\theta}[hV_{f_\theta}[hu_{f_\theta}] ]\rangle_{L^2(\mathcal O)}\\
	 &~~~~~~~~~~~~~~~- \langle u_{f_{\theta_0}}-u_{f_\theta},V_{f_\theta}[u_{f_\theta}H^2 (\Phi''\circ F_\theta)] \rangle_{L^2(\mathcal O)}\\
	&=:I+II+III.
	\end{split}
	\end{equation}
	We next derive a lower bound on the term $I$ and upper bounds for the terms $II$ and $III$, for any fixed $v\in\R^D$. 
	
\smallskip

\textit{Lower bound for $I$.} Writing $a_\theta:=u_{f_\theta}(\Phi'\circ F_\theta)$, using the elliptic $L^2$-$(H^2_0)^*$ coercivity estimate (\ref{eq:schr:lb}) from Lemma \ref{cpebach} below as well as (\ref{hammer}), we have
\begin{equation}\label{lb-init}
\sqrt I=\|V_{f_\theta}[a_\theta H]\|_{L^2(\mathcal O)} \gtrsim \frac{\| a_\theta H\|_{(H^2_0)^*}}{1+\|f_\theta\|_\infty}\gtrsim  \| a_\theta H\|_{(H^2_0)^*}, ~~~ \theta\in \mathcal B_1.
\end{equation}
The next step is to lower bound $a_\theta$. By Theorem 1.17 in \cite{CZ95}, the expected exit time $\tau_\mathcal O$ featuring in the Feynman-Kac formula (\ref{fkac}) satisfies the uniform estimate $\sup_{x \in \mathcal O}E^x \tau_\mathcal O \le K(vol(\mathcal O), d)<\infty$. Therefore, using also Jensen's inequality and $g\ge g_{min}>0$, we have that, with $B$ from (\ref{hammer}),
\begin{equation} \label{leivand} \inf_{\theta\in\mathcal B_1} \inf_{x\in\mathcal O} u_{f_\theta}(x)\ge g_{min}e^{-BK(vol(\mathcal O),d)}=:u_{min}>0.
\end{equation}
Also, since $\Phi$ is a regular link function, for some $k=k(B)>0$ we have
\[ \inf_{\theta\in\mathcal B_1}\inf_{x\in\mathcal O}[\Phi'\circ F_\theta](x)\ge \inf_{t\in[-k,k]}\Phi'(t)>0, \]
and therefore for some $a_{min}=a_{min}(\Phi,B,\mathcal O,g_{min})>0$,
\begin{equation}\label{eq:aF:amin}
\inf_{\theta\in\mathcal B_1}\inf_{x\in\mathcal O} a_\theta(x)\ge  a_{min}>0.
\end{equation}
We thus obtain, by definition of $(H^2_0)^*$ and the multiplication inequality (\ref{eq:h-mult}) that for some $c=c(a_{min})>0$,
\begin{equation}\label{multi}
\begin{split}
 \|H\|_{(H^2_0)^*}=\| a_\theta a_\theta^{-1}H\|_{(H^2_0)^*}\le \|a_\theta^{-1}\|_{H^2} \| a_\theta H\|_{(H^2_0)^*}\le c(1+ \|a_\theta\|_{H^2}^2) \| a_\theta H\|_{(H^2_0)^*},
\end{split}
\end{equation}
where in the last inequality we used (\ref{links}) for the function $x\mapsto 1/x$. Using again (\ref{hammer}), regularity of $\Phi'$, the chain rule as well as the elliptic regularity estimate (\ref{2bound}), we obtain that
\begin{equation}\label{super}
\sup_{\theta\in\mathcal B_1} \|a_\theta\|_{H^2}\le \sup_{\theta\in\mathcal B_1} \|u_{f_\theta}\|_{H^2}\sup_{\theta\in\mathcal B_1} \|\Phi'\circ F_\theta\|_{H^2}\le C(g,S,\mathcal O,\Phi)<\infty.
\end{equation}
Therefore, combining the displays (\ref{lb-init}), (\ref{multi}), (\ref{super}), we have proved that, uniformly in $\theta\in\mathcal B_1$,
\begin{equation}\label{diekrux}
I\gtrsim \|a_\theta H\|_{(H^2_0)^*}^2 \gtrsim \frac{\| H\|_{(H^2_0)^*}^2}{c^2\sup_{\theta\in\mathcal B_1}(1+ \|a_\theta\|_{H^2}^2)^2}\gtrsim D^{-4/d} \| H\|_{L^2}^2,
\end{equation}
where we have used Lemma \ref{normal} below in the last inequality.

\smallskip

	\textit{Upper bound for $II$ and $III$.}
	Using the self-adjointness of $V_{f_\theta}$ on $L^2(\mathcal O)$, a Sobolev embedding, Lemma \ref{cpebach}, (\ref{hammer}), the Lipschitz estimate (\ref{hummel}) as well as (\ref{ubd}), we have uniformly in $\theta\in\mathcal B_1$,
	\begin{equation}\label{eq:I-upper}
	\begin{split}
	|II|&\lesssim \Big| \int_{ \mathcal O} (u_{f_{\theta_0}}-u_{f_\theta})V_{f_\theta}[hV_{f_\theta}[h u_{f_\theta}] ]\Big|=\Big|\int_{ \mathcal O} V_{f_\theta}[u_{f_{\theta_0}}-u_{f_\theta}][hV_{f_\theta}[h u_{f_\theta}] ]\Big| \\
	&\lesssim \|V_{f_\theta}[u_{f_{\theta_0}}-u_{f_\theta}]\|_\infty \|hV_{f_\theta}[h u_{f_\theta}] \|_{L^1}\\
	&\lesssim \|u_{f_{\theta_0}}-u_{f_\theta}\|_{L^2} \|h\|_{L^2}\|V_{f_\theta}[h u_{f_\theta}] \|_{L^2}\\
	&\lesssim \|u_{f_{\theta_0}}-u_{f_\theta}\|_{L^2} \|H\|_{L^2}^2.
	\end{split}
	\end{equation}
	Similarly, for the term $III$, using also $\|\Phi''\|_\infty<\infty$, we estimate
	\begin{equation}\label{eq:II-upper}
	\begin{split}
	|III|&=\big| \langle u_{f_{\theta_0}}-u_{f_\theta}, V_{f_\theta}[u_{f_\theta}H^2(\Phi''\circ F_\theta) ]  \rangle_{L^2(\mathcal O)} 
	\big|\\
	&= \big| \langle V_{f_\theta}[u_{f_{\theta_0}}-u_{f_\theta}], u_{f_\theta}H^2(\Phi''\circ F_\theta)  \rangle_{L^2(\mathcal O)} \big|\\
	&\le \|V_{f_\theta}[u_{f_{\theta_0}}-u_{f_\theta}]\|_\infty \| u_{f_\theta}\|_\infty  \|\Phi''\circ F_\theta\|_\infty\|H^2\|_{L^1}\\
	&\lesssim \|u_{f_{\theta_0}}-u_{f_\theta}\|_{L^2} \|H\|_{L^2}^2
	\end{split}
	\end{equation}
	
	\smallskip
	
	Combining the displays (\ref{eq:the-expected-one}), (\ref{diekrux}), (\ref{eq:I-upper}) and (\ref{eq:II-upper}), we have proved that for any $\theta \in \mathcal B_1$, any $v\in \R^D$ and some constants $c',c''>0$,
	\[  v^TE_{\theta_0}[-\nabla^2 \ell (\theta,Z)]v\ge  \big(c'D^{-4/d}-c''\|u_{f_{\theta_0}}-u_{f_\theta}\|_{L^2}\big) \|H\|_{L^2}^2 . \]
	Using (\ref{eq:zero-increm}) and the hypotheses, we obtain that for some $c_g>0$,
	\[  \|u_{f_{\theta_0}}-u_{f_\theta}\|_{L^2}\le  \|\mathcal G(\theta_0)-\mathcal G(\theta_{0,D})\|_{L^2} + c_g\|\theta_{0,D}-\theta\|_{\R^D}\le (c_1+c_g\epsilon_S)D^{-4/d}.\]
	Thus for all $c_1,\eps_S>0$ small enough and taking the infimum over $v\in \R^D$ with $\|v\|_{\R^D}=\|\Psi (v)\|_{L^2}=\|H\|_{L^2}=1$, we obtain that for any $\theta\in\mathcal B_{\epsilon_S}$ and some $c'''>0$,
	\[ \lambda_{min} \big(E_{\theta_0}[-\nabla^2 \ell(\theta,Z)] \big)\ge c'''D^{-4/d},\]
	which completes the proof.
\end{proof}

Finally, we prove the upper bound required for Assumption \ref{ass:geom} ii).

\begin{lem}[Upper bound]\label{lem:schr:ub}
	For every $S>0$, there exists a constant $c>0$ such that for $\|\theta_0\|_{h^{2}}\le S$ and all $D\in\N$, we have
	\[ \sup_{\theta\in\mathcal B_1} \Big[ |E_{\theta_0}[\ell(\theta,Z) ]|+ \| E_{\theta_0}[\nabla \ell(\theta,Z) ]\|_{\R^D} + \| E_{\theta_0}[\nabla^2 \ell(\theta,Z) ]\|_{op} \Big]\le c. \]
\end{lem}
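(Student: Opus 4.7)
The plan is to compute the three relevant expectations explicitly via (\ref{eq:llhderivs}), using $E_{\theta_0}[\varepsilon]=0$, and then control each one with a dimension-free constant. Writing $P^X$ for the uniform law on $\mathcal O$ with $vol(\mathcal O)=1$, this gives
\begin{equation*}
-E_{\theta_0}\ell(\theta,Z)=\tfrac12+\tfrac12\|\mathcal G(\theta_0)-\mathcal G(\theta)\|_{L^2}^{2},\ \ -E_{\theta_0}[\nabla\ell(\theta,Z)]=\langle \mathcal G(\theta)-\mathcal G(\theta_0),\nabla\mathcal G(\theta)\rangle_{L^2},
\end{equation*}
\begin{equation*}
-E_{\theta_0}[\nabla^{2}\ell(\theta,Z)]=\langle\nabla\mathcal G(\theta),(\nabla\mathcal G(\theta))^{T}\rangle_{L^2}+\langle \mathcal G(\theta)-\mathcal G(\theta_0),\nabla^{2}\mathcal G(\theta)\rangle_{L^2}.
\end{equation*}
The uniform bound $\|\mathcal G(\theta)\|_\infty\le\|g\|_\infty$ from (\ref{ubd}) immediately handles the zero-order term. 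For the gradient, Cauchy--Schwarz in $L^{2}(\mathcal O)$ together with the dimension-free estimate $\|v^{T}\nabla \mathcal G(\theta)\|_{\infty}\le C\|v\|_{\R^{D}}$ from Lemma \ref{lem:schr:one} gives $\|E_{\theta_0}[\nabla\ell(\theta,Z)]\|_{\R^D}\le 2\|g\|_\infty C$.

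The second-order term is the only genuine issue, since the naive bound via Lemma \ref{lem:schr:one} would cost a factor $D^{2/d}$. By symmetry of the Hessian the operator norm equals $\sup_{\|v\|_{\R^D}=1}|v^{T}E_{\theta_0}[\nabla^{2}\ell(\theta,Z)]v|$, so I only need to bound a single scalar. The quadratic piece $E^{X}[(v^{T}\nabla\mathcal G^{X}(\theta))^{2}]=\|v^{T}\nabla \mathcal G(\theta)\|_{L^2}^{2}\le C^{2}\|v\|_{\R^D}^{2}$ is again dimension-free by Lemma \ref{lem:schr:one}. For the remaining bias piece I will insert the explicit representation of Lemma \ref{stamitz}: with $H=\Psi(v)\in E_{D}$ and $h=(\Phi'\circ F_\theta)H$,
\begin{equation*}
v^{T}\nabla^{2}\mathcal G_\theta v = V_{f_\theta}\!\big[u_{f_\theta}(\Phi''\circ F_\theta)H^{2}\big]+2\,V_{f_\theta}\!\big[h\,V_{f_\theta}[h\,u_{f_\theta}]\big],
\end{equation*}
and then exploit the $L^{2}(\mathcal O)$-self-adjointness of $V_{f_\theta}$ to move it onto $\mathcal G(\theta)-\mathcal G(\theta_0)$, where it smooths. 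The key estimate is
\begin{equation*}
\|V_{f_\theta}[\mathcal G(\theta)-\mathcal G(\theta_0)]\|_{\infty}\lesssim\|V_{f_\theta}[\mathcal G(\theta)-\mathcal G(\theta_0)]\|_{H^{2}}\lesssim\|\mathcal G(\theta)-\mathcal G(\theta_0)\|_{L^{2}}\lesssim\|g\|_\infty,
\end{equation*}
using Sobolev embedding ($d\le 3$), the elliptic regularity in Lemma \ref{cpebach} (whose constants are uniform on $\mathcal B_{1}$ by (\ref{hammer})), and (\ref{ubd}). The conjugate factors are then paired in $L^{1}$: since $\Psi$ is an isometry $\R^{D}\to E_{D}\subset L^{2}(\mathcal O)$, $\|H\|_{L^{2}}=\|v\|_{\R^D}$ and one gets $\|u_{f_\theta}(\Phi''\circ F_\theta)H^{2}\|_{L^{1}}\lesssim\|v\|_{\R^D}^{2}$, while another application of Lemma \ref{cpebach} (now in $L^{2}\to H^{2}$ form) yields $\|V_{f_\theta}[hu_{f_\theta}]\|_{L^{2}}\lesssim\|v\|_{\R^D}$ and hence $\|h\,V_{f_\theta}[hu_{f_\theta}]\|_{L^{1}}\lesssim\|v\|_{\R^D}^{2}$. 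Combining and taking the supremum over unit $v$ gives $\|E_{\theta_0}[\nabla^{2}\ell(\theta,Z)]\|_{op}\le c_{max}$ with $c_{max}$ depending only on $\Phi,g,\mathcal O,S$, as required.

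The main obstacle is exactly this last point: avoiding the $D^{2/d}$ loss inherent in the pointwise bound on $\nabla^{2}\mathcal G$ by routing the estimate through the inverse Schr\"odinger operator $V_{f_\theta}$. All other steps are straightforward applications of (\ref{ubd}), Lemma \ref{lem:schr:one}, and the uniform PDE bounds encoded in (\ref{hammer}) and Lemma \ref{cpebach}.
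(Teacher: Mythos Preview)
Your proposal is correct and follows essentially the same route as the paper. The paper's proof is brief because for the second-order term it simply invokes the estimates (\ref{eq:I-upper}) and (\ref{eq:II-upper}) already established in the proof of Lemma~\ref{wundervoncordoba}, which are precisely the self-adjointness-of-$V_{f_\theta}$ plus Sobolev-embedding argument you spell out here; the zero- and first-order terms are handled identically via (\ref{ubd}) and Lemma~\ref{lem:schr:one}.
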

\begin{proof}
	For the first term, using Lemma \ref{lem:schr:one}, we have that for some $K_0>0$ and any $\theta\in\mathcal B_1$,
	\[	|E_{\theta_0}[\ell(\theta)]|=1/2 +1/2 \|\mathcal G(\theta)-\mathcal G(\theta_0)\|_{L^2}^2 \lesssim 1+\|\mathcal G(\theta)\|_\infty^2+ \|u_{f_0}\|_\infty ^2\le K_0.\]
	For the first derivative, similarly by Lemma \ref{lem:schr:one} there exists some $K_1>0$ such that for any $\theta\in\mathcal B_1$,
	\begin{equation*}
	\begin{split}
	\big\|E_{\theta_0}\big[-\nabla \ell (\theta)\big]\big\|_{\R^D} \lesssim  \big\|\langle \mathcal G(\theta_0)-\mathcal G(\theta),\nabla \mathcal G(\theta)\rangle_{L^2(\mathcal O)}\big\|_{\R^D}\lesssim \big\| G(\theta_0)-\mathcal G(\theta)\big\|_{\infty} \big\|\nabla \mathcal G(\theta)\big\|_{L^\infty(\mathcal O,\R^D)}\le K_1.
	\end{split}
	\end{equation*}
	For the second derivative, we recall the decomposition
	\[	\lambda_{max}\big(E_{\theta_0}\big[-\nabla^2 \ell (\theta)\big]\big)=\sup_{v:\|v\|_{\R^D}\le 1}v^TE_{\theta_0}\big[-\nabla^2 \ell (\theta)\big]v= \sup_{v:\|v\|_{\R^D}\le1} \big[I+II+III\big],\]
	where the terms $I-III$ were defined in (\ref{eq:the-expected-one}). Suitable uniform upper bounds for the terms $II$ and $III$ have already been shown in (\ref{eq:I-upper}) and (\ref{eq:II-upper}) respectively, whence it suffices to upper bound the term $I$. We do this by using (\ref{hammer}) and Lemma \ref{cpebach}: for any $\theta\in\mathcal B_1$ and any $H=\Psi(v)$, $v\in\R^D$,
	\[ \sqrt I =\|V_{f_\theta}[u_{f_\theta}(\Phi'\circ F_\theta)H]\|_{L^2}\lesssim  \|u_{f_\theta}(\Phi'\circ F_\theta)H\|_{L^2}\lesssim \|u_{f_\theta}\|_\infty\|\Phi'\circ F_\theta\|_\infty\|H\|_{L^2}\lesssim \|v\|_{\R^D}. \]
\end{proof}

We conclude with the following basic comparison lemma for Sobolev norms on the subspaces $E_D\subseteq  L^2(\mathcal O)$ from (\ref{subba}).

\begin{lem}\label{normal} 
	 There exists $C>0$ such that for any $D\in \N$ and any $H \in E_D$, 
	\begin{equation}\label{eq:norm:equiv}
	\|H\|_{H^2}\leq CD^{2/d}\|H\|_{L^2}, ~~~~ \|H\|_{L^2}\leq CD^{2/d}\|H\|_{(H^2_0)^*}.
	\end{equation}
\end{lem}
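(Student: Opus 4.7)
The plan is to exploit the isomorphism between the spectrally defined space $\mathcal H_2$ (equipped with the sequence norm $\|\cdot\|_{h^2}$) and the Sobolev space $H^2(\mathcal O)$ established in Section \ref{ONB}, combined with the Weyl asymptotics $\lambda_k \simeq k^{2/d}$ from (\ref{weyl}). Since the $\lambda_k$ are increasing, $\lambda_D \lesssim D^{2/d}$. For the first inequality, writing $H=\sum_{k=1}^D a_k e_k \in E_D$ so that $\|H\|_{L^2}^2 = \sum_{k=1}^D a_k^2$, I would estimate
\[
\|H\|_{H^2(\mathcal O)}^2 \;\lesssim\; \|H\|_{h^2}^2 \;=\; \sum_{k=1}^D \lambda_k^{2} a_k^2 \;\le\; \lambda_D^2 \sum_{k=1}^D a_k^2 \;\lesssim\; D^{4/d}\|H\|_{L^2}^2 ,
\]
which directly yields the first claim.

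For the second inequality, the key observation is that $E_D \subseteq H^2_0(\mathcal O)$. Indeed, each Dirichlet eigenfunction $e_k$ lies in $C^\infty(\bar{\mathcal O})$ by elliptic regularity on the smooth domain $\mathcal O$, and satisfies $e_k|_{\partial \mathcal O}=0$, so its trace vanishes. Hence $H\in E_D$ may itself be used as a test function in the dual pairing
\[
\|H\|_{(H^2_0)^*} \;=\; \sup_{\psi\in H^2_0,\;\|\psi\|_{H^2}\le 1} \bigl|\langle H,\psi\rangle_{L^2(\mathcal O)}\bigr| \;\ge\; \frac{\langle H,H\rangle_{L^2}}{\|H\|_{H^2}} \;=\; \frac{\|H\|_{L^2}^2}{\|H\|_{H^2}} ,
\]
obtained by choosing $\psi = H/\|H\|_{H^2}$. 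Combining this with the first inequality gives
\[
\|H\|_{L^2}^2 \;\le\; \|H\|_{H^2}\,\|H\|_{(H^2_0)^*} \;\le\; C D^{2/d}\,\|H\|_{L^2}\,\|H\|_{(H^2_0)^*} ,
\]
and dividing by $\|H\|_{L^2}$ (the case $H=0$ being trivial) yields the desired bound.

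There is no substantive obstacle here; the proof is essentially a direct consequence of the spectral characterisation of the Sobolev scale from Section \ref{ONB} and the Weyl asymptotics. The only point worth pausing over is the inclusion $E_D\subseteq H^2_0(\mathcal O)$, which is needed so that $H$ itself is an admissible test function when computing $\|H\|_{(H^2_0)^*}$.
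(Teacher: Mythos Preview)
Your proof is correct and takes essentially the same approach as the paper: both arguments derive the first inequality from the spectral characterisation $\|H\|_{H^2}\simeq\|H\|_{h^2}$ together with the Weyl asymptotics $\lambda_D\lesssim D^{2/d}$, and both obtain the second inequality by a duality argument that reduces it to the first. The only cosmetic difference is that the paper realises $\|H\|_{L^2}$ as a supremum over $\psi\in E_D$ with $\|\psi\|_{L^2}\le 1$ and then rescales to $\|\psi\|_{H^2_0}\le 1$, whereas you test directly against $\psi=H/\|H\|_{H^2}$; these are equivalent moves.
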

\begin{proof}
	Fix $D\in \N$. By the isomorphism property of $\Delta$ between the spaces $H^2_0$ and $L^2$ (see e.g. Theorem II.5.4 in \cite{LM72}), we first have the norm equivalence
	\[ \|\Delta H\|_{L^2}\lesssim \|H\|_{H^2_0}\lesssim \|\Delta H\|_{L^2},~~~ H\in E_D. \]
	It follows by Weyl's law (\ref{weyl}) that
	\begin{equation*}
	\begin{split}
	\|H\|_{H^2_0}^2\lesssim \sum_{k=1}^D \big|\langle H,e_k\rangle_{L^2}\big|^2 \lambda_k^2\lesssim D^{4/d}\|H\|_{L^2}^2.
	\end{split}
	\end{equation*}
	Thus, combining the above display with the following duality argument completes the proof:
	\begin{equation*}
	\begin{split}
	\|H\|_{L^2}=\sup_{\psi\in E_D: \|\psi\|_{L^2}\le 1 }\big| \langle H, \psi\rangle_{L^2}\big|\lesssim D^{2/d}\sup_{\psi \in E_D:\|\psi\|_{H^2_0}\le 1 }\big| \langle H, \psi\rangle_{L^2}\big|\leq D^{2/d} \|H\|_{(H^2_0)^*}.
	\end{split}
	\end{equation*}
\end{proof}

\subsection{Wasserstein approximation of the posterior measure}\label{sec:freqpfs}

The main purpose of this section is to prove Theorem \ref{waterstone}, which provides a bound on the Wasserstein distance between the posterior measure $\Pi(\cdot|Z^{(N)})$ from (\ref{postbus}) and the surrogate posterior $\tilde \Pi(\cdot|Z^{(N)})$ from (\ref{surrod}) in the Schr\"odinger model. The idea behind the proof of this theorem is to show that both $\Pi(\cdot|Z^{(N)})$ and $\tilde \Pi(\cdot|Z^{(N)})$ concentrate most of their mass on the region (\ref{eq:B:def}) where the log-likelihood function $\ell_N$ is strongly concave (with high $P_{\theta_0}^N$-probability, cf.~Proposition \ref{schroe-emperor-smalldim}). We achieve this by a careful study of the mode (maximiser) of the posterior density, given in Theorem \ref{maprate}. Our derivations reveal that both $\pi(\cdot|Z^{(N)})$ and $\tilde \pi(\cdot|Z^{(N)})$ possess a \textit{unique} mode, with high frequentist probability (see after (\ref{score})).

\subsubsection{Convergence rate of MAP estimates}\label{maprat}
For $(Y_i, X_i)_{i=1}^N$ arising from (\ref{model}) with $\mathcal G:\R^D \to \R$ from (\ref{fwdG}), we now study maximisers
 \begin{equation}\label{maptheta}
\hat \theta_{MAP} \in \arg \max_{\theta \in \mathbb R^{D}} \left[-\frac{1}{2N}\sum_{i=1}^N\big(Y_i - \mathcal G(\theta)(X_i)\big)^2 - \frac{\delta_N^2}{2} \|\theta\|^2_{h^\alpha} \right],~~~\delta_N = N^{-\frac{\alpha}{2\alpha+d}},
\end{equation}
of the posterior density (\ref{postbus}). For $\Lambda_\alpha$ from (\ref{schrottprior}) we will write $I(\theta):=\frac{1}{2}\|\theta\|_{h^\alpha}^2 = \frac{1}{2} \theta^T \Lambda_\alpha \theta$ for $\theta \in \mathbb R^D$. We denote the empirical measure on $\mathbb R \times \mathcal O$ induced by the $Z_i=(Y_i,X_i)$'s as
\begin{equation} \label{empm}
P_N= \frac{1}{N}\sum_{i=1}^N \delta_{(Y_i, X_i)}, ~~\text{so that } \int h dP_N = \frac{1}{N} \sum_{i=1}^N h(Y_i, X_i)
\end{equation}
for any measurable map $h: \mathbb R \times \mathcal O \to \mathbb R$. Recall also that $p_\theta: \R \times \mathcal O \to [0,\infty)$ denotes the marginal probability densities of $P_\theta^N$ defined in (\ref{lik}).

\begin{lem} \label{basiclemma}
Let $\hat \theta_{MAP}$ be any maximiser in (\ref{maptheta}), and denote by $\theta_{0,D}$ the projection of $\theta_0$ onto $\mathbb R^D$. We have ($P^N_{\theta_0}$-a.s.)
$$\frac{1}{2}\|\mathcal G(\hat \theta_{MAP})- \mathcal G(\theta_{0})\|_{L^2}^2 +   \delta_N^2 I(\hat\theta_{MAP}) \le  \int \log \frac{p_{\hat \theta_{MAP}}}{p_{\theta_{0,D}}} d(P_N-P_{\theta_0}) +  \delta_N^2 I(\theta_{0,D}) + \frac{1}{2}\|\mathcal G(\theta_{0,D}) -\mathcal G(\theta_0)\|_{L^2}^2.$$
\end{lem}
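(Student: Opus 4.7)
The plan is to prove this as a standard ``basic inequality'' for penalised M-estimators, exploiting the explicit Gaussian form of the likelihood.

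First, I would start from the defining property of $\hat\theta_{MAP}$: since $\theta_{0,D}\in\R^D$ is an admissible competitor, the penalised log-likelihood at $\hat\theta_{MAP}$ is at least that at $\theta_{0,D}$, which after dividing by $N$ and rewriting $\ell_N/N = \int \log p_\theta\, dP_N$ (up to the $-\tfrac12 N\log(2\pi)$ constant that cancels) yields
\[
\int \log \frac{p_{\hat\theta_{MAP}}}{p_{\theta_{0,D}}}\, dP_N \ge \delta_N^2 \bigl(I(\hat\theta_{MAP}) - I(\theta_{0,D})\bigr).
\]

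Next, I would decompose
\[
\int \log \frac{p_{\hat\theta_{MAP}}}{p_{\theta_{0,D}}}\, dP_N = \int \log \frac{p_{\hat\theta_{MAP}}}{p_{\theta_{0,D}}}\, d(P_N - P_{\theta_0}) + \int \log \frac{p_{\hat\theta_{MAP}}}{p_{\theta_{0,D}}}\, dP_{\theta_0}.
\]
For the population term, I would use the explicit form (\ref{lik}): for any $\theta\in\R^D$, writing $Y = \mathcal G(\theta_0)(X)+\eps$ under $P_{\theta_0}$ with $\eps\sim N(0,1)$ independent of $X\sim\text{Uniform}(\mathcal O)$ and $\mathrm{vol}(\mathcal O)=1$, one gets
\[
E_{\theta_0}[\log p_\theta] = -\tfrac12\log(2\pi) - \tfrac12 - \tfrac12 \|\mathcal G(\theta) - \mathcal G(\theta_0)\|_{L^2}^2,
\]
so that the population contribution equals $\tfrac12\|\mathcal G(\theta_{0,D})-\mathcal G(\theta_0)\|_{L^2}^2 - \tfrac12\|\mathcal G(\hat\theta_{MAP})-\mathcal G(\theta_0)\|_{L^2}^2$.

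Finally I would substitute this back and rearrange, isolating $\tfrac12\|\mathcal G(\hat\theta_{MAP})-\mathcal G(\theta_0)\|_{L^2}^2 + \delta_N^2 I(\hat\theta_{MAP})$ on the left. The inequality is deterministic once the MAP defining property holds (which it does $P_{\theta_0}^N$-a.s.\ whenever a maximiser exists), so there is no real obstacle here: the only substantive step is the Gaussian computation of $E_{\theta_0}[\log p_\theta]$, which is routine given the i.i.d.\ uniform-design/Gaussian-noise structure. The lemma is essentially a bookkeeping identity that recasts the MAP optimality condition into a bias--variance--empirical-process decomposition to be exploited in subsequent arguments (i.e.\ bounding the empirical process term via concentration and invoking local curvature from Lemma~\ref{wundervoncordoba} to convert the $L^2$-prediction error into a parameter rate).
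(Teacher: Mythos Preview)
Your proposal is correct and follows essentially the same route as the paper: start from the MAP optimality inequality comparing $\hat\theta_{MAP}$ to $\theta_{0,D}$, split $\int \log(p_{\hat\theta_{MAP}}/p_{\theta_{0,D}})\,dP_N$ into empirical-process and population parts, and evaluate the latter via the Gaussian identity $-E_{\theta_0}\log(p_\theta/p_{\theta_0})=\tfrac12\|\mathcal G(\theta)-\mathcal G(\theta_0)\|_{L^2}^2$. The paper's proof is identical in structure (it cites Lemma 23 in \cite{GN19} for the Gaussian computation you spell out), so there is nothing to add.
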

\begin{proof}
By the definitions 
$$ \ell_N(\hat \theta_{MAP})- \ell_N(\theta_{0,D}) - N \delta_N^2 I(\hat \theta_{MAP}) \ge - N \delta_N^2 I(\theta_{0,D}) $$
which is the same as
\begin{equation}
N\int \log \frac{p_{\hat \theta_{MAP}}}{p_{\theta_{0,D}}} d(P_N-P_{\theta_0}) +  N \delta_N^2 I(\theta_{0,D}) \ge N \delta_N^2 I(\hat \theta_{MAP}) - N\int \log \frac{p_{\hat \theta_{MAP}}}{p_{\theta_{0,D}}} dP_{\theta_0}.
\end{equation}
The last term can be decomposed as
\begin{align*}
-\int \log \frac{p_{\hat \theta_{MAP}}}{p_{\theta_{0,D}}} dP_{\theta_0} &= -\int \log \frac{p_{\hat \theta_{MAP}}}{p_{\theta_{0}}} dP_{\theta_0} + \int \log \frac{p_{\theta_{0,D}}}{p_{\theta_0}} dP_{\theta_0} \\
&= \frac{1}{2}\|\mathcal G(\hat \theta_{MAP}) - \mathcal G(\theta_0)\|_{L^2(\mathcal O)}^2 - \frac{1}{2}\|\mathcal G(\theta_{0,D}) - \mathcal G(\theta_{0})\|_{L^2(\mathcal O)}^2
\end{align*}
where we have used a standard computation of likelihood ratios (see also Lemma 23 in \cite{GN19}). The result follows from the last two displays after dividing by $N$.
\end{proof}

\smallskip

The following result can be proved by adapting techniques from $M$-estimation \cite{V00} (see also \cite{V01}, \cite{NVW18}) to the present situation. We will make crucial use of the concentration Lemma \ref{mixchain}.

\begin{prop}\label{fwdrate}
Let $\alpha>d$. Suppose $\|\theta_0\|_{h^\alpha} \le c_0$ and that $D$ is such that $\|\mathcal G(\theta_0)-\mathcal G(\theta_{0,D})\|_{L^2} \le c_1 \delta_N$ for some $c_0, c_1>0$. Then, for any $c \ge 1$ we can choose $C=C(c, c_0, c_1)$ large enough so that every $\hat \theta_{MAP}$ maximising (\ref{maptheta}) satisfies,
\begin{equation}
P^N_{\theta_0}\left(\frac{1}{2}\|\mathcal G(\hat \theta_{MAP})- \mathcal G(\theta_{0})\|_{L^2}^2 +   \delta_N^2 I(\hat\theta_{MAP}) > C\delta_N^2  \right) \lesssim  e^{-c^2N\delta_N^2}.
\end{equation}
\end{prop}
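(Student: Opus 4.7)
\medskip
\textbf{Proof plan for Proposition \ref{fwdrate}.}

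The argument follows the standard template of penalised $M$-estimation in the style of van de Geer, adapted to the present high-dimensional (discretised) nonparametric setting following \cite{V01, NVW18}. Write $u_\theta = \mathcal G(\theta)$ throughout, abbreviate $r(\theta):= \tfrac12\|u_\theta-u_{\theta_0}\|_{L^2}^2 + \delta_N^2 I(\theta)$, and let $\nu_N(h) = \int h\, d(P_N-P_{\theta_0})$ with $P_N$ from (\ref{empm}). By Lemma \ref{basiclemma} and the hypotheses $\|\theta_0\|_{h^\alpha}\le c_0$, $\|u_{\theta_0}-u_{\theta_{0,D}}\|_{L^2}\le c_1\delta_N$, together with $I(\theta_{0,D})\le I(\theta_0)\le c_0^2/2$, the basic inequality becomes
\begin{equation*}
r(\hat\theta_{MAP}) \le \nu_N\!\left(\log \tfrac{p_{\hat\theta_{MAP}}}{p_{\theta_{0,D}}}\right) + C_0\,\delta_N^2,
\qquad C_0 = (c_0^2+c_1^2)/2.
\end{equation*}

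The next step is to expand the log-likelihood ratio using $Y = u_{\theta_0}(X)+\varepsilon$:
\begin{equation*}
\log \tfrac{p_\theta}{p_{\theta_{0,D}}}(Y,X) \;=\; \varepsilon\,(u_\theta-u_{\theta_{0,D}})(X) \;+\; (u_\theta-u_{\theta_{0,D}})\bigl(u_{\theta_0}-\tfrac{u_\theta+u_{\theta_{0,D}}}{2}\bigr)(X),
\end{equation*}
so that $\nu_N(\log p_\theta/p_{\theta_{0,D}}) = \mathbb{G}_N^{(1)}(\theta) + \mathbb{G}_N^{(2)}(\theta)$, where $\mathbb{G}_N^{(1)}(\theta) = N^{-1}\sum_i \varepsilon_i (u_\theta-u_{\theta_{0,D}})(X_i)$ is a Gaussian-multiplier process and $\mathbb{G}_N^{(2)}(\theta)$ is a centred empirical process in the $X_i$ indexed by quadratic-type functions of $u_\theta-u_{\theta_{0,D}}$. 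I then perform a \emph{peeling} argument: for a constant $C\ge 2C_0$ to be chosen, define shells
\begin{equation*}
S_j := \{\theta \in \mathbb R^D : 2^{j-1}C\delta_N^2 \le r(\theta) \le 2^{j}C\delta_N^2\}, \qquad j=1,2,\dots,
\end{equation*}
and observe that on $\{r(\hat\theta_{MAP}) > C\delta_N^2\}$ the parameter $\hat\theta_{MAP}$ lies in some $S_j$, so $2^{j-2}C\delta_N^2 \le \sup_{\theta\in S_j}|\nu_N(\log p_\theta/p_{\theta_{0,D}})|$. Union-bounding over $j$, the total $P_{\theta_0}^N$-probability is at most $\sum_j \Pr(\sup_{S_j}|\mathbb{G}_N^{(1)}|+|\mathbb{G}_N^{(2)}|\ge 2^{j-3}C\delta_N^2)$.

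To bound each shell I use the chaining Lemma \ref{mixchain}. On $S_j$ one has $\|\theta\|_{h^\alpha}\le R_j:= \sqrt{2^{j}C}$ and $\|u_\theta-u_{\theta_{0,D}}\|_{L^2}\le \sigma_j:= C'2^{j/2}\sqrt{C}\,\delta_N$; by (\ref{eq:zero-increm}) and the Sobolev embedding $H^\alpha\hookrightarrow L^\infty$ (valid since $\alpha>d/2$), the function class $\mathcal H_j^{(1)}=\{x\mapsto (u_\theta-u_{\theta_{0,D}})(x):\theta\in S_j\}$ has envelope of order $R_j$, $L^2(P^X)$-diameter of order $\sigma_j$, and metric entropies controlled by those of the Sobolev ball $\{F:\|F\|_{H^\alpha}\le R_j\}$ in $L^\infty(\mathcal O)$, which satisfy $\log N(\mathcal H_j^{(1)},\|\cdot\|_\infty,\rho)\lesssim (R_j/\rho)^{d/\alpha}$ (classical Kolmogorov–Tikhomirov bounds). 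Since $\alpha>d$, the entropy integrals $J_2, J_\infty$ are finite and of order $\sigma_j^{1-d/(2\alpha)}R_j^{d/(2\alpha)}$ and $R_j$ respectively; Lemma \ref{mixchain} together with the conditional Gaussianity of $\varepsilon_i$ (which makes $\mathbb{G}_N^{(1)}$ sub-Gaussian of proxy $\sigma_j/\sqrt N$ given the $X_i$) then gives, for some universal $L$,
\begin{equation*}
\Pr\bigl(\sup_{\theta\in S_j}|\mathbb{G}_N^{(1)}(\theta)| \ge L[\sigma_j\delta_N + \sigma_j\sqrt{x/N} + R_j x/N]\bigr) \le 2e^{-x},
\end{equation*}
and an analogous bound for $\mathbb{G}_N^{(2)}$ (whose envelope is of order $R_j^2$, variance of order $\sigma_j^2$). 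Choosing $x_j = c^2 N\delta_N^2 \cdot 2^j$ and using $N\delta_N^2 = N^{d/(2\alpha+d)}$, the rate $\sigma_j\delta_N \lesssim 2^{j/2}\sqrt{C}\delta_N^2$ is dominated by the target threshold $2^{j-3}C\delta_N^2$ as soon as $C$ is large enough in terms of $c$, $c_0$, $c_1$. Summing the resulting $2e^{-c^2N\delta_N^2\cdot 2^j}$ over $j\ge 1$ yields a geometric series bounded by a constant multiple of $e^{-c^2N\delta_N^2}$, proving the claim.

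The main obstacle is the two-term empirical-process control on each shell: one needs entropy bounds for $u_\theta-u_{\theta_{0,D}}$ with constants that do \emph{not} deteriorate with $D$ (this is why the condition $\alpha>d$ is used, ensuring that the Sobolev entropy integral converges and the $R_j$-dependence is mild), and one must balance the Gaussian-noise term against the drift term $\mathbb{G}_N^{(2)}$ so that both contributions are $o(2^j C\delta_N^2)$. Once this uniform control is in place, the peeling and union bound produce the exponential tail $e^{-c^2 N\delta_N^2}$.
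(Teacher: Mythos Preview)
Your overall strategy matches the paper's proof closely: basic inequality from Lemma~\ref{basiclemma}, peeling over shells defined via $r(\theta)$, decomposition of the log-likelihood ratio into a Gaussian-multiplier process and a centred empirical process (the paper's (\ref{twoproc})), and control of each via the chaining Lemma~\ref{mixchain} with Sobolev-ball entropy bounds $\log N(\cdot,\|\cdot\|_\infty,\rho)\lesssim (R_j/\rho)^{d/\alpha}$. This is exactly the route taken in the paper.

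There is, however, one concrete gap. You take the $L^\infty$-envelope of $\mathcal H_j^{(1)}$ to be of order $R_j=(2^jC)^{1/2}$ via the Sobolev embedding, and correspondingly $R_j^2$ for the quadratic process $\mathbb G_N^{(2)}$. With this choice, the Bernstein-type term $U x_j/N$ from Lemma~\ref{mixchain}, evaluated at $x_j=c^2N2^j\delta_N^2$, equals $c^2R_j\,2^j\delta_N^2=c^2C^{1/2}2^{3j/2}\delta_N^2$; this exceeds the target threshold $2^{j-3}C\delta_N^2$ as soon as $2^{j/2}\gtrsim C^{1/2}/c^2$, so for any fixed $C$ the bound fails on all sufficiently large shells and the peeling sum does not converge. (The same obstruction, worse by a factor $R_j$, hits $\mathbb G_N^{(2)}$.) You only verify that the sub-Gaussian contribution $\sigma_j\delta_N$ is dominated, not this sub-exponential one.

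The paper avoids this by exploiting the uniform range bound (\ref{ubd}), $\|\mathcal G(\theta)\|_\infty\le\|g\|_\infty$, which gives a \emph{constant} envelope $U$ independent of the shell index; then $Ux_j/N\simeq c^2 2^j\delta_N^2$ is dominated by $2^{j-3}C\delta_N^2$ uniformly in $j$ once $C=C(c)$ is large, and likewise $J_\infty(\mathcal H)\lesssim 2^{jd/(2\alpha)}$ rather than $R_j$. Once you replace your Sobolev-embedding envelope by this constant one (and bound the variance of the quadratic process via $E h_\theta^2\le c_U\sigma_j^2$ as the paper does after (\ref{envelope})), the rest of your argument goes through unchanged.
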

\begin{proof}
We define functionals $$\tau(\theta, \theta') = \frac{1}{2}\|\mathcal G(\theta)-\mathcal G(\theta')\|_{L^2}^2 + \delta_N^2 I(\theta),~~\theta \in \mathbb R^D, \theta' \in h^\alpha,$$ and empirical processes $$W_N(\theta) =  \int \log \frac{p_{\theta}}{p_{\theta_{0,D}}} d(P_N-P_{\theta_0}), ~W_{N,0}(\theta) =  \int \log \frac{p_{\theta}}{p_{\theta_0}} d(P_N-P_{\theta_0}), ~\theta \in \mathbb R^D,$$ so that $$W_N(\theta)= W_{N,0}(\theta) - W_{N,0}(\theta_{0,D}),~~~\theta \in \mathbb R^D.$$
Using the previous lemma it suffices to bound
\begin{equation*}
P^N_{\theta_0}\left(\tau(\hat \theta_{MAP}, \theta_0) > C\delta_N^2, W_N(\hat \theta_{MAP}) \ge \tau(\hat \theta_{MAP}, \theta_0) -  \delta_N^2 I(\theta_{0,D}) - \|\mathcal G(\theta_{0,D}) -\mathcal G(\theta_0)\|_{L^2}^2/2 \right)
\end{equation*} 
Since $$I(\theta_{0,D}) =  \|\theta_{0,D}\|_{h^\alpha}^2/2 \le \|\theta_0\|_{h^\alpha}^2/2 \le c^2_0/2 \text{ and }\|\mathcal G(\theta_{0,D}) -\mathcal G(\theta_0)\|_{L^2}^2 \le c_1^2\delta_N^2$$ by hypothesis, we can choose $C$ large enough so that the last probability is bounded by 
\begin{align}
&P^N_{\theta_0}\left(\tau(\hat \theta_{MAP}, \theta_0) > C\delta_N^2, |W_N(\hat \theta_{MAP})| \ge \tau(\hat \theta_{MAP}, \theta_0)/2 \right) \notag \\
& \le \sum_{s =1}^\infty P^N_{\theta_0}\left(\sup_{\theta \in \mathbb R^D: 2^{s-1}C \delta_N^2 \le \tau(\theta, \theta_0) \le 2^sC \delta_N^2}|W_{N,0}(\theta)| \ge  2^{s}C \delta_N^2/8 \right) +P^N_{\theta_0}\big(|W_{N,0}(\theta_{0,D})| \ge  C \delta_N^2/4 \big) \notag \\
&  \le 2\sum_{s=1}^\infty P^N_{\theta_0}\left(\sup_{\theta \in\Theta_s}|W_{N,0}(\theta)| \ge  2^{s}C \delta_N^2/8 \right),  \label{peel}
\end{align} 
where, for $s \in \mathbb N$,
\begin{equation}
\Theta_s := \big\{\theta \in \mathbb R^D: \tau(\theta, \theta_0) \le 2^sC \delta_N^2 \big\} = \big\{\theta \in \mathbb R^D: \|\mathcal G(\theta) - \mathcal G(\theta_0)\|^2_{L^2} + \delta_N^2 \|\theta\|_{h^\alpha}^2 \le 2^{s+1}C \delta_N^2\big\},
\end{equation}
and where we have used that $\theta_{0,D} \in \Theta_1$ for $C$ large enough by the hypotheses. To proceed, notice that $$NW_{N,0}(\theta)=\ell_N(\theta) - \ell_N(\theta_{0}) - E_{\theta_0}[\ell_N(\theta) -  \ell_N(\theta_{0})]$$ and that,
for $(Y_i, X_i) \sim^{i.i.d.} P_{\theta_0}$,
\begin{align}\label{twoproc}
\ell_N(\theta) - \ell_N(\theta_{0}) &= -\frac{1}{2}\sum_{i=1}^N\big[(\mathcal G(\theta_0)(X_i)-\mathcal G(\theta)(X_i) + \varepsilon_i)^2 - \varepsilon_i^2 \big] \notag \\
&=- \sum_{i=1}^N(\mathcal G(\theta_0)(X_i)-\mathcal G(\theta)(X_i))\varepsilon_i -\frac{1}{2}\sum_{i=1}^N(\mathcal G(\theta_0)(X_i)-\mathcal G(\theta)(X_i))^2,
\end{align}
so that we have to deal with two empirical processes separately. We first bound
\begin{equation}\label{sum1}
\sum_{s =1}^\infty P^N_{\theta_0}\left(\sup_{\theta \in \Theta_s}|Z_N(\theta)| \ge  \sqrt N 2^{s}C \delta_N^2/16 \right)
\end{equation}
where $$Z_N=\frac{1}{\sqrt N}\sum_{i=1}^N h_\theta(X_i)\varepsilon_i,~~ h_\theta = \mathcal G(\theta_0) - \mathcal G(\theta),~~\theta \in \Theta = \Theta_s, s \in \mathbb N,$$ is as in Lemma \ref{mixchain}. We will apply that lemma with bounds (recalling $vol(\mathcal O)=1$)
\begin{equation}\label{envelope}
E^Xh^2_\theta(X) = \|\mathcal G(\theta)-\mathcal G(\theta_0)\|_{L^2}^2 \le 2^{s+1}C \delta_N^2 =: \sigma_s^2,~~\|h_\theta\|_\infty \le 2\sup_\theta \|\mathcal G(\theta)\|_\infty \le U<\infty 
\end{equation}
uniformly in all $\theta \in \Theta_s$, for some fixed constant $U=U(g,\mathcal O)$ (cf.~(\ref{ubd})).
For the entropy bounds, we use that on each slice $\sup_{\theta \in \Theta_s}\|F_\theta\|_{H^{\alpha}} \le \sqrt{2C}2^{s/2}$, which for $\alpha>d$ implies (using (4.184) in \cite{GN16} and standard extension properties of Sobolev norms) 
\begin{equation*}
\log N\big(\{F_\theta: \theta \in \Theta_s\}, \|\cdot\|_\infty, \rho \big) \le K\Big(\frac{\sqrt C2^{s/2}}{\rho}\Big)^{d/\alpha},~~\rho>0,
\end{equation*}
for some constant $K=K(\alpha,d)$. Since the map $F_\theta \mapsto \mathcal G(\theta)$ is Lipschitz for the $\|\cdot\|_\infty$-norm (Lemma \ref{lem:schr:two}) we deduce that also
\begin{equation}
\log N\big(\{h_\theta=\mathcal G(\theta)-\mathcal G(\theta_0): \theta \in \Theta_s\}, \|\cdot\|_\infty, \rho \big) \le K'\Big(\frac{\sqrt C2^{s/2}}{\rho}\Big)^{d/\alpha},~~\rho>0,
\end{equation}
and as a consequence, for $\alpha>d$ and $J_2(\mathcal H), J_\infty(\mathcal H)$ defined in Lemma \ref{mixchain},
\begin{equation}\label{entbd}
\begin{split}
J_2(\mathcal H) &\lesssim \int_0^{4\sigma_s}\Big(\frac{\sqrt C2^{s/2}}{\rho}\Big)^{d/2\alpha}d\rho \lesssim C^{d/4\alpha}2^{sd/4\alpha}\sigma_s^{1-\frac{d}{2\alpha}},\\
J_\infty(\mathcal H) &\lesssim \int_0^{4U}\Big(\frac{\sqrt C2^{s/2}}{\rho}\Big)^{d/\alpha}d\rho \lesssim C^{d/2\alpha}2^{sd/2\alpha} U^{1-\frac{d}{\alpha}}.
\end{split}
\end{equation}
The sum in (\ref{sum1}) can now be bounded by Lemma \ref{mixchain} with $x=c^2N2^s \delta_N^2$ and the choices of $\sigma_s, U$ in (\ref{envelope}) for $C>0$ large enough,
\begin{equation}\label{sum11}
\sum_{s=1}^\infty P^N_{\theta_0}\left(\sup_{\theta \in \Theta_s}|Z_N(\theta)| \ge  \sqrt N \sigma_s^2/32 \right) \le 2 \sum_{s \in \mathbb N} e^{-c^22^sN\delta_N^2} \lesssim e^{-c^2 N\delta_N^2}
\end{equation}
since then, by definition of $\delta_N$, for $\alpha>d$ and $C$ large enough, the quantities 
\begin{equation}\label{gaussiantail}
\mathcal J_2(\mathcal H) \lesssim  C^{d/4\alpha}2^{sd/4\alpha} (2^{s/2} \sqrt C\delta_N)^{1-\frac{d}{2\alpha}} \lesssim \frac{1}{\sqrt C}\sqrt N \sigma^2_s,~~\sigma_s \sqrt x  \le \frac{c}{\sqrt {2C}} \sqrt N \sigma_s^2,
\end{equation}
 and
 \begin{equation}\label{exponentialtail}
 \frac{1}{\sqrt N}\mathcal J_\infty(\mathcal H) \lesssim \frac{C^{d/2\alpha}2^{sd/2\alpha}}{\sqrt N} \lesssim \frac{1}{C^{d/2\alpha-1}} \sqrt N \sigma^2_s,~~\frac{x}{\sqrt N} = \frac{c^2}{2C} \sqrt N \sigma_s^2
 \end{equation}
are all of the correct order of magnitude compared to $\sqrt N \sigma_s^2$.

We now turn to the process corresponding to the second term in (\ref{twoproc}), which is bounded by
\begin{equation}\label{sum2}
\sum_{s \in \mathbb N} P^N_{\theta_0}\left(\sup_{\theta \in \Theta_s}|Z'_N(\theta)| \ge  \sqrt N 2^{s}C \delta_N^2/16 \right)
\end{equation}
where $Z'_N$ is now the centred empirical process 
$$Z'_N(\theta) = \frac{1}{\sqrt N} \sum_{i=1}^N(h_\theta-E^X h_\theta(X)\big),~~\text{ with }~  \mathcal H = \{ h_\theta=(\mathcal G(\theta)-\mathcal G(\theta_0))^2 : \theta \in \Theta_s\}$$
to which we will again apply Lemma \ref{mixchain}. Just as in (\ref{envelope}) the envelopes of this process are uniformly bounded by a fixed constant, again denoted by $U$, which implies in particular that the bounds (\ref{entbd}) also apply to $\mathcal H$ as then, for some constant $c_U>0$, $$ \|h_\theta- h_{\theta'}\|_\infty \leq c_U \|\mathcal G(\theta)-\mathcal G(\theta')\|_\infty.$$ Moreover on each slice $\Theta_s$ the weak variances are bounded by $$E^Xh_\theta^2(X) \le c'_U \|\mathcal G(\theta)-\mathcal G(\theta_0)\|_{L^2}^2 \le c'_U\sigma^2_s$$ with $\sigma_s$ as in (\ref{envelope}) and some $c'_U>0$. We see that all bounds required to obtain (\ref{sum1}) apply to the process $Z_N'$ as well, and hence the series in (\ref{peel}) is indeed bounded as required in the proposition, completing the proof.
\end{proof}

From a stability estimate for $\theta \mapsto \mathcal G(\theta)$ we now obtain the following convergence rate for $\|\hat \theta_{MAP} - \theta_0\|_{\ell^2}$ which in turn also bounds $\|\hat \theta_{MAP}-\theta_{0,D}\|_{\R^D}$. 

\begin{thm}\label{maprate}
Let $Z^{(N)} \sim P_{\theta_0}^N$ be as in (\ref{ZN}) where $\theta_0 \in h^\alpha, \alpha>d, d \le 3$. Define $$\bar \delta_N := N^{-r(\alpha)}~\text{ where } r(\alpha) = \frac{\alpha}{2\alpha +d} \frac{\alpha}{\alpha+2}.$$ Suppose $\|\theta_0\|_{h^\alpha} \le c_0$ and that $D$ is such that $\|\mathcal G(\theta_0)-\mathcal G(\theta_{0,D})\|_{L^2} \le c_1 \delta_N,$ for some constants $c_0, c_1>0$. Then given $c\ge 1$ we can choose $\bar C, \bar c$ large enough (depending on $c, c_0, c_1, \alpha, \mathcal O$) so that for all $N$ and any maximiser $\hat \theta_{MAP}$ satisfying (\ref{maptheta}), one has
\begin{equation}\label{betarate}
P^N_{\theta_0}\left(\|{\hat \theta_{MAP}}-\theta_0\|_{\ell^2}  \le \bar C\bar \delta_{N},~~\|\hat\theta_{MAP}\|_{h^\alpha} \le \bar C \right) \ge 1- \bar ce^{-c^2N\delta_N^2}.
\end{equation}
\end{thm}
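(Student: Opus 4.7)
First I would invoke Proposition \ref{fwdrate} with the constants of the present theorem to obtain, on an event of $P^N_{\theta_0}$-probability at least $1-\bar c e^{-c^2 N\delta_N^2}$, the two simultaneous bounds
\begin{equation*}
\tfrac{1}{2}\|\mathcal G(\hat\theta_{MAP})-\mathcal G(\theta_0)\|_{L^2(\mathcal O)}^2 \le C\delta_N^2, \qquad \delta_N^2 I(\hat\theta_{MAP}) = \tfrac{\delta_N^2}{2}\|\hat\theta_{MAP}\|_{h^\alpha}^2 \le C\delta_N^2,
\end{equation*}
with constant $C=C(c,c_0,c_1)$. The second of these already gives the $h^\alpha$-bound in \eqref{betarate} (upon enlarging $\bar C$). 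The remaining task is to convert the $L^2$-prediction bound $\|\mathcal G(\hat\theta_{MAP})-\mathcal G(\theta_0)\|_{L^2}\lesssim \delta_N$ into an $\ell_2$-parameter bound at the slower rate $\bar\delta_N = \delta_N^{\alpha/(\alpha+2)}$.

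The bridge I would prove is a deterministic stability estimate of the form
\begin{equation*}
\|F_\theta-F_{\theta_0}\|_{(H^2_0(\mathcal O))^*} \lesssim \|\mathcal G(\theta)-\mathcal G(\theta_0)\|_{L^2(\mathcal O)},
\end{equation*}
valid whenever $\|\theta\|_{h^\alpha}\vee\|\theta_0\|_{h^\alpha}$ is bounded by a fixed constant. To derive it, subtract the two Schr\"odinger equations \eqref{eq:schr} satisfied by $u_{f_\theta}$ and $u_{f_{\theta_0}}$ to obtain the representation
\begin{equation*}
u_{f_\theta}-u_{f_{\theta_0}} = V_{f_\theta}\!\bigl[(f_\theta-f_{\theta_0})\,u_{f_{\theta_0}}\bigr],
\end{equation*}
then apply the elliptic $L^2\!-\!(H^2_0)^*$ coercivity estimate \eqref{eq:schr:lb} of Lemma \ref{cpebach} to extract a $(H^2_0)^*$-norm of the right-hand side, and finally strip off the multiplicative factor $u_{f_{\theta_0}}$ together with the mean-value factor $\int_0^1 \Phi'(F_{\theta_0}+t(F_\theta-F_{\theta_0}))\,dt$ arising from writing $f_\theta-f_{\theta_0}$ in terms of $F_\theta-F_{\theta_0}$. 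Both of these factors are bounded above in $H^2(\mathcal O)$ (by \eqref{hammer}, \eqref{super} and regularity of $\Phi$) and are bounded below pointwise on $\mathcal O$ by a strictly positive constant (by the Feynman--Kac lower bound \eqref{leivand} together with the definition of a regular link function, cf.~\eqref{eq:aF:amin}). Their reciprocals therefore lie in $H^2$, so multiplication by them is continuous on $(H^2_0)^*$ by duality, exactly as in the passage from \eqref{lb-init} to \eqref{diekrux} in the proof of Lemma \ref{wundervoncordoba}.

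With the stability estimate established, I would apply the standard Sobolev interpolation
\begin{equation*}
\|F\|_{L^2(\mathcal O)} \lesssim \|F\|_{(H^2_0(\mathcal O))^*}^{\alpha/(\alpha+2)}\,\|F\|_{H^\alpha(\mathcal O)}^{2/(\alpha+2)}
\end{equation*}
with $F=F_{\hat\theta_{MAP}}-F_{\theta_0}$, using the uniform $H^\alpha$-bound on $F_{\hat\theta_{MAP}}$ from the first step together with $\|\theta_0\|_{h^\alpha}\le c_0$, to conclude
\begin{equation*}
\|F_{\hat\theta_{MAP}}-F_{\theta_0}\|_{L^2(\mathcal O)} \lesssim \delta_N^{\alpha/(\alpha+2)} = \bar\delta_N.
\end{equation*}
Since $\Psi:\ell_2(\mathbb N)\to L^2(\mathcal O)$ is a Hilbert isometry by the ONB property of $\{e_k\}$ (Section \ref{ONB}), and since $\hat\theta_{MAP}$ is canonically identified with its extension by zeros in $\ell_2(\mathbb N)$, the left-hand side equals $\|\hat\theta_{MAP}-\theta_0\|_{\ell_2}$, which yields the first claim of \eqref{betarate}.

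The hard part is the stability step: one must ensure that \emph{all} multiplicative constants appearing in the chain of $H^2$-multiplier and $(H^2_0)^*$-duality arguments remain uniform in $\theta$ over the high-probability set on which $\|\hat\theta_{MAP}\|_{h^\alpha}$ is controlled. This reduces to uniform upper $H^2$-control and uniform pointwise positivity of $u_{f_\theta}$ and $\Phi'\circ F_\theta$, both of which are already available from the elliptic regularity of Lemma \ref{cpebach}, the Feynman--Kac bound \eqref{leivand}, and the definition of a regular link function; once they are assembled, the interpolation and the application of the isometry $\Psi$ are routine.
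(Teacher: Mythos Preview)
Your argument is correct but follows a genuinely different route from the paper's. The paper interpolates at the level of the \emph{solutions} $u_f$: on the high-probability event it bounds $\|u_{\hat f}-u_{f_0}\|_{H^2}$ via the standard positive-order interpolation \eqref{krankl} between $\|u_{\hat f}-u_{f_0}\|_{L^2}$ and $\|u_{\hat f}-u_{f_0}\|_{H^{\alpha+2}}$ (the latter controlled by the elliptic regularity estimate \eqref{alphabound}), and then recovers $\|\hat f-f_0\|_{L^2}$ directly from the explicit identity $f=\Delta u_f/(2u_f)$ before applying Lipschitz regularity of $\Phi^{-1}$. You instead pull the stability back to the \emph{parameter} level via the coercivity estimate \eqref{eq:schr:lb} and the $H^2$-multiplier argument (exactly as in \eqref{multi}--\eqref{diekrux}), obtaining an $(H^2_0)^*$-bound on $F_{\hat\theta}-F_{\theta_0}$, and interpolate that against the $H^\alpha$-bound. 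Both routes produce the exponent $\alpha/(\alpha+2)$.

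What each buys: the paper's version uses only the positive-order interpolation inequality already recorded as \eqref{krankl}, at the cost of invoking the $H^{\alpha+2}$-regularity bound \eqref{alphabound} for $u_f$ and the inverse link function. Your version recycles the curvature machinery of Lemma~\ref{wundervoncordoba} and avoids \eqref{alphabound} entirely, but requires an interpolation between a \emph{negative}-order space $(H^2_0)^*$ and $H^\alpha$. That inequality is not \eqref{krankl} as stated; to make it rigorous here you should either appeal to Lions--Magenes-type interpolation or, more simply, note that $F_{\hat\theta}-F_{\theta_0}$ lies in the closure of $\mathrm{span}\{e_k\}$ and derive the needed inequality spectrally by H\"older on the coefficients (using $\|\cdot\|_{h^{-2}}\lesssim\|\cdot\|_{(H^2_0)^*}$, which follows from $\mathcal H_2\subset H^2_0$ with equivalent norms). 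With that small addendum your proof is complete.
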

\begin{proof}
By Proposition \ref{fwdrate} we can restrict to events 
\begin{equation} \label{TN}
T_N := \big\{\|\mathcal G(\hat \theta_{MAP})- \mathcal G(\theta_{0})\|_{L^2}^2 \le 2C \delta_N^2, \|F_{\hat \theta_{MAP}}\|_{H^\alpha} = \|\hat \theta_{MAP}\|_{h^\alpha} \le \sqrt{2C}\big\}
\end{equation}
 of sufficiently high $P^N_{\theta_0}$-probability. If we write $\hat f = \Phi \circ F_{\hat \theta_{MAP}}$ for $\Phi$ from (\ref{fwdG}) then by (\ref{links}), on the events $T_N$ we also have $~\|\hat f\|_{H^\alpha} \le C' $ and $\|\hat f\|_\infty \le C',$ for some $C'>0$. We write $ u_{\hat f} = \mathcal G(\hat \theta_{MAP})$ for the unique solution of the Schr\"odinger equation (\ref{eq:schr}) corresponding to $\hat f$. We then necessarily have $f = \Delta u_f/(2 u_f)$ both for $f= \hat f$ and $f=f_0$, where we also use that denominator $u_f$ is bounded away from zero by a constant $C''>0$ depending only on $\|f\|_\infty, \mathcal O, g$, see (\ref{leivand}). Then using the multiplication and interpolation inequalities (\ref{eq:h-mult}), (\ref{krankl}), the regularity estimate from (\ref{alphabound}) and (\ref{links}), we have for $t=\alpha/(\alpha+2)$,
\begin{align}
\|\hat f-f_0\|_{L^2} & \lesssim \|u_{\hat f} - u_{f_0}\|_{H^{2}} \notag \\
& \lesssim \|\mathcal G(\hat \theta_{MAP}) - \mathcal G(\theta_0)\|_{L^2}^t \|u_{\hat f} -u_{f_0}\|_{H^{\alpha+2}}^{1-t} \notag \\
& \lesssim \delta_N^t (\|\hat f\|_{H^\alpha}+\|f_0\|_{H^\alpha}) \lesssim \delta_N^t \label{interpol}
\end{align}
on the event $T_N$. From a Sobolev imbedding (for some $\kappa>0$) and applying (\ref{krankl}) again we further deduce $\|\hat f - f_0\|_\infty \lesssim \delta_N^{(\alpha-d/2-\kappa)/(\alpha+2)} \to 0$ as $N \to \infty$, hence using $\inf_{x}f_0(x)>K_{min}$ we also have $\inf_{x}\hat f(x) \ge K_{min}+k$ for some $k>0$ (on $T_N$, for all $N$ large enough). We deduce 
\begin{align*}
\|\hat \theta_{MAP}-\theta_{0}\|_{\ell^2} &\le \|F_{\hat \theta_{MAP}}- F_{\theta_0}\|_{L^2} = \|\Phi^{-1} \circ \hat f- \Phi^{-1} \circ f_0\|_{L^2} \lesssim \|\hat f-f_0\|_{L^2} \lesssim \delta_N^{t}
\end{align*} on the events $T_N$, where in the last inequality we have used regularity of the inverse link function $\Phi^{-1}: [K_{min}+k, \infty)$ and (\ref{linklip}). This completes the proof. 
\end{proof}

\subsubsection{Posterior contraction rates}

We now study the full posterior distribution (\ref{postbus}) arising from the Gaussian prior $\Pi$ for $\theta$ from (\ref{schrottprior}). The result we shall prove parallels Theorem \ref{maprate} but holds for most of the `mass' of the posterior measure instead of just for its `mode' $\hat \theta_{MAP}$. This requires very different techniques and we rely on ideas from Bayesian nonparametrics \cite{vdVvZ08, GV17}, specifically recent progress \cite{MNP21} that allows one to deal with non-linear settings (see also \cite{GN19}).

In the proof of Theorem \ref{waterstone} to follow we will require control of the posterior `normalising factors', expressed via sets
\begin{equation}\label{CN}
\mathcal C_N = \mathcal C_{N, K} = \left\{\int_{\mathbb R^D} e^{\ell_N(\theta)-\ell_N(\theta_0)} d\Pi(\theta) \ge \Pi(B(\delta_N))  \exp\{-(1+K)N\delta_N^2\}\right\},
\end{equation} 
for some $K>0$, where $\delta_N = N^{-\alpha/(2\alpha+d)}$ and $$B(\delta_N) = \big\{\theta \in \mathbb R^D: \|\mathcal G(\theta)-\mathcal G(\theta_0)\|_{L^2(\mathcal O)} \le \delta_N\big\}.$$ This is achieved in the course of the proof of our next result. We denote by $c_g$ the global Lipschitz constant of the map $\theta \mapsto \mathcal G(\theta)$ from $\ell^2(\mathbb N) \to L^2(\mathcal O)$, see (\ref{eq:zero-increm}).
\begin{thm}\label{contractionrate}
Let $Z^{(N)}, \theta_0, \alpha, d, \bar\delta_N$ be as in Theorem \ref{maprate} and let $\Pi(\cdot|Z^{(N)})$ denote the posterior distribution from (\ref{postbus}). Suppose $\|\theta_0\|_{h^\alpha}\le c_0$ and that $D \le c_2 N \delta_N^2$ is such that
\begin{equation}\label{sievecond}
\|\mathcal G(\theta_0) - \mathcal G(\theta_{0,D})\|_{L^2(\mathcal O)}  \le  c_1 \delta_N
\end{equation}
for some finite constants $c_0,c_2>0, 0<c_1<1/2$. Then for any $a>0$ there exist $c', c''$ such that for $K, L=L(a,c_0, c_2, c_g, \alpha, \mathcal O)$ large enough,
\begin{equation} \label{postrat}
P^N_{\theta_0}\big(\big\{\Pi(\theta: \|\theta-\theta_{0,D}\|_{\R^D} \le L \bar \delta_N, ~\|\theta\|_{h^\alpha} \le L |Z^{(N)}) \ge 1- e^{-aN\delta_N^2}\big\}, \mathcal C_{N,K}\big) \ge 1-c'e^{-c''N\delta_N^2}.
\end{equation}
\end{thm}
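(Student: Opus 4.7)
\textbf{Proof plan for Theorem \ref{contractionrate}.} The argument follows the general Bayesian nonparametric strategy of Ghosal--Ghosh--van der Vaart, adapted to the non-linear PDE setting as in \cite{MNP19b, GN19}. The plan is to first establish contraction of the posterior in the `forward' regression metric $\|\mathcal G(\theta)-\mathcal G(\theta_0)\|_{L^2}$ at rate $\delta_N$, and then convert this to the $\ell_2$-rate $\bar\delta_N$ via the interpolation/stability estimate already used in the proof of Theorem \ref{maprate}.

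First, I would establish the prior small ball condition: by the Lipschitz property (\ref{eq:zero-increm}) of $\mathcal G$ and the hypothesis (\ref{sievecond}), $\{\theta : \|\theta - \theta_{0,D}\|_{\R^D} \le \delta_N/(2c_g)\} \subseteq B(\delta_N)$. Since $\theta_{0,D}$ lies in the RKHS $\mathcal H_\alpha$ with $\|\theta_{0,D}\|_{h^\alpha}\le c_0$, a standard Cameron--Martin/small-ball estimate for the Gaussian prior (\ref{schrottprior}) with rescaling $N^{-d/(2\alpha+d)}$ yields $\Pi(B(\delta_N)) \ge \exp(-C_0 N\delta_N^2)$ for some $C_0=C_0(c_0,c_1,c_g,\alpha)$. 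Combined with the standard Fubini argument (e.g., Lemma 7.3.2 in \cite{GV17}), this immediately gives $P^N_{\theta_0}(\mathcal C_{N,K}^c) \le e^{-c'' N\delta_N^2}$ for $K$ large enough, handling the normalising factor event.

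Next, I would build a sieve $\Theta_N:=\{\theta \in \R^D: \|\theta\|_{h^\alpha}\le M\}$ for $M=M(a,c_0,c_2)$ large. The Borell--Sudakov--Tsirel'son inequality applied to the rescaled Gaussian field (\ref{matern}) yields $\Pi(\Theta_N^c)\le e^{-(a+C_0+2)N\delta_N^2}$ once $M$ is chosen large enough. For the complexity of $\Theta_N$: since $\mathcal G$ is Lipschitz from $(h^\alpha,\|\cdot\|_{\ell_2})$ to $L^2(\mathcal O)$ and since a bounded ball of $h^\alpha$ has $L^2$-metric entropy of order $\rho^{-d/\alpha}$, the image $\{\mathcal G(\theta):\theta \in \Theta_N\}$ has $L^2$-entropy controlled by $(M/\rho)^{d/\alpha}$. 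Standard Gaussian-noise testing theory (or the Hellinger tests in Theorem 7.1.4 of \cite{GV17}) then delivers exponentially consistent tests $\Psi_N$ for $H_0:\theta=\theta_0$ against alternatives $\{\theta \in \Theta_N:\|\mathcal G(\theta)-\mathcal G(\theta_0)\|_{L^2}>L\delta_N\}$, once $L$ is large. Combining sieve, prior mass, and tests via the usual decomposition of $\Pi(\cdot|Z^{(N)})$ yields, with $P^N_{\theta_0}$-probability at least $1-c'e^{-c''N\delta_N^2}$,
\begin{equation*}
\Pi\big(\theta:\|\mathcal G(\theta)-\mathcal G(\theta_0)\|_{L^2}\le L\delta_N,\ \|\theta\|_{h^\alpha}\le M \,\big|\, Z^{(N)}\big) \ge 1 - e^{-aN\delta_N^2}.
\end{equation*}

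Finally, to pass from this forward contraction to the $\R^D$-rate $\bar\delta_N$, I would reuse the interpolation argument from the proof of Theorem \ref{maprate}: on the event inside the posterior probability above, setting $f_\theta = \Phi\circ F_\theta$, elliptic regularity (\ref{alphabound}) combined with the multiplicative/interpolation inequalities (\ref{eq:h-mult})--(\ref{krankl}) gives $\|u_{f_\theta}-u_{f_0}\|_{H^{\alpha+2}}\lesssim 1$ uniformly, so by interpolation $\|u_{f_\theta}-u_{f_0}\|_{H^2}\lesssim \delta_N^{\alpha/(\alpha+2)}$, which via $f_\theta = \Delta u_{f_\theta}/(2u_{f_\theta})$ and Lipschitz of $\Phi^{-1}$ yields $\|\theta - \theta_0\|_{\ell_2}\le \|F_\theta-F_{\theta_0}\|_{L^2}\lesssim \bar\delta_N$. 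Since by (\ref{sievecond}) and the Lipschitz estimate (\ref{eq:zero-increm}) also $\|\theta_0 - \theta_{0,D}\|_{\ell_2}\lesssim \bar\delta_N$, a triangle inequality delivers (\ref{postrat}) with $L$ adjusted.

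The main technical obstacle is the conversion step: converting the forward regression rate $\delta_N$ into the sharper parameter rate $\bar\delta_N$ requires simultaneous control of the $H^\alpha$-regularity of $\theta$ (provided by the sieve) and careful use of elliptic regularity for the Schrödinger solution map, including ensuring that $f_\theta$ stays uniformly bounded away from the threshold $K_{min}$ so that $1/u_{f_\theta}$ and $\Phi^{-1}$ remain uniformly smooth. Handling all constants uniformly in $D\le c_2 N\delta_N^2$ and controlling the combined $P^N_{\theta_0}$-probabilities so they decay as $e^{-cN\delta_N^2}$ (rather than polynomially) is the most delicate bookkeeping.
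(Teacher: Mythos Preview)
Your plan is essentially the same as the paper's proof: small-ball lower bound via Lipschitz plus Cameron--Martin shift, Borell-type sieve in $h^\alpha$, $L^\infty$/Hellinger entropy of $\{\mathcal G(\theta):\theta\in\mathcal A_N\}$ via Sobolev-ball entropy, application of the contraction machinery of \cite{GN19}, and finally the same interpolation argument from (\ref{interpol}) to upgrade the forward rate $\delta_N$ to the parameter rate $\bar\delta_N$. Two remarks on points where you diverge slightly from the paper.

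\emph{Sieve construction.} You apply Borell--TIS directly to $\|\theta\|_{h^\alpha}$ under the truncated prior (\ref{schrottprior}); this is fine here because in standardized coordinates $\|\theta\|_{h^\alpha}^2=N^{-d/(2\alpha+d)}\sum_{k\le D}g_k^2$, and the dimension bound $D\le c_2N\delta_N^2$ makes $E\|\theta\|_{h^\alpha}^2\le c_2$, so Gaussian concentration gives $\Pi(\|\theta\|_{h^\alpha}>M)\le e^{-(M-\sqrt{c_2})^2N\delta_N^2/2}$. The paper instead invokes Borell's isoperimetric inequality in the form of Lemma 17 in \cite{GN19} to get a decomposition $\theta=\theta_1+\theta_2$ with $\|\theta_1\|_{\R^D}\le M\delta_N$, $\|\theta_2\|_{h^\alpha}\le M$, and then uses $\|\theta_1\|_{h^\alpha}\lesssim D^{\alpha/d}\|\theta_1\|_{\R^D}$ together with $D\le c_2N\delta_N^2$ (display (\ref{rkhstrick})) to absorb $\theta_1$ into an $h^\alpha$-ball. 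Your route is more direct in this finite-dimensional setting; the paper's is the standard infinite-dimensional template.

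\emph{Exponential control of $\mathcal C_{N,K}$.} You write that the ``standard Fubini argument'' (Lemma 7.3.2 in \cite{GN16}) immediately gives $P^N_{\theta_0}(\mathcal C_{N,K}^c)\le e^{-c''N\delta_N^2}$. It does not: that argument uses Chebyshev/second moments and only yields polynomial decay $\lesssim (N\delta_N^2)^{-1}$. The paper needs a separate lemma (Lemma \ref{expsmall}) which applies Bernstein's inequality (\ref{bernstein}) to the log-likelihood ratio integrated against $\nu=\Pi(\cdot|B(\delta_N))/\Pi(B(\delta_N))$, exploiting the uniform bound (\ref{ubd}) on $\mathcal G$ and the variance envelope $\sigma^2\lesssim\delta_N^2$ on $B(\delta_N)$. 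You correctly flag in your last paragraph that getting exponential rather than polynomial probability bounds is the delicate part, but you should be aware that this step requires precisely such a Bernstein-based refinement; the same issue arises when checking that the contraction theorem of \cite{GN19} gives \emph{exponentially} small type-I and type-II testing errors, which the paper also notes explicitly.
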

\begin{proof}
We initially establish some auxiliary results that will allow us to apply a standard contraction theorem from Bayesian non-parametrics, specifically in a form given in Theorem 13 in \cite{GN19}. By Lemma 23 in \cite{GN19} and (\ref{ubd})  we can lower bound $\Pi_N(\mathcal B_N)$ in (A5)~in \cite{GN19} by our $\Pi_N(B(\delta_N))$ (after adjusting the choice of $\delta_N$ in \cite{GN19} by a multiplicative constant). Then using (\ref{sievecond}), Corollary 2.6.18 in \cite{GN16}, and ultimately Theorem 1.2 in \cite{LL99} combined with (4.184) in \cite{GN16}, we have for $\theta' \sim N(0, \Lambda_\alpha^{-1})$,
\begin{align}
\Pi_N(\|\mathcal G(\theta)-\mathcal G(\theta_0)\|_{L^2(\mathcal O)} < \delta_N) & \ge  \Pi_N(\|\mathcal G(\theta)-\mathcal G(\theta_{0,D})\|_{L^2(\mathcal O)} <\delta_N/2) \notag \\
& \ge \Pi_N(\|\theta-\theta_{0,D}\|_{\mathbb R^D} <\delta_N/2c_g) \notag \\
& \ge e^{-N\delta_N^2 \|\theta_{0,D}\|^2_{h^\alpha}/2}\Pr(\|\theta'\|_{\mathbb R^D} < \sqrt N \delta_N^2/2c_g) \ge e^{-\bar d N\delta_N^2} \label{smallball}
\end{align}
for some $\bar d>0$. From this we deduce further from Borell's Gaussian iso-perimetric inequality \cite{B75} (in the form of Theorem 2.6.12 in \cite{GN16}), arguing just as in Lemma 17 in \cite{GN19} (and invoking the remark after that lemma with $\kappa=0$ there), that given $B>0$ we can find $M$ large enough (depending on $\bar d, B$) such that 
$$\Pi_N\big(\theta=\theta_1 + \theta_2 \in \R^D: \|\theta_1\|_{\R^D} \le M \delta_N, \|\theta_2\|_{h^\alpha} \le M\big) \ge 1- 2e^{-BN\delta_N^2}.$$ Next the eigenvalue growth $\lambda_k^\alpha \lesssim k^{2\alpha/d}$ from  (\ref{weyl}) and the hypothesis on $D$ imply that for $\bar L$ large enough we have
\begin{equation} \label{rkhstrick}
\|\theta_1\|_{h^\alpha} \lesssim D^{\alpha/d} \|\theta_1\|_{\mathbb R^D} \leq (c_2N \delta_N^2)^{\alpha/d}M \delta_N \le \bar L/2
\end{equation}
and then also
\begin{equation}
\Pi_N(\mathcal A_N^c) \le 2e^{-BN\delta_N^2} \text{ where }\mathcal A_N =\{\theta \in \mathbb R^D: \|\theta\|_{h^\alpha} \le \bar L\}.
\end{equation}
The $\|\cdot\|_\infty$-covering numbers of the implied set of regression functions $\mathcal G(\theta)$ satisfy the bounds 
\begin{align*}
\log N(\{\mathcal G(\theta): \theta \in \mathcal A_N\}, \|\cdot\|_{\infty}, \delta_N) &\lesssim \log N(\{F_\theta: \theta \in \mathcal A_N\}, \|\cdot\|_{\infty}, c\delta_N) \\
& \lesssim \log N(\{F: \|F\|_{H^\alpha(\mathcal O)} \le \bar L\}, \|\cdot\|_\infty, c\delta_N) \lesssim N \delta_N^2,
\end{align*}
for some $c>0$, using that the map $F_\theta \mapsto \mathcal G(\theta)$ is globally Lipschitz for the $\|\cdot\|_\infty$-norm (Lemma \ref{lem:schr:two}) and also the bound (4.184) in \cite{GN16}. By (\ref{ubd}) and Lemma 22 in \cite{GN19} the previous metric entropy inequality also holds for the Hellinger distance replacing $\|\cdot\|_\infty$-distance on the l.h.s.~in the last display. Theorem 13 and again Lemma 22 in \cite{GN19} now imply that for any $a>0$ there exists $L$ large enough,
\begin{equation}\label{rate1}
P^N_{\theta_0}\Big(\Pi(\{\theta: \|\mathcal G(\theta)-\mathcal G(\theta_0)\|_{L^2} > L \delta_N\} \cup \mathcal A^c_N|Z^{(N)}) \le e^{-aN\delta_N^2}\Big) \to 0
\end{equation}
as $N \to \infty$. The convergence in probability to zero obtained in the proof of Theorem 13 in \cite{GN19} is in fact exponentially fast, as required in (\ref{postrat}): This is true by virtue of the bound to follow in the next display (which forms part of the proof in \cite{GN19} as well), and since  the type-one testing errors in (39) in \cite{GN19} are controlled at the required exponential rate (via Theorem 7.1.4 in \cite{GN16}). The inequality
\begin{align*}
 P_{\theta_0}^N\Big(\int_{B(\delta_N)} e^{\ell_N(\theta)-\ell_N(\theta_0)} d\Pi(\theta) \ge \Pi(B(\delta_N))  \exp\{-(1+K)N\delta_N^2\}\Big)\le c'e^{-c''N\delta_N^2},
\end{align*}
bounding $P^N_{\theta_0}(\mathcal C_{N,K}^c)$ as required in the theorem follows from Lemma \ref{expsmall} below for large enough $K$ and $\bar C=1/2$. 

\medskip

Now to conclude, we can define subsets of $\mathbb R^D$ as $$\Theta_N := \{\theta: \|\mathcal G(\theta)-\mathcal G(\theta_0)\|_{L^2} \le L \delta_N\} \cap \mathcal A_N = \{\theta: \|\mathcal G(\theta)-\mathcal G(\theta_0)\|_{L^2} \le  L \delta_N, \|F_\theta\|_{H^\alpha} = \|\theta\|_{h^\alpha} \le \bar L \}$$ paralleling the events $T_N$ from (\ref{TN}) above. Then arguing as in and after (\ref{interpol}), one shows that
\begin{equation*}
\Theta_N \subset \tilde \Theta_N = \{\theta: \|\theta-\theta_{0}\|_{\R^D} \le L N^{-r(\alpha)}, \|\theta\|_{h^\alpha} \le L  \},
\end{equation*}
increasing also the constant $L$ if necessary, and hence the posterior probability of this event is also lower bounded by $\Pi(\tilde \Theta_N|Z^{(N)}) \ge 1 - e^{-aN\delta_N^2},$ with the desired $P^N_{\theta_0}$-probability, proving the theorem, since $\|\theta-\theta_{0}\|_{\ell^2} \le \|\theta-\theta_{0,D}\|_{\R^D}$.
\end{proof}

\medskip

Moreover, a quantitative uniform integrability argument from Section 5.4.5 in \cite{MNP21} (see the proof of Theorem \ref{waterstone}, term III, below) then also gives a convergence rate for the posterior mean $E^{\Pi}[\theta|Z^{(N)}]$ towards $\theta_0$, namely that for $L$ large enough there exist $\bar c', \bar c''>0$ such that
\begin{equation}\label{postmeanrat}
P^N_{\theta_0}\big(\|E^\Pi[\theta|Z^{(N)}]-\theta_0\|_{\ell^2} > L\bar \delta_N\big) \le \bar c' e^{-\bar c''N\delta_N^2}.
\end{equation}

\subsubsection{Globally log-concave approximation of the posterior in Wasserstein distance}\label{subsec:schr:wass}

Recall the surrogate posterior measure $\tilde \Pi(\cdot|Z^{(N)})$ from (\ref{surrod}) with log-density 
\begin{equation}
\log \tilde \pi_N(\theta) = const + \tilde \ell_N (\theta) - \frac{N \delta_N^2}{2}\|\theta\|_{h^\alpha}^2,~~\theta \in \mathbb R^D
\end{equation}
with $\theta_{init}$ and parameters $\epsilon, K$ chosen as in Condition \ref{asymptopia}, and with $\delta_N=N^{-\alpha/(2\alpha+d)}$. We now prove the main result of this section.
\begin{thm} \label{waterstone}
Assume Condition \ref{FAQ} and let $\tilde\Pi(\cdot|Z^{(N)})$ be the probability measure of density given in (\ref{surrod}) with $K,\eps>0$ chosen as in Condition \ref{asymptopia}. Then for some $a_1, a_2>0$ and all $N\in \N$,
$$P_{\theta_0}^N \big(W^2_2(\tilde \Pi(\cdot|Z^{(N)}), \Pi(\cdot|Z^{(N)}))> e^{-N \delta_N^2} \big) \leq a_1 e^{-a_2 N\delta_N^2}.$$ 
\end{thm}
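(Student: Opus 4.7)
The plan is to exploit that the posterior $\mu_1 := \Pi(\cdot|Z^{(N)})$ and the surrogate $\mu_2 := \tilde\Pi(\cdot|Z^{(N)})$ have densities proportional to one another on the ``good set'' $G := \{\theta \in \R^D: \|\theta - \theta_{init}\|_{\R^D} \le \eta/2\}$, where $\eta = \epsilon D^{-4/d} = D^{-4/d}/\log N$. Indeed, inspection of (\ref{eq:gamma:tilde})--(\ref{eq:alphaeta:def}) shows $\alpha_\eta \equiv 1$ and $g_\eta \equiv 0$ on $G$, so that $\tilde \ell_N \equiv \ell_N$ there, and hence the normalised restrictions
\[ \mu_G := \mu_1|_G/\mu_1(G) = \mu_2|_G/\mu_2(G) \]
coincide. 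The Wasserstein bound will then follow from the triangle inequality $W_2(\mu_1,\mu_2) \le W_2(\mu_1,\mu_G) + W_2(\mu_G,\mu_2)$ once we show that $\mu_i(G^c)$ is exponentially small in $N\delta_N^2$ on a high-probability frequentist event.

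The bound $\mu_1(G^c) \le e^{-aN\delta_N^2}$ follows directly from the posterior contraction estimate Theorem \ref{contractionrate} together with the initialiser proximity event $\mathcal E_{init}$ provided by Theorem \ref{triebelei}: the posterior puts mass at most $e^{-aN\delta_N^2}$ outside a ball of $L\bar\delta_N$-radius around $\theta_{0,D}$, and this ball sits well inside $G$ because $\bar\delta_N/\eta$ is polynomially small under the hypothesis $\alpha>6$ (the algebraic check reduces to $r(\alpha)>4/(2\alpha+d)$, i.e.~$\alpha^2/(\alpha+2)>4$). The analogous bound $\mu_2(G^c) \le e^{-bN\delta_N^2}$ is more delicate. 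By Proposition \ref{prop:log:conv}, on $\mathcal E_{conv}\cap\mathcal E_{init}$ the measure $\mu_2$ is globally $m$-strongly log-concave with $m \simeq ND^{-4/d}$, admits a unique mode $\theta_{max}$, and satisfies the standard sub-Gaussian concentration $\mu_2(\|\theta-\theta_{max}\|_{\R^D} \ge \sqrt{D/m}+t) \le e^{-mt^2/2}$ for $t>0$ (e.g.~via Bakry--\'Emery log-Sobolev). To place $\theta_{max}$ at the centre of $G$ one identifies it with the posterior MAP $\hat\theta_{MAP}$: Theorem \ref{maprate} shows $\hat\theta_{MAP}$ lies well inside $G$ (using $\alpha>6$ again), where $\nabla(\tilde \ell_N+\log\pi)(\hat\theta_{MAP})=\nabla(\ell_N+\log\pi)(\hat\theta_{MAP})=0$, and strong concavity of $\tilde \ell_N+\log\pi$ on $\R^D$ forces $\theta_{max}=\hat\theta_{MAP}$. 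Choosing $t\asymp \eta/4$ then gives $\mu_2(G^c) \le e^{-c m\eta^2/32}$, and the arithmetic $m\eta^2 \gtrsim N^{(2\alpha-12)/(2\alpha+d)}(\log N)^{-2}\cdot N\delta_N^2 \gg N\delta_N^2$ under $D \le c_0 N^{d/(2\alpha+d)}$ and $\alpha>6$ upgrades this to the required $e^{-bN\delta_N^2}$ bound.

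Given both tail estimates, one finishes by constructing, for $i \in \{1,2\}$, the explicit coupling of $\mu_i$ with $\mu_G$ that sets $\theta' = \theta$ whenever $\theta \in G$ and samples $\theta' \sim \mu_G$ independently on $\{\theta \in G^c\}$. A direct computation plus Cauchy--Schwarz gives
\[ W_2^2(\mu_i, \mu_G) \le 2 E_{\mu_G}\|\theta'\|^2 \cdot \mu_i(G^c) + 2\sqrt{E_{\mu_i}\|\theta\|^4\cdot \mu_i(G^c)}, \]
where $E_{\mu_G}\|\theta'\|^2 = O(1)$ since $\mu_G$ is supported in a bounded ball near $\theta_{init}$, and $E_{\mu_i}\|\theta\|^4$ grows at most polynomially in $N,D$ (for $\mu_1$ via the domination $\mu_1 \le \pi/Z_1$ together with the Gaussian moments of the prior and the lower bound on $Z_1$ from $\mathcal C_{N,K}$ in (\ref{CN}); for $\mu_2$ directly from its strong log-concavity). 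The right-hand side is thus $o(e^{-N\delta_N^2})$, and the triangle inequality plus a union bound over $\mathcal E_{conv}$, $\mathcal E_{init}$, $\mathcal C_{N,K}$, and the good events in Theorems \ref{contractionrate} and \ref{maprate} yield $W_2^2(\mu_1,\mu_2) \le e^{-N\delta_N^2}/2$ with the required probability. The main obstacle is the identification $\theta_{max} = \hat\theta_{MAP}$ in the surrogate-concentration step: this step simultaneously saturates the dimension budget $D \le c_0 N^{d/(2\alpha+d)}$ and the regularity condition $\alpha>6$, and relies on the non-trivial $M$-estimation analysis behind Theorem \ref{maprate} to guarantee that $\hat\theta_{MAP}$ lies inside $G$ in the first place.
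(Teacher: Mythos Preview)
Your overall strategy is sound and parallels the paper's proof closely: both rely on identifying the surrogate mode with $\hat\theta_{MAP}$ via the score equation (\ref{score}), on Theorem \ref{contractionrate} for the posterior tail, and on strong log-concavity of $\tilde\Pi(\cdot|Z^{(N)})$ for the surrogate tail. Where the paper uses Villani's transport bound $W_2^2\le 2\int\|\theta-\hat\theta_{MAP}\|^2\,d|\mu_1-\mu_2|$ and an explicit Taylor expansion of $\log\tilde\pi_N$ around its maximum, you instead triangulate through $\mu_G$ with an explicit coupling and invoke Bakry--\'Emery sub-Gaussian concentration. Both decompositions lead to the same three ingredients; yours is arguably more streamlined on the surrogate side.

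There is, however, a genuine gap in your treatment of the posterior $\mu_1$. You assert that $E_{\mu_1}\|\theta\|^4$ grows at most polynomially in $N,D$ via the domination $\mu_1\le\pi/Z_1$ and the lower bound on $Z_1$ from $\mathcal C_{N,K}$. But (\ref{CN}) only controls $Z_1/e^{\ell_N(\theta_0)}$, and $e^{-\ell_N(\theta_0)}=e^{\frac12\sum_i\eps_i^2}$ is of order $e^{N/2}$ with high $P_{\theta_0}^N$-probability. Your domination therefore yields at best $E_{\mu_1}\|\theta\|^4\lesssim e^{N/2+CN\delta_N^2}E_\Pi\|\theta\|^4$, which is exponential in $N$. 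Since $N\delta_N^2=N^{d/(2\alpha+d)}\ll N$, no choice of the contraction exponent $a$ in $\mu_1(G^c)\le e^{-aN\delta_N^2}$ can absorb this factor, and your bound $\sqrt{E_{\mu_1}\|\theta\|^4\cdot\mu_1(G^c)}=o(e^{-N\delta_N^2})$ fails.

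The paper circumvents this (Term III in its proof) by \emph{not} bounding $E_{\mu_1}\|\theta\|^4$ on a data event. Instead it bounds the $P_{\theta_0}^N$-probability that the relevant integral exceeds $e^{-N\delta_N^2}/8$ directly, via Markov's inequality, Fubini, and the likelihood-ratio identity $E_{\theta_0}^N[e^{\ell_N(\theta)-\ell_N(\theta_0)}]=1$: on $\mathcal C_{N,K}\cap\mathcal S_N'$ one has
\[
\mu_1(G^c)\,E_{\mu_1}\|\theta-\hat\theta_{MAP}\|^4 \le e^{(1+K+\bar d-a)N\delta_N^2}\int_{\R^D}\|\theta-\hat\theta_{MAP}\|^4 e^{\ell_N(\theta)-\ell_N(\theta_0)}\,d\Pi(\theta),
\]
and the $P_{\theta_0}^N$-expectation of the integral is $\int(1+\|\theta\|^4)\,d\Pi(\theta)=O(1)$ after using $\|\hat\theta_{MAP}\|_{\R^D}=O(1)$ on $\mathcal S_N'$. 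Your coupling argument can be repaired in exactly this way: replace the pointwise fourth-moment claim by this Markov--Fubini step when bounding $W_2^2(\mu_1,\mu_G)$.
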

\begin{proof}
In the proof we will require a new sequence 
\begin{equation}\label{tildedeltan}
\tilde \delta_N = N^{(-\alpha+2)/(2\alpha+d)} \sqrt{\log N}
\end{equation}
describing the `rate of contraction' of the surrogate posterior obtained below. 
We first notice that the definitions of $\bar \delta_N$ (from Theorem \ref{maprate}) and of $\delta_N$ imply by straightforward calculations and using $D \lesssim N \delta_N^2, \alpha>6$, the asymptotic relations as $N \to \infty$,
\begin{equation}\label{not-cosi}
\delta_N D^{2/d} \sqrt{\log N} =O(\tilde \delta_N), ~~\delta_N \ll \bar \delta_N \ll \tilde \delta_N \ll \frac{1}{\log N}D^{-\frac{4}{d}},
\end{equation}
which we shall use in the proof.  We will prove the bound for all $N$ large enough, which is sufficient to prove the desired inequality after adjusting the constant in $\lesssim$ (since probabilities are always bounded by one). 

\smallskip

\textbf{Geometry of the surrogate posterior.} To set things up, consider MAP estimates $\hat \theta_{MAP}$ from (\ref{maptheta}). In view of (\ref{ubd}), the function $q_N$ to be maximised over $\mathbb R^D$ in (\ref{maptheta}) satisfies $q_N(\theta)<q_N(0)$ for all $\theta$ such that $\|\theta\|_{h^\alpha}$ exceeds some positive constant $k$. Then on the compact set $M=\{\theta \in \R^D: \|\theta\|_{h^\alpha} \le k\}$ the function $q_N$ is continuous (as $\mathcal G$ is continuous from $\mathbb R^D \to L^\infty(\mathcal O)$, Lemma \ref{lem:schr:two}), and hence attains its maximum at some $\hat \theta_M \in M$, which must be a global maximiser of $q_N$ since $q_N(\hat \theta_M) \ge q_N(0) > \inf_{\theta \in M^c} q_N(\theta)$. Conclude that a maximiser $\hat \theta_{MAP}$ exists (one shows that it can be taken to be measurable, Exercise 7.2.3 in \cite{GN16}). 

\smallskip

In view of Proposition \ref{schroe-emperor-smalldim}, Theorem \ref{maprate}, Theorem \ref{triebelei} (and the remark before it) and $\alpha>6$, we may restrict ourselves in the rest of the proof to the following event
\begin{equation*}
	\begin{split}
		\mathcal S_N := \Big\{&\|\theta_{init} - \theta_{0,D}\|_{\R^D}  \le \frac{1}{8\log N D^{4/d}} \Big\} \cap \Big\{\inf_{\theta \in \mathcal B_{1/\log N}} \lambda_{min}(-\nabla^2 \ell_N(\theta)) \ge \underline cND^{-4/d}\Big\}\\
		&~~~\cap \Big\{\sup_{\theta \in \mathcal B_{1/\log N}}\Big[|\ell_N(\theta)|+\|\nabla \ell_N(\theta)\|_{\R^D}+\|\nabla^2\ell_N(\theta)\|_{op}\Big]< \underline c'N \Big\}\\
		&~~~ \cap \Big\{\text{any } \hat \theta_{MAP} \text{ satisfies }~\|\hat \theta_{MAP} - \theta_{0,D}\|_{\R^D} \le \min\big\{\frac{1}{8\log N D^{4/d}}, \bar C\bar\delta_N \big\}\Big\},
	\end{split}
\end{equation*}
where $\mathcal B_\epsilon$ was defined in (\ref{eq:B:def}), where $\bar C$ is from (\ref{betarate}) and where $\underline c=c_3$, $\underline c' =c_4$ from Proposition $\ref{schroe-emperor-smalldim}$. On $\mathcal S_N$ we have the following properties of $\tilde \ell_N$. First, from (\ref{eq:localization}),
\begin{equation}\label{38}
\tilde \ell_N(\theta)=\ell_N(\theta) \text{ for any } \theta ~s.t.~\|\theta-\theta_{0,D}\|_{\R^D}\le \frac{3}{8D^{4/d}\log N}.
\end{equation}
Moreover, by Proposition \ref{prop:log:conv}, $\log \tilde \pi(\cdot|Z^{(N)})$ is strongly concave in view of
\begin{equation}\label{hesse}
\sup_{\theta\in \mathcal B_{1/\log N}, \vartheta \in \mathbb R^D, \|\vartheta\|_{\mathbb R^D} = 1} \vartheta^T [\nabla^2 \log \tilde \pi_N(\theta)] \vartheta \le \sup_{\theta\in \mathcal B_{1/\log N}, \vartheta \in \mathbb R^D, \|\vartheta\|_{\mathbb R^D} = 1} \vartheta^T [\nabla^2 \tilde \ell_N(\theta)] \vartheta \le - \underline c ND^{-4/d}.
\end{equation}
Finally, any $\hat \theta_{MAP}$ necessarily satisfies
\begin{equation} \label{score}
0=\nabla \log \pi(\hat \theta_{MAP}|Z^{(N)}) = \nabla \log \tilde \pi(\hat \theta_{MAP}),
\end{equation}
from which we conclude that $\hat \theta_{MAP}$ necessarily equals the \textit{unique} global maximiser of the strongly concave function $\log \tilde \pi(\cdot|Z^{(N)})$ over $\mathbb R^D$.

%\begin{equation}\label{caltn}\mathcal T_N=\Big\{\|{\hat \theta_{MAP}}-\theta_{0,D}\|_{h^\beta}  \le \bar C \bar \delta_N(\beta), \beta \in \{0,d/2+\eta\},~~\|\hat\theta_{MAP}\|_{h^\alpha} \le \bar C \Big\},~\bar \delta_N = \bar \delta_N(0),\end{equation}

\medskip

\textbf{Decomposition of the Wasserstein distance.} Now let us write $$\hat {\mathcal B}(r) = \{\theta \in \mathbb R^D: \|\theta-\hat \theta_{MAP}\|_{\mathbb R^D} \le r \},~~ $$ for the Euclidean ball of radius $r>0$ centred at $\hat \theta_{MAP}$. Then using Theorem 6.15 in \cite{V09} with $x_0=\hat \theta_{MAP}$, we obtain for any $m>0$ that
\begin{align*}
W_2^2(\tilde \Pi(\cdot|Z^{(N)}), \Pi(\cdot|Z^{(N)})) &\le 2 \int_{\mathbb R^D} \|\theta-\hat \theta_{MAP}\|_{\mathbb R^D}^2 d|\tilde \Pi(\cdot|Z^{(N)})-\Pi(\cdot|Z^{(N)})|(\theta) \\
& \le 2  \int_{\hat {\mathcal B}(m\tilde \delta_N)} \|\theta-\hat \theta_{MAP}\|_{\mathbb R^D}^2 d|\tilde \Pi(\cdot|Z^{(N)})-\Pi(\cdot|Z^{(N)})|(\theta) \\
&~~~~+  2\int_{\mathbb R^D \setminus \hat {\mathcal B}(m\tilde \delta_N)} \|\theta-\hat \theta_{MAP}\|_{\mathbb R^D}^2 d|\tilde \Pi(\cdot|Z^{(N)})-\Pi(\cdot|Z^{(N)})|(\theta) \\
& \le  2m^2\tilde \delta_N^2~ \int_{\hat {\mathcal B}(m\tilde \delta_N)} d|\Pi(\cdot|Z^{(N)})-\tilde \Pi(\cdot|Z^{(N)})|(\theta)\\
&~~~~~+ 2\int_{\|\theta-\hat \theta_{MAP}\|_{\R^D} >  m\tilde \delta_N}\|\theta-\hat \theta_{MAP}\|_{\mathbb R^D}^2 d\tilde \Pi(\theta|Z^{(N)}) \\
&~~~~~+2\int_{\|\theta-\hat \theta_{MAP}\|_{\R^D} >  m\tilde \delta_N}\|\theta-\hat \theta_{MAP}\|_{\mathbb R^D}^2 d \Pi(\theta|Z^{(N)}) \\
& \equiv  I +II + III,
\end{align*}
and we now bound $I,II, III$ in separate steps. 

\smallskip

\textbf{Term II.} We can write the surrogate posterior density as $$\tilde \pi(\theta|Z^{(N)}) = \frac{e^{\tilde \ell_N(\theta)- \tilde \ell_N(\hat \theta_{MAP})}\pi(\theta)}{\int_{\mathbb R^D}e^{\tilde \ell_N(\theta)- \tilde \ell_N(\hat \theta_{MAP})}\pi(\theta)d\theta},~~\theta \in \mathbb R^D,$$ and will first lower bound the normalising factor. From  (\ref{not-cosi}) we have for any $c>0$ the set inclusion $$B_N \equiv \{\|\theta - \theta_{0,D}\|_{\mathbb R^D} \le c \delta_N\}\subset \Big\{\|\theta- \theta_{0,D}\|_{\R^D} \le \frac{3}{8D^{4/d}\log N} \Big\}$$ whenever $N$ is large enough. Since $\ell_N(\theta)=\tilde \ell_N(\theta)$ on the last set we have on an event of large enough $P^N_{\theta_0}$-probability,
\begin{align*}
\int_{\mathbb R^D} e^{\tilde \ell_N(\theta)-\tilde \ell_N(\hat \theta_{MAP})}d\Pi(\theta) &\ge \int_{B_N} e^{\tilde \ell_N(\theta)-\tilde \ell_N(\hat \theta_{MAP})} d\Pi(\theta) \\
& = \int_{B_N} e^{\ell_N(\theta)- \ell_N(\hat \theta_{MAP})} d\nu(\theta) \times \Pi(B_N)  \ge e^{-\bar c N \delta_N^2}
\end{align*}
for some $\bar c = \bar c(\bar d, c)$, where we have used Lemma \ref{expsmall} for our choice of $B_N$ (permitted for appropriate choice of $c>0$ by (\ref{dimbias}) and since $\mathcal G: \mathbb R^D \to L^2$ is Lipschitz, see Appendix \ref{sec:aux}) with $\nu = \Pi(\cdot)/\Pi(B_N), \bar C=1/2;$ as well as the small ball estimate for $\Pi$ in (\ref{smallball}).

Now recall the prior (\ref{schrottprior}) and define scaling constants
$$V_N = (2\pi)^{-D/2}\sqrt{\det(N \delta_N^2 \Lambda_\alpha)} \times e^{\bar cN\delta_N^2}.$$
Then on the preceding events the term II can be bounded, using a second order Taylor expansion of $\log \tilde \pi(\cdot|Z^{(N)})$ around its maximum $\hat \theta_{MAP}$ combined with (\ref{hesse}), (\ref{score}), as
\begin{align*}
 &\int_{\|\theta-\hat \theta_{MAP}\|_{\mathbb R^D} >  m\tilde \delta_N}\|\theta-\hat \theta_{MAP}\|_{\mathbb R^D}^2 \tilde \pi(\theta|Z^{(N)})d\theta \\
 &\le e^{\bar cN\delta_N^2} \int_{\|\theta-\hat \theta_{MAP}\|_{\mathbb R^D} >  m\tilde \delta_N}\|\theta-\hat \theta_{MAP}\|_{\mathbb R^D}^2 e^{\tilde \ell_N(\theta)- \tilde \ell_N(\hat \theta_{MAP})}\pi(\theta)d\theta \\
 &\le V_N \times \int_{\|\theta-\hat \theta_{MAP}\|_{\mathbb R^D} >  m\tilde \delta_N}\|\theta-\hat \theta_{MAP}\|_{\mathbb R^D}^2 e^{\tilde \ell_N(\theta)-\frac{N\delta_N^2}{2}\|\theta\|^2_{h^{\alpha}} - \tilde \ell_N(\hat \theta_{MAP}) + \frac{N\delta_N^2}{2}\|\hat \theta_{MAP}\|^2_{h^{\alpha}}}d\theta \\
 &= V_N \times \int_{\|\theta-\hat \theta_{MAP}\|_{\mathbb R^D} >  m\tilde \delta_N}\|\theta-\hat \theta_{MAP}\|_{\mathbb R^D}^2 e^{\log \tilde \pi_N(\theta)-\log \tilde \pi_N(\hat \theta_{MAP})}d\theta \\
 & \le V_N \times \int_{\|\theta-\hat \theta_{MAP}\|_{\mathbb R^D} >  m\tilde \delta_N}\|\theta-\hat \theta_{MAP}\|_{\mathbb R^D}^2 e^{-\underline cND^{-4/d} \|\theta- \hat \theta_{MAP}\|^2_{\mathbb R^D}/2}d\theta \\
 &\leq 2 V_N \times \big(\frac{4\pi}{\underline c ND^{-4/d}}\big)^{D/2}  \Pr\big(\|Z\|_{\mathbb R^D}>m \tilde \delta_N \big)
\end{align*}
where we have used $x^2 e^{-cx^2} \le  2e^{-cx^2/2}$ for all $x \in \R$, $c \ge 1$ (and $N$ such that $\underline c ND^{-4/d} \ge 1$) and where $$Z \sim N \Big(0, \frac{2}{\underline c D^{-4/d} N} I_{D \times D}\Big).$$ Now by $D \le c_0 N \delta_N^2$ and (\ref{not-cosi}), $$E\|Z\|_{\mathbb R^D} \le \sqrt{E\|Z\|_{\mathbb R^D}^2} \le \sqrt{2D/(\underline c D^{-4/d}N)} \leq (2 c_0/\underline c)^{1/2} \delta_ND^{2/d} \le (m/2) \tilde \delta_N$$ for $m$ large enough, so that 
$$\Pr\big(\|Z\|_{\mathbb R^D}>m \tilde \delta_N \big)\le \Pr\big(\|Z\|_{\mathbb R^D}-E\|Z\|_{\mathbb R^D}>(m/2) \tilde \delta_N \big) \le e^{-m^2 \underline c ND^{-4/d} \tilde \delta_N^2/16}$$ by a concentration inequality for Lipschitz-functionals of $D$-dimensional Gaussian random vectors (e.g., Theorem 2.5.7 in \cite{GN16} applied to $(\underline c N D^{-4/d}/2)^{1/2}Z \sim N(0,I_{D \times D})$ and $F=\|\cdot\|_{\mathbb R^D}$). By (\ref{weyl}) and since $D \lesssim N \delta_N^2$ we have for some $c'>0$
$$V_N \leq e^{c' N\delta_N^2 \log N}$$ so that for $m$ large enough and using (\ref{not-cosi}), the last term in the  displayed array above, and hence $II/2$ is bounded by
$$2 V_N \times \big(\frac{4\pi}{\underline c ND^{-4/d}}\big)^{D/2}  \times e^{-m^2 \underline c D^{-4/d} N \tilde \delta_N^2/16} \leq e^{-m^2 D^{-4/d} N \tilde \delta_N^2/32} \le \frac{1}{8}e^{-N\delta_N^2}.$$

\medskip

\textbf{Term III:} We first note that Theorem \ref{contractionrate} and (\ref{not-cosi}) imply that for every $a>0$ we can find $m$ large enough such that
\begin{align*}
\Pi(\|\theta-\hat \theta_{MAP}\|_{\mathbb R^D} >  m\tilde \delta_N |Z^{(N)}) &\le \Pi(\|\theta-\theta_{0,D}\|_{\mathbb R^D} >  m\bar \delta_N - \|\hat \theta_{MAP} - \theta_{0,D}\|_{\mathbb R^D} |Z^{(N)}) \\
&\le \Pi(\|\theta- \theta_{0,D}\|_{\mathbb R^D} >  m\bar \delta_N/2 |Z^{(N)}) \le e^{-aN \delta_N^2} 
\end{align*}
on events $\mathcal S_N' \subset \mathcal S_N$ of sufficiently high probability. Moreover, again by Theorem \ref{contractionrate}, we can further restrict the argument that follows to the event $\mathcal C_{N, K}$ from (\ref{CN}) for some $K>0$. Now using the Cauchy-Schwarz and Markov inequalities as well as $E^N_{\theta_0}e^{\ell_N(\theta) - \ell_N(\theta_0)}=1$ and the small ball estimate for $\Pi$ in (\ref{smallball}), we have
\begin{align*}
&P^N_{\theta_0}\Big(\mathcal C_{N, K} \cap \mathcal S'_N, \int_{\|\theta-\hat \theta_{MAP}\|_{\mathbb R^D} >  m\tilde \delta_N}\|\theta-\hat \theta_{MAP}\|_{\mathbb R^D}^2 d \Pi(\theta|Z^{(N)})>e^{-N\delta_N^2}/8\Big)  \\
&\le P^N_{\theta_0}\Big(\mathcal C_{N, K} \cap \mathcal S'_N, \Pi(\|\theta-\hat \theta_{MAP}\|_{\mathbb R^D} >  m\tilde \delta_N |Z^{(N)}) E^\Pi[\|\theta-\hat \theta_{MAP}\|_{\mathbb R^D}^4 |Z^{(N)}]>e^{-2N\delta_N^2}/64\Big) \\
& \le P_{\theta_0}^N\Big(\mathcal S_N', e^{(1+K+\bar d+2-a)N\delta_N^2} \int_{\mathbb R^D} \|\theta-\hat \theta_{MAP}\|_{\mathbb R^D}^4 e^{\ell_N(\theta) - \ell_N(\theta_0)} d\Pi(\theta) >1/64\Big) \\
&\lesssim e^{(1+K+\bar d+2-a)N \delta_N^2} \int_{\mathbb R^D} (1+\|\theta\|_{\mathbb R^D}^4 )d\Pi(\theta) \le e^{-a_2N \delta_N^2}
\end{align*}
whenever $m$ and then $a$ are large enough, since $\Pi$ has uniformly bounded fourth moments and since $\|\hat \theta_{MAP}\|_{\mathbb R^D}$ is uniformly bounded by a constant depending only on $\|\theta_0\|_{\ell^2}$ on the events $\mathcal S_N$.

\smallskip

\textbf{Term I:} On the events $\mathcal S_N$ we have from (\ref{not-cosi}) that for fixed $m>0$ and all $N$ large enough $$\hat {\mathcal B}(m\tilde \delta_N)\subseteq \{\theta:\|\theta-\theta_{0,D}\|_{\R^D}\le 3/(8 D^{4/d} \log N) \}.$$ On the latter set, by (\ref{38}), the probability measures $\tilde \Pi(\cdot|Z^{(N)})$ and $\Pi(\cdot|Z^{(N)})$ coincide up to a normalising factor, and thus we can represent their Lebesgue densities as $$\tilde \pi(\theta|Z^{(N)})=p_N \pi(\theta|Z^{(N)}),~~\theta \in \hat {\mathcal B}(m\tilde \delta_N),$$ for some $0<p_N<\infty$. Moreover, by the preceding estimates for terms II and III (which hold just as well without the integrating factors $\|\theta-\hat \theta_{MAP}\|_{\mathbb R^D}^2$), we have both
$$ p_N \Pi(\hat {\mathcal B}(m\tilde \delta_N)|Z^{(N)}) = \tilde \Pi(\hat {\mathcal B}(m\tilde \delta_N)|Z^{(N)})  \ge 1 - e^{-N\delta_N^2}/8~\Rightarrow~ 1 - e^{-N\delta_N^2}/8 \le p_N,$$
$$  p^{-1}_N \tilde \Pi(\hat {\mathcal B}(m\tilde \delta_N)|Z^{(N)}) = \Pi(\hat {\mathcal B}(m\tilde \delta_N)|Z^{(N)}) \ge 1 - e^{-N\delta_N^2}/8~\Rightarrow~ 1 - e^{-N\delta_N^2}/8 \le \frac{1}{p_N}$$ on events of sufficiently high $P_{\theta_0}^N$-probability. On these events necessarily $$p_N \in \Big[1- \frac{e^{-N\delta_N^2}}{8}, \frac{1}{1-\frac{e^{- N\delta_N^2}}{8}}\Big]$$
and so for $N$ large enough
$$\int_{\hat {\mathcal B}(m\tilde \delta_N)} d|\Pi(\cdot|Z^{(N)})-\tilde \Pi(\cdot|Z^{(N)})|(\theta) =|1-p_N|\int_{\hat {\mathcal B}(m\tilde \delta_N)}  \pi(\theta|Z^{(N)}) d\theta \le |1-p_N| \le e^{-N \delta_N^2}/4,$$ which is obvious for $p_N \le 1$ and follows from the mean value theorem applied to $f(x)=(1-x)^{-1}$ near $x=0$ also for $p_N>1$. Collecting the  bounds for $I, II, III$ completes the proof.
\end{proof}

\subsubsection{An `exponential' small ball lemma}

\begin{lem}\label{expsmall}
Let $\mathcal G$ be as in (\ref{fwdG}) and let $\nu$ be a probability measure on some ($\ell^2(\mathbb N)$-measurable) set 
\begin{equation} \label{ischgl}
B_N \subseteq \big\{\theta \in h^\alpha: \|\mathcal G(\theta)-\mathcal G(\theta_{0})\|_{L^2}^2 \le 2\bar C\delta_N^2 \big\}, \text{ for some }\bar C>0.
\end{equation}
Then for $\ell_N$ from (\ref{loglik}) there exists $b>0$ such that for every $K>0$ large enough,
\begin{equation}
P^N_{\theta_0}\left(\int_{B_N} e^{\ell_N(\theta)-\ell_N(\hat \theta_{MAP})}d\nu(\theta) \le e^{-(1+K)\bar C^2N\delta_N^2} \right) \lesssim e^{-b N \delta_N^2}.
\end{equation} The same conclusion holds true with $\ell_N(\hat \theta_{MAP})$ replaced by $\ell_N(\theta_0)$.
\end{lem}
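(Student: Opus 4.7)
I would first prove the auxiliary version with $\ell_N(\theta_0)$ in place of $\ell_N(\hat\theta_{MAP})$, since $\theta_0$ is deterministic whereas $\hat\theta_{MAP}$ depends on the data. Because $\nu$ is a probability measure on $B_N$, Jensen's inequality applied to the convex function $e^{\cdot}$ gives
\[
\int_{B_N} e^{\ell_N(\theta) - \ell_N(\theta_0)} d\nu(\theta) \ge \exp\Big(\int_{B_N}[\ell_N(\theta) - \ell_N(\theta_0)] d\nu(\theta)\Big),
\]
so it suffices to show that the integral in the exponent is bounded below by $-(1+K)\bar C^2 N\delta_N^2$ with the required probability. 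Writing $h_\theta := \mathcal G(\theta)-\mathcal G(\theta_0)$ and $\bar h := \int_{B_N} h_\theta d\nu$, the identity $Y_i = \mathcal G(\theta_0)(X_i)+\varepsilon_i$ together with (\ref{loglik}) yields the exact decomposition
\[
\int_{B_N}[\ell_N(\theta) - \ell_N(\theta_0)] d\nu(\theta) = \sum_{i=1}^N \varepsilon_i\,\bar h(X_i) \;-\; \tfrac{1}{2}\sum_{i=1}^N \int_{B_N} h_\theta(X_i)^2 d\nu(\theta).
\]

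\textbf{Bounding the two pieces.} For the quadratic piece, set $Z_i := \int_{B_N} h_\theta(X_i)^2 d\nu(\theta)$. By the uniform bound (\ref{ubd}) and the triangle inequality we have $\|h_\theta\|_\infty \le 2\|g\|_\infty = U$, hence $|Z_i|\le U^2$, and by Fubini together with the defining inclusion of $B_N$, $E_{\theta_0}^X Z_i = \int_{B_N}\|h_\theta\|_{L^2}^2 d\nu \le 2\bar C\delta_N^2$, with second moment $E Z_i^2 \le U^2 EZ_i \le 2U^2\bar C\delta_N^2$. A standard Bernstein inequality then gives $\sum_i Z_i \le (2\bar C+1)N\delta_N^2$ with $P^N_{\theta_0}$-probability $\ge 1-e^{-b_1 N\delta_N^2}$ for some $b_1>0$ depending only on $U, \bar C$. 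For the linear piece, condition on $(X_i)$: then $\sum_i \varepsilon_i \bar h(X_i)$ is centred Gaussian with variance $\sum_i \bar h(X_i)^2 \le \sum_i Z_i$ by Jensen. On the event above, the Gaussian tail inequality gives $\sum_i \varepsilon_i \bar h(X_i)\ge -N\delta_N^2$ with conditional probability $\ge 1 - \exp(-N\delta_N^2/(2(2\bar C+1)))$. Integrating out $(X_i)$ and combining, one obtains $\int_{B_N}[\ell_N(\theta)-\ell_N(\theta_0)]d\nu \ge -(\bar C + 3/2)N\delta_N^2$ outside an event of $P^N_{\theta_0}$-probability $\lesssim e^{-bN\delta_N^2}$, which is stronger than what is required once $K = K(\bar C)$ is large enough.

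\textbf{Passing to $\hat\theta_{MAP}$ and main obstacle.} The $\hat\theta_{MAP}$-version follows from the factorisation
\[
\int_{B_N} e^{\ell_N(\theta)-\ell_N(\hat\theta_{MAP})}d\nu(\theta) = e^{\,\ell_N(\theta_0)-\ell_N(\hat\theta_{MAP})}\int_{B_N} e^{\ell_N(\theta)-\ell_N(\theta_0)}d\nu(\theta),
\]
once $\ell_N(\hat\theta_{MAP}) - \ell_N(\theta_0)$ is shown to be $O(N\delta_N^2)$ with the right probability. Using the decomposition (\ref{twoproc}) and discarding the non-positive quadratic term gives $\ell_N(\hat\theta_{MAP})-\ell_N(\theta_0) \le \sum_i \varepsilon_i h_{\hat\theta_{MAP}}(X_i)$; since by Theorem \ref{maprate} we may restrict to the event on which $\|\mathcal G(\hat\theta_{MAP})-\mathcal G(\theta_0)\|_{L^2}^2 \le C\delta_N^2$ and $\|\hat\theta_{MAP}\|_{h^\alpha}\le C$, the uniform empirical-process bound already established in the proof of Proposition \ref{fwdrate} (see (\ref{sum1})--(\ref{sum11}) with slice $\Theta_1$) yields $\sup_\theta|\sum_i \varepsilon_i h_\theta(X_i)| \lesssim \sqrt N\delta_N \lesssim N\delta_N^2$ with the desired probability. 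The only subtle point, which I expect to be the main obstacle, is getting the Bernstein step for the quadratic term to yield exponent $N\delta_N^2$ rather than $N\delta_N^4$: this relies crucially on using the Bernstein inequality in its variance form (with $EZ_i^2 \lesssim U^2\delta_N^2$) rather than the cruder Hoeffding bound.
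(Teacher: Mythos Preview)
Your proof is correct and essentially identical to the paper's: Jensen's inequality, the linear/quadratic decomposition of $\ell_N(\theta)-\ell_N(\theta_0)$, Bernstein (in variance form) for the quadratic piece, and for the $\hat\theta_{MAP}$ version the reduction to the uniform empirical-process bound over the slice $\Theta_1$ from the proof of Proposition~\ref{fwdrate}. One small slip: with the required exponential probability $e^{-cN\delta_N^2}$, the bound (\ref{sum1})--(\ref{sum11}) gives $\sup_\theta|\sum_i\varepsilon_i h_\theta(X_i)|\lesssim N\delta_N^2$ directly (not the intermediate $\sqrt N\delta_N$), which is exactly the conclusion you need.
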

\begin{proof}
We proceed as in Lemma 7.3.2 in \cite{GN16} to deduce from Jensen's inequality (applied to $\log$ and $\int (\cdot)d\nu$) that, for $P_N$ the empirical measure from (\ref{empm}), the probability in question is bounded by
$$P^N_{\theta_0}\left(\int \int_{B_N} \log \frac{p_\theta}{p_{\hat \theta_{MAP}}}d\nu(\theta)d(P_N-P_{\theta_0}) \le -(1+K)\bar C^2 \delta_N^2 - \int \int_{B_N} \log  \frac{p_\theta}{p_{\hat \theta_{MAP}}}d\nu(\theta)dP_{\theta_0} \right).$$  Now just as in the proof of Lemma \ref{basiclemma} we see that for all $\theta\in B_N$,
\begin{align*}
-\int \log  \frac{p_\theta}{p_{\hat \theta_{MAP}}}dP_{\theta_0} &= -\int \log  \frac{p_\theta}{p_{\theta_0}}dP_{\theta_0} - \int \log  \frac{p_{\theta_0}}{p_{\hat \theta_{MAP}}}dP_{\theta_0} \\
& = \frac{1}{2}\|\mathcal G(\theta)-\mathcal G(\theta_0)\|_{L^2}^2 - \frac{1}{2} \|\mathcal G(\hat \theta_{MAP}) - \mathcal G(\theta_0)\|_{L_2}^2 \le \bar C^2 \delta_N^2
\end{align*}
 so that using also Fubini's theorem the last probability can be bounded by 
\begin{align*}
&P^N_{\theta_0}\left(\sqrt N \int \int_{B_N} \log \frac{p_{\theta_0}}{p_\theta}d\nu(\theta)d(P_N-P_{\theta_0}) \ge K \bar C^2\sqrt N\delta_N^2/2\right) \\
& + P^N_{\theta_0}\left(\sqrt N \int \log \frac{p_{\hat \theta_{MAP}}}{p_{\theta_0}}d(P_N-P_{\theta_0}) \ge K \bar C^2\sqrt N \delta_N^2/2\right).
\end{align*}
For the first probability we decompose as in (\ref{twoproc}) and consider $Z_N$ as in Lemma \ref{mixchain} for fixed $h_\theta$ equal to  either $h_1$ or $h_2$, where $$h_1(x) = \int_{B_N} (\mathcal G(\theta)(x)- \mathcal G(\theta_0)(x)) d\nu(\theta),~\text{ and}~h_2(x) =  \int_{B_N} (\mathcal G(\theta)(x)- \mathcal G(\theta_0)(x))^2 d\nu(\theta).$$ To each of these we apply Bernstein's inequality (\ref{bernstein}) with $x= N\sigma^2$ and $K$ large enough to obtain the desired exponential bound, using uniform boundedness $\|\mathcal G(\theta)-\mathcal G(\theta_0)\|_\infty \le 2U$ from (\ref{ubd}) and Jensen's inequality in the variance estimates $E^Xh^2_1(X) \le 2\bar C^2 \delta_N^2 \equiv \sigma^2$ in the first case and $$E^Xh^2_2(X) \le 4U^2 \int_{B_N}\|\mathcal G(\theta)-\mathcal G(\theta_0)\|_{L^2}^2 d\nu(\theta) \le 8U^2 \bar C\delta_N^2 \equiv \sigma^2$$ for the second case. [This already proves the case where $\hat \theta_{MAP}$ is replaced by $\theta_0$.]

For the second probability, restricting to the event in the supremum below, which has sufficiently high $P_{\theta_0}^N$-probability in view of  Proposition \ref{fwdrate}, it suffices to bound for some $C>0$,
$$P^N_{\theta_0}\left(\sup_{\|\theta\|_{h^\alpha} \le  2C, \|\mathcal G(\theta)-\mathcal G(\theta_0)\|^2_{L_2} \le  2C \delta^2_N} \sqrt N\Big|\int \log \frac{p_\theta}{p_{\theta_0}}d(P_N-P_{\theta_0}) \Big| \ge K \bar C^2\sqrt N\delta_N^2/2\right).$$ This term corresponds to the empirical process bounded in and after (\ref{peel}) for $s=1$. Choosing $K$ large enough the proof there now applies directly, giving the desired exponential bound. 
\end{proof}

\appendix

\section{Review of convergence guarantees for ULA}\label{app:ULA}
In this section we collect some key results (that were used in our proofs) about convergence guarantees for an Unadjusted Langevin Algorithm (ULA) for sampling from \textit{strongly log-concave target measures}, see \cite{D17, DM17, DM18} and also the classical reference \cite{RT96}.  Our presentation follows the recent article \cite{DM18}.

Suppose that $\mu$ is a Borel probability measure on $\R^D$ which has a Lebesgue density proportional to $e^{-U}$  for some potential $U:\R^D\to \R$, specifically
\begin{equation}\label{brillenschlange}
\mu(B)=\frac{\int_{B}e^{-U(\theta)}d\theta}{\int_{\R^D}e^{-U(\theta)}d\theta},~~~B \subseteq \R^D \text{ measurable}.
\end{equation}
Following \cite{DM18} (cf.~H1 and H2 there) we will assume that the potential $U$ has a $\Lambda$-Lipschitz gradient and is $m$-strongly convex.
\begin{ass}\label{ass:ULA}
	1. The function $U:\R^D\to \R$ is continuously differentiable and there exists a constant $\Lambda\ge 0$ such that for all $\theta,\bar\theta \in\R^D$,
	\[\|\nabla U(\theta)-\nabla U(\bar \theta)\|_{\R^D}\le \Lambda \|\theta-\bar\theta\|_{\R^D}. \] 
	\par 
	2. There exists a constant $0<m\le \Lambda$ such that for all $\theta,\bar \theta\in\R^D$, we have
	\[U(\bar \theta)\ge U(\theta) +\langle \nabla U(\theta),\bar\theta-\theta \rangle_{\R^D}+\frac m2 \|\theta-\bar\theta\|_{\R^D}^2. \]
\end{ass}

Under Assumption \ref{ass:ULA}, the potential $U$ has a unique minimiser over $\mathbb R^D$, which we shall denote by $\theta_{U}$. For the computation of $\theta_U$ via gradient descent methods, we have the following standard result from convex optimisation (see Theorem 1 in \cite{D17} and (9.18) in \cite{BV04}).

\begin{prop}\label{reiskorn}
	Suppose $U:\R^D\to \R$ satisfies Assumption \ref{ass:ULA}. Then the gradient descent algorithm given by
	\[ \vartheta_{k+1}=\vartheta_{k} - \frac{1}{2\Lambda} \nabla U(\vartheta_{k}),~~~ k=0,1,2,\dots, \]
	satisfies that
	\[ \|\vartheta_{k}-\theta_U\|_{\R^D}^2 \le \frac{2(U(\vartheta_0)-U(\theta_U))}{m}\big(1-\frac{m}{2\Lambda} \big)^k,~~~ k=0,1,2,\dots \]
\end{prop}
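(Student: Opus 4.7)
The plan is to follow the standard two-step argument for gradient descent on strongly convex, Lipschitz-smooth potentials, namely: (a) prove linear contraction in function values $U(\vartheta_k)-U(\theta_U)$, and (b) convert this into the claimed contraction in $\|\vartheta_k-\theta_U\|_{\R^D}^2$ via strong convexity.

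For step (a), the first ingredient is the descent lemma that follows from the $\Lambda$-Lipschitz gradient hypothesis (Assumption \ref{ass:ULA}.1) by integrating $\nabla U$ along the segment $[\theta,\bar\theta]$:
\[ U(\bar\theta)\le U(\theta)+\langle \nabla U(\theta),\bar\theta-\theta\rangle_{\R^D}+\tfrac{\Lambda}{2}\|\bar\theta-\theta\|_{\R^D}^2,\qquad \theta,\bar\theta\in\R^D. \]
Plugging in $\bar\theta=\vartheta_{k+1}=\vartheta_k-\tfrac{1}{2\Lambda}\nabla U(\vartheta_k)$ and $\theta=\vartheta_k$ gives
\[ U(\vartheta_{k+1})\le U(\vartheta_k)-\tfrac{3}{8\Lambda}\|\nabla U(\vartheta_k)\|_{\R^D}^2. \]
The second ingredient is a Polyak–\L{}ojasiewicz-type inequality $\|\nabla U(\theta)\|_{\R^D}^2\ge 2m\bigl(U(\theta)-U(\theta_U)\bigr)$ for all $\theta\in\R^D$. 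This follows by minimising the quadratic lower bound in Assumption \ref{ass:ULA}.2 with respect to $\bar\theta$ (the minimum being attained at $\bar\theta=\theta-\nabla U(\theta)/m$ with value $U(\theta)-\tfrac{1}{2m}\|\nabla U(\theta)\|_{\R^D}^2$), and noting that this minimum is bounded below by $U(\theta_U)$. Combining the two ingredients gives
\[ U(\vartheta_{k+1})-U(\theta_U)\le \bigl(1-\tfrac{3m}{4\Lambda}\bigr)\bigl(U(\vartheta_k)-U(\theta_U)\bigr), \]
and iterating yields $U(\vartheta_k)-U(\theta_U)\le \bigl(1-\tfrac{3m}{4\Lambda}\bigr)^k\bigl(U(\vartheta_0)-U(\theta_U)\bigr)\le \bigl(1-\tfrac{m}{2\Lambda}\bigr)^k\bigl(U(\vartheta_0)-U(\theta_U)\bigr)$, since $m/(2\Lambda)\le 3m/(4\Lambda)$.

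For step (b), I apply Assumption \ref{ass:ULA}.2 at $\theta=\theta_U$ with $\bar\theta=\vartheta_k$, noting that $\nabla U(\theta_U)=0$ (which holds since $\theta_U$ is the unique global minimiser guaranteed by strong convexity), to obtain
\[ U(\vartheta_k)-U(\theta_U)\ge \tfrac{m}{2}\|\vartheta_k-\theta_U\|_{\R^D}^2. \]
Rearranging and combining with the bound from step (a) gives exactly the claimed inequality. The argument is entirely standard and presents no substantive obstacle; the only mild friction is that the prescribed stepsize $1/(2\Lambda)$ (rather than the more common $1/\Lambda$) yields the factor $3/(8\Lambda)$ in the descent lemma, which is nevertheless large enough to absorb into the stated rate $1-m/(2\Lambda)$.
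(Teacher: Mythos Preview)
Your proof is correct. The paper does not give its own argument for this proposition but simply refers to Theorem~1 in \cite{D17} and to (9.18) in \cite{BV04}; the latter is precisely the standard descent-lemma plus Polyak--\L{}ojasiewicz argument you have written out, so your approach coincides with what the paper cites.
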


The results presented below establish corresponding geometric convergence bounds for \textit{stochastic} gradient methods which target the entire probability measure $\mu$ (instead of just its mode $\theta_U$). Define the continuous time Langevin diffusion process as the unique strong solution $(L_t: t \ge 0)$ of the stochastic differential equation
\begin{equation}\label{eq:langevin}
dL_t=-\nabla U(L_t)dt +\sqrt 2 dW_t, ~~~ t\ge 0,~L_t \in \R^D,
\end{equation}
where $(W_t:t\ge 0)$ is a $D$-dimensional standard Brownian motion. It is well known that the Markov process $(L_t: t \ge 0)$ has $\mu$ from (\ref{brillenschlange}) as its invariant measure. The Euler-Maruyama discretisation of the dynamics (\ref{eq:langevin}) gives rise to the discrete-time Markov chain $(\vartheta_k:k\ge 0)$,
\begin{equation}\label{eq:ULA}
\vartheta_{k+1}=\vartheta_{k}-\gamma \nabla U(\vartheta_k)+\sqrt{2\gamma}\xi_{k+1},~~~ k\ge 0,
\end{equation}
where $(\xi_{k}:k\ge 1)$ form an i.i.d.~sequence of $D$-dimensional standard Gaussian $N(0, I_{D\times D})$ vectors, and $\gamma>0$ is some fixed \textit{step size}. We will refer to $(\vartheta_k)$ as the unadjusted Langevin algorithm (ULA) in what follows. We denote by $\mathbf P_{\theta_{init}}, \mathbf E_{\theta_{init}}$ the law and expectation operator, respectively, of the Markov chain $(\vartheta_k:k \ge 1)$ when started at a deterministic point $\vartheta_0=\theta_{init}$. We also write $\mathcal L(\vartheta_k)$ for the (marginal) distribution of the $k$-th iterate $\vartheta_k$. 
\par 
For any measurable function $H:\R^D\to \R$ and any $J_{in},J\ge 0$, let us define the average of $H$ along an ULA trajectory after `burn-in' period $J_{in}$ by 
\[ \hat \mu_{J_{in}}^J(H) =\frac 1J\sum_{k=J_{in}+1}^{J_{in}+J}H(\vartheta_k). \]

\begin{prop}\label{prop:ULA:conc}
	Suppose that $U$ satisfies Assumption \ref{ass:ULA} and suppose $\gamma \le 2/(m+\Lambda)$. Then for all $J, J_{in}\ge 1, x>0$ and any Lipschitz function $H:\R^D\to \R$, we have the concentration inequality
	\[\mathbf P_{\theta_{init}}\Big(\hat \mu_{J_{in}}^J(H)-\mathbf E_{\theta_{init}}[\hat \mu_{J_{in}}^J(H)]\ge x\Big)\le \exp\Big(-\frac{J\gamma x^2m^2}{16\|H\|_{Lip}^2(1+2/(mJ\gamma))}\Big). \]
\end{prop}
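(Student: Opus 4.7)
My plan is to express the iterates $\vartheta_k$ as a deterministic Lipschitz function of the i.i.d.~standard Gaussian input vector $\Xi=(\xi_1,\dots,\xi_{J_{in}+J})$ and then invoke Gaussian concentration for Lipschitz functionals. The first step is to recall the standard fact from convex optimisation: under Assumption \ref{ass:ULA} with step-size $\gamma \le 2/(m+\Lambda)$, the gradient-descent map
\[ T_\gamma : \R^D\to\R^D,\qquad T_\gamma(\theta):=\theta-\gamma \nabla U(\theta), \]
is a strict contraction with Lipschitz constant $1-\gamma m$. This follows from co-coercivity of $\nabla U$ applied to $\|T_\gamma(\theta)-T_\gamma(\bar\theta)\|^2$ expanded in inner products, exactly as in the proof of Proposition \ref{reiskorn}. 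Consequently the ULA recursion $\vartheta_{k+1}=T_\gamma(\vartheta_k)+\sqrt{2\gamma}\,\xi_{k+1}$ has Jacobians $\nabla T_\gamma(\vartheta_k)$ whose operator norms are all bounded by $1-\gamma m$.

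Second, I would differentiate the ergodic average $F(\Xi):=\hat\mu^J_{J_{in}}(H) = J^{-1}\sum_{k=J_{in}+1}^{J_{in}+J}H(\vartheta_k(\Xi))$ with respect to each $\xi_j$. By the chain rule, for $j\le k$,
\[ \bigl\|\partial \vartheta_k/\partial \xi_j\bigr\|_{op} \le \sqrt{2\gamma}\,(1-\gamma m)^{k-j},\qquad \partial \vartheta_k/\partial \xi_j=0 \text{ for } j>k, \]
so that $\|\partial_{\xi_j} H(\vartheta_k)\|_{\R^D}\le \|H\|_{Lip}\sqrt{2\gamma}(1-\gamma m)^{k-j}$. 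Summing over the window $k\in\{J_{in}+1,\dots,J_{in}+J\}$ and using the geometric-series bound $\sum_{k\ge j}(1-\gamma m)^{k-j}\le 1/(\gamma m)$, one obtains, after squaring and summing over $j\le J_{in}+J$,
\[ \|\nabla_\Xi F\|_{\R^{D(J_{in}+J)}}^2 \;\le\; \frac{2\,\|H\|_{Lip}^2}{J m^2 \gamma}\Bigl(1+\frac{1}{Jm\gamma}\Bigr), \]
where the extra factor comes from splitting the sum over $j$ according to whether $j\le J_{in}$ (geometric decay kicks in) or $j>J_{in}$ (all $J$ terms contribute uniformly). In particular, $F$ is a Lipschitz function of the standard Gaussian vector $\Xi$ with an explicit Lipschitz constant $L_F$ of the stated order.

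Third, I would apply Gaussian concentration (e.g., Theorem 2.5.7 in \cite{GN16}) to the Lipschitz functional $F$ of the $N(0,I)$ random vector $\Xi$, which yields
\[ \mathbf{P}_{\theta_{init}}\bigl(F(\Xi)-\mathbf E_{\theta_{init}}F(\Xi)\ge x\bigr)\le \exp\bigl(-x^2/(2L_F^2)\bigr), \]
whence, substituting the bound for $L_F^2$ and absorbing constants, the claim follows (the factor $16$ and the $2$ inside the parenthesis in the statement leave comfortable room for the constants arising in the computation). The only real bookkeeping is in the second step; there is no genuine obstacle here, and in fact this is essentially the argument developed in \cite{DM18} on which the present proposition relies. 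The two main ingredients — the $(1-\gamma m)$ contraction of $T_\gamma$ furnished by strong convexity plus gradient-Lipschitzness, and Gaussian isoperimetry for the injected noise — are both off-the-shelf.
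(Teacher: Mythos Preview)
Your approach is correct and is precisely the argument underlying Theorem~17 of \cite{DM18}, which the paper simply cites directly (the paper's proof consists only of invoking that theorem and checking that the constant $\kappa=2m\Lambda/(m+\Lambda)$ lies in $[m,2m]$ and that the variance constant $v_{N,n}(\gamma)$ there is bounded by $1+2/(mJ\gamma)$). One small imprecision: in the general (not necessarily $C^2$) setting, co-coercivity yields the contraction factor $(1-\gamma\kappa)^{1/2}$ with $\kappa=2m\Lambda/(m+\Lambda)\ge m$ rather than $1-\gamma m$; this only changes the geometric-series bound by a factor of~$2$ and is absorbed by the constant~$16$ in the statement, so the sketch goes through unchanged.
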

\begin{proof}
	The statement follows directly from Theorem 17 of \cite{DM18}, noting that $\kappa=2m\Lambda /(m+\Lambda)\in [m,2m]$ and that the constant $v_{N,n}(\gamma)$ from (28) of \cite{DM18} can be upper bounded by
	\[ 1 + \frac{m^{-1}+2/(m+\Lambda)}{\gamma J}\le 1 + 2/ (m\gamma J). \]
\end{proof}

\begin{prop}\label{prop:ULA:wass}
	Suppose that $U$ satisfies Assumption \ref{ass:ULA} and let $\gamma, J_{in}, J$ and $H$ be as in Proposition \ref{prop:ULA:conc}. Then we have for $\mu$ as in (\ref{brillenschlange}) that 
	\begin{equation}\label{eq:ULA:wass}
	W^2_2(\mathcal L(\vartheta_k),\mu)\le 2\big(1-m\gamma /2 \big)^k\Big[ \|\theta_{init}-\theta_U\|^2_{\R^D}+\frac Dm \Big] + b(\gamma)/2,~~~~ k\ge 0,
	\end{equation}
	where
	\begin{equation}\label{eq:bgamma}
	b(\gamma)= 36 \frac{\gamma D \Lambda^2}{m^2} + 12 \frac{\gamma^2 D \Lambda^4}{m^3},
	\end{equation}
	as well as
	\begin{equation}\label{eq:ULA:bias}
	\Big(\mathbf E_{\theta_{init}} [\hat \mu_{J_{in}}^J(H)]-E_\mu H\Big)^2\le \|H\|_{Lip}^2~\frac{1}{J}~\sum_{k=J_{in}+1}^{J_{in}+J}W^2_2(\mathcal L(\vartheta_k),\mu).
	\end{equation}
\end{prop}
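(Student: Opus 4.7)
The plan for (\ref{eq:ULA:wass}) is to invoke the non-asymptotic Wasserstein convergence bound for ULA with strongly log-concave target established in \cite{DM18}. Under Assumption \ref{ass:ULA} and for $0<\gamma\le 2/(m+\Lambda)$, Theorem~9 there gives, for any $k\ge 0$,
\[
W_2^2(\mathcal L(\vartheta_k),\mu) \le (1-\gamma\kappa)^k\, W_2^2(\delta_{\theta_{init}},\mu) + A(\gamma),
\]
with contraction rate $\kappa = 2m\Lambda/(m+\Lambda)$ and an explicit discretisation bias $A(\gamma)$ of order $\gamma D \Lambda^2/m^2 + \gamma^2 D\Lambda^4/m^3$; after tracking the universal constants one verifies $A(\gamma)\le b(\gamma)/2$, with $b(\gamma)$ as in (\ref{eq:bgamma}). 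Since $\Lambda\ge m$ we have $\kappa\ge m\ge m/2$, so the contraction factor is further dominated by $(1-\gamma m/2)^k$.

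It remains to control the initial Wasserstein distance. Using $\|\theta-\theta_{init}\|^2_{\R^D}\le 2\|\theta-\theta_U\|^2_{\R^D}+2\|\theta_U-\theta_{init}\|^2_{\R^D}$ and integrating against $\mu$,
\[
W_2^2(\delta_{\theta_{init}},\mu) = \int\|\theta-\theta_{init}\|^2_{\R^D}\,d\mu(\theta) \le 2\|\theta_{init}-\theta_U\|^2_{\R^D} + 2\int\|\theta-\theta_U\|_{\R^D}^2\,d\mu(\theta).
\]
Because $\mu\propto e^{-U}$ with $U$ being $m$-strongly convex with minimiser $\theta_U$, the second moment around the mode satisfies $\int\|\theta-\theta_U\|_{\R^D}^2\, d\mu\le D/m$ -- a standard consequence of the Brascamp--Lieb inequality, or equivalently of the Poincar\'e inequality with constant $1/m$ guaranteed by the Bakry--\'Emery criterion applied to the coordinate functions. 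Substitution yields (\ref{eq:ULA:wass}).

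For (\ref{eq:ULA:bias}), I would for each $k\in\{J_{in}+1,\dots,J_{in}+J\}$ pick an optimal coupling $\nu_k\in\Gamma(\mathcal L(\vartheta_k),\mu)$ attaining the infimum in (\ref{verymuchso}). The Lipschitz property of $H$ then gives
\[
\big|\mathbf E_{\theta_{init}}H(\vartheta_k)-E_\mu H\big| = \Big|\int[H(\theta)-H(\vartheta)]\,d\nu_k(\theta,\vartheta)\Big| \le \|H\|_{Lip}\, W_2(\mathcal L(\vartheta_k),\mu),
\]
where the final bound follows from $W_1\le W_2$ (Cauchy--Schwarz in $L^2(\nu_k)$). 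Denoting the left-hand side by $a_k$, the quantity of interest equals $J^{-1}\sum_k a_k$, and Jensen's inequality $(J^{-1}\sum a_k)^2\le J^{-1}\sum a_k^2$ combined with the preceding bound produces (\ref{eq:ULA:bias}).

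The substantive technical content sits entirely in the cited \cite{DM18} results, so the above amounts mostly to bookkeeping; the one delicate point is to reconcile the universal constants in Theorem~9 of \cite{DM18} with the clean form of $b(\gamma)$ in (\ref{eq:bgamma}). Were one to reprove the Wasserstein bound from scratch, the main obstacle would be the one-step Wasserstein contraction estimate, established via a synchronous coupling of two ULA chains (one initialised at $\theta_{init}$, the other in stationarity) combined with Gaussian heat-kernel estimates that control the Euler--Maruyama discretisation bias at orders $\gamma$ and $\gamma^2$ in terms of the problem constants $D,m,\Lambda$.
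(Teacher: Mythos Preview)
Your proposal is correct and follows essentially the same route as the paper: both reduce (\ref{eq:ULA:wass}) to a non-asymptotic Wasserstein bound from \cite{DM18} plus bookkeeping, and both derive (\ref{eq:ULA:bias}) from the Lipschitz/Kantorovich duality argument (the paper simply cites display (27) in \cite{DM18}, which is exactly your coupling-plus-Jensen computation).

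Two small differences worth flagging. First, the paper invokes Theorem~5 of \cite{DM18} (varying step sizes, specialised to constant $\gamma$) rather than Theorem~9; in that formulation the factor $\|\theta_{init}-\theta_U\|^2_{\R^D}+D/m$ appears directly as the expression $u_n^{(1)}(\gamma)$, so no separate second-moment estimate for $\mu$ is needed---your Brascamp--Lieb/integration-by-parts argument for $\int\|\theta-\theta_U\|^2\,d\mu\le D/m$ is correct but slightly redundant against that reference. Second, where you write ``after tracking the universal constants one verifies $A(\gamma)\le b(\gamma)/2$'', the paper actually carries this out: it bounds the discretisation-bias term $u_n^{(2)}(\gamma)$ explicitly using $\kappa\in[m,2m]$ and $\gamma\le\min\{2/\Lambda,1/m\}$, arriving at the clean constants $36$ and $12$ in (\ref{eq:bgamma}). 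This is the only place where genuine work beyond citation happens, so if you want your proof to be self-contained at the level of constants you should reproduce that short chain of inequalities.
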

\begin{proof}
	The display (\ref{eq:ULA:bias}) is derived in (27) of \cite{DM18}.	The bound (\ref{eq:ULA:wass}) follows from an application of Theorem 5 in \cite{DM18} with fixed step size $\gamma>0$, where in our case, noting again that $\kappa \in [m,2m]$, the expression $u_n^{(1)}(\gamma)$ there is upper bounded by $2\big(1-m\gamma /2 \big)^k$ and the expression $u_n^{(2)}(\gamma)$ there is upper bounded by (using that $\gamma\le \min\{ 2/\Lambda, 1/m \}\le \min\{ 2/\Lambda, 2/\kappa \}$)
	\begin{equation*}
	\begin{split}
	&\Lambda ^2D\gamma ^2\big(\kappa^{-1}+\gamma\big) \Big(2+\frac{\Lambda^2\gamma}{m}+\frac{\Lambda^2\gamma^2}{6} \Big)\sum_{i=1}^k(1-\kappa \gamma/2)^{k-i}\\
	&~~~\le \Lambda ^2D\gamma ^2\big(\kappa^{-1}+\gamma\big) \Big(2+\frac{\Lambda^2\gamma}{m}+\frac{\Lambda^2\gamma^2}{6} \Big)\frac{2}{\kappa\gamma}\\
	&~~~\le \Lambda ^2 D\gamma \Big(\kappa^{-2}+\frac{\gamma}{\kappa}\Big) \Big(6+\frac{2\Lambda^2\gamma}{m}\Big)\\
	&~~~\le \Lambda ^2 D\gamma m^{-2}  \Big (18+\frac{6\Lambda^2\gamma}{m}\Big),
	\end{split}
	\end{equation*}
	which equals (\ref{eq:bgamma}).
\end{proof}

\section{Auxiliary results}\label{sec:aux}

\subsection{Analytical properties of Schr\"odinger operators and link functions}
Recall the inverse Schr\"odinger operators $V_f$ from (\ref{Vf}).

\begin{lem}\label{cpebach} There exists a constant $C>0$ such that for any $f\in C(\mathcal O)$ with $f\ge 0$, the following holds.
	\begin{enumerate}[label=\textbf{\roman*)}]
		\item We have the estimates
		\begin{equation}\label{eq:Vf:est}
			\begin{split}
			\|V_f[\psi]\|_{L^2}&\le C\|\psi\|_{L^2},~\psi\in L^2(\mathcal O),\\
			\|V_f[\psi]\|_{\infty}&\le C\|\psi\|_{\infty},~\psi\in C(\mathcal O).
			\end{split}
		\end{equation}
		 \item For any $\psi\in L^2(\mathcal O)$, we have that
		\begin{align}\label{H2L2}
			\|V_f[\psi]\|_{H^2}&\le C(1+\|f\|_\infty)\|\psi\|_{L^2},
		\end{align}
		as well as
		\begin{align}
			\frac{1}{C(1+\|f\|_\infty)}\|\psi\|_{(H^2_0)^*}\le \|V_f[\psi]\|_{L^2}&\le C(1+\|f\|_\infty)\|\psi\|_{(H^2_0)^*}. \label{eq:schr:lb}
		\end{align}	
%		\item For any $\psi \in C(\mathcal O)$,
%		\begin{equation}\label{eq:schr:C2}
%		\|V_f\psi\|_{\mathcal C^2}\leq C(1+\|f\|_\infty)\|\psi\|_\infty.
%		\end{equation}
		\item If also $d\le 3$, then for any $\psi\in L^2(\mathcal O)$ and any $f,\bar f\in C(\mathcal O)$ with $f,\bar f\ge 0$, we have that 
		\begin{equation}\label{eq:Vfdiff}
				\|V_f[\psi]-V_{\bar f}[\psi]\|_\infty \lesssim (1+\|f\|_\infty)\|\psi\|_{L^2} \|f-\bar f\|_\infty.
		\end{equation}
	\end{enumerate}
\end{lem}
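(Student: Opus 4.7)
The unifying strategy is to work with the defining PDE $\frac{1}{2}\Delta w - fw = \psi$, $w|_{\partial\mathcal O}=0$, for $w=V_f[\psi]$, exploiting that $f\ge 0$ makes the Dirichlet form $\int|\nabla\cdot|^2/2 + \int f(\cdot)^2$ coercive and turning algebraic manipulations of the operator equation into norm estimates via elliptic regularity and duality. I would begin with part i). The $L^2$-bound follows from the standard energy estimate: testing the PDE against $w\in H^1_0(\mathcal O)$ and using $f\ge 0$ yields $\|\nabla w\|_{L^2}^2/2\le \|\psi\|_{L^2}\|w\|_{L^2}$, then Poincar\'e's inequality on the bounded domain $\mathcal O$ gives $\|w\|_{L^2}\lesssim \|\psi\|_{L^2}$. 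For the $L^\infty$-bound, the cleanest route is the probabilistic representation: by Feynman--Kac,
\[
w(x)=-E^x\!\left[\int_0^{\tau_{\mathcal O}} e^{-\int_0^s f(X_r)dr}\,\psi(X_s)\,ds\right],
\]
and since $f\ge 0$ the exponential is bounded by $1$, yielding $|w(x)|\le \|\psi\|_\infty\sup_x E^x\tau_{\mathcal O}\lesssim \|\psi\|_\infty$ (the exit time is uniformly bounded on bounded domains, cf.\ Theorem 1.17 of \cite{CZ95}, as was already used in (\ref{leivand})).

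For part ii), the $H^2$--$L^2$ estimate (\ref{H2L2}) is classical elliptic regularity: rewrite the PDE as $\frac{1}{2}\Delta w = \psi + fw$, apply the standard $H^2$-regularity bound for the Dirichlet Laplacian on a smooth domain, and use part i) to control $\|fw\|_{L^2}\le \|f\|_\infty\|w\|_{L^2}\lesssim\|f\|_\infty\|\psi\|_{L^2}$. The coercivity bound (\ref{eq:schr:lb}) requires a pair of duality arguments. For the upper inequality, write $\|V_f[\psi]\|_{L^2}=\sup_{\|\phi\|_{L^2}\le 1}\langle V_f[\psi],\phi\rangle$, introduce the test function $V_f[\phi]\in H^2_0(\mathcal O)$, and use the self-adjointness/symmetry of the Schr\"odinger operator to obtain $\langle V_f[\psi],\phi\rangle = \langle\psi, V_f[\phi]\rangle$; estimating via the $(H^2_0)^*$-$H^2$ duality and (\ref{H2L2}) applied to $V_f[\phi]$ produces the factor $(1+\|f\|_\infty)$. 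For the lower inequality, note that $\psi=\frac{1}{2}\Delta w-fw$ and compute $\langle\psi,\phi\rangle=\frac12\langle w,\Delta\phi\rangle-\langle fw,\phi\rangle$ for $\phi\in H^2_0(\mathcal O)$ (boundary terms vanish since both $w,\phi$ vanish on $\partial\mathcal O$), which bounds $|\langle\psi,\phi\rangle|\lesssim (1+\|f\|_\infty)\|w\|_{L^2}\|\phi\|_{H^2}$ and hence $\|\psi\|_{(H^2_0)^*}\lesssim (1+\|f\|_\infty)\|V_f[\psi]\|_{L^2}$.

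Part iii) rests on the key resolvent-type identity
\[
V_f[\psi]-V_{\bar f}[\psi] = V_f\bigl[(f-\bar f)\,V_{\bar f}[\psi]\bigr],
\]
obtained by subtracting the PDEs satisfied by $w=V_f[\psi]$ and $\bar w=V_{\bar f}[\psi]$: the difference $w-\bar w$ solves $\frac{1}{2}\Delta(w-\bar w)-f(w-\bar w)=(f-\bar f)\bar w$ in $H^2_0(\mathcal O)$. Taking $L^\infty$-norms and using the Sobolev embedding $H^2(\mathcal O)\hookrightarrow L^\infty(\mathcal O)$ (which holds precisely because $d\le 3$), combined with (\ref{H2L2}) and part i) applied to $\bar w$, gives
\[
\|w-\bar w\|_\infty \lesssim \|V_f[(f-\bar f)\bar w]\|_{H^2}\lesssim (1+\|f\|_\infty)\|f-\bar f\|_\infty\|\bar w\|_{L^2}\lesssim (1+\|f\|_\infty)\|f-\bar f\|_\infty\|\psi\|_{L^2},
\]
which is (\ref{eq:Vfdiff}).

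\textbf{Main obstacle.} The routine parts are the energy estimate and the elliptic $H^2$-bound; the slightly delicate step is the lower inequality in (\ref{eq:schr:lb}), where one must be careful about which function plays the role of the test function in the $(H^2_0)^*$-duality pairing and about keeping the dependence on $\|f\|_\infty$ sharp so that the coercivity constant $c_{min}\simeq D^{-4/d}$ obtained in Lemma \ref{wundervoncordoba} indeed follows from this bound applied with uniformly bounded $\|f_\theta\|_\infty$. The other subtlety is simply ensuring that the dimensional restriction $d\le 3$ is invoked only where needed, namely at the Sobolev embedding step in part iii).
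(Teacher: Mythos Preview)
Your proposal is correct and follows essentially the same approach as the paper: the paper likewise invokes Feynman--Kac for part i), standard elliptic regularity for (\ref{H2L2}) and the upper half of (\ref{eq:schr:lb}), the self-adjointness/integration-by-parts argument $\int \phi S_f w = \int w S_f \phi$ for the lower bound in (\ref{eq:schr:lb}), and the resolvent identity plus Sobolev embedding for part iii). The only minor variation is that you obtain the $L^2$ bound in part i) via an energy/Poincar\'e argument rather than Feynman--Kac, which is an equally standard and valid route.
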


\begin{proof}
	Part i) is a direct consequence of the Feynman-Kac formula for $V_f[\psi]$ from \cite{CZ95} (see also Lemma 25 in \cite{NVW18}). The upper bounds in part ii) likewise are proved by standard arguments for elliptic PDEs (see, e.g., Lemma 26 in \cite{NVW18}). In order to prove the lower bound in (\ref{eq:schr:lb}), let us denote the Schr\"odinger operator by $S_f[w]= \frac 12 \Delta w -fw$. Since $S_f:H^2_0\to L^2$ satisfies $S_f V_f[ 
\psi] = \psi$, it suffices to show that
	 \[ \|S_fw \|_{(H^2_0)^*}\lesssim (1+\|f\|_\infty)\|w\|_{L^2},~w \in H^2_0.\]
 Using the divergence theorem we have that for such $w$,
	 \begin{equation*}
	 	\begin{split}
	 		\|S_f w \|_{(H^2_0)^*}&=\sup_{\psi\in H^2_0:\|\psi\|_{H^2_0}\le 1}\Big| \int_{\mathcal O} \psi S_fw\Big|\\
	 		&=\sup_{\psi\in H^2_0:\|\psi\|_{H^2_0}\le 1}\Big| \int_{\mathcal O} w S_f\psi \Big|\le \|w\|_{L^2}\sup_{\psi\in H^2_0:\|\psi\|_{H^2_0}\le 1}\|S_f\psi\|_{L^2},
	 	\end{split}
	 \end{equation*}
	 and the term on the right hand side is further estimated by
	 \[\|S_f\psi\|_{L^2}\lesssim \|\Delta \psi\|_{L^2}+\|f\psi\|_{L^2}\lesssim 1+\|f\|_\infty\|\psi\|_{L^2}\le 1+\|f\|_\infty, \]
	 which proves (\ref{eq:schr:lb}). Finally, (\ref{eq:Vfdiff}) is proved by using a Sobolev embedding as well as (\ref{eq:Vf:est}), (\ref{H2L2}):
	 \begin{equation*}
	 	\begin{split}
	 		\|V_f[\psi]-V_{\bar f}[\psi]\|_\infty &\lesssim \|V_f[ (f-\bar f)V_{\bar f}[\psi]] \|_{H^2}\lesssim (1+\|f\|_\infty) \|(f-\bar f)V_f [\psi]\|_{L^2}\\
	 		&\lesssim (1+\|f\|_\infty) \|f-\bar f\|_{\infty}\|\psi \|_{L^2}.
	 	\end{split}
	 \end{equation*}
\end{proof}

For any normed vector spaces $(V,\|\cdot\|_V)$ and $(W,\|\cdot\|_W)$ let $L(V,W)$, denote the space of bounded linear operators $V\to W$, equipped with the operator norm. For $g\in C^\infty(\partial\mathcal O)$ and any $f\in C(\mathcal O)$ with $f>0$, there exists a unique (weak) solution $G(f)\in C(\mathcal O)$ of (\ref{eq:schr}), see Theorem 4.7 in \cite{CZ95}. We define the operators $DG_f\in L(C(\mathcal O),C(\mathcal O))$ and $D^2G_f\in L(C(\mathcal O),L(C(\mathcal O),C(\mathcal O)))$ as
\begin{equation}\label{eq:DG}
\begin{split}
D G_f[h_1 ]= V_f[h_1u_{f}],~~ (D^2 G_f[h_1])[h_2]=V_f[h_1D G_f[h_2]]+V_f[h_2D G_f[h_1]], ~~h_1,h_2\in C(\mathcal O).
\end{split}
\end{equation}
The next lemma establishes that these operators are suitable Fr\'echet derivatives of $G$ on the open subset $\{f \in C(\mathcal O), f>0\}$ of $C(\mathcal O)$.

\begin{lem}\label{lem:schr:deriv}
	\begin{enumerate}[label=\textbf{\roman*)}]
	\item For any $f\in C(\mathcal O)$ with $f>0$, we have $G(f)\in C(\mathcal O)$. Moreover there exists $C>0$ such that for any $f,\bar f\in C(\mathcal O)$ with $f,\bar f >0$,
	\begin{equation}\label{hummel}
			\|G (\bar f)- G(f)\|_\infty \le C\|\bar f-f\|_\infty,
	\end{equation}
	as well as 
	\begin{equation}\label{deriv}
	\begin{split}
	\| G (\bar f)- G(f)-D G_f[\bar f- f]\|_\infty&\le C\|\bar f-f\|_\infty^2,\\
	\|D G_{\bar f}-D G_f-D^2 G_f[\bar f-f] \|_{L(C(\mathcal O),C(\mathcal O))}&\le C\|\bar f-f\|_\infty^2.
	\end{split}
	\end{equation}
	\item For any integer $\alpha >d/2$ there exists a constant $C>0$ such that for all $f\in H^\alpha$ with $\inf_{x\in \mathcal O}f(x)>0$, we have
	\begin{align}
		\|G(f)\|_{H^{2}} &\le C(\|f\|_{L^2}+\|g\|_{C^{2}(\partial \mathcal O)}), \label{2bound}\\
	\|G(f)\|_{H^{\alpha+2}}&\le C(1+\|f\|_{H^\alpha}^{\alpha/2+1})\|g\|_{C^{\alpha+2}(\partial \mathcal O)}.\label{alphabound}
	\end{align}
	\end{enumerate}
\end{lem}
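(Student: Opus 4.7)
For part i), the key algebraic identity is that for $u_f:=G(f)$ and $u_{\bar f}:=G(\bar f)$, the difference $u_{\bar f}-u_f$ solves the inhomogeneous Schr\"odinger equation $\tfrac12\Delta(u_{\bar f}-u_f)-\bar f(u_{\bar f}-u_f)=(\bar f-f)u_f$ with zero boundary values, so that $u_{\bar f}-u_f = V_{\bar f}[(\bar f - f)u_f]$. Applying the $L^\infty$ bound in (\ref{eq:Vf:est}) and the uniform bound $\|u_f\|_\infty \le \|g\|_\infty$ from (\ref{ubd}) immediately yields (\ref{hummel}). The same argument with $\bar f, f$ swapped gives the resolvent identity $(V_{\bar f}-V_f)[\psi]=V_{\bar f}[(\bar f-f)V_f[\psi]]$, which I would use throughout the rest of the proof. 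For the first estimate in (\ref{deriv}), write
\[ G(\bar f)-G(f)-DG_f[\bar f-f] = V_{\bar f}[(\bar f-f)u_f] - V_f[(\bar f-f)u_f] = V_{\bar f}\big[(\bar f-f)V_f[(\bar f-f)u_f]\big], \]
and apply the $L^\infty$ bound twice to obtain a quadratic factor in $\|\bar f-f\|_\infty$.

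For the second estimate in (\ref{deriv}), expand $V_{\bar f}[hu_{\bar f}] - V_f[hu_f]$ by replacing $u_{\bar f}$ with $u_f + V_{\bar f}[(\bar f-f)u_f]$ and then replacing $V_{\bar f}$ by $V_f$ plus a resolvent term; after grouping, the linearisation $D^2 G_f[\bar f - f][h]$ defined by (\ref{eq:DG}) cancels exactly with the leading terms, and what remains is a sum of three triple-nested $V$-expressions, each carrying two factors of $(\bar f-f)$. Each such term is bounded in $L^\infty$ by $\|\bar f-f\|_\infty^2\|h\|_\infty$ via repeated use of (\ref{eq:Vf:est}) together with $\|u_f\|_\infty \le \|g\|_\infty$. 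This is essentially the same bookkeeping already carried out in the proof of Lemma \ref{lem:schr:two} (see the decomposition around (\ref{B39})), and the main nuisance is notational rather than conceptual.

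For part ii), I would split $u_f = u_g + w$, where $u_g$ is the harmonic extension of $g$, satisfying $\|u_g\|_{H^2}\lesssim \|g\|_{C^2(\partial\mathcal O)}$ by standard results. The residual $w\in H^2_0(\mathcal O)$ then satisfies the Dirichlet Poisson equation $\tfrac12\Delta w = f(w+u_g)$, and the $L^2$--$H^2_0$ isomorphism of the Dirichlet Laplacian (see Lemma \ref{cpebach}) gives
\[ \|w\|_{H^2}\lesssim \|f(w+u_g)\|_{L^2}\lesssim \|f\|_{L^2}\big(\|w\|_\infty+\|u_g\|_\infty\big)\lesssim \|f\|_{L^2}\|g\|_\infty, \]
where the sup-norm bound on $w$ follows from the maximum principle applied to $u_f$ (which gives $\|u_f\|_\infty \le \|g\|_\infty$). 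Combining yields (\ref{2bound}) up to the conventional abuse where $\|g\|_{C^2(\partial \mathcal O)}$ absorbs multiplicative constants involving $\|g\|_\infty$.

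For (\ref{alphabound}) the plan is a bootstrap: rewriting the PDE as $\Delta u_f = 2fu_f$, the Sobolev multiplier inequality (\ref{eq:h-mult}) (valid since $\alpha>d/2$) gives $\|fu_f\|_{H^k}\lesssim \|f\|_{H^\alpha}\|u_f\|_{H^k}$ for $0\le k\le \alpha$. Applying elliptic regularity for the Dirichlet Laplacian to $u_f - u_g$ (which raises regularity by $2$), one gains two derivatives per step while picking up one power of $\|f\|_{H^\alpha}$ and a boundary term controlled by $\|g\|_{C^{k+2}(\partial\mathcal O)}$. Iterating $\lceil \alpha/2\rceil$ times starting from (\ref{2bound}) delivers $\|u_f\|_{H^{\alpha+2}}\lesssim (1+\|f\|_{H^\alpha}^{\alpha/2+1})\|g\|_{C^{\alpha+2}(\partial\mathcal O)}$ as claimed. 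The main technical point to watch is that the bootstrap requires $\mathcal O$ to have smooth boundary and that the linear lift $u_g$ is regular enough, both of which are standing assumptions on the domain. Otherwise the argument is entirely standard.
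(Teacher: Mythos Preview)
Your proposal is correct and follows essentially the same approach as the paper. The only cosmetic differences are: in part i) you use the symmetric variant $u_{\bar f}-u_f=V_{\bar f}[(\bar f-f)u_f]$ where the paper uses $u_{\bar f}-u_f=V_f[(\bar f-f)u_{\bar f}]$ (both valid), and for (\ref{2bound}) you subtract the harmonic extension whereas the paper invokes the joint isomorphism $(\Delta,\mathrm{tr}):H^2\to L^2\times H^{3/2}(\partial\mathcal O)$ directly; for (\ref{alphabound}) the paper simply cites \cite{NVW18}, Lemma~27, whose content is precisely the bootstrap you sketch.
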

\begin{proof}
The estimate (\ref{hummel}) follows from the identity  $ G(\bar f)-G(f)=V_f[(\bar f-f)G(\bar f)]$, (\ref{eq:Vf:est}) and (\ref{ubd}). Arguing similarly and using (\ref{hummel}), we further obtain
\begin{equation*}
\begin{split}
\| G (\bar f)-G(f)-D G_f[\bar f-f]\|_\infty&=\|V_f[(\bar f-f)( G (\bar f)- G(f))]\|_\infty\\
&\lesssim \|(\bar f-f)( G (\bar f)- G(f))\|_\infty\lesssim \|\bar f-f\|_\infty^2,
\end{split}
\end{equation*}
which proves the first part of (\ref{deriv}). For the second part of (\ref{deriv}), we have for any $h\in C(\mathcal O)$ that
\begin{equation*}
\begin{split}
DG_{\bar f}[h]-D G_{f}[h]&= V_{\bar f}[hu_{\bar f}]-V_f[hu_f]\\
&= V_{\bar f}[h(u_{\bar f}-u_f)]+(V_{\bar f}-V_f)[hu_f]\\
&=V_f[hD G_f[\bar f-f]]+R_1+V_f[(\bar f-f)V_f[hu_f]] + R_2\\
&=(D^2G_{f}[\bar f-f])[h]+R_1+R_2,
\end{split}
\end{equation*}
with remainder terms $R_1,R_2$ given by
\begin{equation*}
\begin{split}
R_1&= [V_{\bar f}-V_f][h(u_{\bar f}-u_f)] + V_f[h(u_{\bar f}-u_f-DG[h])],\\
R_2&=[V_{\bar f}-V_f](hu_f)-V_f[(\bar f-f)V_f[hu_f]].
\end{split}
\end{equation*}
Using the identity $(V_{\bar f}-V_f)\psi =V_f[(\bar f-f)V_{\bar f}[\psi]]$ with $\psi = h(u_{\bar f}-u_f)$, Lemma \ref{cpebach} as well as the first part of (\ref{deriv}), we have
\begin{equation*}
\begin{split}
\|R_1\|_\infty &\lesssim \|\bar f-f\|_\infty \|h(u_{\bar f}-u_f)\|_\infty +\|h\|_\infty \|u_{f+h}-u_f-D\bar G[h]\|_\infty\lesssim \|\bar f-f\|_\infty^2\|h\|_\infty,
\end{split}
\end{equation*}
and arguing similarly,
\[ \|R_2\|_\infty = \|V_f[(\bar f-f)(V_{\bar f}-V_f)[hu_f]]\|_\infty \lesssim \|\bar f-f\|_\infty\|(V_{\bar f}-V_f)[hu_f]\|_\infty \lesssim \|\bar f-f\|_\infty^2\|h\|_\infty. \]
This completes the proof of (\ref{deriv}).

\smallskip 

To prove (\ref{2bound}), we use that $(\Delta,\text{tr}):H^2(\mathcal O)\to L^2\times H^{3/2}(\partial\mathcal O)$ [where $\text{tr}$ denotes the boundary trace operator for the domain $\mathcal O$] is a topological isomorphism, see Theorem II.5.4 in \cite{LM72}, such that in particular
\[ \|G(f)\|_{H^2}\lesssim  \|fu_f\|_{L^2}+\|g\|_{C^2(\partial \mathcal O)}\le \|f\|_{L^2} +\|g\|_{C^2(\partial \mathcal O)}. \]
where we also used (\ref{ubd}). Finally, (\ref{alphabound}) is proved in Lemma 27 in \cite{NVW18}.
\end{proof}

\subsubsection{Properties of the map $\Phi^*$}\label{ssec-link}

We summarise some properties of `regular' link functions from Definition \ref{def:reg:link}. We recall the notation $\Phi^*$ for the associated composition operator from (\ref{fifa}). For any $F\in  C(\mathcal O)$, define the operators $D\Phi^*_F\in L(C(\mathcal O),C(\mathcal O))$, $D^2\Phi^*_F\in L(C(\mathcal O),L(C(\mathcal O),C(\mathcal O)))$ by
\begin{equation}\label{Dphi}
	D\Phi^*_{F}[H]=H\Phi'\circ F,~~~~	(D^2\Phi^*_F[H])[J]=HJ\Phi''\circ F, ~~~ H,J\in C(\mathcal O). 
\end{equation}
Then for any $F,H,J \in C(\mathcal O)$ and $x\in \mathcal O$, a Taylor expansion immediately implies that, with $\zeta_x,\bar \zeta_x$ denoting intermediate points between $F(x)$ and $(F+H)(x)$,
\begin{equation*}
\begin{split}
	|(\Phi^*(F+H)-\Phi^*(F)-D\Phi^*_F[H])(x)|&= |H^2(x)\Phi''(\zeta_x)/2|\le \|H\|_\infty^2\sup_{t\in \R}|\Phi''(t)|,\\
	\big|\big(D\Phi^*_{F+H}-D\Phi^*_{F}-D^2\Phi^*_F[H]\big)[J](x)\big|&= \big| J(x) H^2(x)\Phi'''(\bar \zeta_x)/2 \big|\le \|J\|_\infty\|H\|_\infty^2 \sup_{t\in \R}|\Phi'''(t)|,
\end{split}
\end{equation*}
whence $D\Phi^*,D^2\Phi^*$ are the Fr\'echet derivatives of $\Phi^*:C(\mathcal O)\to C(\mathcal O)$.

\smallskip

We also need the basic fact that for any integer $\alpha>d/2$ there exists $C>0$ such that for all $F\in H^\alpha(\mathcal O)$,
\begin{equation}\label{links}
	\|\Phi\circ F\|_{H^\alpha}\le C(1+	\|\Phi\circ F\|_{H^\alpha}^\alpha),
\end{equation}
see Lemma 29 in \cite{NVW18}. Finally, note that by the definition of $\Phi$, there exists $C'>0$ such that for any $\bar F, F\in C(\mathcal O)$,
\begin{equation}\label{linklip}
	\|\Phi\circ \bar F-\Phi\circ F\|_{\infty} \le C\|\bar F-F\|_\infty,~~\|\Phi\circ \bar F-\Phi\circ F\|_{L^2} \le C\|\bar F-F\|_{L^2}.
\end{equation}

\subsubsection{Chain rule for Fr\'echet derivatives}
Let $U,V$ be normed vector spaces and $\mathcal D\subseteq U$ an open subset. For a map $T:\mathcal D\to V$ we denote by $DT_\theta \in L(U,V)$ and $D^2T_\theta \in L(U,L(U,V))$ the first and second order Fr\'echet derivatives at $\theta \in \mathcal D$, respectively, whenever they exist. The following basic lemma then follows directly from the chain rule.
\begin{lem}\label{lem:chain}
	Suppose $U,V,W$ are (open subsets of) normed vector spaces, and suppose that $A:U\to V$ and $B:V\to W$ are both twice differentiable in the Fr\'echet sense. Then for any $\theta\in U$ and $H_1,H_2\in U$, we have that $D(B\circ A)_\theta= DB_{A(\theta)}\circ DA_\theta$ and 
	\begin{align}
	\big(D^2(B\circ A)_\theta [H_1]\big)[H_2]=\big(D^2B_{A(\theta)}[DA_\theta[H_1]]\big)[DA_\theta[H_2]]+DB_{A(\theta)}\big[(D^2A_\theta[H_1])[H_2]\big].   \label{eq:sec:der}
	\end{align}
\end{lem}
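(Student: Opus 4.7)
The plan is to derive the formula for $D^2(B\circ A)$ by a two-step application of differentiation rules to the well-known first-order chain rule, with some care about the category in which each derivative lives.

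First, I will establish the first-order identity $D(B\circ A)_\theta=DB_{A(\theta)}\circ DA_\theta$ by a direct Taylor expansion: writing $A(\theta+H)=A(\theta)+DA_\theta[H]+r_A(H)$ with $\|r_A(H)\|_V=o(\|H\|_U)$ and $B(v+k)=B(v)+DB_v[k]+r_B(k)$ with $\|r_B(k)\|_W=o(\|k\|_V)$, composing gives
\begin{equation*}
	B(A(\theta+H))=B(A(\theta))+DB_{A(\theta)}[DA_\theta[H]]+DB_{A(\theta)}[r_A(H)]+r_B(DA_\theta[H]+r_A(H)),
\end{equation*}
and the last two terms are $o(\|H\|_U)$ by the boundedness of $DB_{A(\theta)}$ and $DA_\theta$. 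This is the standard argument.

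Next, to obtain the second derivative, I will differentiate the map $\theta\mapsto F(\theta):=DB_{A(\theta)}\circ DA_\theta\in L(U,W)$ by decomposing it as the composition
\begin{equation*}
	\theta\mapsto (DB_{A(\theta)},DA_\theta)\in L(V,W)\times L(U,V)\xrightarrow{\;\mathrm{comp}\;} L(U,W),
\end{equation*}
where $\mathrm{comp}(S,T):=S\circ T$ is bilinear and continuous, hence smooth with derivative $D\mathrm{comp}_{(S,T)}[(S',T')]=S'\circ T+S\circ T'$ (a product-rule identity). The first component $\theta\mapsto DB_{A(\theta)}$ is the composition of $DB:V\to L(V,W)$ with $A:U\to V$, so by the first-order chain rule (applied to the twice Fréchet-differentiable $B$, whose first derivative has Fréchet derivative $D^2B$) its derivative at $\theta$ in direction $H_1$ is $D^2B_{A(\theta)}[DA_\theta[H_1]]\in L(V,W)$. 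The derivative of the second component is $D^2A_\theta[H_1]\in L(U,V)$ by definition. Combining via the bilinear product rule gives
\begin{equation*}
	DF_\theta[H_1]=D^2B_{A(\theta)}[DA_\theta[H_1]]\circ DA_\theta + DB_{A(\theta)}\circ D^2A_\theta[H_1].
\end{equation*}

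Finally, evaluating this linear map at $H_2\in U$ yields exactly formula (\ref{eq:sec:der}). The only potentially delicate point is keeping track of which argument is being inserted into which multilinear slot, since $D^2B_{A(\theta)}\in L(V,L(V,W))$ and $D^2A_\theta\in L(U,L(U,V))$ live in iterated operator spaces; with the convention that $(D^2B_v[k_1])[k_2]$ denotes the value in $W$, the two terms take the stated form and no further computation is needed. I expect no obstacle beyond this bookkeeping, as the result is a straightforward consequence of the first-order chain rule and the product rule for the bilinear composition map.
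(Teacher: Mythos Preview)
Your proposal is correct and is exactly the standard argument the paper has in mind: the paper's own ``proof'' is just the single sentence that the lemma follows directly from the chain rule, and you have spelled out precisely that computation (first-order chain rule plus product rule for the bilinear composition map applied to $\theta\mapsto DB_{A(\theta)}\circ DA_\theta$). There is nothing to add.
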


\subsection{Proof of Proposition \ref{prop:log:conv}}
We first record the following basic lemma without proof.
\begin{lem}\label{lem:norm:der}
	Let $|\cdot|$ be an ellipsoidal norm on $\R^D$ with associated matrix $M$, $|\theta|^2=\theta^TM\theta$ and define the function $n: \theta\to |\theta|$. Then for any $\theta\neq 0$, we have
	\begin{equation}\label{eq:f:derivs}
	\begin{split}
	\nabla n(\theta)= \frac{M\theta}{|\theta|},~~~~~~\nabla^2 n(\theta)=\frac{M}{|\theta|}- \frac{M\theta(M\theta)^T}{|\theta|^3},
	\end{split}
	\end{equation}
	as well as the norm estimates
	\begin{align}
	\|\nabla n(\theta)\|_{\R^D}&\le \sqrt{\lambda_{max}(M)},      	\label{eq:grad:norm}\\
	\|\nabla^2 n(\theta)\|_{op}&\le 2\lambda_{max}(M)/|\theta|_1.     	\label{eq:hess:norm}
	\end{align}
\end{lem}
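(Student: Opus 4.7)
The plan is to compute $\nabla n$ and $\nabla^2 n$ directly from the identity $n(\theta)^2 = \theta^T M\theta$ on the set $\{\theta \ne 0\}$ where $n > 0$, and then bound their norms using the spectral decomposition of the positive definite symmetric matrix $M$. Throughout we use that $M$ is symmetric, so $\nabla(\theta^T M\theta) = 2M\theta$.

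For $\nabla n$, differentiating both sides of $n^2 = \theta^T M\theta$ gives $2n\,\nabla n = 2M\theta$, hence $\nabla n = M\theta/n$. Differentiating once more by the quotient rule, using $\partial_j(M\theta)_i = M_{ij}$ and $\partial_j n = (M\theta)_j/n$, produces the displayed formula $\nabla^2 n = M/n - M\theta(M\theta)^T/n^3$.

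For the bound on $\|\nabla n\|_{\R^D}$, note that $\|\nabla n\|_{\R^D}^2 = \theta^T M^2\theta / \theta^T M\theta$. Diagonalising $M$ in an orthonormal eigenbasis and writing $\theta = \sum_i c_i u_i$ one has $\theta^T M^2\theta = \sum_i c_i^2 \lambda_i^2 \le \lambda_{max}(M) \sum_i c_i^2 \lambda_i = \lambda_{max}(M)\,\theta^T M\theta$, which gives the claim. For $\|\nabla^2 n\|_{op}$, the triangle inequality splits it into two contributions: the first summand has operator norm $\|M\|_{op}/n = \lambda_{max}(M)/n$, while the second, being a rank-one PSD matrix, has operator norm $\|M\theta\|_{\R^D}^2/n^3 = \theta^T M^2\theta/n^3 \le \lambda_{max}(M)/n$ by the same eigen-expansion. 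Summing yields $\|\nabla^2 n\|_{op} \le 2\lambda_{max}(M)/|\theta|_1$.

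There is no real obstacle here; the argument is essentially a mechanical computation combined with one elementary spectral inequality. The only point worth noting is the hypothesis $\theta \ne 0$, which ensures $n > 0$ so that the quotient rule applies and the various inverse factors $1/n$, $1/n^3$ are well-defined.
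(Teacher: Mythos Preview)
Your proof is correct. The paper explicitly records this lemma \emph{without proof} (see the sentence ``We first record the following basic lemma without proof'' immediately preceding it), so there is no argument to compare against; your computation via implicit differentiation of $n^2=\theta^TM\theta$ and the spectral bound $\theta^TM^2\theta\le\lambda_{max}(M)\,\theta^TM\theta$ is exactly the standard verification one would expect.
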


Using Lemma \ref{lem:norm:der}, we prove the following bounds on the cut-off function $\alpha_\eta$.

\begin{lem}\label{lem:alphaeta}
	If $|\cdot|_1$ is an ellipsoidal norm with associated matrix $M$, $|\theta|_1^2=\theta^TM\theta$, then the function $\alpha_\eta$ from (\ref{eq:alphaeta:def}) satisfies that for all $\theta\in \R^D$,
	\begin{equation*}
	\begin{split}
	\|\nabla \alpha_\eta(\theta)\|_{\R^D}\le \frac{\|\alpha\|_{C^1}\sqrt{\lambda_{max}(M)}}{\eta},~~~~~~~\|\nabla^2\alpha_\eta(\theta)\|_{op}\le \frac{4\|\alpha\|_{C^2}\lambda_{max}(M)}{\eta^2}.
	\end{split}
	\end{equation*}
\end{lem}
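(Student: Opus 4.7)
\textbf{Proof plan for Lemma \ref{lem:alphaeta}.} The plan is to compute $\nabla \alpha_\eta$ and $\nabla^2 \alpha_\eta$ by a direct chain rule expansion in terms of derivatives of the one-dimensional profile $\alpha$ and of the (ellipsoidal) norm function $n(\theta) := |\theta-\theta_{init}|_1$, and then to apply Lemma \ref{lem:norm:der} to bound $\nabla n$ and $\nabla^2 n$. Writing $t(\theta) := n(\theta)/\eta$, the chain rule gives
\[
\nabla \alpha_\eta(\theta) \;=\; \tfrac{1}{\eta}\,\alpha'(t(\theta))\, \nabla n(\theta), \qquad
\nabla^2 \alpha_\eta(\theta) \;=\; \tfrac{1}{\eta^2}\,\alpha''(t(\theta))\,\nabla n(\theta)\nabla n(\theta)^T \;+\; \tfrac{1}{\eta}\,\alpha'(t(\theta))\,\nabla^2 n(\theta),
\]
wherever these are classically defined.

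For the first order bound, combining the preceding display with (\ref{eq:grad:norm}) and the trivial estimate $|\alpha'|\le \|\alpha\|_{C^1}$ immediately yields $\|\nabla \alpha_\eta(\theta)\|_{\R^D}\le \|\alpha\|_{C^1}\sqrt{\lambda_{max}(M)}/\eta$, as asserted.

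For the second-order bound, the first summand is straightforward: $\|\nabla n(\theta)\nabla n(\theta)^T\|_{op}=\|\nabla n(\theta)\|_{\R^D}^2\le \lambda_{max}(M)$ by (\ref{eq:grad:norm}), so its operator norm is at most $\|\alpha\|_{C^2}\lambda_{max}(M)/\eta^2$. The main point requiring care is the second summand, because $\nabla^2 n(\theta)$ blows up as $|\theta-\theta_{init}|_1\to 0$ (cf.~(\ref{eq:hess:norm})). The key observation is that, since $\alpha$ is constant on $[0,3/4]$ (equal to $1$) and on $[7/8,\infty)$ (equal to $0$), we have $\alpha'(t(\theta))=0$ whenever $t(\theta)\notin[3/4,7/8]$. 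Consequently the product $\alpha'(t(\theta))\,\nabla^2 n(\theta)$ can be defined to be zero for $|\theta-\theta_{init}|_1<3\eta/4$ (in particular at $\theta=\theta_{init}$ itself, where $\alpha_\eta$ is smooth and identically equal to $1$ in a neighbourhood), and the resulting $\alpha_\eta$ is smooth on all of $\R^D$. On the relevant set $\{3\eta/4\le |\theta-\theta_{init}|_1\le 7\eta/8\}$, estimate (\ref{eq:hess:norm}) gives $\|\nabla^2 n(\theta)\|_{op}\le 2\lambda_{max}(M)/|\theta-\theta_{init}|_1\le (8/3)\lambda_{max}(M)/\eta$, so the second summand contributes at most $(8/3)\|\alpha\|_{C^1}\lambda_{max}(M)/\eta^2$. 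Adding the two contributions and bounding $\|\alpha\|_{C^1}\le \|\alpha\|_{C^2}$ yields at most $(1+8/3)\|\alpha\|_{C^2}\lambda_{max}(M)/\eta^2\le 4\|\alpha\|_{C^2}\lambda_{max}(M)/\eta^2$, as claimed.

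The only mildly subtle point is precisely this handling of the singularity of $n$ at its centre: one must observe that the smoothness of $\alpha_\eta$ across $\theta=\theta_{init}$ is ensured by the plateau of $\alpha$ on $[0,3/4]$, so that the bounds above extend from the annulus where $\nabla^2 n$ is defined to all of $\R^D$ by continuity. Everything else reduces to direct calculation using Lemma \ref{lem:norm:der}.
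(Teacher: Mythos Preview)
Your proposal is correct and follows essentially the same approach as the paper: both compute $\nabla\alpha_\eta$ and $\nabla^2\alpha_\eta$ via the chain rule in terms of $\alpha',\alpha''$ and the derivatives of $n(\theta)=|\theta-\theta_{init}|_1$, apply Lemma~\ref{lem:norm:der}, and handle the singularity of $\nabla^2 n$ at the centre by exploiting that $\alpha'(t)=0$ for $t\in[0,3/4]$ so that the second summand is only nonzero on $\{|\theta-\theta_{init}|_1\ge 3\eta/4\}$. Your arithmetic $(1+8/3)\le 4$ is exactly the one implicit in the paper's final inequality.
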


\begin{proof}
	We may assume w.l.o.g. that $\theta_{init}=0$ and we write $n(\theta)=|\theta|_1$. The gradient bound is obtained by the chain rule and (\ref{eq:grad:norm}):
	\begin{equation*}
	\begin{split}
	\|\nabla \alpha_\eta(\theta)\|_{\R^D}&= \big\|\eta^{-1}\alpha'\big(|\theta|_1/\eta\big)\nabla n(\theta)\big\|_{\R^D}\le \eta^{-1} \|\alpha\|_{C_1}\sqrt{\lambda_{max}(M)}.
	\end{split}
	\end{equation*}
	For the Hessian, we similarly employ the chain rule, (\ref{eq:grad:norm}), (\ref{eq:hess:norm}) as well as the fact that $\alpha'(t)=0$ when $t\in (0,3/4)$:
	\begin{equation*}
	\begin{split}
	\|\nabla^2\alpha_\eta(\theta)\|_{op}&\le \eta^{-2}\big\|\alpha''\big(|\theta|_1/\eta\big)\nabla n(\theta)\nabla n(\theta)^T\big\|_{op}+\eta^{-1}\big\|\alpha'\big(|\theta|_1/\eta\big)\nabla^2n(\theta)\big\|_{op}\\
	&\le\eta^{-2}\|\alpha\|_{C^2}\|\nabla n(\theta)\|_{\R^D}^2 +  \eta^{-1}\|\alpha\|_{C_1}\mathbbm 1_{\{|\theta|\ge 3\eta/4 \}}\cdot \frac{2\lambda_{max}(M)}{|\theta|_1} \\
	&\le 4 \eta^{-2}\|\alpha\|_{C^2}\lambda_{max}(M).
	\end{split}
	\end{equation*}
\end{proof}

We now turn to the proof of Proposition \ref{prop:log:conv}. Throughout, we work on the event $\mathcal E_{conv}\cap \mathcal E_{init}$ defined by (\ref{eq:E:event}),(\ref{initevent}); moreover we assume without loss of generality that $\theta_{init}=0$.

\begin{proof}[Proof of Proposition \ref{prop:log:conv}] We divide the proof into five steps.
\par 
\textbf{1. Local lower bound for $\alpha_\eta\ell_N$.}
For the set
\[ V:=\{\theta: |\theta|_1\le 3\eta/4 \},\]
by definition of $\mathcal E_{init}$, we have that $V\subseteq \mathcal B$. Thus using the definitions of $\mathcal E_{conv}$ and of $\alpha_\eta$, we obtain
\begin{equation}\label{eq:alphaeta:lb}
\inf_{\theta\in V}\lambda_{min} \big(-\nabla^2[\alpha_\eta\ell_N](\theta)\big)\ge Nc_{min}/2.
\end{equation}
\par 
\textbf{2. Upper bound for $\alpha_\eta\ell_N$.}  By the chain rule, Lemma \ref{lem:alphaeta}, the definition of $\mathcal E_{conv}$ and using that $\|\alpha\|_{C^2}\ge 1$, we obtain that for any $\theta\in\R^D$ and some $c=c(\alpha)$,
\begin{align}
\|\nabla^2[\alpha_\eta\ell_N&](\theta)\|_{op}\le |\ell_N(\theta)|\|\nabla^2 \alpha_\eta (\theta)\|_{op} +2 \|\nabla \alpha_\eta(\theta)\|_{\R^D}\|\nabla \ell_N(\theta)\|_{\R^D}+|\alpha_\eta(\theta)|\|\nabla^2 \ell_N(\theta)\|_{op}\notag\\
&\le 2\sup_{\theta\in\mathcal B}\Big(\big[|\alpha_\eta(\theta)| + \|\nabla \alpha_\eta(\theta)\|_{\R^D}+\|\nabla^2 \alpha_\eta(\theta)\|_{op}\big]\big[|\ell_N(\theta)| + \|\nabla \ell_N(\theta)\|_{\R^D}+\|\nabla^2 \ell_N(\theta)\|_{op}\big]\Big)\notag\\
&\le c\big( 1+\lambda_{max}(M)/\eta^2 \big)\cdot N(c_{max}+1).\label{eq:alphaeta:ub}
\end{align}

\textbf{3. Global lower bound for $\nabla^2 g_\eta$.} First we note that $g_\eta$ is convex on all of $\R^D$: Indeed, this follows from the the identity $\gamma_\eta=\tilde\gamma_\eta \ast \varphi_{\eta/8}$, the convexity of the functions $n:\theta\mapsto |\theta|_1$, $\tilde \gamma_\eta$ and the fact that convolution with the positive function $\varphi_{\eta/8}$ preserves convexity. As $g_\eta$ has $C^2$ regularity, it follows that $\nabla^2 g_\eta \succeq 0$ on all of $\R^D$.
\par
We next prove a quantitative lower bound for $\nabla^2 g_\eta$ on the set $V^c$. By the chain rule and Lemma \ref{lem:norm:der}, we have that for any $\theta\in\R^D$, writing $v=\nabla n(\theta)$,
\begin{equation}\label{eq:geta:est}
\begin{split}
\nabla^2 g_\eta(\theta) &= \gamma_\eta''(|\theta|_1)\nabla n(\theta)\nabla n(\theta)^T + \gamma_\eta'(|\theta|_1)\nabla^2 n(\theta)\\
&=\gamma_\eta''(|\theta|_1)vv^T + \frac{\gamma_\eta'(|\theta|_1)}{|\theta|_1}\big( M-vv^T \big)\\
&= \Big(\gamma_\eta''(|\theta|_1)-\frac{\gamma_\eta'(|\theta|_1)}{|\theta|_1}  \Big) vv^T+\frac{\gamma_\eta'(|\theta|_1)}{|\theta|_1}M\\
&=: A(|\theta|_1) vv^T+B(|\theta|_1)  M.
\end{split}
\end{equation}
To derive lower bounds for the functions $B(\cdot)$ and $A(\cdot)$, we first observe that by the symmetry of $\varphi_{\eta/8}$ around $0$, it holds for any $t\ge 3\eta/4$ that
\begin{equation}\label{eq:gammaeta:der}
\gamma_\eta'(t) = \int_{[-\eta/8,\eta/8]} \varphi_{\eta/8}(y)\cdot 2(t-y-5\eta/8)=2(t-5\eta/8).
\end{equation}
Thus the function $B(t)=\gamma_\eta'(t)/t$ strictly increases on $(3\eta/4,\infty)$, and for any $t\ge 3\eta/4$, we obtain
\begin{equation}\label{eq:B:lb}
B(t)\ge B(3\eta /4)=\frac {\gamma_\eta'(3\eta/4)}{3\eta/4}= 2\frac{3\eta/4-5\eta/8}{3\eta/4}=\frac 13.
\end{equation}
For the term $A(\cdot)$, we note that for any $t\ge 3\eta/4$, using that  $\gamma_\eta''(t)=2$ as well as (\ref{eq:gammaeta:der}), we have
\begin{equation}\label{eq:A:lb}
A(t)=2-\frac{2(t-5\eta/8)}{t}\ge 0.
\end{equation}
Combining the displays (\ref{eq:geta:est}), (\ref{eq:B:lb}), (\ref{eq:A:lb}), we have proved the lower bound
\begin{equation}\label{eq:geta:lb}
 \inf_{\theta\in V^c}\lambda_{min}\big(\nabla^2 g_\eta(\theta) \big)\ge \lambda_{min}(M)/3,~~~.
\end{equation}
\par
\textbf{4. Global upper bound for $\nabla^2 g_\eta$.} We note that the functions $A(\cdot)$, $B(\cdot)$ from (\ref{eq:geta:est}) satisfy
\[ \sup_{t\in(0,\infty)} |A(t)|\le  \sup_{t\in(0,\infty)} |\gamma_\eta'(t)/t|+|\gamma_\eta''(t)|\le 4,~~~\sup_{t\in(0,\infty)} |B(t)|\le  \sup_{t\in(0,\infty)} |\gamma_\eta'(t)/t|\le 2.\]
Hence, by (\ref{eq:geta:est}) and  Lemma \ref{lem:norm:der}, we obtain that
\begin{equation}\label{eq:geta:ub}
\|\nabla^2 g_\eta(\theta)\|_{op}\le 4\|vv^T\|_{op} + 2\|M\|_{op}\le 6\lambda_{max}(M),~~~  \theta \in\R^D.
\end{equation}
\par
\textbf{5. Combining the bounds.}
Combining the estimates (\ref{eq:alphaeta:lb}), (\ref{eq:alphaeta:ub}) and (\ref{eq:geta:lb}), we obtain that 
\begin{equation}
\begin{split}
\inf_{\theta\in V}\lambda_{min}\big(- \nabla^2\tilde \ell_N(\theta) \big)&\ge \frac{Nc_{min}}2,\\
\inf_{\theta\in V^c}\lambda_{min}\big(- \nabla^2\tilde \ell_N(\theta) \big)&\ge \frac{K\lambda_{min}(M)}{3}- c \big(1+\lambda_{max}(M)/\eta^2\big) N(c_{max}+1).
\end{split}
\end{equation}
In particular, there exists $C\ge 3$ such that for any $K$ satisfying (\ref{eq:K:cond}), we have
\begin{align*}
\inf_{\theta\in \R^D}\lambda_{min}\big(-\nabla^2 \tilde\ell_N(\theta)\big)\ge \min\Big\{\frac{Nc_{min}}2, \frac{K\lambda_{min}(M)}6 \Big\}=Nc_{min}/2,
\end{align*}
which completes the proof of (\ref{eq:str:conv}). To prove (\ref{eq:grad:lip}), we use (\ref{eq:alphaeta:ub}), (\ref{eq:geta:ub}) and (\ref{eq:K:cond}) to obtain that for all $\theta\neq \bar\theta \in\R^D$,
\begin{equation*}
\begin{split}
\frac{\|\nabla \tilde\ell_N(\theta)-\nabla \tilde\ell_N(\bar \theta)\|_{\R^D}}{\|\theta-\bar\theta\|_{\R^D}}&\le \sup_{\theta\in\R^D} \|\nabla^2 \tilde\ell_N(\theta)\|_{op}\\
&\le c\|\alpha\|_{C^2} \big(1+\lambda_{max}(M)/\eta^2\big) N(c_{max}+1)+6K\lambda_{max}(M)\\
&\le 7K\lambda_{max}(M).
\end{split}
\end{equation*}
\end{proof}

\subsection{Initialisation} \label{sec:initpfs}

In this section we prove the existence of polynomial time `initialiser' $\theta_{init}=\theta_{init}(Z^{(N)}) \in \mathbb R^D$ (that lies in the region $\mathcal B_{1/\log N}$ from (\ref{eq:B:def}) of strong log-concavity of the posterior measure with high $P_{\theta_0}^N$-probability, when $\alpha>6$), in the Schr\"odinger model. 
\begin{thm}\label{triebelei}
Suppose $\theta_0 \in h^\alpha(\mathcal O)$ for some $\alpha>2+d/2, d \le 3$. Then there exists a measurable function $\theta_{init} \in \mathbb R^D$ of the data $Z^{(N)}$ from (\ref{ZN}) and large enough $M'>0$ such that for all $N,D\in \N$ and some $\bar c>0$,
\begin{equation*}
P_{\theta_0}^N\big(\|\theta_{init} - \theta_{0,D}\|_{\R^D}  >M' N^{-(\alpha-2)/(2\alpha+d)} \big) \lesssim e^{-\bar c N^{d/(2\alpha+d)}}.
\end{equation*} Moreover $\theta_{init}$ is the output of a polynomial time algorithm involving $O(N^{b_0}), b_0>0,$  iterations of gradient descent (each requiring a multiplication with a fixed $D' \times D'$ matrix, $D' \lesssim N^{d/(2\alpha+d)}$).
\end{thm}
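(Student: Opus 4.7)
\textbf{Proof plan for Theorem \ref{triebelei}.} The strategy is to exploit the PDE structure of $\mathcal G$ to \emph{invert} the forward map directly: since the Schr\"odinger equation (\ref{eq:schr}) gives $f_0 = \tfrac{1}{2}\Delta u_{f_0}/u_{f_0}$ pointwise, an estimator of $u_{f_0}$ built from the regression data yields an estimator of $f_0$ after two derivatives and division. I would first construct a linear sieve estimator $\hat u \in E_{D'}$ (cf.~(\ref{subba})) for $u_{f_0}=\mathcal G(\theta_0)$ at truncation level $D'\asymp N^{d/(2\alpha+d)}$, e.g.\ by (convex) penalised least squares $\hat u = \arg\min_{u\in E_{D'}} \sum_i (Y_i-u(X_i))^2 + \lambda \|u\|_{H^\alpha}^2$ with $\lambda \asymp N^{d/(2\alpha+d)}$. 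This is a finite-dimensional strongly convex quadratic problem whose minimiser can be produced by $O(N^{b_0})$ gradient descent steps (Proposition \ref{reiskorn}), each requiring a single multiplication with the (fixed) $D'\times D'$ design Gram matrix.

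By elliptic regularity ((\ref{alphabound}) applied to $\theta_0 \in h^\alpha$) we have $u_{f_0} \in H^{\alpha+2}(\mathcal O)$, so that the $H^2$-bias of the projection onto $E_{D'}$ is controlled by $\lambda_{D'+1}^{-\alpha/2}\|u_{f_0}\|_{H^{\alpha+2}} \lesssim (D')^{-\alpha/d}$. The $H^2$-stochastic error, given by the empirical process $\sum_i \varepsilon_i e_k(X_i)$ weighted by $\lambda_k$, is controlled through Lemma \ref{mixchain} in combination with the Weyl asymptotics (\ref{weyl}) and yields a variance proxy of order $(D')^{(d+4)/d}/N$. With $D' \asymp N^{d/(2\alpha+d)}$ the variance term dominates (since $\alpha>2$) and a direct calculation gives, on an event of probability $\ge 1 - c_1 \exp(-c_2 N^{d/(2\alpha+d)})$,
\begin{equation*}
\|\hat u - u_{f_0}\|_{H^2(\mathcal O)} \lesssim N^{-(\alpha-2)/(2\alpha+d)}.
\end{equation*}

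With this $H^2$-rate at hand, the inversion is straightforward. Using the Sobolev embedding $H^2(\mathcal O)\hookrightarrow L^\infty(\mathcal O)$ (recall $d\le 3$) together with $u_{f_0}\ge u_{min}>0$ from (\ref{leivand}), the estimate $\hat u \ge u_{min}/2$ holds uniformly on $\mathcal O$ on the preceding event. Defining $\hat f := \tfrac{1}{2}\Delta \hat u/\hat u$, truncated from below at $K_{min}$, the identity
\begin{equation*}
\hat f - f_0 = \frac{\Delta(\hat u - u_{f_0})}{2\hat u} + \Delta u_{f_0}\Bigl(\frac{1}{2\hat u}-\frac{1}{2u_{f_0}}\Bigr)
\end{equation*}
together with the multiplier inequality (\ref{eq:h-mult}), the bound (\ref{links}) for the function $1/x$, and $u_{f_0}\in H^{\alpha+2}$ yields $\|\hat f - f_0\|_{L^2} \lesssim \|\hat u - u_{f_0}\|_{H^2}$. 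Setting $\hat F:= \Phi^{-1}\circ \hat f$ and invoking the Lipschitz property (\ref{linklip}) of $\Phi^{-1}$ on the bounded range of $\hat f$, the same $L^2$-rate transfers to $\hat F-F_0$. Finally take $\theta_{init}:=(\langle \hat F, e_k\rangle_{L^2}:k\le D)\in \R^D$; contractivity of the $L^2$-projection $P_D$ gives
\begin{equation*}
\|\theta_{init}-\theta_{0,D}\|_{\R^D} = \|P_D(\hat F - F_0)\|_{L^2(\mathcal O)} \le \|\hat F-F_0\|_{L^2(\mathcal O)} \lesssim N^{-(\alpha-2)/(2\alpha+d)},
\end{equation*}
as required.

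The main technical obstacle will be the exponential tail bound for $\|\hat u - u_{f_0}\|_{H^2}$ in Step 1: one must control a stochastic process indexed by the $D'$-dimensional sieve in an \emph{$H^2$-weighted} norm, and translate the random-design least-squares problem into the Fourier-coefficient representation. This requires (i) showing that the empirical Gram matrix $(N^{-1}\sum_i e_k(X_i)e_\ell(X_i))_{k,\ell \le D'}$ is close to the identity in operator norm with probability $\ge 1-c\exp(-\bar c N^{d/(2\alpha+d)})$ (a matrix-concentration argument analogous to, but simpler than, those underlying Lemma \ref{youtube}), and (ii) applying Lemma \ref{mixchain} to the Gaussian-multiplier process $\sum_i \varepsilon_i v^T \Psi^{-1}(\cdot)(X_i)$ indexed over unit $H^2$-vectors $v$ in $E_{D'}$, whose metric entropy integrals are bounded via the Weyl asymptotics. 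Once this concentration is in hand, the remaining manipulations are consequences of regularity results already collected in Appendix \ref{sec:aux}.
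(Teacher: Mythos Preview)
Your overall inversion strategy (estimate $u_{f_0}$, then form $\hat f=\Delta\hat u/(2\hat u)$, apply $\Phi^{-1}$, project) matches the paper's Step~III exactly, and your remarks on polynomial-time computability via Proposition~\ref{reiskorn} are correct. However, there is a genuine gap in your Step~1.

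The sieve $E_{D'}=\spn(e_1,\dots,e_{D'})$ consists of Dirichlet--Laplacian eigenfunctions, all of which vanish on $\partial\mathcal O$. But the regression function $u_{f_0}=\mathcal G(\theta_0)$ satisfies $u_{f_0}|_{\partial\mathcal O}=g\ge g_{min}>0$, so $u_{f_0}\notin H^2_0(\mathcal O)$ and a fortiori $u_{f_0}\notin\mathcal H_\beta$ for any $\beta\ge 2$. Your bias bound $\|u_{f_0}-P_{D'}u_{f_0}\|_{H^2}\lesssim\lambda_{D'+1}^{-\alpha/2}\|u_{f_0}\|_{H^{\alpha+2}}$ presupposes that $u_{f_0}$ lies in the spectrally defined space $\mathcal H_{\alpha+2}$, which it does not. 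In fact, since every element of $E_{D'}$ has zero boundary trace while $u_{f_0}$ does not, continuity of the trace operator on $H^2$ gives $\|u_{f_0}-P_{D'}u_{f_0}\|_{H^2}\gtrsim\|g\|_{H^{3/2}(\partial\mathcal O)}>0$ uniformly in $D'$: the $H^2$-bias does not even tend to zero.

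The paper resolves this by expanding $u_{f_0}$ not in the Dirichlet eigenbasis but in a wavelet frame $\{\phi_{l,r}\}$ for $L^2(\mathcal O)$ (Theorem~5.51 in \cite{T08}) that characterises the \emph{full} Sobolev scale $H^\beta(\mathcal O)$ without boundary constraints; elliptic regularity $u_{f_0}\in H^{\alpha+2}$ then translates into $\lambda_{0,l,r}\in h^{\alpha+2}$ and yields the required approximation (\ref{bias}). A second structural difference is that the paper does not attempt a direct $H^2$ bound on $\hat u-u_{f_0}$: instead it obtains from an $M$-estimation argument (Theorem~2.1 in \cite{V01}) the two bounds $\|\hat u-u_{f_0}\|_{L^2}\lesssim\delta_N$ and $\|\hat u\|_{H^\alpha}\lesssim 1$, transferring the empirical norm to $L^2$ via a restricted-isometry estimate (the paper's Step~IV), and then reaches the $H^2$ rate $N^{-(\alpha-2)/(2\alpha+d)}$ by interpolation~(\ref{krankl}). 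This $L^2$-plus-regularity-then-interpolate route avoids having to control the stochastic error directly in an $H^2$-weighted norm.
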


\begin{proof}
\textbf{Step I.}
To start, consider the wavelet frame $$\big\{\phi_{l,r}, 1 \le r \le N_l, l \in \mathbb N \big\}, N_l \lesssim 2^{ld},$$ of $L^2(\mathcal O)$ constructed in Theorem 5.51 in \cite{T08}. Then for data arising from (\ref{model}), choosing $$ ~2^J \simeq N^{1/(2\alpha+d)}= (N \delta^2_N)^{1/d},~\delta_N = N^{-\alpha/(2\alpha+d)},~n_J \equiv \sum_{l \le J} N_l \lesssim 2^{Jd},$$
and for multiscale vectors $(\lambda_{l,r}) \in \R^{n_J}$, define
\begin{equation}\label{global}
\hat \lambda = \arg\min_{\lambda \in \R^{n_J}} \left[\frac{1}{N}\sum_{i=1}^N\big(Y_i - \sum_{l \le J, r} \lambda_{l,r} \phi_{l,r}(X_i)\big)^2 +  \delta_N^2 \|\lambda\|^2_{h^\alpha} \right],~~~ \|\lambda\|_{h^\alpha}^2=\sum_{l,r}2^{2l\alpha}\lambda_{l,r}^2.
\end{equation} 
Next we set $$\hat u = \hat u(Z^{(N)}) = \sum_{l \le J, r} \hat \lambda_{l,r} \phi_{l,r},~~ u_{f_0,J} = \sum_{l \le J,r} \lambda_{0,l, r} \phi_{l,r},$$ where the $\lambda_{0,l,r} \in h^{\alpha+2}$ are frame coefficients of $u_{f_0}=\mathcal G(\theta_0) \in H^{\alpha+2}$ furnished by Theorem 5.51 in \cite{T08} and the elliptic regularity estimate (\ref{alphabound}). In particular by the Sobolev embedding $h^{\alpha+2} \subset b^\alpha_{\infty \infty}$ ($d<4$) and again Theorem 5.51 in \cite{T08} we can prove
\begin{equation}\label{bias}
\|u_{f_0} -u_{f_0,J}\|_{L^2} \lesssim \|u_{f_0}-u_{f_0,J}\|_{\infty} \lesssim 2^{-J\alpha} \lesssim \delta_N.
\end{equation}
We now apply a standard result from $M$ estimation \cite{V00, V01}, with empirical norms $$\|u\|^2_{(N)}= \frac{1}{N} \sum_{i=1}^N u^2(X_i),$$ conditional on the design $X_1, \dots, X_n$, to obtain the following bound.
\begin{prop}
We have for $\alpha>d/2$, all $N$ and some constant $c>0$,
\begin{equation}\label{mongolei}
P_{\theta_0}^N\big(\|\hat u - u_{f_0}\|^2_{(N)} + \delta_N^2 \|\hat \lambda\|_{h^{\alpha}}^2 > \|u_{f_0} - u_{f_0,J}\|_{(N)}^2 + \delta_N^2 \|\lambda_{0, l,r}\|_{h^{\alpha}}^2 | (X_i)_{i=1}^N \big) \le e^{-cN \delta_N^2}.
\end{equation}
\end{prop}
\begin{proof}
We apply Theorem 2.1 in \cite{V01}. We can bound the $\|\cdot\|_\infty$ and then also $\|\cdot\|_{(N)}$-metric entropy of the class of functions $$\Big\{u: u = \sum_{l \le J, r}  \lambda_{l,r} \phi_{l,r}; \|\lambda\|^2_{h^{\alpha}} \le m\Big\},~~m>0,$$ by the metric entropy of a ball of radius $m$ in a $H^{\alpha}$-Sobolev space, which by (4.184) in \cite{GN16} is of order $H(\tau) \lesssim (m/\tau)^{d/\alpha}$ for every $m>0$. Then arguing as in Section 3.1.1 in \cite{V01} (the only notational difference being that here $d>1$), the result follows. 
\end{proof}

\medskip

This implies in particular, using $\|u\|_{(N)} \le \|u\|_\infty$, (\ref{bias}), $\lambda_{0,l,r} \in h^{\alpha+2}$ and Theorem 5.51 in \cite{T08}, that for some $C, C'>0$,
\begin{equation}\label{regul}
P_{\theta_0}^N\big(\|\hat u\|_{H^{\alpha}}^2 > C\big) \le P_{\theta_0}^N\big(\|\hat \lambda\|_{h^{\alpha}}^2 > C'\big) \le \exp\{-cN\delta_N^2\}.
\end{equation}
as well as
\begin{equation}\label{rate}
P_{\theta_0}^N\big(\|\hat u - u_{f_0,J}\|^2_{(N)}  > C \delta_N^2 \big) \le  \exp\{-cN \delta_N^2\}.
\end{equation}
In Step IV below we establish the following restricted isometry type bound
\begin{align} \label{ripbd}
P_{\theta_0}^N \left(\Big|\frac{\|\hat u - u_{f_0,J}\|^2_{(N)}}{\|\hat u - u_{f_0,J}\|^2_{L^2}} - 1\Big| \le \frac{1}{2}\right) \ge 1- c''e^{-c' N\delta_N^2}
\end{align}
for some constants $c',c''>0$ so that in particular 
$$P_{\theta_0}^N \left(\frac{1}{2} \le \frac{\|\hat u - u_{f_0,J}\|^2_{(N)}}{\|\hat u - u_{f_0,J}\|^2_{L^2}} \le \frac{3}{2}\right) \ge 1- c''e^{-c' N\delta_N^2}.$$
On the event $\mathcal A_N$ in the last probability we can write, using again (\ref{bias}) and (\ref{rate}), for $M$ large enough,
\begin{align*}
& P_{\theta_0}^N\big(\|\hat u - u_{f_0}\|^2_{L^2}  > M \delta_N^2  \big) \le P_{\theta_0}^N\big(\|\hat u - u_{f_0,J}\|^2_{L^2}  > (M/2) \delta_N^2  \big) \\
& \le P_{\theta_0}^N\left(\frac{\|\hat u - u_{f_0,J}\|^2_{L^2}}{\|\hat u - u_{f_0,J}\|^2_{(N)}}\|\hat u - u_{f_0,J}\|^2_{(N)}  > (M/2) \delta_N^2, \mathcal A_N \right) +c''e^{-c'N\delta_N^2} \\
& \le P_{\theta_0}^N\left( \|\hat u - u_{f_0,J}\|^2_{(N)} > (M/4) \delta_N^2 \right)  + c''e^{-c'N\delta_N^2} \lesssim e^{-cN \delta_N^2} + e^{-c'N \delta_N^2}.
\end{align*}
Overall what precedes implies that we can find $M$ large enough such that for some constants $\bar c, \bar c'>0$,
\begin{equation}\label{finalbd}
P_{\theta_0}^N\big(\|\hat u - u_{f_0}\|^2_{L^2}  \le M\delta_N^2 \text{ and } \|\hat u\|_{H^{\alpha}}^2 \le M \big)  \ge 1- \bar c' e^{-\bar c N \delta_N^2}.
\end{equation}

\textbf{Step II.}
By definition of the $\|\cdot\|_{h^\alpha}$-norm, the objective function minimised in (\ref{global}) over $\R^{n_J}$ is $m$-strongly convex with convexity bound $m \ge \delta_N^2$. Moreover, noting that the sum-of-squares term $Q_N$ appearing in (\ref{global}) satisfies
\[ \frac{\partial Q_N}{\partial \lambda_{l',r'}}(\lambda)= -\frac{2}{N}\sum_{i=1}^N \big[ Y_i-\sum_{l\le J,r}\lambda_{l,r}\phi_{l,r}(X_i) \big]\phi_{l',r'}(X_i), ~~~ l'\le J,~1\le r'\le N_{l'}, \]
we can deduce that the gradient of the objective function is globally Lipschitz with constant at most of order $O(2^{Jd})=O(N\delta_N^2)$, using standard properties of the wavelet frame from Definition 5.25 in \cite{T08}. Using (\ref{ubd}), (\ref{bernstein}) and a standard tail inequality for $\chi^2$-random variables (Theorem 3.1.9 in \cite{GN16}), one shows further that for some $\bar C>0$ and on events of sufficiently high $P_{\theta_0}^N$-probability,
\[ Q_N(0)=\frac 1N \sum_{i=1}^N \big( \eps_i^2 + 2\eps_i u_{f_0}(X_i) +u_{f_0}^2(X_i) \big) \le \bar C. \]
By Proposition \ref{reiskorn} and using the standard sequence norm inequality
\[ \|v\|_{h^\beta} \le 2^{J\beta} \|v\|_{\ell^2} \lesssim  N^{\frac{\beta}{2\alpha +d}} \|v\|_{\ell^2} ,~~ v\in \R^{n_J}, ~\beta \ge 0,\]
we deduce that on preceding events and for any fixed $p>0$ there exists $b_0 >0$ such that the output $\lambda_{init}\in \R^{n_J}$ from $O(N^{b_0})$ iterations of gradient descent satisfies $\|\lambda_{init}-\hat\lambda\|_{h^\alpha}\le N^{-p}.$ In particular we can choose $p$ such that, denoting $$u_{init} := \sum_{l \le J,r} \lambda_{init,l,r} \phi_{l,r},$$ we have that $\|\hat u - u_{init}\|_{H^\alpha}\lesssim \|\hat \lambda - \lambda_{init}\|_{h^\alpha} = o(\delta_N)$; hence by virtue of (\ref{finalbd}), we may restrict the rest of the proof to an event of sufficiently large probability where $u_{init}$ satisfies
\begin{equation}\label{guetersloh}
\|u_{init} - u_{f_0}\|^2_{L^2} + \delta_N^2 \|u_{init}\|_{H^{\alpha}}^2 \le  (2M+1)\delta_N^2.
\end{equation}

\textbf{Step III.}
From the interpolation inequality for Sobolev norms from Section \ref{notatio} and (\ref{guetersloh}) we now obtain, with sufficiently high $P_{\theta_0}^N$-probability,
\begin{equation}
\|u_{init} - u_{f_0}\|_{H^2} \leq \bar M N^{-(\alpha-2)/(2\alpha+d)}
\end{equation}
and the Sobolev imbedding ($d<4$) further implies $\|u_{init} - u_{f_0}\|_\infty \to 0$ as $N \to \infty$ so that we deduce from (\ref{leivand}) $\hat u \ge u_{f_0}/2 \ge c>0$ with sufficiently high $P_{\theta_0}^N$-probability. So on these events we can define a new estimator
\begin{equation}
f_{init} = \frac{\Delta u_{init}}{2 u_{init}}, ~~\text{ noting that } f_0 = \frac{\Delta u_{f_0}}{2 u_{f_0}}.
\end{equation}
For $F_{init} = \Phi^{-1}\circ f_{init}$, using also the regularity of the inverse link function (\ref{linklip}), we then see $$\|F_{init}-F_{\theta_0}\|_{L^2} \lesssim \|f_{init} - f_0\|_{L^2} \lesssim \|u_{init} - u_{f_0}\|_{H^2},$$ and hence for some $M'>0$,
$$P_{\theta_0}^N\big(\|F_{init} - F_{\theta_0}\|_{L^2}  \le M' N^{-(\alpha-2)/(2\alpha+d)}\big) \ge 1- \bar c' e^{-\bar c N\delta_N^2}.$$ 
We finally define $\theta_{init}$ as 
\[\theta_{init} = (\langle F_{init}, e_k \rangle_{L^2}: k \le D) \in \R^D,~~D \in \mathbb N,\]
the vector of the first $D$ `Fourier coefficients' of $F_{init}$. Then we obtain from Parseval's identity that $\|\theta_{init} - \theta_{0,D}\|_{\R^D} \le  \|F_{init} - F_{\theta_0}\|_{L^2}$, which combined with the last probability inequality establishes convergence rate desired in Theorem \ref{triebelei}. 

\smallskip

\textbf{Step IV. Proof of (\ref{ripbd}).}
Let us introduce the symmetric $n_J \times n_J, n_J \lesssim 2^{Jd},$ matrices $$\hat \Gamma_{(l,r), (l',r')} = \frac{1}{N} \sum_{i=1}^N \phi_{l,r}(X_i) \phi_{l',r'}(X_i),~~\Gamma_{(l,r), (l', r')} = \int_\mathcal O \phi_{l,r}(x) \phi_{l',r'}(x)dP^X(x),$$ and vectors $(\hat \lambda=\hat \lambda_{l,r}), (\lambda_0=\lambda_{0,l,r}) \in \mathbb R^{n_J}$. Then we can write 
$$\|\hat u - u_{f_0,J}\|^2_{(N)}-\|\hat u - u_{f_0,J}\|^2_{L^2(\mathcal O)} = (\hat \lambda-\lambda_0)^T (\hat \Gamma - \Gamma) (\hat \lambda-\lambda_0)$$
and hence (one minus the) probability relevant in (\ref{ripbd}) can be bounded as
\begin{align*}
\Pr \left(\Big|\frac{(\hat \lambda-\lambda_0)^T (\hat \Gamma - \Gamma) (\hat \lambda-\lambda_0)}{(\hat \lambda-\lambda_0)^T  \Gamma (\hat \lambda-\lambda_0)}\Big| > 1/2\right) \le \Pr \left(\sup_{v \in \mathbb R^{n_J}: v^T\Gamma v \le 1}\big|v^T(\hat \Gamma - \Gamma) v \big| > 1/2\right) . 
\end{align*}
We also note that by the frame property of the $\{\phi_{l,r}\}$, specifically from (5.252) in \cite{T08} with $s=0, p=q=2$, for any $u_v=\sum_{l \le J,r} v_{l,r} \phi_{l,r}$ we have the norm equivalence
\begin{equation}\label{frame}
\|v\|^2_{\mathbb R^{n_J}} \simeq \|u_v\|^2_{L^2} = \sum_{l, l'\le J,r, r'} v_{l,r} v_{l',r'} \Gamma_{(l,r), (l', r')} = v^T \Gamma v  =: \|v\|^2_{\Gamma},
\end{equation}
with the constants implied by $\simeq$ independent of $J$. Next for any $\kappa>0$ let $$\{v_m, m=1, \dots, M_{J,\kappa}\},~ M_{J,\kappa} \lesssim (3/\kappa)^{n_J}$$ denote the centres of balls of $\|\cdot\|_\Gamma$-radius $\kappa$ covering the unit ball $V_\Gamma$ of $(\mathbb R^{n_J}, \|\cdot\|_{\Gamma})$ (e.g., as in Prop. 4.3.34 in \cite{GN16} and using (\ref{frame})). Then using the Cauchy-Schwarz inequality
\begin{align*}
|v^T(\hat \Gamma - \Gamma) v|& = |(v-v_m+v_m)^T(\hat \Gamma - \Gamma) (v-v_m+v_m)| \\
&\le \|v-v_m\|_\Gamma^2 \sup_{v \in V_\Gamma}|v^T(\hat \Gamma - \Gamma) v \big| + 2\|v-v_m\|_{\Gamma} \|(\hat\Gamma-\Gamma) v\|_{\Gamma} + |v_m^T (\hat \Gamma- \Gamma) v_m| \\
&\le (\kappa^2 + 2\kappa)\sup_{v \in V_\Gamma}|v^T(\hat \Gamma - \Gamma) v \big| +  |v_m^T (\hat \Gamma- \Gamma) v_m|
\end{align*}
so choosing $\kappa$ small enough so that $\kappa^2+2\kappa <1/4$ we obtain
\begin{equation}
\sup_{v \in V_\Gamma}|v^T(\hat \Gamma - \Gamma) v \big| \le (4/3) \max_{m=1, \dots, M_J} |v_m^T (\hat \Gamma- \Gamma) v_m|,~~M_J \equiv M_{J, \kappa}.
\end{equation}
In particular, using also that $M_J \lesssim e^{c_02^{Jd}} \le e^{c_1 N\delta_N^2}$, the last probability is thus bounded by 
\begin{equation}\label{vershy}
\Pr\Big(\max_{m=1, \dots, M_J} |v_m^T(\hat \Gamma- \Gamma) v_m| >1/4 \Big) \le  e^{c_1 N\delta_N^2} \max_m \Pr\Big(|v_m^T(\hat \Gamma- \Gamma) v_m| >1/4 \Big).
\end{equation}
Each of the last probabilities can be bounded by Bernstein's inequality (Prop.~3.1.7 in \cite{GN16}) applied to $$v_m^T(\hat \Gamma- \Gamma) v_m=\frac{1}{N}\sum_{i=1}^N Z_{i}-EZ_i,$$ with i.i.d.~variables $Z_i=Z_{i,m}$ given by
\begin{equation}
Z_i = \sum_{l,l'\le J,r, r'} v_{m,l,r}v_{m, l', r'} \phi_{l,r}(X_i) \phi_{l',r'}(X_i)= \sum_{l\le J,r} v_{m,l,r} \phi_{l,r}(X_i) \sum_{l' 
\le J, r'} v_{m, l', r'}\phi_{l',r'}(X_i),
\end{equation}
wit vectors $v_m$ all satisfying $\|v_m\|_{\Gamma} \le 1$. For these variables we have from the Cauchy-Schwarz inequality
\begin{equation*}
|Z_i| \le \Big|\sum_{l \le J,r} v_{m,l,r} \phi_{l,r}(\cdot)\Big|^2 \le \|v_m\|_{\mathbb R^{n_J}}^2 \sum_{l\le J,r}(\phi_{l,r}(\cdot))^2 \leq c 2^{Jd} \equiv U
\end{equation*}
where the constant $c$ depends only on the wavelet frame (cf.~(\ref{frame}) and also Definition 5.25 in \cite{T08}). Similarly, using the previous estimate, we can bound
\begin{align*}
EZ_i^2 &= E\Big[\sum_{l\le J,r} v_{m,l,r} \phi_{l,r}(X_i)\Big]^4 \le U  \int_\mathcal O \Big[\sum_{l\le J,r} v_{m,l,r} \phi_{l,r}(x)\Big]^2 dx= U \|v_m\|^2_\Gamma \le U.
\end{align*}
Now Proposition 3.1.7 in \cite{GN16} implies for some constant $c_0>0$
$$\Pr\Big(N|v_m (\hat \Gamma- \Gamma) v_m| >N/4 \Big) \le 2\exp \Big\{-\frac{N^2/16}{2NU+(2/12) NU}\Big\} \le 2e^{-c_0/\delta_N^2} $$ since $U=c2^{Jd} \simeq N \delta_N^2$. Now since $\alpha>d/2$ we have $\delta_N^2 = o(1/\sqrt N)$ and thus $(1/\delta_N^2) \gg N \delta_N^2$ which means that the r.h.s~in (\ref{vershy}) is bounded by a constant multiple of $e^{-c'N\delta_N^2}$ for some $c'>0$, completing the proof.

\end{proof}

\bibliography{bib_2019_12}
\bibliographystyle{abbrv}
\end{document}